\newtheorem{thm}{Theorem}[section]
\newtheorem{prop}[thm]{Proposition}
\newtheorem{lem}[thm]{Lemma}
\newtheorem{lem-def}[thm]{Lemma-Definition}
\newtheorem{cor}[thm]{Corollary}
\theoremstyle{definition}
\newtheorem{ex}[thm]{Example}
\newtheorem{rem}[thm]{Remark}
\newtheorem{defn}[thm]{Definition}
\numberwithin{equation}{section}
\newcommand{\epic}{\twoheadrightarrow}
\newcommand{\into}{\hookrightarrow}
\newcommand{\bbA}{\mathbb{A}}
\newcommand{\bbB}{\mathbb{B}}
\newcommand{\bbC}{\mathbb{C}}
\newcommand{\bbF}{\mathbb{F}}
\newcommand{\bbG}{\mathbb{G}}
\newcommand{\bbH}{\mathbb{H}}
\newcommand{\bbI}{\mathbb{I}}
\newcommand{\bbJ}{\mathbb{J}}
\newcommand{\bbP}{\mathbb{P}}
\newcommand{\bbQ}{\mathbb{Q}}
\newcommand{\bbT}{\mathbb{T}}
\newcommand{\bbV}{\mathbb{V}}
\newcommand{\bbW}{\mathbb{W}}
\newcommand{\bbZ}{\mathbb{Z}}
\newcommand{\bbc}{\mathbbm{c}}
\newcommand{\bbg}{\mathbbm{g}}
\newcommand{\bbr}{\mathbbm{r}}
\newcommand{\bbt}{\mathbbm{t}}
\newcommand{\bfI}{\mathbf{I}}
\newcommand{\bfP}{\mathbf{P}}
\newcommand{\bfQ}{\mathbf{Q}}
\newcommand{\cB}{\mathcal{B}}
\newcommand{\cE}{\mathcal{E}}
\newcommand{\cF}{\mathcal{F}}
\newcommand{\cG}{\mathcal{G}}
\newcommand{\cL}{\mathcal{L}}
\newcommand{\cN}{\mathcal{N}}
\newcommand{\cO}{\mathcal{O}}
\newcommand{\cR}{\mathcal{R}}
\newcommand{\cT}{\mathcal{T}}
\newcommand{\cU}{\mathcal{U}}
\newcommand{\cV}{\mathcal{V}}
\newcommand{\fa}{\mathfrak{a}}
\newcommand{\fb}{\mathfrak{b}}
\newcommand{\fc}{\mathfrak{c}}
\newcommand{\fg}{\mathfrak{g}}
\newcommand{\fh}{\mathfrak{h}}
\newcommand{\fn}{\mathfrak{n}}
\newcommand{\fp}{\mathfrak{p}}
\newcommand{\ft}{\mathfrak{t}}
\newcommand{\sA}{\mathsf{A}}
\newcommand{\sB}{\mathsf{B}}
\newcommand{\sG}{\mathsf{G}}
\newcommand{\sT}{\mathsf{T}}
\newcommand{\sU}{\mathsf{U}}
\newcommand{\sk}{\mathsf{k}}
\newcommand{\coker}{\mathrm{coker}}
\newcommand{\ga}{\gamma}
\newcommand{\la}{\lambda}
\newcommand{\La}{\Lambda}
\newcommand{\spec}{\mathrm{Spec}}
\newcommand{\vol}{\mathrm{vol}}
\newcommand\bm[1]{\begin{bmatrix}#1\end{bmatrix}}
\title{Witt vector affine Springer fibers}
\author{Jingren Chi}
\address{Morningside Center of Mathematics and State Key Laboratory of Mathematical Sciences, Academy of Mathematics and Systems Science, Chinese Academy of Sciences, Beijing 100190, China}
\email{jrenchi@amss.ac.cn}
\date{}
\begin{document}
\begin{abstract}
    We establish dimension formulae for the Witt vector affine Springer fibers associated to a reductive group over a mixed characteristic local field, under the assumption that the group is essentially tamely ramified and the residue characteristic is not bad. Besides the discriminant valuations that show up in classical works on the usual affine Springer fibers, our formulae also involve the Artin conductors and the Kottwitz invariants of the relevant conjugacy classes.
\end{abstract}
\maketitle
%\tableofcontents

\section{Introduction}\label{sec:intro}
\subsection{General motivation}
The affine Springer fibers are analogues of Springer fibers for loop groups, first introduced and studied over the field of complex numbers $\bbC$ by Kazhdan and Lusztig in \cite{KL}. Later their counterparts over finite fields have found applications in harmonic analysis on groups $G(F)$ where $F$ is an \emph{equal-characteristic} local field, i.e. a finite extension of the field of Laurent series $\bbF_p((t))$, and $G$ is a reductive group over $F$, see \cite{GKM}, \cite{Ngo10} and more recently \cite{BV21}. The main reason behind this is that they provide a geometrization of the orbital integrals on the Lie algebra $\fg(F)$, which are closely related to orbital integrals on the group $G(F)$. When $F$ is a mixed characteristic local field, i.e. a finite extension of the field of $p$-adic numbers $\bbQ_p$, the orbital integrals on $G(F)$ are the key ingredients in one side of the Arthur-Selberg trace formula over number fields, making them fundamental objects of investigation in the theory of automorphic representations and the Langlands program. Therefore, it is valuable to have geometrizations of orbital integrals over mixed characteristic local fields. Given that the classical affine Springer fibers are subschemes of the affine Grassmannians or (partial) affine flag varieties, the geometrizations of orbital integrals in the mixed-characteristic setting would involve subschemes defined analogously in the Witt vector affine Grassmannians, which have been explored in the works of Xinwen Zhu \cite{Zhu-mixed} and Bhatt-Scholze \cite{BS-Witt}. The study of these Witt vector affine Springer fibers is anticipated to employ techniques from algebraic geometry and sheaf theory, akin to the equal-characteristic scenario. In this paper, we take the initial step in this direction by establishing their fundamental geometric properties: finiteness properties, criteria for non-emptiness, and dimension formulae.\par
To be more precise, we first assume that $F$ is a finite extension of $\bbQ_p$ with valuation ring $\cO$ and residue field $k=\bbF_q$, where $q$ is a power of $p$. Let $G$ be a split reductive group over $F$ and let $\fg$ be the Lie algebra of $G$. (In the main body of the paper we will work under less restrictive assumptions.) The most basic orbital integrals one encounters are those of the characteristic function $1_{G(\cO)}$ of the hyperspecial maximal compact subgroup $G(\cO)$:
\[O_\ga(1_{G(\cO)}):=\int_{G_\ga(F)\backslash G(F)}1_{G(\cO)}(g^{-1}\ga g)d\dot{g}=\sum_{g\in G_\ga(F)\backslash X_\ga^G(k)}\frac{1}{\vol(gG(\cO)g^{-1}\cap G_\ga(F),dg_\ga)}\]
where 
\begin{itemize}
    \item $\ga$ is an arbitrary element of $G(F)$ and $G_\ga(F)$ is its centralizer in $G(F)$;
    \item we choose Haar measures $dg$ (resp. $dg_\ga$) on $G(F)$ (resp. $G_\ga(F)$) such that $\mathrm{vol}(G(\cO),dg)=1$ and let $d\dot{g}=\frac{dg}{dg_\ga}$ be the quotient measure on $G_\ga(F)\backslash G(F)$; 
    \item $X_\ga^G(k):=\{g\in G(F)/G(\cO)\mid g^{-1}\ga g\in G(\cO)\}$.
\end{itemize}
Here $X_\ga^G(k)$ is the set of $k$-points of the \emph{Witt vector affine Springer fiber} associated to $\ga$, which we denote by $X_\ga^G$. It is the fixed point loci of the natural action of $\ga$ (by left multiplication) on the Witt vector affine Grassmannian $\mathrm{Gr}_G$ that geometrize the quotient $G(F)/G(\cO)$. The sum in the formula above might be infinite 
for a general element $\ga\in G(F)$. Nevertheless it always converges by a result of Deligne and Rao (see \cite{Rao}), and when $\ga$ is regular semisimple the sum is finite.

\subsection{Affine Springer fibers for the Lie algebra}
A common practice in the study of the orbital integrals $O_\ga(1_{G(\cO)})$ is to relate them to analogous integrals defined on the Lie algebras. That is to say, for each element $\ga\in\fg(F)$, one considers the orbital integral
\[O_\ga(1_{\fg(\cO)}):=\int_{G_\ga(F)\backslash G(F)}1_{\fg(\cO)}(\mathrm{ad}(g)^{-1}\ga)d\dot{g}\]
whose geometrization is the Witt vector affine Springer fiber $X_\ga^\fg$ for the element $\ga$ in the Lie algebra. As in the group case, its set of $k$-points is described by
\[X_\ga^\fg(k)=\{g\in G(F)/G(\cO)\mid\mathrm{ad}(g)^{-1}\ga\in \fg(\cO)\}.\]
If one replaces the field $F$ (resp. its valuation ring $\cO$) in this description by the Laurent series field $k((t))$ (resp. the power series ring $k[[t]]$), then we get the set of $k$-points of the usual affine Springer fibers, whose basic geometric properties and dimension formulae are established in \cite{KL}, \cite{Be96}, see also \cite{Ngo10} and the generalizations to tamely ramified groups in \cite[Appendix]{OY}. Our first main result is concerned with the analogous statements in the mixed-characteristic setting. Since we are interested in the geometric properties of $X_\ga^\fg$, we may and do replace $F$ by the completion of its maximal unramified extension and assume that the residue field $k$ is algebraically closed. 
\begin{thm}[special case of Theorem \ref{thm:main-Lie-algebra-case}]
    Let $G$ be a split reductive group over $F$ with  Lie algebra $\fg$. Let $\ga\in\fg(F)$ be a bounded regular semisimple element (see Definition \ref{def:bounded-Lie-alg}). Then the Witt vector affine Springer fiber $X_\ga^\fg$ is represented by a finite dimensional perfect scheme locally perfectly of finite type (see Definition \ref{def:perfect-finite-type}). Suppose moreover that the residue characteristic $p$ is not bad for $G$, then we have 
    \[\dim X_\ga^\fg=\frac{d_\fg(\ga)-\mathrm{Art}_\ga}{2}\]
    where $d_\fg(\ga)$ is the discriminant valuation of $\ga$ and $\mathrm{Art}_\ga=\mathrm{Art}(G_\ga)$ is the Artin conductor of the centralizer of $\ga$. 
\end{thm}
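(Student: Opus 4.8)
\emph{Step 1: representability and finiteness.} The plan is to realize $X_\ga^\fg$ inside a Witt vector affine Grassmannian and to bound it. Fix a faithful representation $\rho\colon G\hookrightarrow\GL(V)$ with $V\cong F^n$; this presents $\mathrm{Gr}_G$ as a locally closed sub-ind-scheme of $\mathrm{Gr}_{\GL_n}$, the Witt vector moduli of $\cO$-lattices in $V$, and $X_\ga^\fg$ becomes the closed condition ``$\rho(\ga)\Lambda\subseteq\Lambda$'' intersected with the $G$-orbit. The key point is boundedness: since $\ga$ is regular semisimple, $A:=F[\ga]\subseteq\mathrm{End}(V)$ is an \'etale $F$-algebra of degree $n$, and since $\ga$ is bounded, $\cO[\ga]$ is an $\cO$-order in $A$ whose conductor $\fc\subseteq\cO_A$ has valuation bounded in terms of $d_\fg(\ga)$; a $\ga$-stable lattice $\Lambda$ is an $\cO[\ga]$-module, so $\fc\Lambda$ is an $\cO_A$-module and hence pinched between fixed scalar multiples of a standard lattice. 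It follows that $p^N\Lambda_0\subseteq\Lambda\subseteq p^{-N}\Lambda_0$ for all $\Lambda\in X_\ga^\fg$ with $N=N(\ga)$ uniform, so $X_\ga^\fg$ is a closed subfunctor of a bounded region of $\mathrm{Gr}_G$; by the theory of Zhu \cite{Zhu-mixed} and Bhatt--Scholze \cite{BS-Witt} such a region is a perfection of a projective scheme (a finite union of Schubert varieties), whence $X_\ga^\fg$ is represented by a perfect scheme perfectly of finite type and is finite-dimensional.

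\emph{Step 2: d\'evissage to the elliptic case.} Put $T:=G_\ga$, a maximal $F$-torus of $G$, and let $M\subseteq G$ be a Levi subgroup containing $T$ and minimal with this property, so that $\ga$ is elliptic in $M$. A Levi-descent argument --- the root spaces $\fg_\alpha$ for $\alpha\in\Phi(G,T)\smallsetminus\Phi(M,T)$ organize into an explicit affine-space bundle over $X_\ga^\fm$, each contributing $\tfrac12 v(\alpha(\ga))$ to the dimension --- transcribed to the Witt vector setting gives $\dim X_\ga^\fg=\dim X_\ga^\fm+\tfrac12\bigl(d_\fg(\ga)-d_\fm(\ga)\bigr)$. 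Since $\mathrm{Art}_\ga=\mathrm{Art}(T)$ depends only on the inertia action on $X_*(T)$ and so is the same whether computed in $G$ or in $M$, it suffices to prove the formula when $\ga$ is elliptic.

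\emph{Step 3: the elliptic case.} For the upper bound, stratify $\mathrm{Gr}_G$ by Iwahori orbits $\mathrm{Gr}_G=\bigsqcup_{w}\mathrm{Gr}_G^{w}$ (passing, for sharper control, through the affine flag variety $\mathrm{Fl}_G$ and the standard comparison of Iwahori-level and $\mathrm{Gr}$-level affine Springer fibers \`a la Kazhdan--Lusztig \cite{KL}), write a point of $\mathrm{Gr}_G^{w}$ in pro-unipotent coordinates, and solve the $\ga$-stability equations: the resulting bound on $\dim\bigl(X_\ga^\fg\cap\mathrm{Gr}_G^{w}\bigr)$ is combinatorial in $w$ and the root valuations $r_\alpha=v(\alpha(\ga))$, and maximizing over $w$ yields $\dim X_\ga^\fg\le\tfrac12\bigl(d_\fg(\ga)-\mathrm{Art}_\ga\bigr)$. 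For the lower bound, the Witt vector symmetry group $P_\ga$ --- the twisted N\'eron model of $T$ --- acts on $X_\ga^\fg$, and a version of Bezrukavnikov's argument \cite{Be96} shows that the quotient is perfectly of finite type of dimension zero, so that $\dim X_\ga^\fg$ equals the (suitably interpreted) dimension of $P_\ga$; in the elliptic case this last quantity is purely local in $T$, attached to the \'etale algebra $F[\ga]$, and the conductor--discriminant formula (Serre), which expresses $\mathrm{Art}(T)$ --- including its Swan part when $p$ is small --- through $d_\fg(\ga)$ and the split rank of $T$, evaluates it to $\tfrac12\bigl(d_\fg(\ga)-\mathrm{Art}_\ga\bigr)$.

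\emph{Expected main obstacle.} I expect the genuinely new work to be the cell estimate of Step 3 in the Witt vector world. Over $k((t))$ one has polynomial coordinates and the $t$-adic filtration behaves like a grading, so the $\ga$-stability equations are transparent; in mixed characteristic the Iwahori cells are perfections of pro-affine spaces with Witt-vector coordinates, the relevant filtration is by powers of $p$, and the defining equations are Witt-vector-polynomial, hence laden with Frobenius twists and carries, so the equal-characteristic estimates cannot be transcribed verbatim. Getting a clean bound will likely require a careful truncation argument (reducing modulo $p^m$) or a tilting comparison with the equal-characteristic count. A secondary difficulty, where the hypotheses that $G$ is split and $p$ is good are consumed, is the structure theory needed to define $P_\ga$ and to compute $\mathrm{Art}_\ga$ uniformly --- in particular controlling the (possibly wild) ramification of $T=G_\ga$ through the conductor--discriminant formula, and ensuring that centralizers of regular semisimple elements are smooth tori whose root data faithfully record $d_\fg(\ga)$.
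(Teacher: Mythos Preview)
Your overall plan is reasonable but diverges from the paper's at the crucial point, and the step you flag as the main obstacle is precisely the one the paper \emph{avoids} rather than confronts. The paper does not do a Levi reduction to the elliptic case, nor does it attempt cell-by-cell estimates on Iwahori strata in the Witt vector world. Instead it works directly with the full $\ga$: it introduces the \emph{regular locus} $X_\ga^{\mathrm{reg}}\subset X_\ga$ (points where the reduction of $\mathrm{ad}(g)^{-1}\ga$ is regular), shows this is a nonempty open union of $P_\ga$-torsors, and computes its dimension as $\mathrm{length}_\cO(\fg_\ga^\dagger/\fg_\ga)$. The identification of this length with $\tfrac12(d_\fg(\ga)-\mathrm{Art}_\ga)$ is the numerical heart, and the key input is not the classical conductor--discriminant formula but the \emph{Chai--Yu theorem} on the base change conductor of tori, which is what produces the full Artin conductor (including the Swan part) rather than merely the defect.

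The passage from $\dim X_\ga^{\mathrm{reg}}$ to $\dim X_\ga$ is where the paper's route is most different from yours. You propose to invoke ``a version of Bezrukavnikov's argument'' to show $P_\ga\backslash X_\ga$ is $0$-dimensional; but Bezrukavnikov's equal-characteristic proof uses deformation along the Hitchin/Chevalley base and semicontinuity, and it is not clear that this transcribes to Witt vectors --- this is a genuine gap in your proposal, not just a technicality. The paper instead proves that the \emph{Iwahori} affine Springer fiber $Y_\ga$ is equidimensional by a surprising detour: it passes to the \emph{group} affine Springer fiber via a quasi-logarithm map (built from an invariant pairing, available because $p$ is good), and then runs a Kazhdan--Lusztig style line-bundle/Springer-fiber argument on the group side (Proposition~\ref{prop:equi-dim-group}) to get connectedness and equidimensionality. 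Since the map $Y_\ga\to X_\ga$ is quasi-finite over the regular locus and surjective, equidimensionality of $Y_\ga$ forces $\dim X_\ga=\dim X_\ga^{\mathrm{reg}}$. So the paper never needs the hard Witt-vector cell estimates you anticipate; conversely, your route would need either a mixed-characteristic analogue of Bezrukavnikov's deformation argument or a direct attack on those estimates, neither of which is currently available.
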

We refer to Definition \ref{def:disc-Lie-alg} and Definition \ref{def:conductor-group} for the numerical invariants occurring in the formulae above. In fact we will only assume that $G$ is essentially tamely ramified (a condition slightly more general than tamely ramified, see Definition \ref{def:ess-tame-ram}) and the general formulae will also involve the Artin conductor of $G$ (which vanishes when $G$ is split). Besides,
we also study the Witt vector affine Springer fibers in the partial affine flag varieties. In other words, the hyperspecial maximal bounded subgroup $G(\cO)$ could be replaced by a general parahoric subgroup $\bfP\subset G(F)$, with the caveat that when $\bfP$ is not contained in any special parahoric subgroup, we only prove an upper bound on the dimension in the current work. We expect that the general dimension formulae in Theorem \ref{thm:main-Lie-algebra-case} should hold for any reductive group $G$ over $F$ and any parahoric subgroup $\bfP\subset G(F)$, with no assumption on the residue characteristic $p$.\par 
The primary innovation in our first result is the introduction of the Artin conductor of $G_\ga$. This is not unexpected in view of \cite{GG-Artin}. When the centralizer $G_\ga$ splits over a tamely ramified extension, the Artin conductor of $G_\ga$ is simply its \emph{defect}, i.e. the difference between its absolute rank and its $F$-rank. In these cases our formulae become the exact analogues of the dimension formulae for the usual affine Springer fibers in \cite{KL}, \cite{Be96} (see also \cite{Ngo10} and generalizations in \cite[Appendix]{OY}). However in these references it is assumed that the characteristic of the residue field is either $0$ or does not divide the order of the Weyl group, ensuring that the centralizers are always tamely ramified. By contrast, we only require that the characteristic $p$ is not bad for $G$ and hence our results are stronger than the direct analogues of the classical results in the equal characteristic case. For instance, when $G=\mathrm{GL}_n$ we impose no restriction on $p$, in contrast to the assumption $p>n$ in the classical works. To address the possible wild ramification phenomena in the case of small residue characteristics, a crucial ingredient is a result of Chai-Yu \cite{ChaiYu} on the base change conductors for tori, where the Artin conductors emerge naturally as the generalizations of the defects of tamely ramified tori. More precisely, this result will be used in Theorem \ref{thm:dim-reg} to compute the dimensions of the regular locus of the Witt vector affine Springer fibers.

\subsection{Affine Springer fibers for the group}
Our second main result is about the Witt vector affine Springer fibers $X_\ga^G$ defined for the group $G$ itself. We will study more generally the fixed point locus\footnote{More precisely, this is the fixed point locus of $\ga$ only if $G$ is semisimple or if the Kottwitz invariant of $\ga$ vanishes.} $X_{\bfP,\ga}^G$ of a regular semisimple element $\ga\in G(F)$ on the Witt vector affine partial flag variety $G(F)/\bfP$, where $\bfP\subset G(F)$ is any parahoric subgroup. Its set of $k$-points is
\[X_{\bfP,\ga}^G(k)=\{g\in G(F)/\bfP\mid g^{-1}\ga g\in\widetilde{\bfP}\}\]
where $\widetilde{\bfP}$ is the normalizer of $\bfP$ in $G(F)$. We will show that $X_{\bfP,\ga}^G$ is nonempty only if $\ga$ is bounded mod center in $G(F)$ (see Theorem \ref{thm:nonempty-group}). In the special case where $\ga$ lies in a parahoric subgroup of $G(F)$ (or equivalently, if the Kottwitz invariant of $\ga$ vanishes), we can replace the condition in the description of $X_{\bfP,\ga}^G(k)$ by the more familiar condition $g^{-1}\ga g\in\bfP$. The generality considered here will be necessary in the study of the geometrizations of orbital integrals of general functions in the parahoric Hecke algebras, see \cite{He2023affine}. The main novelty in this generality is that, compared to the Lie algebra case, the dimension formula for $X_{\bfP,\ga}^G$ will also involve the Kottwitz invariant $\kappa_G(\ga)$. For instance, when $G=\mathrm{GL}_n$, then $\kappa_G(\ga)$ is the valuation of the determinant of $\ga$.\par 
Now we state our second main result. After replacing $F$ by the completion of its maximal unramified extension we may and do assume that the residue field $k$ is algebraically closed. Then the group $G$ is automatically quasi-split. 
\begin{thm}[special case of Theorem \ref{thm:main-group-case}]
    Let $G$ be a tamely ramified reductive group over $F$. For any bounded (see \S\ref{sec:top-Jordan-group}) regular semisimple element $\ga\in G(F)^{\mathrm{rs}}$ and any parahoric subgroup $\bfP\subset G(F)$, the Witt vector affine Springer fiber $X_{\bfP,\ga}^G$ is a finite dimensional perfect scheme locally perfectly of finite type (see Definition \ref{def:perfect-finite-type}). Suppose moreover that the residue characteristic $p$ is not bad for $G$ and that $\bfP$ is contained in a special parahoric subgroup of $G(F)$, then we have
    \[\dim X_{\bfP,\ga}^G=\frac{1}{2}(d_G(\ga)+\mathrm{def}(\kappa_G(\ga))+\mathrm{Art}(G)-\mathrm{Art}_\ga)\]
    where 
    \begin{itemize}
        \item $d_G(\ga)$ is the discriminant valuation of $\ga$ (see Definition \ref{def:disc-group});
        \item $\mathrm{Art}_\ga=\mathrm{Art}(G_\ga)$ is the Artin conductor of the centralizer of $\ga$, and $\mathrm{Art}(G)$ is the Artin conductor of the group $G$ (see Definition \ref{def:conductor-group});
        \item $\mathrm{def}(\kappa_G(\ga))$ is the defect of the Kottwitz invariant $\kappa_G(\ga)$ (see Definition \ref{def:defect-Kott-invariant}).
    \end{itemize}
    Furthermore, if $\bfP=\bfI$ is an Iwahori subgroup, then $X_{\bfI,\ga}^{G}$ is equi-dimensional.
\end{thm}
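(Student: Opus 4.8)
The plan is to deduce the theorem from the Lie algebra case, Theorem~\ref{thm:main-Lie-algebra-case}, in a sequence of reductions, after first disposing of the structural assertion — representability by a perfect scheme perfectly of finite type — which uses neither the hypothesis on $p$ nor the one on $\bfP$. For this one uses that a bounded regular semisimple $\ga$ is $G(F)$-conjugate into the normalizer $\widetilde\bfP$ of some parahoric, so that $X_{\bfP,\ga}^G$ is contained in a bounded region of the Witt vector affine partial flag variety $G(F)/\bfP$; by the representability theorems of Zhu and of Bhatt--Scholze such a region is a perfect scheme perfectly of finite type, and $X_{\bfP,\ga}^G$ is the closed perfect subscheme it defines, cut out there by the evident closed condition. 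The same conjugation statement yields non-emptiness, so the dimension formula is not vacuous.

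For the dimension formula I would first reduce to the case in which $\bfP$ is itself a special maximal parahoric. If $\bfP\subset\bfP'$ with $\bfP'$ special maximal, projection induces $\pi\colon X_{\bfP,\ga}^G\to X_{\bfP',\ga}^G$ whose fibers are closed perfect subschemes of the (partial) flag variety $\bfP'/\bfP$ of the reductive quotient of $\bfP'$. The content of this step is that over a dense open of each top-dimensional component of $X_{\bfP',\ga}^G$ the action of a conjugate of $\ga$ on the fiber is regular, hence that fiber is finite; this gives $\dim X_{\bfP,\ga}^G=\dim X_{\bfP',\ga}^G$, the Witt vector incarnation of the classical fact that the affine Springer fiber of a regular semisimple element has the same dimension at all special parahoric levels.

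The remaining, and main, step treats $\bfP=\bfP'$ special maximal and splits according to the Kottwitz invariant. When $\kappa_G(\ga)=0$ one has $\mathrm{def}(\kappa_G(\ga))=0$ and $\ga$ lies in a parahoric; using the topological Jordan decomposition $\ga=\ga_0\ga_+$ inside a bounded subgroup together with an \'etale slice around the point determined by the topologically semisimple part $\ga_0$ — which is where goodness of $p$ for $G$, and hence for the pseudo-Levi $H=G_{\ga_0}$, enters — one identifies, near each relevant component, $X_{\bfP,\ga}^G$ with a torsor over a Witt vector affine Springer fiber for the topologically unipotent element $\ga_+\in H(F)$, equivalently via $\log$ for $\log\ga_+\in\fh(F)$; one then checks $d_G(\ga)=d_\fh(\log\ga_+)$ and $G_\ga=H_{\log\ga_+}$, so that Theorem~\ref{thm:main-Lie-algebra-case} delivers the value $\tfrac12(d_G(\ga)-\mathrm{Art}_\ga)$. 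When $\kappa_G(\ga)\neq0$ one passes to a central extension $1\to S\to\widetilde G\to G\to1$ by an induced torus killing the Kottwitz obstruction, so that a lift $\widetilde\ga$ of $\ga$ is conjugate into a parahoric of $\widetilde G$; comparing $X_{\widetilde\bfP,\widetilde\ga}^{\widetilde G}$ with $X_{\bfP,\ga}^G$ and tracking the discrepancy produces the correction $\mathrm{def}(\kappa_G(\ga))$, appearing exactly as a difference of $F$-ranks, in the same fashion as in the dimension formula for affine Deligne--Lusztig varieties. In either branch the equality is established by matching an upper bound, obtained by paving $X_{\bfP,\ga}^G$ with intersections of Iwahori orbits and bounding each piece, against a lower bound, obtained from the torus action together with the local model or from exhibiting one irreducible component of the predicted dimension.

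The step I expect to be the main obstacle is the reduction to the Lie algebra under the sole hypothesis that $p$ is good for $G$, rather than the much stronger $p>\lvert W\rvert$ typical of the classical literature: the topological Jordan decomposition and the \'etale slice must be made to work in small characteristic, and — crucially — when $G_\ga$ is wildly ramified the correct correction term is the Artin conductor $\mathrm{Art}_\ga$ rather than merely the defect of $G_\ga$. Reconciling these requires the Chai--Yu computation of the base change conductor for tori, which both identifies $\mathrm{Art}_\ga$ as the right invariant and guarantees its compatibility with all the base changes invoked above; a further, more bookkeeping-type difficulty is to carry the perfect-scheme structures and the two-sided dimension estimates consistently through every reduction in the Witt vector setting.
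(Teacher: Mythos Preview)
Your overall plan --- reduce to the Lie algebra via topological Jordan decomposition and a quasi-logarithm --- is the paper's, and the $\kappa_G(\gamma)=0$ branch is essentially right (with ``$\log$'' replaced by a quasi-logarithm and the \'etale slice replaced by the Harish-Chandra descent of \S\ref{sec:HC-descent}). The genuine gap is the $\kappa_G(\gamma)\neq 0$ branch: a $z$-extension $1\to S\to\widetilde G\to G\to 1$ does \emph{not} arrange that a lift $\widetilde\gamma$ is conjugate into a parahoric. Indeed $\kappa_{\widetilde G}(\widetilde\gamma)$ maps to $\kappa_G(\gamma)$ under $\pi_1(\widetilde G)_I\to\pi_1(G)_I$; when $\widetilde G^{\mathrm{der}}$ is simply connected the source is torsion-free, so if $\kappa_G(\gamma)\neq 0$ then every lift has $\kappa_{\widetilde G}(\widetilde\gamma)\neq 0$. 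Concretely, for $G=\mathrm{PGL}_2$, $p=2$, and $\gamma$ the image of $\left(\begin{smallmatrix}0&1\\\varpi&0\end{smallmatrix}\right)$, any lift to $\mathrm{GL}_2$ has odd determinant valuation and lies in no parahoric. More conceptually, the $p$-primary part of $\kappa_G(\gamma)$ survives all the standard reductions (isogeny, $z$-extension, topological Jordan decomposition --- the last because $\kappa_G(s)$ has prime-to-$p$ order, so $\kappa_G(u)$ absorbs exactly the $p$-part). The analogy with affine Deligne--Lusztig varieties is misleading here.

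What the paper does instead: Harish-Chandra descent (Lemma~\ref{lem:HC-descent}) is applied \emph{unconditionally}; the identity $\mathrm{Art}(G)+\mathrm{def}(\kappa_G(\gamma))-\mathrm{Art}(G_\gamma)=\mathrm{Art}(H)+\mathrm{def}(\kappa_H(u_0))-\mathrm{Art}(H_{u_0})$ together with $d_G(\gamma)=d_H(u_0)$ reduces the statement for $\gamma$ to the same statement for a topologically unipotent $u_0$ in a pseudo-Levi $H$. If $u_0$ is strongly topologically unipotent, the quasi-logarithm finishes. The residual case --- $u_0$ topologically unipotent with $\kappa_H(u_0)\neq 0$ --- forces (since $p$ is good) $H_{\mathrm{ad}}$ to have a $\mathrm{PGL}_n$ factor with $p\mid n$, and is handled by a direct matrix argument (Theorem~\ref{thm:GLn}): in the elliptic case the group affine Springer fiber is identified with the \emph{regular locus} of a Lie-algebra one by viewing the same matrix simultaneously in $\mathrm{GL}_n$ and $\mathfrak{gl}_n$, and the non-elliptic case is sliced into elliptic pieces via an Iwasawa decomposition along a Levi containing $\gamma$. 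Your proposal has no substitute for this step. (A minor aside: $X^G_{\bfP,\gamma}$ is generally not contained in any bounded piece of $\mathrm{Fl}_\bfP$, since the lattice in $G_\gamma(F)$ acts with infinitely many orbits; one proves it is finite-dimensional and \emph{locally} perfectly of finite type via Theorem~\ref{thm:finiteness}.)
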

We expect the dimension formulae above to be valid for any reductive group $G$ over $F$ and any parahoric subgroup $\bfP\subset G(F)$, with no assumption on the residue characteristic $p$. On the other hand, we do not know if the equi-dimensionality property is true for general parahoric subgroups.\par 
Let us briefly explain our strategy. For elements $\ga$ that are strongly topologically unipotent in the sense of Definition \ref{def:top-unip}, we can identify $X_{\bfP,\ga}^G$ with a Witt vector affine Springer fiber for the Lie algebra using a quasi-logarithm map (cf. \cite{KV}) and deduce the dimension formulae from the Lie algebra case. In general, we use the topological Jordan decomposition (cf. \cite{Spice08}, \cite{Ka-lifting}) and a geometric version of Harish-Chandra descent argument in \cite{BV21} to reduce to the case of topologically unipotent element. However, there is the remaining case of topologically unipotent but not strongly topologically unipotent elements. Under our assumption that the group $G$ is (essentially) tamely ramified and the residue characteristic $p$ is not bad, this case occurs only when the adjoint group has a $\mathrm{PGL}_n$ factor and $p$ divides $n$. This remaining case will be treated separately in \S\ref{sec:GLn} by a more involved argument. 
\subsection{Related works in the literature}
In the equal characteristic setting, there has been a vast amount of literature on various aspects of affine Springer fibers and their generalizations. In the Lie algebra case, our general strategy follows the works \cite{KL}, \cite{Be96}, \cite{OY}, with reformulations in \cite{Ngo10} using Chevalley quotients and regular centralizers. In the group case, we follow the strategy of \cite{BV21} to reduce to the Lie algebra case. In particular our methods are purely local.\par 
We also mention the generalizations of group version of affine Springer fibers from \cite{Bou}, \cite{BC}, \cite{Chi} (in which they are called ``Kottwitz-Viehmann" varieties), \cite{He2023affine} (in which they are called ``affine Lusztig varieties"). These are geometric incarnations of orbital integrals for general functions in the parahoric Hecke algebras, whereas the group version of the (Witt vector) affine Springer fibers we study in this paper mostly corresponds to the unit elements. Note however that in the works \cite{BC}, \cite{Chi}, global methods involving Hitchin-type moduli spaces play an important role in the proof of the dimension formulae. These methods (currently) do not generalize to the mixed-characteristic setting. In \cite{He2023affine}, the dimension formulae for general affine Lusztig varieties are reduced to the case that we study in this paper. The reduction methods of \emph{loc. cit.} are purely local and work uniformly in both equal and mixed characteristic setting. Thus after combining \cite{He2023affine} and the results in the current paper we could obtain the mixed-characteristic counterparts of the results in \cite{Bou}, \cite{BC} and \cite{Chi}.
\subsection{Organization of the article}
The first three sections \S\ref{sec:group}-\S\ref{sec:conj-Lie} are preparations for the proof of the main results. \S\ref{sec:group} contains a review of some standard facts on reductive groups over local fields. In \S\ref{sec:conj-group} we discuss numerical invariants of regular semisimple conjugacy classes in reductive groups: the defect of Kottwitz invariants, the topological Jordan decompositions, a result of Chai-Yu and its implication on the Artin conductors (Corollary \ref{cor:length-artin-conductor}) that is a crucial ingredient of our work. In \S\ref{sec:conj-Lie} we discuss some results on conjugacy classes in the Lie algebras: Chevalley quotients and universal regular centralizers, topological Jordan decompositions for Lie algebras and quasi-logarithm maps.\par 
In \S\ref{sec:transporter} we study a class of generalized affine Springer fibers, establish their basic finiteness properties (Theorem \ref{thm:finiteness}) and discuss their relations to orbital integrals in \S\ref{sec:orbital-integrals}. In \S\ref{sec:main-thm} we state our two main results on the dimension formulae of affine Springer fibers: the group case (Theorem \ref{thm:main-group-case}) and the Lie algebra case (Theorem \ref{thm:main-Lie-algebra-case}).\par 
In the remaining sections \S\ref{sec:dim-Lie-alg} and \S\ref{sec:dim-group} we prove our main results. We first establish the dimension formulae for the Lie algebras in \S\ref{sec:dim-Lie-alg} and then deduce from it the dimension formulae for the groups in \S\ref{sec:dim-group}.

\subsection{Notations and conventions}
For a group scheme $G$ over a base scheme $S$, we let $Z_G$ denote its center, $G^\circ$ denote its fiberwise identity component, and $\pi_0(G)=G/G^\circ$ denote its group scheme of connected components. The derived (resp. adjoint) group of $G$ will be denoted by $G^{\mathrm{der}}$ (resp. $G_{\mathrm{ad}}$). We use $\mathrm{Lie}(G)$ to denote the Lie algebra of $G$, understood either as an $\cO_S$-module or as an $S$-scheme (it should be clear from the context which point of view we take). A reductive group scheme is always assumed to be fiberwise connected. If $G$ is reductive, then $G^{\mathrm{der}}$ is semisimple and its simply connected cover will be denoted by $G^{\mathrm{sc}}$. Then there are canonical homomorphisms 
\[G^{\mathrm{sc}}\to G^{\mathrm{der}}\to G\to G_{\mathrm{ad}}=G/Z_G.\]
For an element $\ga\in G(S')$ for some $S$-scheme $S'$, we let $G_\ga$ be the  centralizer of $\ga$, which is a group scheme over $S'$.\par
For a scheme $X$ (resp. $Y$) over $S$ on which $G$ acts on the right (resp. left), we let $X\times^GY$ be the stack quotient of $X\times Y$ by the anti-diagonal right $G$ action: $g\in G$ acts by $(x,y)\mapsto(xg,g^{-1}y)$.\par 
We will be working with perfect (ind)-schemes/algebraic spaces and a standard reference is \cite[Appendix A]{Zhu-mixed}. For the reader's convenience, we recall the following notion that will be frequently used throughout the article: 
\begin{defn}\label{def:perfect-finite-type}
    Let $k$ be a perfect field of characteristic $p>0$. A perfect scheme $X$ over $k$ is said to be \emph{locally perfectly of finite type} if it has an open cover $\{U_i\}$ such that each $U_i$ is the perfection of a finite type affine scheme over $k$. A perfect scheme over $k$ is said to be \emph{perfectly of finite type} if it is locally perfectly of finite type and quasi-compact.
\end{defn}
For a reductive group $G$ over a field $F$, with a maximal torus $T\subset G$, let $T^{\mathrm{sc}}$ be the pre-image of $T$ in $G^{\mathrm{sc}}$, the \emph{algebraic fundamental group of $G$} (in the sense of Borovoi) is defined by $\pi_1(G):=X_*(T)/X_*(T^{\mathrm{sc}})$. See \cite[\S11.3]{KP23}. It is an abelian group with an action of the absolute Galois group of $F$, and is independent of the choice of $T$. \par
In the main body of the paper we will be working over a mixed-characteristic discrete valued field $F$ with valuation ring $\cO$ and residue field $k$. We use normal math italic fonts (e.g. $G$) for groups defined over $F$, calligraphic fonts (e.g. $\cG$) for their integral models over $\cO$, and sans-serif fonts (e.g. $\sG$) for their special fiber over the residue field $k$. When our groups are twisted forms of split groups, we will often use blackboard bold font (e.g. $\bbG$) for the corresponding split form. We will use boldface fonts (e.g. $\bfP,\bfI$) for open bounded (mod center) subgroups of $G(F)$, these mainly include parahoric subgroups, their pro-unipotent radicals and their normalizers. The Lie algebra of a group (e.g. $G$) will usually be denoted by the corresponding Gothic fonts (e.g. $\fg$).

\subsection*{Acknowledgments}
This work was partially motivated by a question of Vladimir Drinfeld in the geometric Langlands seminar at the University of Chicago. I would like to thank Professor Ng\^o Bao Chau for suggesting and encouraging me to pursue along this direction. I thank Xuhua He and Sian Nie for helpful conversations. I thank Alexis Bouthier, Kestutis {\v{C}}esnavi{\v{c}}ius, Xuhua He, Kazuhiro Ito, Cheng-Chiang Tsai, Yakov Varshavsky for feedback on the draft of the paper. I am grateful to the anonymous reviewer for valuable comments and suggestions. This work is supported by National Key R\&D Program of China No.2023YFA1009701, National Natural Science Foundation of China (Grant No. 12288201, No.12231001), CAS Project for Young Scientists in Basic Research (Grant No.YSBR-033).

\section{Reductive groups over local fields}\label{sec:group}
\subsection{Local fields and extensions}\label{sec:local-fields}
Fix a prime number $p>0$. Let $k$ be a perfect field of characteristic $p$ and fix an algebraic closure $\bar{k}$ of $k$. Let $F$ be a complete discrete valued field of characteristic $0$ with valuation ring $\cO$ and residue field $k$. Fix a uniformizer $\varpi\in\cO$. Let $\mathrm{val}:F\to\bbZ\cup\{\infty\}$ be the normalized valuation on $F$ with $\mathrm{val}(\varpi)=1$. Fix a (separable) algebraic closure $\overline{F}$ of $F$. For each positive integer $m$, let $\mu_m$ be the group of $m$-th roots of unity in $\overline{F}$. The valuation $\mathrm{val}$ on $F$ extends uniquely to $\overline{F}$ and the associated $p$-adic absolute value on $\overline{F}$ is defined by $|x|:=p^{-\mathrm{val}(x)}$ for any $x\in\overline{F}$. Let $\overline{\cO}\subset\overline{F}$ be the valuation ring, which is also the integral closure of $\cO$ in $\overline{\cO}$. Let $F^{\mathrm{ur}}$ be the maximal unramified extension of $F$ in $\overline{F}$. Let $\Breve{F}$ be the completion of $F^{\mathrm{ur}}$ and let $\Breve{\cO}\subset\Breve{F}$ be its valuation ring. Let $\Gamma:=\mathrm{Gal}(\overline{F}/F)$ be the absolute Galois group of $F$ and let $\Gamma_1=\mathrm{Gal}(\overline{F}/F^{\mathrm{ur}})$ be the inertia subgroup, which is canonically isomorphic to the absolute Galois group of $\Breve{F}$.\par 
For any perfect $k$-algebra $R$, we let $W(R)$ be its ring of $p$-typical Witt vectors. Let $\cO_0=W(k)$ and let $F_0=W(k)[\frac{1}{p}]$ be the fraction field of $\cO_0$. Then $F$ is a finite totally ramified extension of $F_0$. Let $W_\cO(R):=W(R)\otimes_{W(k)}\cO$ be the ring of $\cO$-ramified Witt vectors. In particular, we have $\Breve{\cO}=W_\cO(\bar{k})$. There is a unique group homomorphism
\[[\cdot]:R^\times\to W_\cO(R)^\times\] 
that lifts the identity map modulo $\varpi$. When $R=\Bar{k}$, the image $[\Bar{k}^\times]\subset\Breve{\cO}$ is called the set of \emph{Teichm\"{u}ller representatives} of $\Bar{k}^\times$.\par
In the remaining of the paper, except \S\ref{sec:orbital-integrals},  we will assume that $k=\Bar{k}$ so that $F=\Breve{F}$, $\cO=\Breve{\cO}$.

\subsection{Quasi-split groups and parahoric subgroups}\label{sec:parahoric}
Let $G$ be a connected reductive group over $F$. Since $F$ is complete and the residue field $k$ is algebraically closed, $G$ is automatically quasi-split by Steinberg's theorem. We will fix a pinning of $G$ as follows.
Let $F'/F$ be the \emph{minimal} Galois extension such that $G$ becomes split over $F'$. Let $\bbG$ be the split connected reductive group scheme over $\bbZ$ with Lie algebra $\bbg:=\mathrm{Lie}(\bbG)$, together with an isomorphism $\bbG_{F'}\cong G_{F'}$. Fix a pinning $(\bbT,\bbB,(x_\alpha)_{\alpha\in\Delta})$ of $\bbG$ defined over $\bbZ$, consisting of a split maximal torus $\bbT$ and a Borel subgroup $\bbB$ containing $\bbT$ and for each simple root $\alpha\in\Delta$, a collection of $\bbZ$-basis vectors $x_\alpha\in\bbg_\alpha(\bbZ)$ in the root space $\bbg_\alpha\subset\bbg$. Let $\bbt$ be the Lie algebra of $\bbT$, viewed either as a $\bbZ$-module or a scheme. Let $\sG:=\bbG_k$ (resp. $\sT:=\bbT_k$, $\sB:=\bbB_k$) be the base change of $\bbG$ (resp. $\bbT,\bbB$) to $k$. In the following we will often simplify notation and still use $\bbG$, $\bbT$ etc. to denote their base change to other rings. We hope this will not cause any confusion since in each case it will be clear from the context what are the base rings.\par
The group $\mathrm{Out}(\bbG)$ of outer automorphisms of $\bbG$ can be identified with the subgroup of the automorphism group scheme $\mathrm{Aut}(\bbG)$ that stabilizes this pinning. The group $G$ is the twisted form of $\bbG$ associated to a homomorphism 
\[\rho_G:\mathrm{Gal}(F'/F)\to\mathrm{Out}(\bbG).\]
We also get a Borel subgroup $B$ and maximal torus $T$ of $G$, both defined over $F$. Let $\ft$ (resp. $\fb$) be the Lie algebra of $T$ (resp. $B$).\par

Let $\cB(G)$ be the (reduced) Bruhat-Tits building and let $\widetilde{\cB}(G)$ be the extended Bruhat-Tits building of $G(F)$. By definition $\cB(G)$ is also the Bruhat-Tits building of $G_{\mathrm{ad}}(F)$ or $G^{\mathrm{sc}}(F)$. By Bruhat-Tits theory (cf. \cite{KP23} for a modern reference), to each point $x\in\widetilde{\cB}(G)$ in the \emph{extended} building is associated a sequence of open bounded (mod center) subgroups of $G(F)$:
\[\bfP_{x+}\subset\bfP_x\subset\bfP_x^\dagger\subset\widetilde{\bfP}_x\]
where $\bfP_x$ is the parahoric subgroup, $\bfP_{x+}$ is the pro-unipotent radical of $\bfP_x$, $\bfP_x^\dagger$ is the stabilizer of $x$ in $G(F)$ and $\widetilde{\bfP}_x$ is the normalizer of $\bfP_x$ in $G(F)$ (which coincides with the stabilizer in $G(F)$ of the image of $x$ in the reduced building $\cB(G)$, see \cite[\S4.3]{KP23}). We will omit the subscript ``$x$" when it does not play any role and simply write $\bfP_+,\bfP,\bfP^\dagger,\widetilde{\bfP}$ instead. For example, if $G=\mathrm{GL}_n$ and $\bfP=G(\cO)$ is the hyperspecial parahoric subgroup, then we have $G(\cO)_+=\ker(G(\cO)\to G(k))$, $\bfP^\dagger=\bfP$ and $\widetilde{\bfP}=\bfP Z_G(F)$. We note that the group $\bfP^\dagger$ is bounded while $\widetilde{\bfP}$ is only bounded mod center but not bounded in general.\par
For each parahoric subgroup $\bfP\subset G(F)$ we let $\cG_\bfP$ (resp. $\cG_\bfP^\dagger$) be the smooth group scheme over $\cO$ with generic fiber $G$ and group of $\cO$-points $\cG_\bfP(\cO)=\bfP$ (resp. $\cG^\dagger_\bfP(\cO)=\bfP^\dagger$). Let $\sG_\bfP$ (resp. $\sG^\dagger_\bfP$) be the reductive quotient of the special fiber of $\cG_\bfP$ (resp. $\cG^\dagger_\bfP$). Then $\cG_\bfP$ is the fiberwise identity component of $\cG^\dagger_\bfP$ and $\sG_\bfP$ is the identity component of $\sG^\dagger_\bfP$. 

\subsection{Restriction of scalars and fixed point loci}
We will explicitly construct some parahoric group schemes by restriction of scalars and taking fixed points under finite group actions. Here we review the related facts, following \cite{BLR-Neron} and \cite{Edi}.\par 
Let $S'\to S$ be a morphism of schemes. For any scheme $X'$ over $S'$, recall that the \emph{restriction of scalars of $X'$} along $S'\to S$, denoted $\mathrm{Res}_{S'/S}X'$ is the functor that sends any $S$-scheme $T$ to the set
\[(\mathrm{Res}_{S'/S}X')(T):=\mathrm{Hom}_{S'}(T\times_SS',X').\]
When $S=\spec A$ and $S'=\spec A'$ are affine schemes, we also write $\mathrm{Res}_{A'/A}X'$ instead of $\mathrm{Res}_{S'/S}X'$.
\begin{lem}\label{lem:res}
    Let $A'$ be a finite locally free $A$-algebra and let $X'$ be an affine scheme over $\spec A'$. Then we have
    \begin{enumerate}
        \item The restriction of scalars $\mathrm{Res}_{A'/A}X'$ is represented by an affine scheme. 
        \item If $X'$ is smooth over $\spec A'$, then $\mathrm{Res}_{A'/A}X'$ is smooth over $\spec A$. 
    \end{enumerate}
\end{lem}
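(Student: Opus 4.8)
The plan is to establish the two assertions by reducing each to an explicit description in coordinates.

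For part (1), I would begin by noting that both representability and the property of being affine are Zariski-local on $\spec A$ (affineness of a morphism is local on the target, and $\spec A$ is affine). Since $A'$ is finite locally free over $A$ it is Zariski-locally free as an $A$-module, so after replacing $\spec A$ by the members of a suitable distinguished affine cover I may assume $A'$ is free over $A$, with basis $e_1,\dots,e_d$. Writing the coordinate ring of $X'$ as a quotient $A'[x_i:i\in I]/J$ gives a closed immersion $X'\into\bbA^I_{A'}$. For any $A$-algebra $B$ one has
\[(\mathrm{Res}_{A'/A}\bbA^I_{A'})(B)=\mathrm{Hom}_{A'\text{-alg}}\bigl(A'[x_i:i\in I],\,A'\otimes_A B\bigr)=(A'\otimes_A B)^I\cong B^{I\times\{1,\dots,d\}},\]
the last isomorphism via the chosen basis; hence $\mathrm{Res}_{A'/A}\bbA^I_{A'}\cong\spec A[y_{i,l}]$ is affine. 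Under the corresponding universal point $x_i\mapsto\sum_l e_l\otimes y_{i,l}$, each $f\in J$ maps to an element $\sum_l e_l\otimes g_{f,l}$ of $A'\otimes_A A[y_{i,l}]$, and a $B$-valued point of $\bbA^I_{A'}$ lies in $\mathrm{Res}_{A'/A}X'$ precisely when all the $g_{f,l}$ vanish on it. Thus $\mathrm{Res}_{A'/A}X'$ is the closed subscheme of $\spec A[y_{i,l}]$ cut out by the ideal generated by the $g_{f,l}$, which is affine; since the restriction-of-scalars functor is a Zariski sheaf, these local answers glue over the cover of $\spec A$, proving (1).

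For part (2), suppose in addition that $X'$ is smooth over $\spec A'$. Because $A'$ is a finitely presented $A$-algebra and $X'$ is of finite presentation over $A'$, the coordinate ring of $X'$ is a finitely presented $A$-algebra, and then the description in (1) (with $I$ finite and $J$ finitely generated) shows $\mathrm{Res}_{A'/A}X'$ is locally of finite presentation over $A$. It therefore suffices to verify formal smoothness. Let $B\epic\bar B$ be a square-zero extension of $A$-algebras with kernel $N$. Since $A'$ is flat over $A$, tensoring gives a short exact sequence $0\to A'\otimes_A N\to A'\otimes_A B\to A'\otimes_A\bar B\to 0$ in which $A'\otimes_A N$ is a square-zero ideal of $A'\otimes_A B$, so $A'\otimes_A B\epic A'\otimes_A\bar B$ is itself a square-zero extension of $A'$-algebras. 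Formal smoothness of $X'$ over $A'$ then yields surjectivity of
\[\mathrm{Hom}_{A'\text{-alg}}\bigl(\Gamma(X',\cO_{X'}),\,A'\otimes_A B\bigr)\longrightarrow\mathrm{Hom}_{A'\text{-alg}}\bigl(\Gamma(X',\cO_{X'}),\,A'\otimes_A\bar B\bigr),\]
which is exactly the surjectivity of $(\mathrm{Res}_{A'/A}X')(B)\to(\mathrm{Res}_{A'/A}X')(\bar B)$. Hence $\mathrm{Res}_{A'/A}X'$ is formally smooth and locally of finite presentation over $A$, so it is smooth over $A$.

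The whole argument is formal, and I do not expect a genuine obstacle. The only points that require a little care are the reduction in (1) to the case where $A'$ is a free $A$-module --- harmless because affineness descends along Zariski covers of $\spec A$ and $\mathrm{Res}_{A'/A}$ is a Zariski sheaf, so the locally constructed affine schemes glue --- and, in (2), the use of flatness of $A'$ over $A$, which is precisely what turns a square-zero extension of $A$-algebras into one of $A'$-algebras. In carrying this out the main thing to track carefully is that the defining equations $g_{f,l}$ are chosen compatibly across the cover, so that the resulting closed subschemes of affine space patch to a single affine $A$-scheme.
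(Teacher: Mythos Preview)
Your proposal is correct and follows essentially the same approach as the paper: for (1) the paper reduces to the free case by Zariski localization and then cites \cite[\S7.6, Theorem 4]{BLR-Neron}, whose proof is precisely the explicit coordinate construction you wrote out; for (2) the paper cites \cite[Lemma 2.2]{Edi}, which is exactly the infinitesimal lifting argument you give. Your worry about compatibly choosing the equations $g_{f,l}$ across the cover is unnecessary---the local schemes glue automatically because they all represent the same restriction of the functor, and affineness then follows from your observation that being an affine morphism is Zariski-local on the target.
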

\begin{proof}
    (1) From the definition we see that the formation of $\mathrm{Res}_{A'/A}X'$ is Zariski local on the base $\spec(A)$. Therefore to show that $\mathrm{Res}_{A'/A}X'$ is affine we may assume after localization on $A$ that $A'$ is a finite free $A$-algebra by \cite[\href{https://stacks.math.columbia.edu/tag/01S8}{Tag 01S8}]{stacks-project}. Then we proceed as in the proof of \cite[\S7.6, Theorem 4]{BLR-Neron} to see that $\mathrm{Res}_{A'/A}X'$ is represented by an affine scheme over $A$. \par 
    (2) This follows from \cite[Lemma 2.2]{Edi}.
\end{proof}
The following result is a special case of \cite[Proposition 3.4]{Edi}. For the reader's convenience, we provide a more straightforward proof in the current setting.
\begin{lem}\label{lem:fixed-point-locus-smooth}
    Let $S$ be a scheme and $\Sigma$ be a finite \'etale group scheme over $S$. Let $X$ be an affine $S$-scheme with an action of $\Sigma$. Assume that the order of $\Sigma$ is invertible over $S$ and that $X$ is smooth over $S$, then the fixed point subscheme $X^\Sigma$ is smooth over $S$.
\end{lem}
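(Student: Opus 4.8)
The plan is to verify the smoothness of $X^\Sigma$ by the infinitesimal lifting criterion, after first reducing to the case of a constant group. Smoothness is local for the \'etale topology on the base, and the formation of the fixed-point locus commutes with arbitrary base change, i.e. $(X^\Sigma)_{S'}\cong(X_{S'})^{\Sigma_{S'}}$ for every $S'\to S$; so I would pass to an \'etale surjection $S'\to S$ over which $\Sigma$ becomes a constant group $\Gamma$ of order $n$, necessarily invertible on $S'$, and it then suffices to treat this case — smoothness of $X^\Sigma$ over $S$ follows by fppf (here \'etale) descent, since the subfunctor $X^\Sigma\subset X$ becomes, after base change to $S'$, the closed subscheme $(X_{S'})^\Gamma$, which descends. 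With $\Sigma=\Gamma$ constant one has $X^\Gamma=\bigcap_{\sigma\in\Gamma}\mathrm{Fix}(\sigma)$, where $\mathrm{Fix}(\sigma)\subset X$ is the equalizer of $\sigma$ and $\mathrm{id}_X$; since $X$ is affine, hence separated, over $S$ and locally of finite presentation over $S$ (being smooth), each $\mathrm{Fix}(\sigma)$ is the base change of the finitely presented closed immersion $\Delta\colon X\to X\times_S X$ along $(\mathrm{id}_X,\sigma)$, so $X^\Gamma$ is a closed subscheme of $X$ that is locally of finite presentation over $S$. It thus remains to prove that $X^\Gamma\to S$ is formally smooth.

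For this I would use the standard deformation theory of the smooth morphism $X\to S$. Let $T_0\hookrightarrow T$ be a square-zero closed immersion of affine $S$-schemes with ideal $I$, and let $x_0\in X^\Gamma(T_0)$. Since $X\to S$ is smooth, the set $L$ of lifts of $x_0$ to a $T$-point of $X$ is nonempty and is a torsor under the $\mathcal{O}_{T_0}$-module $M:=\mathrm{Hom}_{\mathcal{O}_{T_0}}(x_0^*\Omega_{X/S},I)$. Each $\sigma\in\Gamma$ acts on $X$ by an $S$-automorphism fixing $x_0$, hence maps $L$ into itself, and in terms of the torsor structure this action is affine-linear: $\sigma\cdot(x+m)=\sigma\cdot x+\sigma_*(m)$ for a canonical $\mathcal{O}_{T_0}$-linear automorphism $\sigma_*$ of $M$. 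Fixing one lift $x\in L$ and writing $\sigma\cdot x=x+c_\sigma$ with $c_\sigma\in M$, the assignment $\sigma\mapsto c_\sigma$ is a $1$-cocycle: $c_{\sigma\tau}=c_\sigma+\sigma_*(c_\tau)$.

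Here is the only place invertibility of $n=\#\Gamma$ enters: as $n\in\mathcal{O}_{T_0}^\times$, the element $d:=\tfrac{1}{n}\sum_{\tau\in\Gamma}c_\tau\in M$ satisfies $d-\sigma_*(d)=c_\sigma$ for every $\sigma$ — a direct computation from the cocycle identity after reindexing the sum — so the class of $(c_\sigma)$ in $H^1(\Gamma,M)$ vanishes. Then $x':=x+d\in L$ is fixed by every $\sigma\in\Gamma$, hence is a $T$-point of $X^\Gamma$ lifting $x_0$. This shows $X^\Gamma\to S$ is formally smooth; combined with the local finite presentation established above, $X^\Gamma\to S$ is smooth, and descending along $S'\to S$ yields the smoothness of $X^\Sigma\to S$.

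The routine parts are the deformation-theoretic identification of $L$ as an $M$-torsor and the check that the $\Gamma$-action on $L$ lies over the $\Gamma$-action on $M$; the main point to get right, although it is entirely standard, is precisely this compatibility, since it is what makes the obstruction a class in $H^1(\Gamma,M)$ killed by $\tfrac{1}{n}\sum_\tau$. No Noetherian or residue-characteristic hypothesis on $S$ is required beyond the invertibility of $\#\Sigma$.
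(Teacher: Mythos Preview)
Your proof is correct and follows essentially the same approach as the paper: reduce to a constant group by \'etale localization, use the infinitesimal lifting criterion, pick an arbitrary lift using smoothness of $X$, and then average over $\Gamma$ to produce an invariant lift. The only difference is packaging: the paper works directly with the coordinate ring, defining $\tilde f(a)=\tfrac{1}{|\Sigma|}\sum_\sigma f(\sigma(a))$ and checking by hand that this is multiplicative (using $I^2=0$), whereas you invoke the torsor description of lifts and phrase the averaging as the vanishing of a class in $H^1(\Gamma,M)$ --- these are the same computation viewed from dual sides.
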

\begin{proof}
    Replacing $S$ by an \'etale cover we may assume that $\Sigma$ is a finite constant group scheme of order $|\Sigma|\in\cO_S^\times$. We write $X=\spec(A)$ and let $J\subset A$ be the ideal generated by $\{a-\sigma(a) \mid a\in A,\sigma\in\Sigma\}$. Then we have $X^\Sigma=\spec (A/J)$. 
    Let $R$ be an $\cO_S$-algebra and $I\subset R$ an ideal such that $I^2=0$. Suppose we are given a homomorphism of $\cO_S$-algebras $\bar{f}:A/J\to R/I$ corresponding to a morphism $\spec(R/I)\to X^\Sigma$. By the infinitesimal lifting criterion, to prove smoothness of $X^\Sigma=\spec(A/J)$, it suffices to find a homomorphism $A/J\to R$ lifting $\Bar{f}$.\par 
    Since $X=\spec A$ is smooth over $S$, there exists an $\cO_S$-algebra  homomorphism $f:A\to R$ making the following diagram commutative:
    \[\xymatrix{
    A\ar[r]^f\ar[d] & R\ar[d]\\
    A/J\ar[r]^{\Bar{f}} & R/I
    }\]
    In particular we have $f(J)\subset I$, so for any $\sigma\in\Sigma$
    there exists a map $i_\sigma:A\to I$ such that $f(\sigma(a))=f(a)+i_\sigma(a)$ for all $a\in A$. For each $\sigma\in\Sigma$, from the identity $f(\sigma(ab))=f(\sigma(a))f(\sigma(b))$ for any $a,b\in A$ we get that
    \[i_\sigma(ab)=f(b)i_\sigma(a)+f(a)i_\sigma(b)\]
    Define an $\cO_S$-linear map $\Tilde{f}:A\to R$ by 
    \[\Tilde{f}(a)=\frac{1}{|\Sigma|}\sum_{\sigma\in\Sigma}f(\sigma(a))\]
    By the previous identity and the fact that $I^2=0$, we get that for any $a,b\in A$, 
    \begin{equation*}
        \begin{split}
            \Tilde{f}(ab)&=f(ab)+\frac{1}{|\Sigma|}\sum_{\sigma\in\Sigma}i_\sigma(ab)\\
            &=f(a)f(b)+\frac{1}{|\Sigma|}f(b)\sum_{\sigma\in\Sigma}i_\sigma(a)+\frac{1}{|\Sigma|}f(a)\sum_{\sigma\in\Sigma}i_\sigma(b)\\
            &=(f(a)+\frac{1}{|\Sigma|}\sum_{\sigma\in\Sigma}i_\sigma(a))(f(b)+\frac{1}{|\Sigma|}\sum_{\sigma\in\Sigma}i_\sigma(b))\\
            &=\Tilde{f}(a)\Tilde{f}(b).
        \end{split}
    \end{equation*}
    Thus $\tilde{f}$ is an algebra homomorphism and by definition it factors through an $\cO_S$-algebra homomorphism $A/J\to R$ that lifts $\Bar{f}$.
\end{proof}

\subsection{Special integral model for tamely ramified groups}\label{sec:group-model}
Let $F$ be as in \S\ref{sec:local-fields} and assume that its residue field $k$ is algebraically closed. Let $G$ be a connected reductive group over $F$ as in \S\ref{sec:parahoric}. In this subsection we assume moreover that the splitting extension $F'/F$ of $G$ is \emph{tamely} ramified of degree $e\ge1$, $p\nmid e$. We will construct $G$ together with a special integral model $\cG_0$ and an Iwahori subgroup $\bfI\subset G(F)$ explicitly.\par 
Let $\cO'$ be the valuation ring of $F'$ and choose a uniformizer $\varpi'\in \cO'$ such that $\varpi=(\varpi')^e$ is a uniformizer of $\cO$. Then we have a canonical isomorphism $\mathrm{Gal}(F'/F)\cong\mu_e$ where $\mu_e$ is the group of $e$-th roots of unit in $F$.\par
Let $\cG_0^\dagger:=(\mathrm{Res}_{\cO'/\cO}\bbG)^{\mu_e}$ where $\mu_e$ acts simultaneously on $\cO'$ and $\bbG$ (through $\rho_G$). Let $\cG_0$ be the fiberwise identity component of $\cG_0^\dagger$. Similarly, we let $\cT$ be the fiberwise identity component of $(\mathrm{Res}_{\cO'/\cO}\bbT)^{\mu_e}$. By Lemma \ref{lem:res} and Lemma \ref{lem:fixed-point-locus-smooth}, the group schemes $\cG_0,\cG_0^\dagger,\cT$ are affine and smooth over $\cO$. The special fiber of $\cG_0$ is $\mathsf{G}_0:=\sG^{\mu_e,\circ}$, the identity component of the fixed point loci of the $\rho_G(\mu_e)$ action on $\sG$. Let $\mathsf{A}:=\bbT^{\mu_e,\circ}$ be the special fiber of $\cT$. Then $\sG_0$ is a reductive group over $k$ and $\sA$ is a maximal torus of $\sG_0$. \par
Let $\bfP_0^\dagger:=\cG_0^\dagger(\cO)$ and $\bfP_0:=\cG_0(\cO)$. Then $\bfP_0$ is a special parahoric subgroup of $G(F)$ corresponding to a special point that we denote by ``$0$" in the Bruhat-Tits building $\cB(G)$. By construction there is a natural reduction map $\mathrm{Red}_0:\cG_0^\dagger\to\sG$
whose image is $\sG^{\mu_e}$. It restricts to a surjective homomorphism (that we denote by the same symbol)
\begin{equation}\label{eq:special-group-reduction}
    \mathrm{Red}_0:\cG_0\to\sG_0
\end{equation}
Let $\bfI\subset\cG_0(\cO)$ be the inverse image of $\bbB(k)\cap\sG_0(k)$ under $\mathrm{Red}_0$. Then $\bfI$ is an Iwahori subgroup of $G(F)$ which corresponds to an alcove in $\cB(G)$ whose closure contains the special point $0$.\par 
In the following, for simplicity we will often denote the special $\cO$-model $\cG_0$ also by $G$ and denote $\fg=\mathrm{Lie}(\cG_0)$ so that we can write $\bfP_0=G(\cO)$, $\mathrm{Lie}(\bfP_0)=\fg(\cO)$ etc. If $G$ is split, then $F'=F$, $\cG_0=\cG_0^\dagger=\bbG=G$ and $\bfP_0=\bfP_0^\dagger=G(\cO)$ is a hyperspecial parahoric subgroup of $G(F)$.\par
Our main theorems will be valid under a condition that is slightly more general than being tamely ramified. We formulate it as follows.
\begin{defn}\label{def:ess-tame-ram}
    A reductive group $G$ over $F$ is \emph{essentially tamely ramified} if its adjoint group $G_{\mathrm{ad}}$ is a product of Weil restrictions of tamely ramified groups over finite extensions of $F$.
\end{defn}

\subsection{Conditions on the residue characteristic}\label{sec:p-assumption}
We review some standard conditions on the residue characteristic $p$ of $F$ determined by a reductive group.\par  
First let us recall the notion of bad primes and torsion primes for a root system, following \cite[Chapter I, \S4]{SS-conj}.
\begin{defn}
    A prime number $p$ is \emph{bad} (resp. \emph{torsion}) for a root system $\Sigma$ if $\bbZ\Sigma/\bbZ\Sigma_1$ (resp. $\bbZ\Sigma^\vee/\bbZ\Sigma_1^\vee$) has nontrivial $p$-torsion for some closed subsystem $\Sigma_1\subset\Sigma$. Here $\Sigma^\vee$ (resp. $\Sigma_1^\vee$) is the dual root system of $\Sigma$ (resp. $\Sigma_1$).\par 
    A prime number $p$ is \emph{bad} (resp. \emph{torsion}) for a reductive group or a reductive Lie algebra over a field if it is a bad (resp. torsion) for its (absolute) root system. A prime number $p$ is \emph{good} for a root system (or reductive group, or reductive Lie algebra) if it is not bad for it.
\end{defn}
The bad primes for an irreducible root system $\Sigma$ are listed according to the types of $\Sigma$ as follows:
\begin{itemize}
    \item Type $\mathrm{A}_n$: none;
    \item Type $\mathrm{B}_n(n\ge2),\mathrm{C}_n(n\ge3),\mathrm{D}_n(n\ge4)$: $p=2$;
    \item Type $\mathrm{E}_6,\mathrm{E}_7,\mathrm{F}_4,\mathrm{G}_2$: $p=2,3$;
    \item Type $\mathrm{E}_8$: $p=2,3,5$.
\end{itemize}
The torsion primes for an irreducible root system $\Sigma$ are listed according to the types of $\Sigma$ as follows:
\begin{itemize}
    \item Type $\mathrm{A}_n,\mathrm{C}_n(n\ge2)$: none;
    \item Type $\mathrm{B}_n(n\ge3), \mathrm{D}_n(n\ge4),\mathrm{G}_2$: $p=2$;
    \item Type $\mathrm{E}_6,\mathrm{E}_7,\mathrm{F}_4$: $p=2,3$;
    \item Type $\mathrm{E}_8$: $p=2,3,5$.
\end{itemize}
A prime number $p$ is bad (resp. torsion) for a general root system if it is so for some of its simple factors. We observe from the list above that the torsion primes for a root system are also bad primes (but not vice versa). Moreover, any bad prime of a root system divides the order of its Weyl group by \cite[I.4.10]{SS-conj}.\par 
Sometimes we need conditions on $p$ that depend not only on the root system but also the \emph{root datum} of a reductive group. Let $\cR=(X,\Sigma,X^\vee,\Sigma^\vee)$ be a root datum consisting of a pair of finite free $\bbZ$-modules $X,X^\vee$ dual to each other, and a subset $\Sigma\subset X$ (resp. $\Sigma^\vee\subset X^\vee$) of roots (resp. coroots). Let $\cR^{\mathrm{sc}}:=(X^{\mathrm{sc}},\Sigma,(X^{\mathrm{sc}})^\vee, \Sigma^\vee)$ be the associated simply-connected root datum in which $(X^{\mathrm{sc}})^\vee=\bbZ\Sigma^\vee$ is the lattice generated by the coroots of $\cR$. The root datum $\cR$ and $\cR^{\mathrm{sc}}$ have the same underlying root system $\Sigma$. 
\begin{defn}
    A prime number $p$ is \emph{torsion for the root datum $\cR=(X,\Sigma,X^\vee,\Sigma^\vee)$} if $X^\vee/\bbZ\Sigma_1^\vee$ has nontrivial $p$-torsion for some closed subsystem $\Sigma_1\subset\Sigma$.
\end{defn}
Note that 
\[|(X^\vee/\bbZ\Sigma_1^\vee)_{\mathrm{tor}}|=|(\bbZ\Sigma^\vee/\bbZ\Sigma_1^\vee)_{\mathrm{tor}}|\cdot|(X^\vee/\bbZ\Sigma^\vee)_{\mathrm{tor}}|.\]
The prime divisors of the first factor $|(\bbZ\Sigma^\vee/\bbZ\Sigma_1^\vee)_{\mathrm{tor}}|$ (when $\Sigma_1$ runs through all possible closed subsystems $\Sigma$) are precisely the torsion primes of the underlying root system $\Sigma$. The second factor above has several equivalent interpretations. Let $\cR^{\mathrm{der}}:=(X^{\mathrm{der}},\Sigma,(X^{\mathrm{der}})^\vee,\Sigma^\vee)$ be the ``derived" root datum of $\cR$ where $X^{\mathrm{der}}:=\mathrm{Im}(X\to X^{\mathrm{sc}})$ and $(X^{\mathrm{der}})^\vee:=X^\vee\cap\bbQ\Sigma^\vee$ (intersection inside $X^\vee\otimes_\bbZ\bbQ$) is the saturation of $\bbZ\Sigma^\vee$ in $X^\vee$. Then we have 
\[|(X^\vee/\bbZ\Sigma^\vee)_{\mathrm{tor}}|=|(X^{\mathrm{der}})^\vee/(X^{\mathrm{sc}})^\vee|=|X^{\mathrm{sc}}/X^{\mathrm{der}}|=|\mathrm{coker}(X\to X^{\mathrm{sc}})|.\]
In particular, $p$ is torsion for $\cR$ if either 
\begin{itemize}
    \item $p$ is torsion for the underlying root system of $\cR$, or
    \item $p$ divides the order of the cokernel of the natural homomorphism $X\to X^{\mathrm{sc}}$. 
\end{itemize}
So our definition of torsion primes for a root datum is the same as the definition in \cite[\S5]{Dem-torsion} by \emph{loc. cit.} Proposition 6, see also \cite[\S4.1.12]{BC22}.\par 
If $\cR$ is the root datum of a split reductive group $G$, then $\cR^{\mathrm{der}}$ (resp. $\cR^{\mathrm{sc}}$) is the root datum of the derived group $G^{\mathrm{der}}$ (resp. its simply connected cover $G^{\mathrm{sc}}$). The finite abelian groups $\mathrm{coker}(X\to X^{\mathrm{sc}})$ and $\ker(G^{\mathrm{sc}}\to G)$ are dual to each other (in the sense of algebraic groups) and hence they have the same order. Here order should be understood as dimension of coordinate ring of the algebraic group, e.g. $\mu_n$ has order $n$ over any field. On the other hand recall that the algebraic fundamental group of $G$ is defined by $\pi_1(G)=X^\vee/\bbZ\Sigma^\vee$ and its torsion subgroup $\pi_1(G)_{\mathrm{tor}}=\pi_1(G^{\mathrm{der}})=(X^{\mathrm{der}})^\vee/(X^{\mathrm{sc}})^\vee$ is a quotient of $\pi_1(G_{\mathrm{ad}})$. In the following table, we list the structure of $\pi_1(G_{\mathrm{ad}})$ for a split reductive group $G$ according to the types.
\begin{table}[!htbp]\label{tab:pi-1}
    \centering
    \begin{tabular}{|c|c|c|c|c|c|c|}
        \hline
        Type of $G$ & $A_{n-1}$ & $B_n,C_n,E_7$ & $D_n$ ($n$ even) & $D_n$ ($n$ odd) & $E_6$ & $E_8,F_4,G_2$\\
        \hline
        $\pi_1(G_{\mathrm{ad}})$ & $\bbZ/n\bbZ$ & $\bbZ/2\bbZ$ & $\bbZ/2\bbZ\times\bbZ/2\bbZ$ & $\bbZ/4\bbZ$ & $\bbZ/3\bbZ$ & 0\\
        \hline
    \end{tabular}
\end{table}
\newline
For later reference, we recall some classical results below about centralizers of semisimple elements in Lie algebras. This also helps us have a glimpse of the relevance of the various conditions on $p$ introduced above.
\begin{prop}\label{prop:centralizer-Levi-connected}
    Let $\sG$ be a connected reductive group over an algebraically closed field $\sk$ and let $\ga\in\mathrm{Lie}(\sG)$ be a semisimple element in its Lie algebra. Let $\sG_\ga$ be the centralizer of $\ga$ in $\sG$ and let $\sG_\ga^\circ$ be its identity component.  
    \begin{enumerate}
        \item The Lie algebra of $\sG_\ga$ coincides with the centralizer of $\ga$ in $\mathrm{Lie}(\sG)$. In other words, we have $\mathrm{Lie}(\sG_\ga)=\mathrm{Lie}(\sG)_\ga$. 
        \item Suppose that either $\mathrm{char}(\sk)=0$ or $\mathrm{char}(\sk)=p>0$ is a good prime for $\sG$, then $\sG_\ga^\circ$ is a Levi subgroup of $\sG$.
        \item Suppose that either $\mathrm{char}(\sk)=0$ or $\mathrm{char}(\sk)=p>0$ is not a torsion prime for the root datum of $\sG$, then $\sG_\ga$ is connected. 
    \end{enumerate}
\end{prop}
\begin{proof}
    (1) This is \cite[III.9, Proposition (2)]{Borel-LAG}.\par
    (2) This is \cite[Lemma 2.6.13(ii)]{Let}. Let us provide more details. Let $\sT\subset\sG$ be a maximal torus and let $\Sigma$ be the set of $\sT$-roots on $\mathrm{Lie}(\sG)$. After $\sG$-conjugation, we may and do assume that $\ga\in\mathrm{Lie}(\sT)$ by \cite[Corollary 4.5]{BS}. Let $\Sigma_\ga:=\{\alpha\in\Sigma, d\alpha (\ga)=0\}$. Then $\Sigma_\ga\subset\Sigma$ is a closed sub-system of $\Sigma$ and $\sG_\ga^\circ$ is a reductive group with maximal torus $\sT$ and root system $\Sigma_\ga$ by \cite[Lemma 3.7]{St-torsion} (see also \cite[Lemma 4.1.6]{BC22}). \par
    Take any root $\alpha\in\Sigma\cap\bbQ\Sigma_\ga$, where $\bbQ\Sigma_\ga$ is the $\bbQ$-subspace of $X^*(\sT)\otimes_\bbZ\bbQ$ spanned by $\Sigma_\ga$. Let $m$ be the smallest positive integer such that $m\alpha$ is a $\bbZ$-linear combination of elements in $\Sigma_\ga$. Then we have $m\cdot d\alpha(\ga)=0$ and $m$ divides the order of the torsion subgroup of $\bbZ\Sigma/\bbZ\Sigma_\ga$. Then the assumption on $\mathrm{char}(\sk)$ implies that $m$ is invertible in $\sk$ (see \cite[I.4.3]{SS-conj}). Hence $d\alpha(\ga)=0$ and we have $\alpha\in\Sigma_\ga$. This shows that $\Sigma\cap\bbQ\Sigma_\ga=\Sigma_\ga$. By \cite[VI \S1.7, Proposition 24]{Bourbaki-Lie}, there exists a set of simple roots for $\Sigma_\ga$ that can be enlarged to a set of simple roots for $\Sigma$. This means that $\sG_\ga^\circ$ is a Levi subgroup of $\sG$.\par 
    (3) If $\mathrm{char}(\sk)=0$, this is \cite[Corollary 3.11]{St-torsion}. If $\mathrm{char}(\sk)=p>0$ is a non-torsion prime for the root datum of $\sG$, this is \cite[Theorem 3.14]{St-torsion}.
\end{proof}

\section{Conjugacy classes in the groups}\label{sec:conj-group}
Let $F,\cO,k$ be as in \S\ref{sec:local-fields} and assume moreover that $k$ is algebraically closed. Let $G$ be a connected reductive group over $F$. In this section we will introduce some numerical invariants of conjugacy classes in $G(F)$.
\subsection{Discriminant valuation}
Recall that an element $\ga\in G(F)$ is \emph{regular semisimple} if it is semisimple and its centralizer $G_\ga$ has minimal possible dimension. In fact, $\ga$ is regular semisimple if and only if the identity component $G_\ga^\circ$ is a maximal torus of $G$ defined over $F$. There is a nonempty open subscheme $G^{\mathrm{rs}}\subset G$ such that $G^{\mathrm{rs}}(F)$ is the set of all regular semisimple elements. 
\begin{defn}\label{def:disc-group}
The \emph{discriminant valuation} of a regular semisimple element $\ga\in G^\mathrm{rs}(F)$ is the integer defined by
    \[d_G(\ga):=\mathrm{val}\det(\mathrm{Id}-\mathrm{ad}(\ga) | \fg(F)/\fg_\ga(F)).\]
\end{defn}
We can compute $d_G(\ga)$ as follows. After $G(\overline{F})$-conjugation, we may assume that $\ga\in T(\overline{F})$ for a maximal torus $T$ in $G$. Let $R$ be the set of roots of $T$ on $\fg$. Then we have
\begin{equation}
    d_G(\ga)=\sum_{\alpha\in R}\mathrm{val}(1-\alpha(\ga)).
\end{equation}

\subsection{Kottwitz homomorphism}
The Kottwitz homomorphism is a generalization of the determinant valuation on $\mathrm{GL}_n(F)$ to all reductive groups. It gives an obstruction for an element to lie in a parahoric subgroup. Let $\pi_1(G)=\pi_1(G_{\overline{F}})$ be the algebraic fundamental group of $G$, which is a finitely generated abelian group with an action of the Galois group $\Gamma=\mathrm{Gal}(\overline{F}/F)$. We have the \emph{Kottwitz homomorphism}
\[\kappa_G:G(F)\to\pi_1(G)_\Gamma\]
where\footnote{In general, one needs to first take coinvariants under the inertia group of $F$ and then take invariants under a Frobenius. In our case, the residue field of $F$ is algebraically closed and hence the inertia group equals to the whole Galois group.} $\pi_1(G)_\Gamma$ is the group of $\Gamma$-coinvariants of $\pi_1(G)$. \par 
Let $G(F)^0:=\ker(\kappa_G)$ and let $G(F)^\dagger$ be the inverse image of the torsion subgroup of $\pi_1(G)_\Gamma$ under $\kappa_G$. Any parahoric subgroup $\bfP\subset G(F)$ is contained in $G(F)^0$ and the Kottwitz homomorphism $\kappa_G$ induces an injective homomorphism $\widetilde{\bfP}/\bfP\into\pi_1(G)_\Gamma$ which is bijective if $\bfP$ is an Iwahori subgroup, see \cite[Proposition 11.6.1]{KP23}.\par
Let $S$ be a maximal $F$-split torus in $G$ and let $N$ (resp. $T$) be the normalizer (resp. centralizer) of $S$ in $G$. By Steinberg's theorem, the assumption on $F$ implies that $G$ is quasi-split and $T$ is a maximal torus of $G$ defined over $F$. Let $\cT$ be the fiberwise identity component of the finite type N\'eron model of $T$. Then $\cT(\cO)$ is the unique parahoric subgroup of $T(F)$ and the Kottwitz homomorphism $\kappa_T$ for $T$ induces an isomorphism $X_*(T)_\Gamma\cong T(F)/\cT(\cO)$. Let $W=N(F)/T(F)$ be the (relative) Weyl group of $G$ and let $\widetilde{W}:=N(F)/\cT(\cO)$ be the Iwahori-Weyl group of $G$. Let $W_{\mathrm{aff}}\subset\widetilde{W}$ be the affine Weyl group of the root system of $G$.\par
We choose a special point, denoted ``$0$", and an alcove $\fa$ whose closure contains $0$, both in the apartment for $S$ in the Bruhat-Tits building of $G$. They determine decompositions 
\[\widetilde{W}=X_*(T)_\Gamma\rtimes W,\quad \widetilde{W}=W_{\mathrm{aff}}\rtimes\widetilde{W}_\fa\]
where $\widetilde{W}_\fa$ is the stabilizer of $\fa$ in $\widetilde{W}$. Moreover, the Kottwitz homomorphism induces an isomorphism 
\[\iota_\fa:\widetilde{W}_\fa\cong\pi_1(G)_\Gamma.\] 
See for example \cite[\S7.8, \S11.5]{KP23} (note that in \emph{loc. cit.}, the Iwahori-Weyl group is denoted by $\widetilde{W}^0$).
\begin{defn}\label{def:defect-Kott-invariant}
    Define the homomorphism $w^G_\fa:\pi_1(G)_\Gamma\to W$ to be the composition of the following natural homomorphisms:
    \[w^G_\fa:\pi_1(G)_\Gamma\cong\widetilde{W}_\fa\into\widetilde{W}\to W\]
    where the first map is the inverse of $\iota_\fa$.\par  
    The \emph{defect} of an element $x\in\pi_1(G)_\Gamma$ is the non-negative integer 
    \[\mathrm{def}(x):=\dim_\bbQ X_*(S)_\bbQ-\dim_\bbQ X_*(S)_\bbQ^{w^G_\fa(x)}\]
    where $X_*(S)_\bbQ:=X_*(S)\otimes_\bbZ\bbQ$ and $X_*(S)_\bbQ^{w^G_\fa(x)}$ is the subspace of $w^G_\fa(x)$-invariants. 
\end{defn}
The invariant $\mathrm{def}(x)$ is independent of the choice of the alcove $\fa$. Indeed, the group $N(F)$ acts transitively on the set of alcoves in the apartment of $S$ and for any $n\in N(F)$ the maps $w_\fa$ and $w_{n(\fa)}$ are conjugate by $n$. Therefore $\dim_\bbQ X_*(S)_\bbQ^{w_\fa(x)}=\dim_\bbQ X_*(S)_\bbQ^{w_{n(\fa)}(x)}$. Moreover $\mathrm{def}(x)$ is also independent of the choice of the maximal $F$-split torus $S$ since any two such tori are conjugate by an element of $G(F)$, which induces canonical isomorphisms between the associated (Iwahori) Weyl groups. It will be convenient to have a description of $\mathrm{def}(x)$ that does not depend on the torus $S$. For this we have the following simple lemma which follows directly from the definitions:
\begin{lem}\label{lem:defect-Kott-invariant}
    Let the notation be as in Definition \ref{def:defect-Kott-invariant}. For any $x\in\pi_1(G)_\Gamma$, choose an element $\Tilde{x}\in\kappa_G^{-1}(x)\subset G(F)$ that stabilizes the barycenter of the alcove $\fa$ (in other words, $\tilde{x}$ lies in the normalizer of the Iwahori subgroup of $G(F)$ corresponding to $\fa$). Then the defect $\mathrm{def}(x)$ equals to the codimension of the fixed point loci $\fa^{\Tilde{x}}$ in the alcove $\fa$. 
\end{lem}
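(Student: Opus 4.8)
The plan is to transport the statement into the linear action of the Iwahori--Weyl group on the apartment. Let $\bfI\subset G(F)$ be the Iwahori subgroup attached to the alcove $\fa$ and $\widetilde{\bfI}$ its normalizer. By the properties of the Kottwitz homomorphism recalled above (see \cite[Proposition 11.6.1]{KP23}), $\kappa_G$ restricts to an isomorphism $\widetilde{\bfI}/\bfI\xrightarrow{\sim}\pi_1(G)_I$, so an element $\tilde x$ as in the statement exists and is unique modulo $\bfI$; since $\bfI$ fixes $\fa$ pointwise, the locus $\fa^{\tilde x}$ and hence its codimension do not depend on the choice. Because the $G(F)$-stabilizer of the barycenter of $\fa$ (an interior point) is exactly the stabilizer of $\fa$, which is $\widetilde{\bfI}$, and $\widetilde{W}_\fa\subset\widetilde{W}=N(F)/\cT(\cO)$ lifts into $N(F)$, I may and will choose $\tilde x\in N(F)\cap\widetilde{\bfI}$. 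Its image $\bar{\tilde x}$ in $\widetilde{W}$ then lies in $\widetilde{W}_\fa$ (it stabilizes $\fa$), and since $\iota_\fa$ is induced by $\kappa_G$ we get $\iota_\fa(\bar{\tilde x})=\kappa_G(\tilde x)=x$, i.e. $\bar{\tilde x}=\iota_\fa^{-1}(x)$.

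Next I identify the apartment $\cA$ (inside the reduced building $\cB(G)$) with the real vector space $V:=X_*(S_{\mathrm{ad}})_\bbR$ by placing the special point $0$ at the origin. Then $\widetilde{W}$ acts on $\cA$ by affine transformations, and the resulting ``linear part'' homomorphism $\widetilde{W}\to\GL(V)$ is precisely the natural projection $\widetilde{W}\to W$ from the decomposition $\widetilde{W}=X_*(T)_I\rtimes W$: the translation subgroup $X_*(T)_I$ has trivial linear part, while a representative of $w\in W$ fixing $0$ acts on $V$ by $w$. Applying this to $\bar{\tilde x}=\iota_\fa^{-1}(x)$ and unwinding Definition~\ref{def:defect-Kott-invariant}, the transformation $\tilde x$ acts on $\cA$ --- and a fortiori on $\fa$ --- affinely with linear part $w:=w^G_\fa(x)\in W$.

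By hypothesis $\tilde x$ fixes the barycenter $b$ of $\fa$ (equivalently, as an element of $\widetilde{\bfI}$ it is a simplicial automorphism of $\fa$ and so permutes its vertices). Writing a general point of $\cA$ as $b+u$ with $u\in V$, we get $\tilde x(b+u)=b+w(u)$, hence $\fa^{\tilde x}=\fa\cap(b+V^{w})$. Since $b$ lies in the interior of the top-dimensional simplex $\fa$, this intersection contains a neighbourhood of $b$ inside the affine subspace $b+V^{w}$, so $\dim\fa^{\tilde x}=\dim_\bbR V^{w}$ and therefore $\mathrm{codim}_\fa\,\fa^{\tilde x}=\dim_\bbR V-\dim_\bbR V^{w}$.

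It remains to match this with $\mathrm{def}(x)$. The surjection $X_*(S)_\bbQ\to X_*(S_{\mathrm{ad}})_\bbQ=V_\bbQ$ is $W$-equivariant with kernel on which $W$ acts trivially, so the quantity $\dim-\dim^{w}$ is the same whether computed on $X_*(S)_\bbQ$ or on $V_\bbQ$; thus $\dim_\bbR V-\dim_\bbR V^{w}=\dim_\bbQ X_*(S)_\bbQ-\dim_\bbQ X_*(S)_\bbQ^{w^G_\fa(x)}=\mathrm{def}(x)$, which finishes the proof. Essentially every step is a direct unwinding of the definitions; the only points that need care are the identification of the linear part of the action of $\tilde x$ with $w^G_\fa(x)$ via $\iota_\fa$, and keeping the two bookkeeping conventions straight --- the reduced building, where $\fa$ is bounded so that $\fa^{\tilde x}$ is genuinely finite-dimensional, versus the lattice $X_*(S)$ (with its central part) appearing in Definition~\ref{def:defect-Kott-invariant}. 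I do not anticipate a genuine obstacle beyond this.
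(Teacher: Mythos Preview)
Your proof is correct and is precisely the direct unwinding of the definitions that the paper has in mind; indeed, the paper offers no separate proof and simply records the lemma as ``following directly from the definitions.'' Your write-up spells this out carefully --- identifying the linear part of the action of $\tilde x$ on the apartment with $w^G_\fa(x)$ via $\iota_\fa$, computing the fixed locus from the barycentric description, and matching $X_*(S)_\bbQ$ with $X_*(S_{\mathrm{ad}})_\bbQ$ --- and there is no gap.
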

\subsection{Topological Jordan decomposition for the group}\label{sec:top-Jordan-group}
Recall from \cite[Definition 2.2.1]{KP23} that for an affine $F$-scheme $S$ of finite type, a subset $B\subset S(F)$ is \emph{bounded} if for any regular function $f\in F[S]$ the set $\{|f(b)|,b\in B\}$ is bounded above. We say that an element $\ga\in G(F)$ is \emph{bounded} if the subgroup $\langle\ga\rangle\subset G(F)$ generated by $\ga$ is bounded. 

\begin{lem}\label{lem:bounded}
    For an element $\ga\in G(F)$, the following are equivalent:
    \begin{enumerate}
        \item $\ga$ is bounded;
        \item For any algebraic representation $\rho$ of $G$ (defined over $\overline{F}$), any generalized eigenvalue $\la$ of $\rho(\ga)$ in $\overline{F}$ satisfies $|\la|=1$;
        \item There exists a parahoric subgroup $\bfP\subset G(F)$ such that $\ga\in\bfP^\dagger$. In other words, $\ga$ has a fixed point in the extended Bruhat-Tits building of $G(F)$.
        \item There exists an Iwahori subgroup $\bfI\subset G(F)$ such that $\ga\in\bfI^\dagger$. In other words, $\ga$ fixes an alcove in the extended Bruhat-Tits building of $G(F)$.
    \end{enumerate}
\end{lem}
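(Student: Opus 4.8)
The plan is to prove the cycle $(4)\Rightarrow(3)\Rightarrow(1)\Leftrightarrow(2)$ and then close it with $(1)\Rightarrow(4)$; the content splits into a representation-theoretic part, $(1)\Leftrightarrow(2)$, and a Bruhat--Tits part linking these to $(3)$ and $(4)$.

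The two formal implications come first. Since an Iwahori subgroup is a parahoric subgroup and $\bfI^\dagger$ is the stabilizer in $G(F)$ of a point of the open alcove defining $\bfI$, condition $(4)$ is a special case of $(3)$. For $(3)\Rightarrow(1)$: by \secref{sec:parahoric} the subgroup $\bfP^\dagger$ is bounded, so if $\ga\in\bfP^\dagger$ then $\langle\ga\rangle\subset\bfP^\dagger$ is bounded, i.e. $\ga$ is bounded.

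For $(1)\Rightarrow(2)$ I would use that representations transport boundedness: if $\rho\colon G\to\GL(V)$ is algebraic over $\overline F$, then writing $\overline F[G]=F[G]\otimes_F\overline F$ and pulling back regular functions along $\rho$, one sees that $\rho$ carries bounded subsets of $G(F)$ to bounded subsets of $\GL(V)(\overline F)$. Hence the matrices $\rho(\ga)^k=\rho(\ga^k)$ have entries bounded uniformly in $k$, so the coefficients of their characteristic polynomials are bounded uniformly in $k$; since the roots of a monic polynomial of fixed degree with coefficients of bounded absolute value stay bounded, and the generalized eigenvalues of $\rho(\ga)^k$ are the $k$-th powers of those of $\rho(\ga)$, every generalized eigenvalue $\la$ of $\rho(\ga)$ satisfies $|\la|\le 1$; applying the same reasoning to $\ga^{-1}\in\langle\ga\rangle$ gives $|\la|=1$. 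Conversely, for $(2)\Rightarrow(1)$ I would fix a faithful representation $\rho_0\colon G\hookrightarrow\GL(V)$ over $F$. By $(2)$ every generalized eigenvalue of $\rho_0(\ga)$ is a unit, so its characteristic polynomial has coefficients in $\cO$ and unit constant term; by Cayley--Hamilton this makes $\cO[\rho_0(\ga),\rho_0(\ga)^{-1}]=\cO[\rho_0(\ga)]$ a finite $\cO$-module, so $\rho_0(\ga)$ stabilizes some $\cO$-lattice $L\subset V$. Then $\rho_0(\langle\ga\rangle)\subset\GL(L)$ is bounded, and since $\rho_0$ is a closed immersion $\langle\ga\rangle$ is bounded in $G(F)$.

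Finally, for $(1)\Rightarrow(4)$: since $\langle\ga\rangle$ is bounded it acts on the extended building $\widetilde{\cB}(G)$ with bounded orbits, so by the Bruhat--Tits fixed-point theorem it fixes a point $x$; by definition $\ga$ then lies in the stabilizer $\bfP_x^\dagger$, which already gives $(3)$. The remaining, and I expect genuinely delicate, point is to arrange that such a fixed point can be taken inside a chamber, so that the relevant parahoric is an Iwahori and $\ga\in\bfI^\dagger$; for this I would appeal to the structure theory of the building as recalled in \secref{sec:parahoric} and to \cite{KP23}, rather than reprove it. So the main obstacle is this last building-theoretic step --- both the fixed-point theorem in the extended-building generality and the upgrade from a fixed point to a fixed chamber --- while the representation-theoretic equivalence $(1)\Leftrightarrow(2)$ is routine.
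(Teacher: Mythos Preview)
Your cycle $(4)\Rightarrow(3)\Rightarrow(1)\Leftrightarrow(2)$ is fine and your treatment of $(1)\Leftrightarrow(2)$ is comparable to the paper's, though slightly different in detail: for $(2)\Rightarrow(1)$ the paper argues by contrapositive, passing to the Zariski closure of $\langle\gamma\rangle$ and invoking \cite[Lemma~2.2.11]{KP23} to find an eigenvalue of absolute value $>1$, whereas your Cayley--Hamilton argument producing an invariant lattice is more self-contained and equally valid. For $(1)\Leftrightarrow(3)$ the paper also just cites \cite[Proposition~2.2.13, Theorem~4.2.15]{KP23}, which packages the fixed-point theorem you have in mind.

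The genuine gap is exactly where you flag it: the step $(3)\Rightarrow(4)$. Here the paper does \emph{not} argue on the building but instead reduces modulo $\varpi$. Given $\gamma\in\bfP^\dagger$, pick an Iwahori $\bfI\subset\bfP$ and pass to the reductive quotient $\sG_\bfP^\dagger$ of the special fiber of $\cG_\bfP^\dagger$; the image of $\bfI$ is a Borel subgroup $\sB_\bfP$. The image $\bar\gamma\in\sG_\bfP^\dagger(k)$ acts by conjugation on the flag variety $\sG_\bfP/\sB_\bfP$, and by Steinberg's theorem \cite[Theorem~7.2, p.~49]{St-End} this action has a fixed point. Lifting, this means $\gamma$ is $\bfP^\dagger$-conjugate to an element of $\bfI^\dagger$, which gives $(4)$. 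So the ``upgrade from a fixed point to a fixed chamber'' is not a building-theoretic subtlety at all; it is the classical fact that every element of a (possibly disconnected) reductive group over an algebraically closed field normalizes some Borel subgroup.
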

\begin{proof}
    (1)$\Leftrightarrow$(2): Suppose $\ga\in G(F)$ is bounded and let $\rho:G\to\mathrm{GL}(V)$ be an algebraic representation (defined over $\overline{F}$). Then $\rho(\ga)$ is a bounded subgroup of $\mathrm{GL}(V)$ and hence for any generalized eigenvalue $\la\in\overline{F}$ of $\rho(\ga)$, the set $\{|\la^n|,n\in\bbZ\}$ is bounded above and so we must have $|\la|=1$. \par
    Suppose that $\ga\in G(F)$ is unbounded. Let $H$ be the Zariski closure of $\langle\ga\rangle$ in $G$. Then $\ga$ is also unbounded in $H$. Let $\rho:G\to\mathrm{GL}(V)$ be an algebraic representation with finite kernel. By \cite[Lemma 2.2.11]{KP23} there is an eigenvalue $\la$ of $\rho(\ga)$ such that $|\la|>1$.\par 
    (1)$\Leftrightarrow$(3) By \cite[Proposition 2.2.13]{KP23}, any bounded subgroup of $G(F)$ is contained in a maximal bounded open subgroup. Therefore the equivalence follows from \emph{loc.cit.} Theorem 4.2.15.\par 
    (3)$\Leftrightarrow$(4): Let $\bfI\subset G(F)$ be an Iwahori subgroup contained in $\bfP$ and suppose $\ga\in\bfP^\dagger$. Let $\cG_\bfP^\dagger$ be the smooth $\cO$-model of $G$ such that $\cG_\bfP^\dagger(\cO)=\bfP^\dagger$ and let $\sG_\bfP^\dagger$ be its special fiber. Let $\Bar{\ga}\in\sG_\bfP^\dagger(k)$ be the image of $\ga\in\bfP^\dagger$ under the natural reduction map. The image of $\bfI$ in $\sG_\bfP$ is a Borel subgroup that we denote by $\sB_\bfP$. By \cite[Theorem 7.2 on page 49]{St-End}, the flag variety $\sG_\bfP/\sB_\bfP$ has a fixed point under the conjugation action of $\Bar{\ga}$. Therefore $\ga$ is $\bfP^\dagger$ conjugate to an element in $\bfI^\dagger$ and we are done.  
\end{proof}

\begin{defn}\label{def:top-unip}
    An element $\ga\in G(F)$ is said to be:
    \begin{itemize}
        \item \emph{strongly semisimple} if it is semisimple and for any algebraic representation $\rho$ of $G$ (defined over $\overline{F}$), each eigenvalue of $\rho(\ga)$ lies in the set of Teichm\"{u}ller representatives $[\Bar{k}^\times]$;
        \item \emph{topologically unipotent} if for any algebraic representation $\rho$ of $G$ (defined over $\overline{F}$) and any generalized eigenvalue $\alpha$ of $\rho(\ga)$ in $\overline{F}$ (i.e. $\alpha$ is a root of the characteristic polynomial of $\rho(\ga)$), we have $|\alpha-1|<1$;
        \item \emph{strongly topologically unipotent} if there exists a parahoric subgroup $\bfP\subset G(F)$ with pro-unipotent radical $\bfP_+$ such that $\ga\in\bfP_+$.
    \end{itemize}
    An element $\ga\in G(F)$ is \emph{(strongly) topologically unipotent} (resp. \emph{strongly semisimple}, \emph{bounded}) \emph{mod center} if its image in $G_{\mathrm{ad}}(F)$ has this property. 
\end{defn}
\begin{lem}\label{lem:tame-centralizer}
    Suppose that the reductive group $G$ is (essentially) tamely ramified over $F$ in the sense of Definition \ref{def:ess-tame-ram}. For any element $s\in G(F)$ that is strongly semisimple mod center, the identity component of its centralizer $G_s^\circ$ is also (essentially) tamely ramified. 
\end{lem}
\begin{proof}
    We may assume that $G$ is adjoint and tamely ramified. By \cite[Corollary 2.37]{Spice08}, the element $s\in G(F)$ is $F$-tame in the sense of \emph{loc. cit.} Definition 2.4. This means that after passing to a tamely ramified extension of $F$, we may and do assume that $s$ is contained in a split torus. Then $s$ is contained in a maximal $F$-split torus $S$. Let $T$ be the centralizer of $S$ in $G$. Then $T$ is a maximal $F$-torus of $G_s^\circ$ that splits over a tamely ramified extension since $G$ is tamely ramified. This means that $G_s^\circ$ is tamely ramified. 
\end{proof}

\begin{lem}
    Let $G$ be a reductive group over $F$ and let $\kappa_G:G(F)\to\pi_1(G)_\Gamma$ be its Kottwitz homomorphism, where $\Gamma=\mathrm{Gal}(\overline{F}/F)$. Recall that $F$ is complete and the residue field $k$ is algebraically closed.  
    \begin{enumerate}
        \item An element $\ga\in G(F)$ is topologically unipotent if and only if there exists a finite extension $E/F$ such that $\ga$ is strongly topologically unipotent in $G(E)$. In particular, any strongly topologically unipotent element in $G(F)$ is topologically unipotent.
        \item An element $\ga\in G(F)$ is strongly topologically unipotent if and only if it is topologically unipotent and $\kappa_G(\ga)=1$.
        \item If the residue characteristic $p$ does not divide the order of the torsion subgroup of $\pi_1(G)_\Gamma$, then an element $\ga\in G(F)$ is  topologically unipotent if and only if it is strongly topologically unipotent.
    \end{enumerate}
\end{lem}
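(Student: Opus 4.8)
The plan is to reduce all three parts to a single equivalence: an element $\gamma\in G(F)$ is topologically unipotent if and only if $\gamma^{p^n}\to 1$ in $G(F)$ as $n\to\infty$. To prove this, fix a faithful $F$-rational representation $\rho\colon G\hookrightarrow\mathrm{GL}(V)$; it is a closed immersion, so $\gamma^{p^n}\to1$ in $G(F)$ if and only if $\rho(\gamma)^{p^n}\to1$ in $\mathrm{GL}(V)(F)$, and since every algebraic representation of $G$ over $\overline F$ is obtained from $\rho$ by tensor constructions and duals, $\gamma$ is topologically unipotent if and only if every generalized eigenvalue $\alpha$ of $\rho(\gamma)$ satisfies $|\alpha-1|<1$. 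If the latter holds, write the ($F$-rational) multiplicative Jordan decomposition $\rho(\gamma)=su$; since the eigenvalues of $s$ are $\equiv1$ and $u-1$ is nilpotent, a routine estimate on $p$-adic valuations of binomial coefficients gives $s^{p^n}\to1$ and $u^{p^n}\to1$, hence $\rho(\gamma)^{p^n}\to1$. Conversely, if $\gamma^{p^n}\to1$ then every generalized eigenvalue $\alpha$ of $\rho(\gamma)$ has $\alpha^{p^n}\to1$, which forces $|\alpha|=1$ and then $\bar\alpha^{p^n}=1$ in $\bar k$ for $n\gg0$, hence $\bar\alpha=1$ because $\mathrm{char}\,k=p$; so $|\alpha-1|<1$. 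Granting this equivalence, any $\gamma\in\bfP_+$ lies in a pro-$p$ group, so $\gamma^{p^n}\to1$ and $\gamma$ is topologically unipotent: this is the last assertion of (1), and together with $\bfP_+\subset\bfP\subset\ker\kappa_G$ it gives the forward implication of (2).

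For the reverse implication of (2), suppose $\gamma$ is topologically unipotent with $\kappa_G(\gamma)=1$. Then $\gamma$ is bounded (every generalized eigenvalue has absolute value $1$), so by \lemref{lem:bounded} there is a parahoric $\bfP\subset G(F)$ with $\gamma\in\bfP^\dagger$; since $\kappa_G$ restricts on $\widetilde{\bfP}$ to a homomorphism with kernel $\bfP$ (see \cite[Proposition~11.6.1]{KP23}) and $\bfP^\dagger\subset\widetilde{\bfP}$, the hypothesis $\kappa_G(\gamma)=1$ forces $\gamma\in\bfP$. Let $\bar\gamma\in\sG_\bfP(k)$ be the image of $\gamma$ under the reduction map $\bfP\to\sG_\bfP(k)$ onto the reductive quotient of the special fiber. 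This map is a continuous homomorphism to a discrete group, so $\gamma^{p^n}\to1$ yields $\bar\gamma^{p^n}=1$ for $n\gg0$, making $\bar\gamma$ a $p$-power-order element of a connected reductive group over the algebraically closed field $k$ of characteristic $p$; by the Jordan decomposition $\bar\gamma$ is then unipotent, so $\bar\gamma\in R_u(\sB)(k)$ for some Borel $\sB\subset\sG_\bfP$. By Bruhat--Tits theory the preimage of $\sB$ in $\bfP$ under the reduction is an Iwahori subgroup $\bfI\subset G(F)$, and the preimage of $R_u(\sB)$ is its pro-unipotent radical $\bfI_+$; hence $\gamma\in\bfI_+$ is strongly topologically unipotent. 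This proves (2).

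For the forward implication of (1), let $\gamma$ be topologically unipotent, so $\gamma^{p^n}\to1$ and $\kappa_G(\gamma)$ has finite $p$-power order in $\pi_1(G)_\Gamma$. Pass first to the splitting field $F'/F$ of $G$: over $F'$ the group $G$ is split and $\pi_1(G)_{\Gamma_{F'}}=\pi_1(G)$, and $\kappa_{G_{F'}}(\gamma)$ still has $p$-power order. Then pass to a totally ramified extension $E/F'$ whose degree is a sufficiently large power of $p$ (such extensions exist because $k=\bar k$); by the compatibility of the Kottwitz homomorphism with ramified base change — which on tori reads $\mathrm{val}_E=[E:F']\cdot\mathrm{val}_{F'}$ — we get $\kappa_{G_E}(\gamma)=[E:F']\cdot\kappa_{G_{F'}}(\gamma)=1$. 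Since topological unipotence is unchanged by base change, part (2) applied over $E$ shows that $\gamma$ is strongly topologically unipotent in $G(E)$. The reverse implication of (1) is the easy direction proved in the first paragraph. Finally, for (3): if $p$ does not divide the order of the torsion subgroup of $\pi_1(G)_I=\pi_1(G)_\Gamma$, then the $p$-power-torsion element $\kappa_G(\gamma)$ attached to a topologically unipotent $\gamma$ must vanish, so part (2) shows that $\gamma$ is strongly topologically unipotent; the converse is again the easy direction.

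The main obstacle is the reverse implication of (2): converting the analytic/representation-theoretic notion of topological unipotence into genuine membership in a pro-unipotent radical. Two structural facts from Bruhat--Tits theory carry the weight here — that $\gamma$ lies in the parahoric $\bfP$ itself rather than merely in its stabilizer $\bfP^\dagger$, which is exactly where the hypothesis $\kappa_G(\gamma)=1$ is used, and the identification of the preimage in $\bfP$ of the unipotent radical of a Borel of $\sG_\bfP$ with the pro-unipotent radical of an Iwahori subgroup. A secondary technical point, needed only for (1), is the behaviour of the Kottwitz homomorphism under a possibly wildly ramified base change; in cases such as $G=\mathrm{PGL}_n$ with $p\mid n$ this base change genuinely must be wild, which is why (1) only claims the existence of some finite $E/F$.
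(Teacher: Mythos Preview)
Your proof is correct and complete, but it takes a different route from the paper in two places.

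For the reverse direction of (2), the paper simply cites \cite[Theorem~8.4.19]{KP23}: once $\gamma\in\bfP$, that theorem produces a parahoric $\bfQ$ whose facet contains the facet of $\bfP$ in its closure with $\gamma\in\bfQ_+$. You instead argue directly: push $\gamma$ down to the reductive quotient $\sG_\bfP$, observe that its image is unipotent (being $p$-power torsion in characteristic $p$), place it in the unipotent radical of a Borel, and pull back to get $\gamma\in\bfI_+$ for an Iwahori $\bfI\subset\bfP$. This is essentially a hands-on proof of the special case of the cited theorem where $\bfQ$ is Iwahori, and it is perfectly valid; the identification of $\bfI_+$ with the preimage of $R_u(\sB)(k)$ that you use follows from the Moy--Prasad filtration picture $\bfP_+\subset\bfI_+\subset\bfI\subset\bfP$ with successive quotients $1\subset R_u(\sB)(k)\subset\sB(k)\subset\sG_\bfP(k)$.

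For the forward direction of (1), the paper invokes \cite[Lemma~2.21]{Spice08} (that topologically $p$-unipotent implies strongly topologically unipotent after a finite extension). You instead construct $E$ explicitly: split $G$, then take a totally ramified $p$-power extension to kill $\kappa_G(\gamma)$ via the scaling $\kappa_{G_E}=e(E/F')\cdot\kappa_{G_{F'}}$, and apply (2). This is more constructive and avoids the external reference, at the cost of needing the base-change behaviour of $\kappa_G$, which you correctly derive from the torus case. Your preliminary equivalence between ``topologically unipotent'' and ``$\gamma^{p^n}\to1$'' is likewise argued directly rather than cited from \cite{Spice08}. Part (3) is handled identically in both proofs.
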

\begin{proof}
    (1) If $\ga\in G(E)$ is strongly topologically unipotent for some finite extension $E/F$, then $\ga\in\bfP_+$ for some parahoric subgroup $\bfP\subset G(E)$ and therefore $\lim_{n\to\infty}\ga^{p^n}=1$. In other words, $\ga$ is topologically $p$-unipotent in the sense of \cite[Definition 1.3]{Spice08}. This implies that $\ga$ is topologically unipotent. Conversely if $\ga\in G(F)$ is topologically unipotent, then it is topologically $p$-unipotent and hence by \cite[Lemma 2.21]{Spice08} there is a finite extension $E/F$ such that $\ga$ is strongly topologically unipotent in $G(E)$.\footnote{In fact, this is the definition of ``topologically $F$-unipotent" from \cite[Definition 2.15]{Spice08}.}\par
    (2) The implication ``$\Rightarrow$" is clear. Suppose that $\ga\in G(F)$ is topologically unipotent and $\kappa_G(\ga)=1$. By Lemma \ref{lem:bounded} there exists a parahoric subgroup $\bfP\subset G(F)$ such that $\ga\in\bfP^\dagger$. Then we must have $\ga\in\bfP$ since $\kappa_G(\ga)=1$. By \cite[Theorem 8.4.19]{KP23} there is a parahoric subgroup $\bfQ\subset G(F)$, whose corresponding facet in the Bruhat-Tits building $\cB(G)$ contains the facet of $\bfP$ in its closure, such that $\ga\in\bfQ_+$ and hence $\ga$ is strongly topologically unipotent.\par
    (3) Suppose moreover that $p$ does not divide the order of the torsion subgroup of $\pi_1(G)_\Gamma$. Let $\ga\in G(F)$ be a topologically unipotent element. Then $\lim_{n\to\infty}\ga^{p^n}=1$ and hence $\kappa_G(\ga)$ lies in the $p$-power torsion subgroup of $\pi_1(G)_\Gamma$. By assumption we must have $\kappa_G(\ga)=1$ and hence $\ga$ is strongly topologically unipotent by (2).
\end{proof}

\begin{ex}
    Suppose the residue characteristic of $F$ equals to $2$. Then the element in $\mathrm{PGL}_2(F)$ represented by the matrix $\left(\begin{smallmatrix}0&1\\ \varpi&0\end{smallmatrix}\right)$ is topologically unipotent but not strongly topologically unipotent. Thus in $\mathrm{GL}_2(F)$, this matrix is topologically unipotent mod center but not strongly topologically unipotent mod center. 
\end{ex}

Finally we state the following result of Spice \cite[Theorem 2.38]{Spice08} on the existence and uniqueness of topological Jordan decompositions.
\begin{thm}[Spice]
    For any bounded element $\ga\in G(F)$, there exists a unique strongly semisimple element $s\in G(F)$ and a unique topologically unipotent element $u\in G(F)$ such that $\ga=su=us$. Moreover, we have $s,u\in G_s^\circ(F)$ where $G_s^\circ$ is the identity component of the centralizer of $s$ in $G$. 
\end{thm}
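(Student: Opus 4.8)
The statement is \cite[Theorem 2.38]{Spice08}, and the plan is to recover it from the profinite structure of the closure of $\langle\ga\rangle$ together with the eigenvalue criteria recorded in Definition~\ref{def:top-unip}. First I would use boundedness: by Lemma~\ref{lem:bounded} we have $\ga\in\bfP^\dagger=\cG^\dagger(\cO)$ for some parahoric $\bfP\subset G(F)$ with smooth $\cO$-model $\cG^\dagger$, so $\cG^\dagger(\cO)$ is profinite and $A:=\overline{\langle\ga\rangle}$ is a topologically cyclic profinite abelian group. Every such group splits canonically as a direct product $A=A_p\times A^{p'}$ of its (unique, closed) pro-$p$ Sylow subgroup and its pro-(prime-to-$p$) part; I would take $\ga=us$ with $u\in A_p$, $s\in A^{p'}$, which automatically commute. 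Equivalently, using the Chinese Remainder Theorem one chooses sequences of positive integers with $n_k\to 0$ in $\bbZ_p$ and $n_k\to 1$ in $\prod_{\ell\neq p}\bbZ_\ell$ (resp. $m_k\to 1$ in $\bbZ_p$, $m_k\to 0$ in $\prod_{\ell\neq p}\bbZ_\ell$), so that $s=\lim_k\ga^{n_k}$ and $u=\lim_k\ga^{m_k}$ lie in $G(F)$.

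Next I would verify the two properties by passing to a faithful algebraic representation $\rho\colon G\hookrightarrow\GL(V)$. Since $\rho(A_p)$ is a pro-$p$ subgroup, any eigenvalue $\la$ of $\rho(u)$ satisfies $\la^{p^n}\to 1$; combined with $|\la|=1$ from Lemma~\ref{lem:bounded} and the injectivity of reduction mod $\varpi$ on roots of unity of order prime to $p$, this forces $|\la-1|<1$, so $u$ is topologically unipotent. Similarly $\rho(A^{p'})$ is a compact pro-(prime-to-$p$) subgroup; reducing mod $\varpi$ (whose kernel on $\GL(V)(\cO)$ is pro-$p$) embeds it into $\GL(V)(k)$ as a locally finite group of order prime to $p$, so $\rho(s)$ is semisimple with eigenvalues that are roots of unity of order prime to $p$, hence — by uniqueness of the Teichm\"uller decomposition of units in $\overline F$ — Teichm\"uller representatives. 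Thus $s$ is strongly semisimple (and of finite order prime to $p$). For uniqueness, given another decomposition $\ga=s'u'=u's'$ I would compare with the ordinary Jordan decomposition $\rho(\ga)=\rho(\ga)_{\mathrm{ss}}\rho(\ga)_{\mathrm{un}}$: both $\rho(s)\rho(u)_{\mathrm{ss}}$ and $\rho(s')\rho(u')_{\mathrm{ss}}$ equal $\rho(\ga)_{\mathrm{ss}}$, and on each of its eigenspaces they express the scalar eigenvalue as a product of a Teichm\"uller representative and a unit $\equiv 1$; uniqueness of this factorization together with semisimplicity of $\rho(s),\rho(s')$ gives $\rho(s)=\rho(s')$, hence $s=s'$ and $u=u'$ by faithfulness.

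For the final assertion, $s$ being semisimple lies in a maximal torus of $G_{\overline F}$, which is connected and contained in $Z_G(s)=G_s$, so $s\in G_s^\circ(F)$; and since $\ga$ commutes with $s$ it lies in $G_s(F)$, so it remains to show its image in the finite group $\pi_0(G_s)$ is trivial, after which $u=s^{-1}\ga\in G_s^\circ(F)$ as well. I would argue that the image of $A$ in $\pi_0(G_s)$ is a finite cyclic group, that $A^{p'}$ surjects onto its prime-to-$p$ part, and that this prime-to-$p$ part is killed because $s\in G_s^\circ$, so the image of $\ga$ has $p$-power order; the remaining input is that $\pi_0(G_s)$ carries no nontrivial $p$-torsion in the image of $\ga$, which one extracts from the fact that $s$ has order prime to $p$, so $G_s^\circ$ again carries a well-behaved Bruhat--Tits theory and the centralizer structure is controlled. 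I expect this last point — pinning down the component group $\pi_0(G_s)$ and showing $\ga$ (equivalently $u$) lands in $G_s^\circ(F)$ without any hypothesis on the residue characteristic — to be the main obstacle; everything else (the Sylow decomposition of the procyclic group $A$, and that pro-$p$ resp. pro-prime-to-$p$ subgroups consist of topologically unipotent resp. strongly semisimple elements) is routine once the criteria of Definition~\ref{def:top-unip} are available.
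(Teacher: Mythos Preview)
The paper does not prove this statement; it simply quotes \cite[Theorem~2.38]{Spice08}. Your sketch follows the same strategy as Spice --- extract $s$ and $u$ from the pro-$p$/prime-to-$p$ splitting of the closure of $\langle\ga\rangle$ --- so there is nothing in the paper itself to compare against beyond the citation.

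That said, your first step contains a real error in this paper's setting. Here the residue field $k$ is algebraically closed (see the opening of \S\ref{sec:conj-group}), so $\bfP^\dagger$ is neither compact nor profinite, and the first congruence subgroup of $\GL(V)(\cO)$ is not pro-$p$ in the profinite sense. What survives is that $A=\overline{\langle\ga\rangle}$ is still profinite: the reduction $\bar\ga$ in the special fibre has finite order $N$ (every element of a linear algebraic group over $\bar k$ does, via Jordan decomposition and the fact that $\bar k^\times$ is torsion), so $\ga^N$ lies in the congruence kernel, every element $x$ of which satisfies $x^{p^n}\to1$; hence $n\mapsto\ga^{Nn}$ extends to a continuous map $\bbZ_p\to\bfP^\dagger$, and $A$ is an extension of a finite cyclic group by a quotient of $\bbZ_p$. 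After this repair your Sylow argument runs. One further small point: your uniqueness step silently uses that $s$ and $s'$ commute, which holds because both are limits of powers of $\ga$ and hence lie in the abelian group $A$.

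For the issue you flag at the end, the missing input is this. Once $s$ has finite order $N$ prime to $p$, pass to $\overline F$ and lift $s$ to $\tilde s$ along the isogeny $\tilde G:=Z(G)^\circ\times G^{\mathrm{sc}}\to G$ with finite central kernel $Z$. For $g\in G_s$ and any lift $\tilde g$, the element $\tilde g\tilde s\tilde g^{-1}\tilde s^{-1}$ lies in $Z$ and defines a homomorphism $G_s\to Z$ whose kernel is the image of the connected group $Z_{\tilde G}(\tilde s)$ (Steinberg's connectedness theorem), hence equals $G_s^\circ$, and whose image has exponent dividing $N$ since $\tilde s^N\in Z$ is central. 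Thus $\pi_0(G_s)$ has exponent prime to $p$; but the image of $u$ in $\pi_0(G_s)$ has $p$-power order because $u^{p^n}\to1$ and $G_s^\circ(F)$ is open, so that image is trivial and $u\in G_s^\circ(F)$.
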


\subsection{Arithmetic invariants of tori}
In this subsection we introduce a numerical invariant of a conjugacy class defined by the Artin conductor of its centralizer. Then we review a result of Chai-Yu \cite{ChaiYu} on the Artin conductor of tori. This result will later be used in Theorem \ref{thm:dim-reg} to compute the dimension of the regular locus of the Witt vector affine Springer fiber for Lie algebras.\par 
Let $T$ be an $F$-algebraic group of multiplicative type. We are only interested in the case of tori but the slightly more general setting would make no difference in most of the following discussion. Let $\La:=X_*(T_{\overline{F}})$ be the lattice of coweights of $T$ defined over $\overline{F}$, equipped with an action of $\mathrm{Gal}(\overline{F}/F)$ through a homomorphism $\rho_T:\mathrm{Gal}(\overline{F}/F)\to\mathrm{GL}(\La):=\mathrm{Aut}_\bbZ(\La)$. We will define some isogeny invariants of $T$ associated to the representation
\[\rho_{T,\bbQ}=\rho_T\otimes_\bbZ\bbQ:\mathrm{Gal}(\overline{F}/F)\to\mathrm{GL}(\La_\bbQ)\]
where $\La_\bbQ:=\La\otimes_\bbZ\bbQ$.
\begin{defn}\label{def:tori-invariant}
    The \emph{Artin conductor} of $T$, denoted $\mathrm{Art}(T)$, is defined to be the Artin conductor of the representation $\rho_{T,\bbQ}$. The \emph{defect} of $T$ is the non-negative integer
    \[\mathrm{def}(T):=\dim_\bbQ\La_\bbQ-\dim_\bbQ\La_\bbQ^{\mathrm{Gal}(\overline{F}/F)}\]
    which is the difference between the $\overline{F}$-rank of $T$ and the $F$-rank of $T$.
\end{defn}
The Artin conductor $\mathrm{Art}(T)$ equals to $\mathrm{def}(T)$ if and only if $T$ splits over a tamely ramified extension of $F$. 
In general $\mathrm{Art}(T)$ is always a non-negative integer by the Hasse-Arf theorem (see \cite[Chapter VI.\S2 Theorem 1']{Serre-local}) and $\mathrm{Art}(T)\ge\mathrm{def}(T)$. Their difference $\mathrm{Art}(T)-\mathrm{def}(T)$ is the \emph{Swan conductor} of the representation $\rho_{T,\bbQ}$ and measures the wild ramification of $T$. \par
We extend this definition to general reductive groups over $F$ as follows.
\begin{defn}\label{def:conductor-group}
    The \emph{Artin conductor} (resp. \emph{defect}) of a reductive group $G$ over $F$ are the integers
    \[\mathrm{Art}(G):=\mathrm{Art}(T),\quad\mathrm{def}(G):=\mathrm{def}(T)\]
    where $T\subset G$ is a maximally split maximal torus defined over $F$, i.e. $T$ is a maximal torus of $G$ defined over $F$ and contains a maximal $F$-split torus $S\subset G$. When $G$ is quasi-split, then $T=Z_G(S)$ is the centralizer of $S$.       
\end{defn}
Now let $T$ be a torus over $F$ and let $\ft=\mathrm{Lie}(T)$ be the Lie algebra of $T$, viewed either as an $F$-scheme or an $F$-vector space. We specialize the discussion in \S\ref{sec:parahoric} to the case where $G=T$. Then we get a unique smooth $\cO$-model $T^\dagger$ of $T$ such that $T^\dagger(\cO)$ is the maximal bounded subgroup of $T(F)$ and the fiberwise identity component $T^{\dagger,0}$ is the unique parahoric group scheme of $T$. In fact $T^\dagger$ is the finite type N\'eron model of $T$ in the terminology of \cite[\S3]{ChaiYu} and \cite{Ngo10}\footnote{In \cite{ChaiYu} it is denoted by $\underline{T}^{\mathrm{NR}}=\underline{T}^{\mathrm{ft\; NR}}$, whereas in \cite{Ngo10} it is denoted by $T^\flat$.}. The Lie algebra $\ft^\dagger:=\mathrm{Lie}(T^\dagger)$ is an $\cO$-lattice in $\ft(F)$. Let $L/F$ be a finite Galois extension contained in $\overline{F}$ such that $\rho_T$ factors through $\mathrm{Gal}(L/F)$ and let $\cO_L$ be the valuation ring of $L$. Then $T$ is split over $L$ and there is a natural homomorphism $(T^\dagger)_{\cO_L}\to(T_L)^\dagger$ that induces an embedding of $\cO_L$-modules $\ft^\dagger\otimes_\cO\cO_L\subset\ft_L^\dagger$. 
\begin{thm}[\cite{ChaiYu}]\label{thm:Chai-Yu}
    Let $e(L|F)$ be the ramification index of the extension $L/F$. Then we have
    \[\frac{1}{e(L|F)}\mathrm{length}_{\cO_L}\left(\frac{\ft_L^\dagger}{\ft^\dagger\otimes_\cO\cO_L}\right)=\frac{1}{2}\mathrm{Art}(T).\]
\end{thm}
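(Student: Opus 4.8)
The plan is to recognize the left‑hand side as the \emph{base change conductor} $c(T)$ of the torus $T$, and then to invoke the theorem of Chai--Yu \cite{ChaiYu}, of which this statement is a reformulation. The first step is to make the identification precise. Recall that $\ft^\dagger=\mathrm{Lie}(T^\dagger)$ is the Lie algebra of the finite type N\'eron model of $T$ (the object written $\underline{T}^{\mathrm{ft\; NR}}$ in \cite{ChaiYu}); over $\cO_L$ the base change $T_L$ is split, so its finite type N\'eron model is $\bbG_{m,\cO_L}^{r}$ with $r=\dim T$, and hence $\ft_L^\dagger\cong\cO_L^{r}$; and the N\'eron mapping property furnishes a canonical $\cO_L$-homomorphism $(T^\dagger)_{\cO_L}\to(T_L)^\dagger$ which restricts to the identity on generic fibres and induces, on Lie algebras, precisely the inclusion $\ft^\dagger\otimes_\cO\cO_L\hookrightarrow\ft_L^\dagger$ of the statement. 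Thus the left‑hand side is by definition $c(T)$; one should also record that $c(T)$ is independent of the splitting field $L$ (so that both sides are well posed), which is standard and in any case part of the formalism of \cite{ChaiYu}. Since the right‑hand side is $\tfrac12\mathrm{Art}(T)$ by Definition \ref{def:tori-invariant}, the theorem becomes exactly the Chai--Yu identity $c(T)=\tfrac12\mathrm{Art}(T)$.

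To exhibit a bit more of the mechanism, I would recall that $c(\cdot)$ and $\tfrac12\mathrm{Art}(\cdot)$ are both additive in short exact sequences of $F$-tori and invariant under isogeny. For $\tfrac12\mathrm{Art}$ this is formal: $T\mapsto\La_\bbQ:=X_*(T_{\overline F})\otimes\bbQ$ carries short exact sequences of tori to short exact sequences of $\Gamma$-representations and isogenies to isomorphisms, and the Artin conductor is additive on short exact sequences. Granting the same two properties for $c(\cdot)$, a d\'evissage (via resolutions of tori by induced tori, additivity, and isogeny invariance) reduces the identity to the case $T=\mathrm{Res}_{K/F}\bbG_m$. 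There it is a direct computation: the finite type N\'eron model is $\mathrm{Res}_{\cO_K/\cO}\bbG_m$, smooth over $\cO$ by Lemma \ref{lem:res}, so $\ft^\dagger=\cO_K$; taking $L\supseteq K$, the identification $K\otimes_F L\cong\prod_\sigma L$ (product over the $F$-embeddings $K\hookrightarrow L$) presents $\ft_L^\dagger$ as the maximal order $\prod_\sigma\cO_L$ and $\ft^\dagger\otimes_\cO\cO_L$ as the order $\cO_K\otimes_\cO\cO_L$, so a discriminant comparison gives
\[ 2\,\mathrm{length}_{\cO_L}\!\left(\frac{\ft_L^\dagger}{\ft^\dagger\otimes_\cO\cO_L}\right)=v_L\!\left(\mathrm{disc}(\cO_K\otimes_\cO\cO_L/\cO_L)\right)=e(L|F)\cdot v_F\!\left(\mathrm{disc}(\cO_K/\cO)\right), \]
whence $c(\mathrm{Res}_{K/F}\bbG_m)=\tfrac12\,v_F(\mathrm{disc}(\cO_K/\cO))$. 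On the other hand $X_*(T_{\overline F})\cong\bbZ[\Gamma/\mathrm{Gal}(\overline F/K)]$, so $\La_\bbQ\cong\mathrm{Ind}_{\mathrm{Gal}(\overline F/K)}^{\Gamma}\mathbf 1$ and the conductor--discriminant formula gives $\mathrm{Art}(T)=v_F(\mathrm{disc}(\cO_K/\cO))$, so the two sides match.

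I expect the main obstacle to be precisely the two properties of $c(\cdot)$ used in the d\'evissage --- additivity in short exact sequences and, above all, invariance under isogeny. When the residue characteristic divides the degree of the isogeny, N\'eron models behave badly and these statements are genuinely deep; in \cite{ChaiYu} they are established through a careful analysis of the base change conductor under Weil restriction of scalars and of the higher ramification filtration, culminating in an appeal to the Hasse--Arf theorem to guarantee integrality. I would not attempt to reprove these and would simply cite \cite{ChaiYu} for them; once they are granted, the identification of the left‑hand side with $c(T)$ and the induced‑torus computation above are the only remaining steps, and both are routine.
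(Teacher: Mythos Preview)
Your proposal is correct, and in fact goes well beyond what the paper does: the paper gives no proof at all for this theorem, simply citing \cite{ChaiYu} for the result (it is stated as an input, not proved). Your identification of the left-hand side with the base change conductor $c(T)$ and the d\'evissage sketch you give (additivity, isogeny invariance, reduction to induced tori, discriminant computation) is a faithful outline of the Chai--Yu argument, and your caveat that the isogeny invariance in bad residue characteristic is the genuinely deep step is exactly right.
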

\begin{proof}
    This follows by combining Proposition 10.2 and Theorem 11.3 in \cite{ChaiYu}.
\end{proof}
Now suppose that there exists a non-degenerate symmetric $F$-bilinear form $(\cdot,\cdot)$ on $\La\otimes_\bbZ F$ that is invariant under $\mathrm{Gal}(L/F)$ (which acts on $\La$ via $\rho_T$). Recall that the dual of an $\cO$-lattice $M\subset\La\otimes_\bbZ F$ is defined to be
\[M^\vee:=\{x\in \La\otimes_\bbZ F\mid (x,M)\subset\cO\}.\]
We would also like to define the dual of $\cO$-lattices in $\ft(F)$. For this we first extend $(\cdot,\cdot)$ to an $L$-bilinear form $(\cdot,\cdot)_L$ on $\La\otimes_\bbZ L$. Since $T$ splits over $L$, there are canonical identifications $\ft(L)=\La\otimes_\bbZ L$ and 
\[\ft(F)=(\La\otimes_\bbZ L)^{\mathrm{Gal}(L/F)}\] 
where $\mathrm{Gal}(L/F)$ acts diagonally. By $\mathrm{Gal}(L/F)$-invariance of $(\cdot,\cdot)$, the extended form $(\cdot,\cdot)_L$ restricts to an $F$-bilinear form on $\ft(F)$. In other words, for any $x,y\in\ft(F)$ we have $(x,y)_L\in F$. Then for any $\cO$-lattice $M\subset\ft(F)$, we define its dual to be
\[M^\vee:=\{x\in\ft(F)\mid (x,M)_L\subset\cO\}=\{x\in\ft(F)\mid (x,M)_L\subset\cO_L\}\]
where the equality follows from the equality $\cO_L\cap F=\cO$. 
\begin{cor}\label{cor:length-artin-conductor}
    With notations as above. Assume moreover that $\La\otimes_\bbZ\cO$ is self-dual under $(\cdot,\cdot)$. Then we have
    \[\mathrm{length}_\cO(\ft^{\dagger,\vee}/\ft^\dagger)=\mathrm{Art}(T).\]
\end{cor}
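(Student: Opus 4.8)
The plan is to deduce the identity from Chai--Yu's theorem (Theorem~\ref{thm:Chai-Yu}) by base-changing every lattice in sight up to the splitting field $L$ and then exploiting the fact that, after this base change, the Lie algebra of the N\'eron model of the now-split torus is \emph{self-dual} for the induced form. The self-duality is exactly what converts Chai--Yu's ``$\tfrac12\mathrm{Art}(T)$'' into the full length $\mathrm{length}_\cO(\ft^{\dagger,\vee}/\ft^\dagger)$, the factor of $2$ coming from a doubling argument and the various ramification factors $e(L|F)$ cancelling on the two sides.

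First I would fix a finite Galois extension $L/F$ over which $T$ splits and through whose Galois group $\rho_T$ factors, and set $e=e(L|F)$. Since $T_L$ is split, its finite type N\'eron model over $\cO_L$ is $\mathbb{G}_m^{\dim T}$, so under the canonical identification $\ft(L)\cong\La\otimes_\bbZ L$ one has $\ft_L^\dagger=\La\otimes_\bbZ\cO_L$. The hypothesis that $\La\otimes_\bbZ\cO$ is self-dual for $(\cdot,\cdot)$ says the Gram matrix of an $\cO$-basis is invertible over $\cO$, hence also over $\cO_L$; thus $\ft_L^\dagger=\La\otimes_\bbZ\cO_L$ is self-dual over $\cO_L$ for the $L$-bilinear extension of $(\cdot,\cdot)$. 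I would also record two elementary facts about lengths of torsion modules: (i) for $\cO_L$-lattices $M\subseteq N$ in a space with a non-degenerate pairing, $\mathrm{length}_{\cO_L}(M^\vee/N^\vee)=\mathrm{length}_{\cO_L}(N/M)$, via $\mathrm{Hom}_{\cO_L}(-,L/\cO_L)$; and (ii) for a finite $\cO$-module $P$, $\mathrm{length}_{\cO_L}(P\otimes_\cO\cO_L)=e\cdot\mathrm{length}_\cO(P)$. I also use that forming the dual lattice commutes with the flat base change $\cO\to\cO_L$, i.e.\ $(\ft^\dagger\otimes_\cO\cO_L)^\vee=\ft^{\dagger,\vee}\otimes_\cO\cO_L$ inside $\ft(L)$.

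Next I would set up the chain of inclusions. The natural map of N\'eron models gives $\ft^\dagger\otimes_\cO\cO_L\subseteq\ft_L^\dagger$; dualizing and using self-duality of $\ft_L^\dagger$ together with base-change compatibility of duals yields
\[\ft^\dagger\otimes_\cO\cO_L\ \subseteq\ \ft_L^\dagger\ \subseteq\ \ft^{\dagger,\vee}\otimes_\cO\cO_L.\]
By faithful flatness of $\cO\to\cO_L$ this already forces $\ft^\dagger\subseteq\ft^{\dagger,\vee}$ (intersect with $\ft(F)$), so $\ft^{\dagger,\vee}/\ft^\dagger$ is a genuine finite $\cO$-module. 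Additivity of length along the chain, together with the key identity
\[\mathrm{length}_{\cO_L}\!\left(\ft^{\dagger,\vee}\otimes_\cO\cO_L/\ft_L^\dagger\right)=\mathrm{length}_{\cO_L}\!\left(\ft_L^\dagger/\ft^\dagger\otimes_\cO\cO_L\right)\]
— which follows by applying $(\cdot)^\vee$ and fact (i) to $\ft^\dagger\otimes_\cO\cO_L\subseteq\ft_L^\dagger$, using self-duality of $\ft_L^\dagger$ — gives $\mathrm{length}_{\cO_L}(\ft^{\dagger,\vee}\otimes_\cO\cO_L/\ft^\dagger\otimes_\cO\cO_L)=2\,\mathrm{length}_{\cO_L}(\ft_L^\dagger/\ft^\dagger\otimes_\cO\cO_L)$.

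Finally I would assemble the numerics: by Theorem~\ref{thm:Chai-Yu} the right-hand side is $2\cdot e\cdot\tfrac12\mathrm{Art}(T)=e\cdot\mathrm{Art}(T)$, while by fact (ii) the left-hand side is $e\cdot\mathrm{length}_\cO(\ft^{\dagger,\vee}/\ft^\dagger)$; cancelling $e$ gives the claim. The only substantive inputs are Chai--Yu's theorem (which I may assume) and the self-duality of $\ft_L^\dagger$; everything else is bookkeeping with lengths of finite modules. I expect the point requiring the most care to be purely a matter of compatibilities of identifications — namely that, inside $\ft(L)$, the lattice $\ft_L^\dagger$ really corresponds to $\La\otimes_\bbZ\cO_L$ in a way compatible with the form transported from the self-dual $\La\otimes_\bbZ\cO\subseteq\ft(F)$ — rather than any deep argument.
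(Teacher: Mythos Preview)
Your proposal is correct and follows essentially the same approach as the paper: base change to the splitting field $L$, use the self-duality of $\ft_L^\dagger=\La\otimes_\bbZ\cO_L$ to obtain the chain $\ft^\dagger\otimes_\cO\cO_L\subseteq\ft_L^\dagger=\ft_L^{\dagger,\vee}\subseteq(\ft^\dagger\otimes_\cO\cO_L)^\vee$, observe that dualizing equates the lengths of the two subquotients, and then apply Chai--Yu together with the ramification factor $e(L|F)$. The paper's proof is the same argument, just written more tersely.
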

\begin{proof}
    First we claim that for any $\cO$-lattice $M\subset\ft(F)$ we have \[M^\vee\otimes_\cO\cO_L=(M\otimes_\cO\cO_L)^\vee\]
    where the right side means the dual of the $\cO_L$-lattice under the pairing $(\cdot,\cdot)_L$. Indeed, after choosing an $\cO$-bases $\{m_1,\dotsc,m_r\}$ for $M$ and letting $m_1^\vee,\dotsc,m_r^\vee$ be the dual bases satisfying $(m_i,m_j^\vee)_L=\delta_{ij}$, both sides are identified with the $\cO_L$ submodule of $\La\otimes_\bbZ L$ spanned by $\{m_i^\vee,1\le i\le r\}$.\par 
    Apply this identity to $M=\ft^\dagger$, we get that
    \[\mathrm{length}_\cO(\ft^{\dagger,\vee}/\ft^\dagger)=\frac{1}{e(L|F)}\mathrm{length}_{\cO_L}(\ft^{\dagger,\vee}/\ft^\dagger)\otimes_\cO\cO_L
    =\frac{1}{e(L|F)}\mathrm{length}_{\cO_L}\frac{(\ft^\dagger\otimes_\cO\cO_L)^\vee}{\ft^\dagger\otimes_\cO\cO_L}.\]
    Since $T$ splits over $L$ we have $\ft_L^\dagger=\La\otimes_\bbZ\cO_L$, which is self-dual by assumption. So we have
    \[\ft^\dagger\otimes_\cO\cO_L\subset\ft_L^\dagger=\ft_L^{\dagger,\vee}\subset(\ft^\dagger\otimes_\cO\cO_L)^\vee\]
    and then by Theorem \ref{thm:Chai-Yu} we conclude that
    \[\mathrm{length}_\cO(\ft^{\dagger,\vee}/\ft^\dagger)=\frac{2}{e(L|F)}\mathrm{length}_{\cO_L}\left(\frac{\ft_L^\dagger}{\ft^\dagger\otimes_\cO\cO_L}\right)=\mathrm{Art}(T).\]
\end{proof}
We end this section with a result on the behavior of Artin conductor under restrictions of scalars. Later this will be used in several reduction steps (see \S\ref{sec:more-assumption} and Proposition \ref{prop:HC-descent}) in the proof of the main theorems.
\begin{prop}\label{prop:Artin-conductor-Res}
    Let $E/F$ be a finite extension field and let $\mathrm{Disc}_{E/F}\subset\cO$ be its discriminant ideal. Let $H$ be a reductive group over $E$ with absolute rank $r$. Then we have
    \[\mathrm{Art}(\mathrm{Res}_{E/F}H)=\mathrm{val}(\mathrm{Disc}_{E/F})r+\mathrm{Art}(H)\]
\end{prop}
\begin{proof}
    We may assume that $E\subset\overline{F}$. After replacing $H$ by a maximally $F$-split sub-torus, we may assume that $H=T$ is a torus over $E$. Then the $\mathrm{Gal}(\overline{F}/F)$-module $X_*(\mathrm{Res}_{E/F}T)$ is induced from the $\mathrm{Gal}(\overline{F}/E)$-module $X_*(T)$ and the result follows from \cite[VI \S2, Cor. to Prop. 4]{Serre-local}. 
\end{proof}

\section{Conjugacy classes in the Lie algebras}\label{sec:conj-Lie}
Let $F,\cO,k$ be as in \ref{sec:local-fields} and assume moreover that $k$ is algebraically closed. Let $G$ be a connected reductive group over $F$ with Lie algebra $\fg$. Keep the notations from \S\ref{sec:parahoric}. In particular $G$ splits over $F'$ and is the twisted form of the split group $\bbG$ by the homomorphism $\rho_G:\mathrm{Gal}(F'/F)\to\mathrm{Out}(\bbG)$.\par 
In this section we review some facts about conjugacy classes in $\fg(F)$. Most of the results here are well-known if $p=\mathrm{char}(k)$ is sufficiently large, but not so otherwise. We have to pay attention to the exact conditions on $p$ throughout. 

\subsection{Chevalley quotient and regular semisimple elements}\label{sec:chevalley-quotient}
The \emph{Chevalley base} for the Lie algebra $\fg$ is the invariant quotient of $\fg$ (viewed as an $F$-scheme) by the adjoint action of $G$, denoted $\fc=\fg//G:=\spec(F[\fg]^{G})$. It comes with a natural morphism 
\[\chi_G:\fg\to\fc\]
called the \emph{Chevalley morphism}. Similarly, for the split form $\bbG$ of $G$ we have its Chevalley morphism $\chi_\bbG:\bbg\to\bbc$ that is actually defined over $\bbZ$.\par 
The $\mathrm{Out}(\bbG)$-action and the adjoint action of $\bbG$ on $\bbg$ combine into an action of the semi-direct product $\bbG\rtimes\mathrm{Out}(\bbG)$ on $\bbg$, which then induces an action of $\mathrm{Out}(\bbG)$ on $\bbc$ making the morphism $\chi_\bbG$ equivariant under $\mathrm{Out}(\bbG)$. Consequently $\chi_G$ is the $\mathrm{Out}(\bbG)$-twist of $\chi_\bbG$ over $F'$ defined by the homomorphism $\rho_G:\mathrm{Gal}(F'/F)\to\mathrm{Out}(\bbG)$. \par 
Recall that an element in $\fg$ (resp. $\bbg$) is \emph{regular} if its centralizer in $G$ (resp. $\bbG$) has minimal possible dimension (or equivalently, its adjoint orbit has maximal possible dimension). 
The regular elements in $\bbg$ form an open subscheme $\bbg^{\mathrm{reg}}\subset\bbg$ that is $\bbG\rtimes\mathrm{Out}(\bbG)$-stable and descends to the open subscheme $\fg^{\mathrm{reg}}$ of regular elements in $\fg$. Then the Chevalley morphism $\chi_G$ is flat and its restriction to the regular open subscheme $\chi_G^{\mathrm{reg}}:\fg^{\mathrm{reg}}\to\fc$ is smooth. This follows by Galois descent from the corresponding statements for the split group $\bbG$. \par
Let $\bbg^{\mathrm{rs}}\subset\bbg^{\mathrm{reg}}$ be the open subscheme of regular semisimple elements. There is an open subscheme $\bbc^{\mathrm{rs}}\subset\bbc$ such that $\bbg^{\mathrm{rs}}=\chi_\bbG^{-1}(\bbc^{\mathrm{rs}})$. The complement of $\bbc^{\mathrm{rs}}$ in $\bbc$ is the vanishing loci of the discriminant function $\Delta_\bbg$. Let us recall its definition below. For this we view the split group $\bbG$ and related objects $\bbg,\bbc$ etc. to be defined over $\bbZ$.\par 
Let $P(X)\in \bbZ[\bbg][X]$ be the universal characteristic polynomial for adjoint actions of elements in $\bbg$. In other words, for any ring $R$ and any $\ga\in\bbg(R)$, viewed as a morphism $\ga:\spec R\to\bbg$, the pullback $\ga^*P(X)$ is the characteristic polynomial of the endomorphism $\mathrm{ad}(\ga)$ on the free $R$-module $\bbg(R)$. The characteristic polynomials are invariant under the action of $\bbG\rtimes\mathrm{Out}(\bbG)$ and therefore we have $P(X)\in\bbZ[\bbc]^{\mathrm{Out}(\bbG)}$, the set of $\mathrm{Out}(\bbG)$-invariant polynomials on $\bbc$ with $\bbZ$-coefficients. Moreover $P(X)$ is divisible by $X^{\bbr}$, where $\bbr$ is the rank of $\bbG$. The coefficient of the $X^{\bbr}$ term is the \emph{discriminant} $\Delta_\bbg\in\bbZ[\bbc]^{\mathrm{Out}(\bbG)}$ that we also view as an element in $\bbZ[\bbg]$. Then the regular semisimple locus $\bbc^{\mathrm{rs}}$ is the complement of the principal divisor $\Delta_{\bbg}=0$. \par
The open subschemes $\bbg^{\mathrm{rs}}$ and $\bbc^{\mathrm{rs}}$ are stable under the $\mathrm{Out}(\bbG)$-action and descend to open subschemes $\fg^{\mathrm{rs}}\subset\fg$ and $\fc^{\mathrm{rs}}\subset\fc$ of regular semisimple elements. Let $\Delta_\fg\in F[\fc]=(F'[\bbc])^{\mathrm{Gal}(F'/F)}$ be the image of $\Delta_\bbg$. Then $\fc^{\mathrm{rs}}$ is the complement of the principal divisor $\Delta_\fg=0$ on $\fc$.
\begin{lem}
    For any regular semisimple element $\ga\in\fg^{\mathrm{rs}}(F)$ we have 
    \[\Delta_\fg(\ga)=\det(\mathrm{ad}(\ga)\mid\fg(F)/\fg_\ga(F))\]
    where $\fg_\ga=\ker(\mathrm{ad}(\ga))=\mathrm{Lie}(G_\ga)$ is the centralizer of $\ga$ in the Lie algebra. More generally, for any semisimple element $\ga\in\ft(\overline{F})=\bbt(\overline{F})$ we have
    \[\Delta_\fg(\ga)=\prod_{\alpha\in R}d\alpha(\ga)\]
    where $R$ is the set of roots of $\bbT$ in $\bbg$ and $d\alpha:\bbt(\overline{F})\to\overline{F}$ is the differential of a root $\alpha\in R$. 
\end{lem}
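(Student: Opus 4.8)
The plan is to reduce both identities to the explicit root-space computation over $\overline{F}$. First I would unwind the definitions: by construction $\Delta_\bbg(\ga)$ is the coefficient of $X^{\bbr}$ in the characteristic polynomial $P_\ga(X)=\det(X\cdot\mathrm{Id}-\mathrm{ad}(\ga)\mid\bbg)$, and $\Delta_\fg$ is its Galois descent along $\mathrm{Gal}(F'/F)$. Both quantities in the two claimed identities are invariant under $G(\overline{F})$-conjugation of $\ga$ --- the left-hand sides because $\Delta_\fg$ is pulled back from $\fc$, and $\det(\mathrm{ad}(\ga)\mid\fg/\fg_\ga)$ manifestly so --- and both take values in $F$. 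Hence it suffices to verify the identities after base change to $\overline{F}$, and after conjugating $\ga$ into $\bbt(\overline{F})=\ft(\overline{F})$. Here I use that $F$ has characteristic $0$: any semisimple $\ga\in\fg(\overline{F})$ lies in the centre of the Lie algebra of its (reductive) centralizer, hence in the Lie algebra of some maximal torus of $G_{\overline{F}}$, and all maximal tori of $G_{\overline{F}}$ are $G(\overline{F})$-conjugate to $\bbT_{\overline{F}}$.

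Next I would carry out the computation for $\ga\in\bbt(\overline{F})$. Using the root-space decomposition $\bbg_{\overline{F}}=\bbt_{\overline{F}}\oplus\bigoplus_{\alpha\in R}\bbg_\alpha$, on which $\mathrm{ad}(\ga)$ acts by $0$ on $\bbt_{\overline{F}}$ and by the scalar $d\alpha(\ga)$ on the line $\bbg_\alpha$, one gets
\[P_\ga(X)=X^{\bbr}\prod_{\alpha\in R}\bigl(X-d\alpha(\ga)\bigr),\]
so the coefficient of $X^{\bbr}$ is $(-1)^{|R|}\prod_{\alpha\in R}d\alpha(\ga)=\prod_{\alpha\in R}d\alpha(\ga)$, the sign disappearing because $|R|$ is even (roots occur in pairs $\pm\alpha$). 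This proves the second, more general, formula for semisimple $\ga\in\ft(\overline{F})$.

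Finally, for the first formula I would specialize to $\ga\in\fg^{\mathrm{rs}}(F)$. After conjugating $\ga$ into $\bbt(\overline{F})$, regularity gives $d\alpha(\ga)\neq 0$ for all $\alpha\in R$, so $\ker(\mathrm{ad}(\ga))_{\overline{F}}=\bbt_{\overline{F}}$, which coincides with $\fg_\ga\otimes_F\overline{F}=\mathrm{Lie}(G_\ga)\otimes_F\overline{F}$; since $\mathrm{ad}(\ga)$ is a semisimple endomorphism (characteristic $0$), this kernel is a direct summand and $\mathrm{ad}(\ga)$ is invertible on the complement $\bigoplus_{\alpha\in R}\bbg_\alpha$, with $\det(\mathrm{ad}(\ga)\mid\bbg_{\overline{F}}/\bbt_{\overline{F}})=\prod_{\alpha\in R}d\alpha(\ga)$. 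Comparing with the previous paragraph and descending back to $F$ yields $\Delta_\fg(\ga)=\det(\mathrm{ad}(\ga)\mid\fg(F)/\fg_\ga(F))$. There is essentially no serious obstacle here; the only points requiring care are the conjugation-invariance used to reduce to $\bbt$, the parity of $|R|$ that kills the sign, and the characteristic-$0$ input ensuring $\ker(\mathrm{ad}(\ga))$ is a summand so that the determinant on $\fg/\fg_\ga$ is the naive one.
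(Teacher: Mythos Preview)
Your argument is correct and follows essentially the same route as the paper: reduce to $\overline{F}$, conjugate into $\bbt$, and read off both formulas from the root-space decomposition of the characteristic polynomial. The only cosmetic difference is that you prove the product formula first and deduce the determinant formula from it, whereas the paper does the reverse and then notes separately that both sides vanish for non-regular $\ga\in\bbt(\overline{F})$; your uniform computation of $P_\ga(X)=X^{\bbr}\prod_\alpha(X-d\alpha(\ga))$ handles all $\ga\in\bbt$ at once and makes the parity-of-$|R|$ sign explicit, which the paper leaves implicit.
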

\begin{proof}
    We can work over $\overline{F}$ and after conjugation we may assume that $\ga\in\ft(\overline{F})$ is regular semisimple. Then $\fg_\ga(\overline{F})=\ft(\overline{F})$ and the first equality follows. Then the second equality follows from the first for regular elements in $\ft(\overline{F})$. If $\ga\in\ft(\overline{F})$ is not regular, then both sides of the second equality are $0$ and we are done. 
\end{proof}
\begin{defn}\label{def:disc-Lie-alg}
    The \emph{discriminant valuation} of a regular semisimple element $\ga\in\fg^{\mathrm{rs}}(F)$ is the integer defined by
    \[d_\fg(\ga):=\mathrm{val}(\Delta_\fg(\ga))=\mathrm{val}\det(\mathrm{ad}(\ga)\mid\fg(F)/\fg_\ga(F)).\]
\end{defn}
Assume in the rest of this subsection that the extension $F'/F$ is tamely ramified of degree $e\ge1$ and identify $\mathrm{Gal}(F'/F)\cong\mu_e$. The construction in \S\ref{sec:group-model} provides natural finite type $\cO$-models for the $F$-schemes $\fg$, $\fg^{\mathrm{reg}}$, $\fc$ etc. We use the same symbol to denote the resulting $\cO$-schemes of finite type. Then we have 
\[\fg=(\mathrm{Res}_{\cO'/\cO}\bbg)^{\mu_e}=\mathrm{Lie}(\cG_0),\quad\fg^{\mathrm{reg}}=(\mathrm{Res}_{\cO'/\cO}\bbg^{\mathrm{reg}})^{\mu_e},\quad\fc=(\mathrm{Res}_{\cO'/\cO}\bbc)^{\mu_e}.\]
Then we can talk about the $\cO$-points $\fg(\cO),\fc(\cO)$ etc. In particular, from the definition we see that $\fg^{\mathrm{reg}}(\cO) = \fg(\cO)\cap\bbg^{\mathrm{reg}}(\cO')$.\par
For any closed point $\ga\in\bbg$, we have $\dim\bbG_\ga\ge\bbr$, the absolute rank of $\bbG$. The locus where equality holds is the open subscheme $\bbg^{\mathrm{reg}}\subset\bbg$. 
\begin{defn}
    Let $\bbI$ be the universal centralizer group scheme over $\bbg$ whose fiber over any point $\ga\in\bbg$ is the centralizer $\bbG_\ga$ of $\ga$ in $\bbG$. Let $\bbI^{\mathrm{reg}}$ be the restriction of $\bbI$ to $\bbg^{\mathrm{reg}}$. Similarly we let $I$ be the universal centralizer group scheme over $\fg$ and let $I^{\mathrm{reg}}$ be the restriction of $I$ to $\fg^{\mathrm{reg}}$. 
\end{defn}
Then $I^{\mathrm{reg}}$ is an open subscheme of the fixed point subscheme of $\mu_e$ on $(\mathrm{Res}_{\cO'/\cO}\bbI^{\mathrm{reg}})|_{\fg^\mathrm{reg}}$, the base change of $\mathrm{Res}_{\cO'/\cO}\bbI^{\mathrm{reg}}$ along the closed embedding $\fg^{\mathrm{reg}}\to\mathrm{Res}_{\cO'/\cO}\bbg^{\mathrm{reg}}$. 
\begin{prop}\label{prop:reg-centralizer}
    Suppose $G$ is tamely ramified and the residue characteristic $p$ is not a torsion prime for the root datum of $G$. We consider the $\cO$-schemes of finite type $\fg^{\mathrm{reg}}\subset\fg$ and $\fc$ defined as above. Then the morphism $\chi_G^{\mathrm{reg}}:\fg^{\mathrm{reg}}\to\fc$ (between $\cO$-schemes of finite type) is smooth and there exists a unique commutative group scheme $J$ over $\fc$, equipped with an isomorphism $(\chi_G^{\mathrm{reg}})^*J\cong I^{\mathrm{reg}}$ that extends uniquely to a homomorphism of group schemes $\chi^*J\to I$ over the $\fg$. 
\end{prop}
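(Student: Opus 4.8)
The plan is to reduce the whole statement to the split group $\bbG$ over $\cO'$, carry out there the construction of the regular centralizer group scheme as in Ng\^o \cite{Ngo10}, paying attention to the torsion-prime hypothesis because $p$ may be small, and then transport the result back to $\cO$ through the restriction-of-scalars and $\mu_e$-fixed-point formalism of \S\ref{sec:group-model}. The last step is harmless precisely because $p\nmid e$, so that Lemmas \ref{lem:res} and \ref{lem:fixed-point-locus-smooth} apply.

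Over $\cO'$ the split case goes as follows. The hypothesis that $p$ is not a torsion prime for the root datum of $\bbG$ should yield the usual structural facts (classical when $p$ is large or $p=0$): the scheme-theoretic centralizer $\bbG_\ga$ of every regular $\ga\in\bbg$ is smooth and commutative of dimension $\bbr$; the pinning provides an $\mathrm{Out}(\bbG)$-equivariant Kostant-type section $\epsilon\colon\bbc\to\bbg^{\mathrm{reg}}$ of $\chi_\bbG^{\mathrm{reg}}$ attached to the regular nilpotent $\sum_{\alpha\in\Delta}x_\alpha$; and $\bbG$ acts transitively on the regular locus of each fibre of $\chi_\bbG$. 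Granting these, the orbit map $a\colon\bbG\times\bbc\to\bbg^{\mathrm{reg}}$, $(g,c)\mapsto\mathrm{Ad}(g)\epsilon(c)$, is smooth and surjective, and since $\chi_\bbG^{\mathrm{reg}}\circ a$ is the smooth projection to $\bbc$, the morphism $\chi_\bbG^{\mathrm{reg}}$ is smooth; moreover $a$ exhibits $\bbg^{\mathrm{reg}}$ as a torsor, pulled back along $\chi_\bbG^{\mathrm{reg}}$, under the smooth commutative $\bbc$-group scheme $\bbJ:=\epsilon^*\bbI^{\mathrm{reg}}$. Unwinding the torsor structure gives a canonical isomorphism $a^*\bbI^{\mathrm{reg}}\cong\mathrm{pr}_\bbc^*\bbJ$ compatible with the descent datum of the fppf cover $a$, hence an isomorphism $(\chi_\bbG^{\mathrm{reg}})^*\bbJ\cong\bbI^{\mathrm{reg}}$; applying $\epsilon^*$ forces $\bbJ\cong\epsilon^*\bbI^{\mathrm{reg}}$, which is the uniqueness of $\bbJ$. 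Since $\epsilon$ can be chosen $\mathrm{Out}(\bbG)$-equivariantly, all of this is compatible with the $\mathrm{Out}(\bbG)$-action.

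Next I would extend $(\chi_\bbG^{\mathrm{reg}})^*\bbJ\xrightarrow{\sim}\bbI|_{\bbg^{\mathrm{reg}}}$ to a homomorphism $\chi_\bbG^*\bbJ\to\bbI$ over all of $\bbg_{\cO'}$: the total space $\chi_\bbG^*\bbJ$ is smooth over $\bbg_{\cO'}$, hence normal; $\bbI$ is affine over $\bbg_{\cO'}$; the non-regular locus $\bbg_{\cO'}\setminus\bbg^{\mathrm{reg}}_{\cO'}$ has codimension $\ge 2$ (classical on the generic fibre, and true on the special fibre under the present hypothesis on $p$), hence so does its preimage in the $\bbg_{\cO'}$-smooth scheme $\chi_\bbG^*\bbJ$; so a Hartogs-type extension for morphisms into affine schemes over a normal base produces the morphism, which is automatically a group homomorphism and is the unique extension since $\bbg^{\mathrm{reg}}_{\cO'}$ is fibrewise dense and $\bbI\to\bbg_{\cO'}$ is separated. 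Everything here is $\mathrm{Out}(\bbG)$-equivariant. To descend to $\cO$, I would apply $(\mathrm{Res}_{\cO'/\cO}(-))^{\mu_e}$ — with $\mu_e=\mathrm{Gal}(\cO'/\cO)$ acting through the Galois action on $\cO'$ together with $\rho_G(\mu_e)\subset\mathrm{Out}(\bbG)$ on $\bbG$ — and restrict to $\fc\subset\mathrm{Res}_{\cO'/\cO}\bbc$, resp. $\fg\subset\mathrm{Res}_{\cO'/\cO}\bbg$. This turns $\bbJ$ into a commutative group scheme $J$ over $\fc$ and turns the above isomorphism and homomorphism into $(\chi_G^{\mathrm{reg}})^*J\cong I^{\mathrm{reg}}$ and $\chi_G^*J\to I$; both uniqueness statements are inherited because the fixed-point functor is faithful. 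Smoothness of $\chi_G^{\mathrm{reg}}$ follows from that of $\chi_\bbG^{\mathrm{reg}}$: restriction of scalars preserves smoothness (as in Lemma \ref{lem:res}) and, $e$ being a unit in $\cO$, so does passage to $\mu_e$-fixed loci (Lemma \ref{lem:fixed-point-locus-smooth} in its relative form, or concretely via the orbit map $\cG_0\times\fc\to\fg^{\mathrm{reg}}$ obtained by taking $\mu_e$-fixed points of $a$).

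I expect the main obstacle to be the split-group input in the second paragraph: verifying, for $p$ merely not a torsion prime for the root datum rather than $p$ large, that centralizers of regular elements in $\bbg$ are smooth and commutative, that the $\mathrm{Out}(\bbG)$-equivariant Kostant-type section exists and meets every regular fibre of $\chi_\bbG$ in a single orbit over $\cO'$ (equivalently over $\bbZ_{(p)}$), and that the non-regular locus keeps codimension $\ge 2$ in the special fibre. Once these small-characteristic facts are in place, the remaining steps — descent along the section, the Hartogs extension, and the transport by $\mathrm{Res}_{\cO'/\cO}$ and $\mu_e$-invariants — are formal.
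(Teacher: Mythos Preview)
Your approach is essentially the same as the paper's: establish the split case over $\cO'$, then descend via $\mathrm{Res}_{\cO'/\cO}$ and $\mu_e$-fixed points using Lemmas~\ref{lem:res} and~\ref{lem:fixed-point-locus-smooth}. Two remarks. First, the split-case input you correctly flag as the obstacle (smoothness of $\chi_\bbG^{\mathrm{reg}}$, existence and uniqueness of $\bbJ$, and the extension $\chi_\bbG^*\bbJ\to\bbI$ under the mere hypothesis that $p$ is not a torsion prime for the root datum) is exactly what the paper outsources to \cite[Proposition~4.2.6, Theorem~4.2.8]{BC22}, so you need not reconstruct it by hand. Second, in the descent step you should not take $J$ to be the full fixed-point scheme $(\mathrm{Res}_{\cO'/\cO}\bbJ)^{\mu_e}$: recall from the paragraph preceding the proposition that $I^{\mathrm{reg}}$ is only an \emph{open} subscheme of the $\mu_e$-fixed points of $(\mathrm{Res}_{\cO'/\cO}\bbI^{\mathrm{reg}})|_{\fg^{\mathrm{reg}}}$, so $J$ must likewise be taken as the corresponding open subgroup scheme of $(\mathrm{Res}_{\cO'/\cO}\bbJ)^{\mu_e}$; this is how the paper phrases it.
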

The group scheme $J$ in the Proposition is called the \emph{universal regular centralizer for $\fg$ (in $G$)}. In general we need more assumptions on either $p$ or the group $G$ to guarantee its smoothness. 
\begin{proof}
    When $G$ is split the smoothness of $\chi_G^{\mathrm{reg}}$ follows from \cite[Proposition 4.2.6]{BC22} and the other statements follow from \cite[Theorem 4.2.8]{BC22}. In particular all the statements hold for the split model $\bbG$ over $\cO$ and we get the regular centralizer group scheme $\bbJ$ over the $\cO$-scheme $\bbc$ that satisfies all the statements.\par 
    In general, we deduce the smoothness of $\mathrm{Res}_{\cO'/\cO}(\chi_\bbG^{\mathrm{reg}}):\mathrm{Res}_{\cO'/\cO}\bbg^{\mathrm{reg}}\to\mathrm{Res}_{\cO'/\cO}\bbc$ from the smoothness of $\chi_\bbG^{\mathrm{reg}}$ by the infinitesimal lifting criterion. Then we apply Lemma \ref{lem:fixed-point-locus-smooth} to the base change of $\mathrm{Res}_{\cO'/\cO}(\chi_\bbG^{\mathrm{reg}})$ along the closed subscheme $\fc\subset\mathrm{Res}_{\cO'/\cO}\bbc$ to get the smoothness of $\chi_G^{\mathrm{reg}}$. The group scheme $I$ is an open subgroup scheme of $(\mathrm{Res}_{\cO'/\cO}\bbI)^{\mu_e}$ and hence descends to an open sub group scheme $J$ of $(\mathrm{Res}_{\cO'/\cO}\bbJ)^{\mu_e}$. The existence of the isomorphism $(\chi^{\mathrm{reg}})^*J\cong I^{\mathrm{reg}}$ that extends uniquely to a homomorphism $\chi^*J\to I$ follows from the corresponding statements for $\bbJ$.
\end{proof}
\begin{cor}\label{cor:chi-surjection}
    Under the notations and assumptions in Proposition \ref{prop:reg-centralizer}, the Chevalley morphism $\chi_G$ restricts to a surjection $\fg^{\mathrm{reg}}(\cO)\to\fc(\cO)$. More generally, for any parahoric subgroup $\bfP\subset G(F)$, $\chi_G$ restricts to a surjection $\mathrm{Lie}(\bfP)\to\fc(\cO)$.
\end{cor}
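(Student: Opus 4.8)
The plan is to deduce both assertions from the smoothness of $\chi_G^{\mathrm{reg}}$ in \propref{prop:reg-centralizer}, using Hensel's lemma over the complete ring $\cO$ together with an approximation argument in the parahoric case.

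\emph{The surjection $\fg^{\mathrm{reg}}(\cO)\to\fc(\cO)$.} By \propref{prop:reg-centralizer} the morphism $\chi_G^{\mathrm{reg}}\colon\fg^{\mathrm{reg}}\to\fc$ of $\cO$-schemes is smooth, so I would first check that it is surjective; since $\fc$ is $\cO$-flat this can be tested on the two fibres over $\spec\cO$. The generic fibre is surjective because after base change to $\overline F$ the group splits and the Chevalley morphism of a reductive Lie algebra in characteristic zero admits a Kostant section. For the special fibre one uses that in non-torsion residue characteristic every fibre of the Chevalley morphism of a reductive Lie algebra over an algebraically closed field contains a regular element: indeed a Kostant section already exists over $\cO$ for the split form $\bbG$ under our hypothesis on $p$ (cf.\ \cite{BC22}) and, being attached to the pinning-adapted principal $\mathfrak{sl}_2$-triple, is $\mathrm{Out}(\bbG)$-equivariant, hence descends through the Weil restriction and the $\mu_e$-fixed-point construction of \secref{sec:group-model} (here $p\nmid e$) to a section $\epsilon_G\colon\fc\to\fg^{\mathrm{reg}}$ of $\chi_G^{\mathrm{reg}}$ over $\cO$. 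Granting surjectivity, for $c\in\fc(\cO)$ the base change $(\fg^{\mathrm{reg}})_c:=\fg^{\mathrm{reg}}\times_{\chi_G^{\mathrm{reg}},\fc,c}\spec\cO$ is a smooth $\cO$-scheme with nonempty special fibre, which therefore has a $k$-point ($k$ being algebraically closed), and that point lifts to an $\cO$-point by smoothness and completeness of $\cO$; alternatively, $\epsilon_G(c)$ is already such a preimage.

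\emph{The surjection $\mathrm{Lie}(\bfP)\to\fc(\cO)$.} That $\chi_G(\mathrm{Lie}(\bfP))\subseteq\fc(\cO)$ is immediate from the boundedness of the elements of $\mathrm{Lie}(\bfP)$. For surjectivity, note that $\chi_G$ is $\mathrm{Ad}(G(F))$-invariant, that every parahoric subgroup contains a $G(F)$-conjugate of the Iwahori subgroup $\bfI$ fixed in \secref{sec:group-model}, and that $\mathrm{Lie}(\bfI)\subseteq\mathrm{Lie}(\bfP)$ when $\bfI\subseteq\bfP$; hence it suffices to prove $\fc(\cO)\subseteq\chi_G(\mathrm{Lie}(\bfI))$. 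Since $\bfI\subseteq\bfP_0=G(\cO)$, the Lie algebra $\mathfrak{i}:=\mathrm{Lie}(\cG_\bfI)$ is a free $\cO$-module contained in $\fg=\mathrm{Lie}(\cG_0)$, so $X:=\mathfrak{i}\cap\fg^{\mathrm{rs}}$ is an open, hence $\cO$-smooth, subscheme of the affine $\cO$-space $\mathfrak{i}$ with $X(\cO)\subseteq\mathrm{Lie}(\bfI)$. I claim $\chi_G\colon X\to\fc^{\mathrm{rs}}$ is smooth and surjective: on the generic fibre it is $\chi_G^{\mathrm{rs}}$, and on the special fibre the structure of the Iwahori group scheme identifies $\mathfrak{i}_k\to\fg_k$ with an affine-space bundle onto a Borel subalgebra $\fb_0\subset\mathrm{Lie}(\sG_0)$ followed by $\fb_0\hookrightarrow\fg_k$, whence $X_k\to\fc_k^{\mathrm{rs}}$ is an affine-space bundle over $\fb_0^{\mathrm{rs}}$ composed with $\chi_G\colon\fb_0^{\mathrm{rs}}\to\fc_k^{\mathrm{rs}}$, and the latter is Zariski-locally on the source the product of the \'etale $W$-cover $\ft_0^{\mathrm{rs}}\to\fc_k^{\mathrm{rs}}$ with an affine space. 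The Hensel argument then gives that $X(\cO)\to\fc^{\mathrm{rs}}(\cO)$ is surjective. Finally, $\fc^{\mathrm{rs}}(\cO)$ is dense in $\fc(\cO)$ for the $\varpi$-adic topology, its complement being the zero locus of the nonzero function $\Delta_\fg$; so given $c\in\fc(\cO)$ I would pick $c_n\to c$ with $c_n\in\fc^{\mathrm{rs}}(\cO)$, lift each to $\gamma_n\in\mathrm{Lie}(\bfI)$ with $\chi_G(\gamma_n)=c_n$, pass to a convergent subsequence by compactness of $\mathrm{Lie}(\bfI)$, and conclude $\chi_G(\lim\gamma_n)=c$ by continuity.

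\emph{Main obstacle.} The crux is the surjectivity of $\chi_G^{\mathrm{reg}}$ on the special fibre — i.e.\ the existence of a Kostant section in small characteristic for the split group and its compatibility with the construction of \secref{sec:group-model} — and, for the parahoric statement, the identification of the special fibre of $\chi_G|_X$ with an affine-space bundle over $\fb_0^{\mathrm{rs}}$, which relies on the explicit description of the Iwahori group scheme $\cG_\bfI$. One cannot run the Hensel step directly with $X$ replaced by $\mathfrak{i}\cap\fg^{\mathrm{reg}}$ over all of $\fc$, since that morphism fails to be smooth over the discriminant (its fibres can be non-reduced there); this is precisely why the approximation over $\fc^{\mathrm{rs}}$ is needed in place of a direct lifting.
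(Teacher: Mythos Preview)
Your treatment of the first assertion is correct and agrees with the paper: smoothness of $\chi_G^{\mathrm{reg}}$ plus surjectivity on the special fibre (via a Kostant section) gives the lifting.

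The second assertion, however, has a genuine gap in the density step. For the $\cO$-scheme $\fc^{\mathrm{rs}}$, a point $c\in\fc(\cO)$ lies in $\fc^{\mathrm{rs}}(\cO)$ precisely when its \emph{reduction} $\bar c\in\fc(k)$ avoids the discriminant divisor, i.e.\ when $\Delta_\fg(c)\in\cO^\times$. This locus is clopen in the $\varpi$-adic topology: if $c'\equiv c\bmod\varpi$ then $\Delta_\fg(c')\equiv\Delta_\fg(c)\bmod\varpi$, so the condition $\Delta_\fg\in\cO^\times$ is stable under small perturbations and so is its negation. Hence $\fc^{\mathrm{rs}}(\cO)$ is not dense in $\fc(\cO)$, and your approximation argument cannot reach any $c$ with $\Delta_\fg(c)\in\varpi\cO$. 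If instead you intended $\fc^{\mathrm{rs}}(\cO)$ to mean $\{c\in\fc(\cO):\Delta_\fg(c)\ne 0\text{ in }F\}$, that set \emph{is} dense, but then your Hensel lifting only applies when the special fibre of the base point lands in $\fc_k^{\mathrm{rs}}$, so the conclusion $X(\cO)\twoheadrightarrow\fc^{\mathrm{rs}}(\cO)$ is no longer justified and the gap reappears. Your own closing remark that one cannot run Hensel with $\mathfrak{i}\cap\fg^{\mathrm{reg}}$ over all of $\fc$ is exactly the obstruction you are running into here in disguise.

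The paper's argument bypasses all of this. Having proved $\fg^{\mathrm{reg}}(\cO)\twoheadrightarrow\fc(\cO)$, one simply notes that every element of $\fg(\cO)$ is $G(\cO)$-conjugate to an element of $\mathrm{Lie}(\bfI)$: the reduction $\bar\gamma$ lies in some Borel subalgebra of $\sG_0$, which is a $G(\cO)$-conjugate of the standard one since $G(\cO)$-conjugates of $\bfI$ correspond to Borels of $\sG_0$; lifting the conjugating element moves $\gamma$ into $\mathrm{Lie}(\bfI)$. Since $\chi_G$ is conjugation-invariant, this already gives $\mathrm{Lie}(\bfI)\twoheadrightarrow\fc(\cO)$, with no smoothness analysis over the discriminant and no limiting argument.
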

\begin{proof}
    The first statement follows from the smoothness of $\chi_G^{\mathrm{reg}}$. For the second statement, after $G(F)$-conjugation we may assume that $\bfP$ contains the standard Iwahori subgroup $\bfI$ and thus it suffices to show surjectivity of $\mathrm{Lie}(\bfI)\to\fc(\cO)$. This follows from the fact that any element in $\fg(\cO)$ is $G(\cO)$-conjugate to an element in $\mathrm{Lie}(\bfI)$, since $G(\cO)$-conjugates of $\bfI$ corresponds bijectively to Borel subgroups of $\sG_0=\sG^{\mu_e,\circ}$. 
\end{proof}
\subsection{Topological Jordan decomposition for the Lie algebra}\label{sec:top-Jordan-Lie}

\begin{defn}\label{def:bounded-Lie-alg}
    An element $\ga\in\fg(\overline{F})=\bbg(\overline{F})$ is 
    \begin{itemize}
        \item \emph{bounded} if $\chi_G(\ga)\in\bbc(\overline{\cO})$;
        \item \emph{topologically nilpotent} if it is bounded and the image of $\chi_G(\ga)$ under the natural reduction map $\bbc(\overline{\cO})\to\bbc(k)$ equals to zero.
        \item \emph{strongly semisimple} if it is semisimple, bounded, and for any root $\alpha\in\Sigma$ and any $\ga'\in\ft(\overline{F})$ that is $G(\overline{F})$-conjugate to $\ga$ we have either $d\alpha(\ga')=0$ or $|d\alpha(\ga')|=1$
    \end{itemize}
\end{defn}
\begin{lem}\label{lem:top-nil}
    Suppose that $G$ is tamely ramified, the residue characteristic $p$ is good for $G$ and divides neither the order of $\pi_1(G^{\mathrm{der}})$ nor the order of $\pi_0(Z_G)$. Then an element $\ga\in\fg(F)$ is topologically nilpotent if and only if there exists a parahoric subgroup $\bfP\subset G(F)$ such that $\ga\in\mathrm{Lie}(\bfP_+)$.
\end{lem}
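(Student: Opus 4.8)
The plan is to prove the two implications separately. The implication $\Leftarrow$ will be a reduction-modulo-$\varpi$ computation, while $\Rightarrow$ is the main content, which I would break into three steps: (i) since a topologically nilpotent element is bounded, place $\ga$ in $\mathrm{Lie}(\bfP)$ for some parahoric $\bfP$; (ii) use topological nilpotence to force the reduction of $\ga$ into the nilpotent cone of the reductive quotient $\sG_\bfP$; (iii) apply a Bruhat--Tits refinement to move $\ga$ into $\mathrm{Lie}(\bfQ_+)$ for a parahoric $\bfQ$ whose facet lies in the closure of the facet of $\bfP$.

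For $\Leftarrow$: suppose $\ga\in\mathrm{Lie}(\bfP_+)$. After conjugating $\bfP$ I may assume it contains the standard Iwahori $\bfI$ of \secref{sec:group-model}, so that $\mathrm{Lie}(\bfP_+)\subset\mathrm{Lie}(\bfI_+)\subset\fg(\cO)$ and in particular $\chi_G(\ga)\in\fc(\cO)$, i.e. $\ga$ is bounded. Under $\mathrm{Red}_0$ the element $\ga$ maps into $\mathrm{Lie}$ of the unipotent radical of a Borel of $\sG_0$, hence into the nilpotent cone of $\mathrm{Lie}(\sG_0)$. Since $p$ is good, the Chevalley morphism of $\sG_0$ takes the constant value $\chi_{\sG_0}(0)$ on the nilpotent cone and is compatible with $\chi_G$ under reduction mod $\varpi$ (this compatibility uses good $p$ together with the tame construction of \secref{sec:group-model}); hence the image of $\chi_G(\ga)$ in $\fc(\bar k)$ vanishes, which is exactly topological nilpotence.

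For $\Rightarrow$: let $\ga\in\fg(F)$ be topologically nilpotent. It is bounded, so $\chi_G(\ga)\in\fc(\cO)$, and by a Lie-algebra analogue of \lemref{lem:bounded} (the adjoint orbit of a bounded element meets $\mathrm{Lie}(\bfP)$ for some parahoric) I may assume $\ga\in\mathrm{Lie}(\bfP)$; write $\bar\ga$ for its image in $\mathrm{Lie}(\sG_\bfP)$. The characteristic polynomial of $\mathrm{ad}(\ga)$ acting on the $\cO$-lattice $\mathrm{Lie}(\cG_\bfP)$ equals $P(X)$ evaluated at $\chi_G(\ga)$; reducing mod $\varpi$ and using that $\chi_G(\ga)$ maps to $0$ in $\fc(\bar k)$, this becomes $X^{\dim\fg}$, so $\mathrm{ad}(\bar\ga)$ is nilpotent on $\mathrm{Lie}$ of the special fiber of $\cG_\bfP$. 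Next I would upgrade this to "$\bar\ga$ lies in the nilpotent cone of $\mathrm{Lie}(\sG_\bfP)$": here one uses that $p$ good and $p\nmid|\pi_1(G^{\mathrm{der}})|$ make $\fg^{\mathrm{der}}$ have no center, that $p\nmid|\pi_0(Z_G)|$ gives a splitting $\fg=\mathrm{Lie}(Z_G^\circ)\oplus\fg^{\mathrm{der}}$ with $\chi_G$ detecting the first summand, and that the central torus of $\sG_\bfP$ acts through nonzero weights on the graded pieces of $\mathrm{Lie}$ of the unipotent radical of $\cG_{\bfP,k}$; together these force the semisimple part of $\bar\ga$ to vanish. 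Finally I would invoke a Lie-algebra version of the Bruhat--Tits statement \cite[Theorem 8.4.19]{KP23} used for the group in the preceding lemma: choosing a Borel of $\sG_\bfP$ with $\bar\ga$ in its nilradical yields a parahoric $\bfQ$ with facet in the closure of that of $\bfP$ and $\ga\in\mathrm{Lie}(\bfQ_+)$.

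The hardest point is the upgrade in step (ii) from "$\mathrm{ad}(\bar\ga)$ nilpotent" to "$\bar\ga$ nilpotent in $\mathrm{Lie}(\sG_\bfP)$": this fails in general (for instance for $\mathfrak{sl}_2$ in residue characteristic $2$, precisely the case excluded by $p\nmid|\pi_0(Z_G)|$), so controlling the semisimple part of $\bar\ga$ is exactly where all three hypotheses on $p$ enter. A secondary obstacle is making the refinement in step (iii) precise over $\cO$. As an alternative to steps (ii)--(iii), once the quasi-logarithm map of \secref{sec:conj-Lie} (cf. \cite{KV}) is available with its matching property $\bfP_+\xrightarrow{\sim}\mathrm{Lie}(\bfP_+)$ one could instead write $\ga=\Phi(g)$ with $g\in G(F)$ topologically unipotent and apply the preceding lemma to $g$, using that $p\nmid|\pi_1(G^{\mathrm{der}})|$ implies $p$ does not divide the order of the torsion subgroup of $\pi_1(G)_I$; but this only shortens the argument if $\Phi$ is known to be defined on the entire topologically unipotent locus rather than merely near the identity.
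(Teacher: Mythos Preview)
The paper does not give a direct proof: it simply translates the hypotheses into the conditions ``$\fn^-$-good'' and ``$\fg$-good'' of \cite{AFV18} and then cites Remark~35 there. Your proposal attempts a self-contained argument, which is more ambitious, but step~(ii) has a genuine gap.

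The two structural claims you rely on in step~(ii) --- that the hypotheses force $\mathrm{Lie}(G^{\mathrm{der}})$ to have trivial center, and that $\fg$ splits as $\mathrm{Lie}(Z_G^\circ)\oplus\mathrm{Lie}(G^{\mathrm{der}})$ --- both fail for $G=\mathrm{GL}_p$ in residue characteristic $p$. Here $\pi_1(G^{\mathrm{der}})=\pi_1(\mathrm{SL}_p)=1$ and $\pi_0(Z_G)=\pi_0(\bbG_m)=1$, so every hypothesis of the lemma is satisfied; yet over $k$ the scalar matrices form a nonzero central ideal of $\mathfrak{sl}_p$, and $\mathrm{Lie}(Z_G^\circ)$ lies \emph{inside} $\mathfrak{sl}_p$ rather than complementing it. Your $\mathfrak{sl}_2$ example in characteristic $2$ is indeed excluded by $p\nmid|\pi_0(Z_G)|$, but $\mathrm{GL}_p$ is not. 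So the upgrade from ``$\mathrm{ad}(\bar\ga)$ nilpotent'' to ``$\bar\ga$ nilpotent in $\mathrm{Lie}(\sG_\bfP)$'' cannot proceed through centerlessness of the derived Lie algebra under the stated hypotheses. What is actually required --- and what \cite{AFV18} packages as ``$\fg$-good'' --- is a compatibility between the $\cO$-scheme $\fc$ and the adjoint quotient of $\mathrm{Lie}(\sG_\bfP)$ upon reduction, which needs a more careful treatment of invariant theory in small characteristic than the decomposition you sketch. The same compatibility is also what underlies your $\Leftarrow$ direction, which you correctly flag as needing ``good $p$ together with the tame construction'' but do not justify.

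Your quasi-logarithm alternative is circular within the paper's logical order: Lemma-Definition~\ref{def:qlog-top-nilp}, which formulates the bijection $\bfP_{tu}\xrightarrow{\sim}\mathrm{Lie}(\bfP)_{tn}$ you would need, explicitly invokes the present lemma to reconcile the paper's definition of topological nilpotence with the one in \cite{KV}.
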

\begin{proof}
    This follows from results in \cite{AFV18}. Let us explain it using the notations and citations from \emph{loc. cit.} By Remark 16(a) and Remark 8, the assumption on $p$ implies that $p$ is ``$\fn^-$-good" in the sense of Definition 15(a). Then by Remark 16(b) we see that $p$ is ``$\fg$-good" in the sense of Definition 15(c). Moreover, the assumption on $G$ implies that it is ``not too wild" in the sense of Definition 17 (see also the paragraph following Definition 17). Then our result is precisely Remark 35.
\end{proof}
\begin{prop}\label{prop:top-jordan-Lie}
    Suppose that $G$ is tamely ramified, the residue characteristic $p$ is good for $G$ and divides neither the order of $\pi_1(G^{\mathrm{der}})$ nor the order of $\pi_0(Z_G)$. Let $\ga\in\fg(F)$ be a bounded semisimple element. 
    Then there exists a decomposition $\ga=\ga_0+\ga_1$ where $\ga_0,\ga_1\in\fg_\ga(F)$ satisfy the following conditions:
    \begin{itemize}
        \item $\ga_0$ is strongly semisimple in $\fg(F)$ and $G(F)$-conjugate to an element in $\ft(\cO)$;
        \item The centralizer $G_{\ga_0}$ is an $F$-Levi subgroup of $G$ and $\ga_1$ is a topologically nilpotent element in $\mathrm{Lie}(G_{\ga_0})$.
    \end{itemize}
     If moreover $\ga$ is regular semisimple in $\fg(F)$, then $\ga_1$ is regular semisimple in $\mathrm{Lie}(G_{\ga_0})$.
\end{prop}
\begin{proof}
    Let $F'\subset\overline{F}$ be the minimal Galois extension of $F$ over which $G$ becomes split. Let $\Gamma:=\mathrm{Gal}(\overline{F}/F)$, $\Gamma':=\mathrm{Gal}(\overline{F}/F')$ and $\Theta:=\mathrm{Gal}(F'/F)\cong\Gamma/\Gamma'$. Let $\rho_G:\Gamma\to\mathrm{Out}(\bbG)$ be the twisting homomorphism of $G$ as in \S\ref{sec:parahoric}. Then $\rho_G$ is the composition of the projection $\Gamma\epic\Theta$ and an embedding $\Theta\into\mathrm{Out}(\bbG)$. For each $\sigma\in\Gamma$, we let $\sigma_G$ denote the Galois action on $G(\overline{F})$ or $\fg(\overline{F})$ and simply use $\sigma(\cdot)$ to denote the Galois action with respect to the \emph{split form} $\bbG$. Then we have 
    \[\sigma_G(g)=\rho_G(\sigma)\sigma(g),\quad\sigma_G(x)=\rho_G(\sigma)\sigma(x),\quad\forall g\in G(\overline{F}), x\in\fg(\overline{F}).\]
    Let $T_\ga\subset G_\ga$ be a maximal $F$-torus. Then $T_\ga$ is also a maximal $F$-torus of $G$ since $\ga$ is semisimple. Let $g\in G(\overline{F})$ be an element such that $\mathrm{Ad}(g)^{-1}T_{\overline{F}}=T_{\ga,\overline{F}}$ (where $T$ is the maximal $F$-torus of $G$, which is part of the pinning of $G$ fixed in \S\ref{sec:parahoric}). Let $\ga'=\mathrm{ad}(g)\ga$. Since $\ga$ lies in the center of $\fg_\ga$ we have $\ga\in\mathrm{Lie}(T_\ga)$. Moreover since $\ga'$ is bounded we get that $\ga'\in\bbt(\overline{\cO})$. For any $\sigma\in\Gamma$ we have $g\cdot\sigma_G(g)^{-1}\in N_G(T)(\overline{F})$ and we let $w_\sigma$ denote its image in $\bbW$. Then we have
    \begin{equation}\label{eq:gamma'-Galois-action}
    \ga'=w_\sigma\cdot\sigma_G(\ga')=w_\sigma\cdot\rho_G(\sigma)\sigma(\ga'),\quad\quad\forall\sigma\in\Gamma.
    \end{equation}
    Recall that $\bbt$ is the split $\cO$-model of $\ft$. Let $\Bar{\ga}'\in\bbt(k)$ be the image of $\ga'$ under the reduction map $\ft(\overline{\cO})\to\bbt(k)$. Since $\chi_G(\ga)=\chi_G(\ga')\in\fc(\cO)$, we have $\chi_\bbG(\bar{\ga}')\in\bbc^{\Theta}(k)$.\par
    We claim that the natural map $\bbt^{\Theta}(k)\to\bbc^{\Theta}(k)$ is surjective. By the assumption that $G$ is tamely ramified, the order of $\Theta$ is invertible in $\cO$. So $\bbt^\Theta$ and $\bbc^\Theta$ are both smooth $\cO$-schemes by Lemma \ref{lem:fixed-point-locus-smooth}. In particular any element in $\bbc^{\Theta}(k)$ lifts to an element in $\bbc^{\Theta}(\cO)$ by \cite[Lemma 8.1.3]{KP23}. Since the morphism $\bbt^\Theta\to\bbc^\Theta$ is finite (hence proper), it suffices to show surjectivity of the map $\bbt^\Theta(\overline{F})\to\bbc^\Theta(\overline{F})$. Thus we are reduced to characteristic $0$ situation and the claim follows from \cite{Levy-MO}. Since $\bbW$ acts transitively on the fiber $\chi_\bbG^{-1}(\chi_\bbG(\bar{\ga}'))$, after conjugation by $N_G(T)(\overline{F})$ we may and do assume that $\bar{\ga}'\in\bbt^{\Theta}(k)$ so that $\rho_G(\sigma)(\bar{\ga}')=\bar{\ga}'$ for all $\sigma\in\Gamma$.\par 
    Let $\Sigma_1\subset\Sigma$ be the subset consisting of roots $\alpha$ such that $\mathrm{val}(d\alpha(\ga'))>0$. Then $\Sigma_1$ is the closed sub-system of $\Sigma$ consisting of the roots whose root hyperplane contains $\Bar{\ga}'$. Moreover, $\Sigma_1$ is $\Theta$-stable since $\bar{\ga}'$ is $\Theta$-invariant. Let $\bbW_1$ be the Weyl group of the root system $\Sigma_1$ and let $\bbH\subset \bbG$ be the split reductive subgroup containing $\bbT$ and having root system $\Sigma_1$. By assumption $p$ is good for $\bbG$ and does not divide the order of $\pi_1(\bbG^{\mathrm{der}})$, so $p$ is not a torsion prime for the root datum of $\bbG$. This implies that the centralizer $k$-group $\sG_{\bar{\ga}'}=\bbG_{k,\bar{\ga}'}$ is connected by Proposition \ref{prop:centralizer-Levi-connected}. Then by \cite[Lemma 3.7]{St-torsion} we get that $\bbW_1=\bbW_{\bar{\ga}'}$, the stabilizer of $\bar{\ga}'$ in $\bbW$. From equation \eqref{eq:gamma'-Galois-action} we deduce
    \[\bar{\ga}'=w_\sigma\cdot\rho_G(\sigma)(\bar{\ga}')=w_\sigma\cdot\bar{\ga}',\quad\forall\sigma\in\Gamma.\]
    Therefore the assignment $\sigma\mapsto w_\sigma$ defines a homomorphism
    \[\rho_{\bar{\ga}'}:\Gamma\to\bbW_{\bar{\ga}'}=\bbW_1.\]
    Let $\bbt_1\subset\bbt$ be the vanishing loci of $\{d\alpha,\alpha\in\Sigma_1\}$ defined by
    \[\bbt_1:=\spec\frac{\cO[\bbt]}{\langle d\alpha;\alpha\in\Sigma_1\rangle}=\spec\frac{\mathrm{Sym}_\cO(X^*(T)\otimes_\bbZ\cO)}{\langle \alpha,\alpha\in\Sigma_1\rangle}.\]
    Then we have $\Omega^1_{\bbt_1/\cO}=(X^*(T)/\bbZ\Sigma_1)\otimes_\bbZ\cO$. Our assumptions on $p$ imply that  $X^*(T)/\bbZ\Sigma_1$ does not have $p$-torsion. Indeed, $\bbZ\Sigma/\bbZ\Sigma_1$ is $p$-torsion free since $p$ is good for $G$; while the order of the torsion subgroup of $(X^*(T)/\bbZ\Sigma)$ equals to the order of $\pi_0(Z_\bbG)$, which is prime to $p$ by assumption. So $\Omega^1_{\bbt_1/\cO}$ is a free $\cO$-module of finite rank and hence $\bbt_1$ is a smooth $\cO$-scheme. Moreover $\bbt_1$ is preserved by the $\Theta$-action and we have $\bbt_1\subset\bbt^{\bbW_1}$. The fixed point locus $\bbt_1^{\Theta}$ is a smooth $\cO$-scheme by Lemma \ref{lem:fixed-point-locus-smooth}. By construction we have $\bar{\ga}'\in\bbt_1^{\Theta}(k)$ and then by \cite[Lemma 8.1.3]{KP23} there exists an element 
    \[\ga_0'\in\bbt_1^{\Theta}(\cO)\subset\ft(\cO)\] 
    whose image under the natural reduction map $\bbt_1(\cO)\to\bbt_1(k)$ equals to $\Bar{\ga}'$. Let $\ga_1':=\ga'-\ga_0'$. Then for any $\sigma\in\Gamma$ we have $w_\sigma\cdot\sigma_G(\ga_0')=\ga_0'$ and by \eqref{eq:gamma'-Galois-action} we get
    \[w_\sigma\cdot\sigma_G(\ga_1')=\ga_1'.\]
    Let $\ga_0:=\mathrm{ad}(g)^{-1}(\ga_0')$ and $\ga_1:=\mathrm{ad}(g)^{-1}(\ga_1')$. Then we have $\ga=\ga_0+\ga_1$ and the above equations imply that $\ga_0,\ga_1\in\fg_\ga(F)$. \par
    For any root $\alpha\in\Sigma_1$ we have $d\alpha(\ga_0')=0$ and for any $\alpha\in\Sigma\backslash\Sigma_1$ we have $d\alpha(\bar{\ga}')\in k^\times$ so that $|d\alpha(\ga_0')|=1$. This implies that $\ga_0'\in\ft(\cO)\subset\fg(F)$ is strongly semisimple and hence $\ga_0\in\fg(F)$ is strongly semisimple. On the other hand since the image of $\ga_1'$ under the reduction map $\bbt(\overline{\cO})\to\bbt(k)$ equals to $0$, it is a topologically nilpotent element in $\fg_{\ga_0'}(\overline{F})$. So $\ga_1$ is a topologically nilpotent element in $\fg_{\ga_0}(F)=\mathrm{Lie}(G_{\ga_0})$. \par
    By our construction $\ga_0$ is $G(\overline{F})$ conjugate to the element $\ga_0'\in\ft(\cO)$. Let $G_{\ga_0,\ga_0'}$ be the $F$-scheme classifying elements $g\in G$ such that $\mathrm{ad}(g)(\ga_0)=\ga_0'$. Then $G_{\ga_0,\ga_0'}$ is nonempty, hence a $G_{\ga_0}$-torsor over $F$. Since $G_{\ga_0}$ is connected reductive by Proposition \ref{prop:centralizer-Levi-connected}, we have $H^1(F,G_{\ga_0})=0$ by Steinberg's theorem and the fact that $F$ has cohomological dimension $\le1$. Thus $G_{\ga_0,\ga_0'}$ is a trivial $G_{\ga_0}$-torsor, which means that $\ga_0$ is $G(F)$-conjugate to $\ga_0'\in\ft(\cO)$. By Proposition \ref{prop:centralizer-Levi-connected},  $G_{\ga_0',\overline{F}}=\bbH_{\overline{F}}$ is a Levi subgroup of $G_{\overline{F}}$. Also, $G_{\ga_0'}$ contains the maximally split maximal $F$-torus $T$ of $G$ since $\ga_0'\in\ft(F)$. Therefore $G_{\ga_0'}$, and hence $G_{\ga_0}$, is an $F$-Levi subgroup of $G$.\par
    Finally suppose that $\ga$ is regular semisimple. Then $\ga'$ is also regular semisimple and hence $d\alpha(\ga_1')=d\alpha(\ga')\ne0$ for all $\alpha\in\Sigma_1$. So $\ga_1$ is regular semisimple in $\mathrm{Lie}(G_{\ga_0})$.
\end{proof}

\subsection{Invariant pairing and quasi-logarithm}\label{sec:pairing-log}
We review relevant facts from \cite[\S1.8]{KV} and generalize some results there to the case of tamely ramified groups. Let $G$ be an algebraic group over $F$ and let $\rho:G\to\mathrm{GL}(V)$ be an algebraic representation on a finite dimensional $F$-vector space $V$. Let $d\rho:\mathrm{Lie}(G)\to\mathrm{End}(V)$ be the induced representation of the Lie algebra. On $\mathrm{End}(V)$ we have the trace pairing defined by $(A,B):=\mathrm{Tr}(AB)$. This induces a $G_{\mathrm{ad}}$-invariant symmetric bilinear form on $\fg$ defined by $(X,Y)_\rho:=\mathrm{Tr}(d\rho(X)\circ d\rho(Y))$. Suppose that $(\cdot,\cdot)_\rho$ is nondegenerate. Then we can define the orthogonal projection $\mathrm{pr}_\fg:\mathrm{End}(V)\to\fg$ such that $\mathrm{pr}_\fg(A)\in\fg$ is the unique element satisfying $(\mathrm{pr}_\fg(A),X)_\rho=\mathrm{Tr}(A\circ d\rho(X))$ for all $X\in\fg$. Define a morphism $\Phi_\rho:G\to\fg$ by $\Phi_\rho(g):=\mathrm{pr}_\fg(\rho(g)-\mathrm{Id})$. Then $\Phi_\rho$ is a quasi-logarithm map in the sense of \cite{KV} whose definition is as follows:

\begin{defn}
    A quasi-logarithm map for an algebraic group $G$ with Lie algebra $\fg$ is a $G_\mathrm{ad}$-equivariant morphism $\Phi:G\to\fg$ such that $\Phi(1)=0$ and the differential $d\Phi_1$ of $\Phi$ at $1\in G$ is the identity map. Here the adjoint group $G_\mathrm{ad}$ acts on $G$ by conjugation and on $\fg$ by the adjoint representation. 
\end{defn}

\begin{lem-def}\label{def:qlog-over-O}
    Let $G$ be a reductive group over $F$ with Lie algebra $\fg$ and let $\Phi:G\to\fg$ be quasi-logarithm map. We say that $\Phi$ is \emph{defined over $\cO$} if the following equivalent conditions are satisfied:
    \begin{enumerate}
        \item For any parahoric subgroup $\bfP\subset G(F)$, $\Phi$ extends to a morphism of $\cO$-schemes $\cG_\bfP\to\mathrm{Lie}(\bfP)$;
        \item There exists a parahoric subgroup $\bfP\subset G(F)$ such that $\Phi$ extends to a morphism of $\cO$-schemes $\cG_\bfP\to\mathrm{Lie}(\bfP)$.
    \end{enumerate}
\end{lem-def}
\begin{proof}
    This is part of \cite[Lemma 1.8.7(b)]{KV}. Note that in \emph{loc. cit.} it is assumed that $G$ is split, but this assumption is not necessary for the current result. Let us review the arguments for (2)$\Rightarrow$(1).\par 
    Suppose that $\Phi$ extends to a morphism of $\cO$-group schemes $\cG_\bfP\to\mathrm{Lie}(\bfP)$ for a parahoric subgroup $\bfP\subset G(F)$. Let $\bfI\subset G(F)$ be an Iwahori subgroup contained in $\bfP$. Then $\Phi$ induces a quasi-logarithm map on the special fiber $\overline{\cG}_\bfP$ and the image of $\bfI$ in the special fiber $\overline{\cG}_\bfP$ is a Borel subgroup. Then by \emph{loc.cit.} Lemma 1.8.3 (a) (which holds also for non-reductive groups), we deduce that $\Phi(\bfI)\subset\mathrm{Lie}(\bfI)$. For any parahoric subgroup $\bfP'$ containing $\bfI$ we have $\bfP'=\cup_{x\in\bfP'}x\bfI x^{-1}$ and $\mathrm{Lie}(\bfP')=\cup_{x\in\bfP'}\mathrm{ad}(x)\bfP'$. By the $G$-equivariance of $\Phi$, we get that $\Phi(\bfP')\subset\mathrm{Lie}(\bfP')$ and that this holds for any parahoric subgroup $\bfP'$. Thus $\Phi$ extends to a morphism of $\cO$-schemes $\cG_{\bfP'}\to\mathrm{Lie}(\cG_{\bfP'})$ for any parahoric subgroup $\bfP'\subset G(F)$ by \cite[Proposition 1.7.6]{BT-II}.
\end{proof}
We are interested in extending the pairing $(\cdot,\cdot)_\rho$ and the map $\Phi_\rho$ to the parahoric group schemes of $G$. First we consider the case where $G$ is split.  
\begin{lem-def}\label{lem:quasi-log}
    Suppose $G$ is a split reductive group over $F$. Let $\rho:G\to\mathrm{GL}(V)$ be an algebraic representation such that the associated pairing $(\cdot,\cdot)_\rho$ on $\fg(F)$ defined as above is non-degenerate. We say that $(\cdot,\cdot)_\rho$ is \emph{non-degenerate over $\cO$} if the following equivalent conditions are satisfied:
    \begin{enumerate}
        \item for any parahoric subgroup $\bfP\subset G(F)$ we have 
        \[\mathrm{Lie}(\bfP_+)=\mathrm{Lie}(\bfP)^\perp:=\{Y\in\fg(F),(Y,\mathrm{Lie}(\bfP))_\rho\subset\varpi\cO\}\]
        \item Then there exists a parahoric subgroup $\bfP\subset G(F)$ such that $\mathrm{Lie}(\bfP_+)=\mathrm{Lie}(\bfP)^\perp$.
    \end{enumerate}
    In this case, the associated quasi-logarithm map $\Phi_\rho$ is defined over $\cO$. 
\end{lem-def}
This is \cite[Lemma 1.8.7(a)]{KV} and \cite[1.8.9]{KV}. 

\begin{lem}\label{lem:pairing-split-case}
    Let $G$ be a reductive group over $F$ whose adjoint group $G_{\mathrm{ad}}$ is simple and split. Suppose that the residue characteristic $p$ is good for $G$. Then there exists a split reductive group $G^\natural$ over $\cO$ equipped with an isomorphism between adjoint groups $G_{\mathrm{ad}}\cong G^\natural_{\mathrm{ad}}$, and a faithful algebraic representation $\rho:G^\natural\to\mathrm{GL}(V)$ (defined over $\cO$) satisfying the following conditions:
    \begin{enumerate}
        \item The trace pairing $(\cdot,\cdot)_\rho$ defined by $\rho$ is a perfect pairing on the $\cO$-module $\fg^\natural=\mathrm{Lie}(G^\natural)$.
        \item The natural homomorphism $\mathrm{Out}(G^\natural)\to\mathrm{Out}(G_{\mathrm{ad}})$ between outer automorphism groups is an isomorphism\footnote{Note that $\mathrm{Out}(G_{\mathrm{ad}})=\mathrm{Out}(G^{\mathrm{sc}})$ is isomorphic to the automorphism group of the Dynkin diagram of $G$. On the other hand, $\mathrm{Out}(G)$ is isomorphic to the automorphism group of the \emph{based root datum} of $G$, which maps naturally to the automorphism group of its Dynkin diagram, but the map may not be bijective. For example, the outer automorphism group of the split $\mathrm{SO}(8)$ is $\bbZ/2\bbZ$; while the outer automorphism group of its simply connected cover $\mathrm{Spin}(8)$ or its adjoint group $\mathrm{PSO}(8)$ is $S_3$, the symmetric group on $3$ letters.} and the trace pairing $(\cdot,\cdot)_\rho$ is $\mathrm{Out}(G_{\mathrm{ad}})$-invariant. In other words, for any $x,y\in\fg^\natural(F)$ and any $\theta\in\mathrm{Out}(G_{\mathrm{ad}})$, we have $(x,y)_\rho=(\theta(x),\theta(y))_\rho$.
        \item The residue characteristic $p$ divides neither the order of $\pi_1(G^{\natural,\mathrm{der}})$ \footnote{together with the assumption that $p$ is good for $G$, this implies that $p$ is not a torsion prime for the root datum of $G^\natural$.} nor the order of the component group of the center of $G^\natural$.
    \end{enumerate}
\end{lem}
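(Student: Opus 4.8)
The plan is to establish the lemma by a direct case analysis over the Dynkin type of the simple split adjoint group $G_{\mathrm{ad}}$; in each type I exhibit an explicit split reductive $\cO$-group $G^\natural$ with $G^\natural_{\mathrm{ad}}\cong G_{\mathrm{ad}}$ together with a faithful $\cO$-representation $\rho:G^\natural\to\mathrm{GL}(V)$, and verify (1)--(3) by hand. The guiding choices are: take the isogeny type of $G^\natural$ so that $\mathrm{Out}(G^\natural)$ already equals the Dynkin diagram automorphism group (this forces, roughly, the simply connected or adjoint form, or an appropriate classical matrix group); and take $\rho$ self-dual when possible, otherwise replace $V$ by $V\oplus V^\vee$, so that the trace pairing is visibly stable under that automorphism group. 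With $G^\natural$ and $\rho$ so chosen, the residual conditions on $p$ in (1) and (3) will only involve bad primes.

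For the classical types I take $G^\natural$ to be a matrix group with $\rho$ its standard representation: $G^\natural=\mathrm{GL}_n$ for type $\mathrm{A}_{n-1}$ (with $\mathrm{SL}_2$ in place of $\mathrm{GL}_2$ when $p$ is odd, the type $\mathrm{A}_1$ case in residue characteristic $2$ requiring a separate look), $G^\natural=\mathrm{SO}_{2n+1}$ for $\mathrm{B}_n$, $G^\natural=\mathrm{Sp}_{2n}$ for $\mathrm{C}_n$, and $G^\natural=\mathrm{SO}_{2n}$ for $\mathrm{D}_n$ with $n\ge5$. In each case the trace pairing $\tr(XY)$ on $\fg^\natural\subset\mathfrak{gl}_N$ is perfect because $\mathfrak{gl}_N=\fg^\natural\oplus(\fg^\natural)^{\perp}$, the orthogonal complement being the subspace of matrices symmetric for the relevant bilinear form; for type $\mathrm{A}$ this is unconditional, since the Gram matrix of $\tr(XY)$ on $\mathfrak{gl}_n$ in the basis $\{E_{ij}\}$ is a permutation matrix, and for $\mathrm{B},\mathrm{C},\mathrm{D}$ it needs only $2\in\cO^\times$, which holds whenever $p$ is good. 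The outer automorphism group of each of these groups matches the diagram automorphism group — trivial for $\mathrm{SO}_{2n+1},\mathrm{Sp}_{2n}$, generated by the transpose-inverse automorphism for $\mathrm{GL}_n$ ($n\ge3$), and by conjugation by a reflection in $\mathrm{O}_{2n}\smallsetminus\mathrm{SO}_{2n}$ for $\mathrm{SO}_{2n}$ ($n\ge5$) — and each such automorphism preserves $\tr(XY)$, which gives (2). For (3), $\pi_1$ of the derived group and the component group of the center are trivial for $\mathrm{GL}_n$ and are $2$-groups for the other three families (using that the order of $\pi_0$ of a $\mu_m$-type center over $\cO$ is the prime-to-$p$ part of $m$), so they cause no difficulty at good $p$.

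Type $\mathrm{D}_4$ is treated separately, because $\mathrm{Out}(\mathrm{SO}_8)=\bbZ/2\bbZ$ is strictly smaller than $\mathrm{Out}(\mathrm{D}_4)=S_3$; here I take $G^\natural=\mathrm{Spin}_8$, whose outer automorphism group realizes triality, and $\rho$ the sum of the two half-spin representations, which is faithful because their kernels are distinct order-two subgroups of $Z(\mathrm{Spin}_8)\cong(\bbZ/2\bbZ)^2$. Its trace form is a power-of-$2$ multiple of the basic invariant form of the simple Lie algebra $\mathfrak{spin}_8$, hence perfect over $\cO$ once $p\ne2$; and although $\rho$ itself is not triality-stable, its trace form is, because the space of invariant forms on $\mathfrak{spin}_8$ is one-dimensional and a rational automorphism of finite order must act trivially on it. For the exceptional types: when $G_{\mathrm{ad}}$ has type $\mathrm{E}_8,\mathrm{F}_4$ or $\mathrm{G}_2$ the group is both simply connected and adjoint, so I take $G^\natural=G_{\mathrm{ad}}$ with $\rho$ the adjoint representation (faithful since the center is trivial); then (2) and (3) are automatic and (1) is the classical non-degeneracy of the Killing form of these Lie algebras modulo $p$ at good $p$, i.e. its discriminant over $\bbZ$ is supported at the bad primes (cf. \cite[I.4]{SS-conj}). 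For $\mathrm{E}_6$ I take $G^\natural=\mathrm{E}_6^{\mathrm{sc}}$ with $\rho=V_{27}\oplus V_{27}^{\vee}$, and for $\mathrm{E}_7$ I take $G^\natural=\mathrm{E}_7^{\mathrm{sc}}$ with $\rho=V_{56}$ the self-dual $56$-dimensional representation; since $p$ good forces $p\ge5$ here, the $\mu_3$ resp. $\mu_2$ center and the trivial $\pi_1$ give (3), $\mathrm{Out}$ matches the diagram automorphisms, and (1) follows by combining the non-degeneracy of the basic form of $\mathfrak{e}_6,\mathfrak{e}_7$ at good $p$ (the Cartan determinants being $3$ resp. $2$) with the known Dynkin index of $V_{27}$ resp. $V_{56}$.

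The step I expect to be the main obstacle is the perfectness assertion (1) for the exceptional groups: one must pin down the discriminant of the trace form of the chosen representation over $\cO$ and confirm it is a unit precisely when $p$ is good — for $\mathrm{E}_8,\mathrm{F}_4,\mathrm{G}_2$ this is the well-documented non-degeneracy of the Killing form at good primes, but for $\mathrm{E}_6,\mathrm{E}_7$ it needs in addition the exact Dynkin index of the small representation used and the discriminant of the basic form on the Chevalley $\bbZ$-form, so some bookkeeping of the primes $2,3$ is unavoidable. Secondary points of care are the triality phenomenon in type $\mathrm{D}_4$, which rules out the naive choice $\mathrm{SO}_8$ and makes the half-spin construction necessary, and the low-rank, small-characteristic corner of type $\mathrm{A}$, where the three conditions constrain $G^\natural$ most severely.
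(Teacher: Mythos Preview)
Your proposal is correct and follows the same case-by-case strategy as the paper, but the paper makes simpler choices in a few places that eliminate precisely the bookkeeping you flag as the main obstacle.

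For the exceptional types the paper takes $G^\natural=G_{\mathrm{ad}}$ with $\rho$ the adjoint representation \emph{uniformly for all five types} $E_6,E_7,E_8,F_4,G_2$, not just the three with trivial center. This works because for $E_6,E_7$ at good $p$ one has $p\ge5$, so $p\nmid|\pi_1(G_{\mathrm{ad}})|\in\{3,2\}$, giving (3); the Killing form is then perfect over $\cO$ by the same classical fact (\cite[I, Lemma~5.3]{SS-conj}) you invoke for $E_8,F_4,G_2$, and its $\mathrm{Out}$-invariance is automatic. So your Dynkin-index computations for $V_{27}$ and $V_{56}$ are unnecessary.

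For $D_4$ the paper takes $G^\natural=\mathrm{SO}_8$ with the standard representation, exactly as for the other $D_n$. You are right that $\mathrm{Out}(\mathrm{SO}_8)=\bbZ/2\bbZ\ne S_3$, so the first clause of (2) is not literally satisfied by this choice; the paper handles this by observing that $\mathrm{Out}(G_{\mathrm{ad}})=S_3$ still acts on $\mathfrak{so}_8=\mathrm{Lie}(\mathrm{SO}_8)=\mathrm{Lie}(\mathrm{PSO}_8)$ and checking invariance of the trace form under the order-$3$ automorphism directly (with a footnote remarking that this triality invariance is in fact never used later). Your $\mathrm{Spin}_8$-with-half-spin construction genuinely fixes the first clause of (2), at the cost of a less uniform argument.

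Your caveat about $A_1$ is unnecessary: the paper uses $\mathrm{GL}_n$ for all $n\ge1$, and the trace form on $\mathfrak{gl}_n$ is perfect over $\bbZ$ with no restriction on $p$.
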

\begin{proof}
    We take $G^\natural$ to be $\mathrm{GL}_n$ (resp. $\mathrm{SO}_{2n+1}$, $\mathrm{Sp}_{2n}$, $\mathrm{SO}_{2n}$) if $G$ is of type $\mathrm{A}_{n-1}$ for $n\ge1$ (resp. $\mathrm{B}_{n}$ for $n\ge2$, $\mathrm{C}_n$ for $n\ge3$, $\mathrm{D}_n$ for $n\ge4$) and let $\rho$ be the standard representation of $G^\natural$. If $G$ is exceptional, then we take $G^\natural=G_{\mathrm{ad}}$ and let $\rho$ be the adjoint representation of $G^\natural$. It is clear from the definition that $\mathrm{Out}(G^\natural)\cong\mathrm{Out}(G_{\mathrm{ad}})$ and that the representation $\rho$ naturally extends to the hyperspecial $\cO$-model of $G^\natural$ (that we still denote by $G^\natural$). By the proof of \cite[I,Lemma 5.3]{SS-conj}, the assumption that $p$ is good implies that the induced trace pairing $(\cdot,\cdot)_\rho$ remains perfect on the special fiber of the hyperspecial $\cO$-model. Therefore it is non-degenerate over the hyperspecial point and hence non-degenerate over $\cO$ by Lemma \ref{lem:quasi-log}.\par 
    Next we show that $(\cdot,\cdot)_\rho$ is $\mathrm{Out}(G_\mathrm{ad})$-invariant. If $G$ is exceptional, then $(\cdot,\cdot)_\rho$ is the Killing form and is easily checked to be invariant under any automorphism of the Lie algebra. If $G$ is of type $\mathrm{A}_{n-1}$, then any nontrivial outer automorphism of $\fg^\natural=\mathrm{Mat}_n$ is conjugate to $x\mapsto-{}^tx$, which clearly fixes the usual trace pairing. If $G$ is of type B or C, then $\mathrm{Out}(G_\mathrm{ad})$ is trivial and there is nothing to prove. If $G$ is of type $\mathrm{D}_n$ for $n\ge5$, then any nontrivial outer automorphism of $\fg^\natural=\mathfrak{so}(2n)$ has order $2$ and is induced by conjugation by an element in $\mathrm{O}_{2n}$ (which contains $G^\natural=\mathrm{SO}_{2n}$ as a subgroup of index $2$, see for example \cite[\S17.3.2]{Spr-LAG}), hence it fixes the trace pairing $(\cdot,\cdot)_\rho$ for the standard representation $\rho$ of $G^\natural=\mathrm{SO}_{2n}$. Thus we are left with the case where $G$ is of type $\mathrm{D}_4$. In this case $\mathrm{Out}(G_\mathrm{ad})=S_3$ and the action of a generator of order $2$ on $\fg^\natural=\mathfrak{so}(8)$ fixes the trace pairing for the same reason as in the previous case. Finally we refer to \cite[Appendix]{MW} for an explicit description of the action of an order 3 outer automorphism on $\fg^\natural=\mathfrak{so}(8)$, from which we easily check that it fixes the standard trace pairing\footnote{Actually the invariance of the standard trace pairing on $\mathfrak{so}(8)$ under the order 3 outer automorphism will not be used in this article}.
\end{proof}
Next we generalize the statements on regular centralizers and Kostant sections to tamely ramified groups. Following the notations in \S\ref{sec:group-model}, we consider a connected reductive group $G$ over $F$ that splits over a degree $e$ tamely ramified extension $F'/F$ where $p\nmid e$ and we assume that $G_{\mathrm{ad}}$ is absolutely simple and $p$ is good for $G$. Let $\bbG$ be the split form of $G$ and let $\rho_G:\mu_e\to\mathrm{Out}(\bbG)$ be the twisting data of $G$. Then we have the special integral model of $G$ for which we use the same notation $G:=(\mathrm{Res}_{\cO'/\cO}\bbG)^{\mu_e,\circ}$. Let $\bbG^\natural$ be the split reductive group over $\cO$ associated to $G^\natural$ from Lemma \ref{lem:pairing-split-case}. Let $\rho_{G_{\mathrm{ad}}}:\mu_e\to\mathrm{Out}(\bbG_{\mathrm{ad}})$ be the composition of $\rho_G$ with the natural homomorphism $\mathrm{Out}(\bbG)\to\mathrm{Out}(\bbG_{\mathrm{ad}})$. By condition (2) in Lemma \ref{lem:pairing-split-case} we can lift $\rho_{G_{\mathrm{ad}}}$ to a homomorphism $\mu_e\to\mathrm{Out}(\bbG^\natural)$, from which we obtain an outer twist $G^\natural$ of $\bbG^\natural$ that comes with an isomorphism of $F$-groups $G^\natural_{\mathrm{ad}}\cong G_{\mathrm{ad}}$. Moreover, we also get a special $\cO$-model $G^\natural:=(\mathrm{Res}_{\cO'/\cO}\bbG^\natural)^{\mu_e,\circ}$. Let $\fg^\natural=\mathrm{Lie}(G^\natural)=(\mathrm{Res}_{\cO'/\cO}\bbg^\natural)^{\mu_e}$ be the Lie algebra and let $\fc^\natural=(\mathrm{Res}_{\cO'/\cO}\bbc^\natural)^{\mu_e}$ be the Chevalley base for $G^\natural$, both defined over $\cO$. We have the Chevalley homomorphism $\chi_{G^\natural}:\fg^\natural\to\fc^\natural$ and universal centralizer group scheme $I^\natural$ over $\fg^\natural$. By condition (3) of Lemma \ref{lem:pairing-split-case}, the residue characteristic $p$ is not a torsion prime for the root datum of $G$. Then by Proposition \ref{prop:reg-centralizer}, we have the universal regular centralizer $J^\natural$ over $\fc^\natural$, together with an isomorphism $(\chi_{G^\natural}^{\mathrm{reg}})^*J^\natural\cong I^\natural|_{\fg^{\natural,\mathrm{reg}}}$. 

\begin{cor}\label{cor:Kostant-section}
    Let notations and assumptions be as above. In particular, $G_\mathrm{ad}$ is absolutely simple and tamely ramified, and $p$ is good for $G$. Then the universal regular centralizer $J^\natural$ (for $\fg^\natural$ in $G^\natural$) is smooth over the $\cO$-scheme $\fc^\natural$. Moreover, there exists a section $\kappa^\natural:\fc^\natural\to\fg^{\natural,\mathrm{reg}}$ of $\chi_{G^\natural}^{\mathrm{reg}}$ and an isomorphism of $\fc^\natural$-group schemes $(\kappa^\natural)^*I^\natural\cong J^\natural$, both defined over $\cO$.
\end{cor}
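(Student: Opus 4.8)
The plan is to first prove the statement for the split reductive group scheme $\bbG^\natural$ over $\cO$ and then descend to $G^\natural=(\mathrm{Res}_{\cO'/\cO}\bbG^\natural)^{\mu_e,\circ}$ through the $\mu_e$-fixed-point construction, in the same spirit as the deduction of Proposition \ref{prop:reg-centralizer} from its split case. For the split group: by condition (3) of Lemma \ref{lem:pairing-split-case} together with the hypothesis that $p$ is good for $G$, the prime $p$ is not a torsion prime for the root datum of $\bbG^\natural$, so \cite[Theorem 4.2.8]{BC22} applies over $\cO$ and gives that $\bbI^\natural|_{\bbg^{\natural,\mathrm{reg}}}$ is smooth over $\bbg^{\natural,\mathrm{reg}}$ (equivalently $\bbJ^\natural$ is smooth over $\bbc^\natural$). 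I would produce the Kostant section using the pinning of $\bbG^\natural$: let $e=\sum_{\alpha\in\Delta}x_\alpha$ and $h=2\rho^\vee$, which are fixed by every pinned automorphism of $\bbG^\natural$; since $p$ is good and not a torsion prime for the root datum, $e$ stays regular nilpotent in the special fibre, $\mathfrak{z}_{\bbg^\natural}(e)$ is $\cO$-free of rank equal to the rank of $\bbG^\natural$, the element $f$ completing the principal $\mathfrak{sl}_2$-triple is unique (hence also pinned-automorphism-fixed), and $\chi_{\bbG^\natural}^{\mathrm{reg}}$ restricts to an isomorphism $f+\mathfrak{z}_{\bbg^\natural}(e)\xrightarrow{\sim}\bbc^\natural$ over $\cO$ (this transversality on the special fibre is where the perfect trace pairing $(\cdot,\cdot)_\rho$ of Lemma \ref{lem:pairing-split-case} is used; cf.\ \cite[\S4.2]{BC22}). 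The resulting Kostant section $\kappa_0\colon\bbc^\natural\to\bbg^{\natural,\mathrm{reg}}$ over $\cO$ has $\mathrm{Out}(\bbG^\natural)$-stable image $f+\mathfrak{z}_{\bbg^\natural}(e)$; as $\chi_{\bbG^\natural}$ is $\mathrm{Out}(\bbG^\natural)$-equivariant, $\kappa_0$ is $\mathrm{Out}(\bbG^\natural)$-equivariant, hence in particular equivariant for the action of $\mu_e$ on $\bbG^\natural$ through the twisting homomorphism $\mu_e\to\mathrm{Out}(\bbG^\natural)$, and pulling back $(\chi_{\bbG^\natural}^{\mathrm{reg}})^*\bbJ^\natural\cong\bbI^\natural|_{\bbg^{\natural,\mathrm{reg}}}$ along $\kappa_0$ yields an isomorphism $\kappa_0^*\bbI^\natural\cong\bbJ^\natural$ of $\bbc^\natural$-group schemes.

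Next I would descend this data. Applying $\mathrm{Res}_{\cO'/\cO}$ to the $\mu_e$-equivariant objects $\kappa_0$, $\bbI^\natural|_{\bbg^{\natural,\mathrm{reg}}}$, $\bbJ^\natural$ and the isomorphism above (Lemma \ref{lem:res}), and then taking $\mu_e$-fixed points, I obtain over $\cO$: a section $\kappa^\natural\colon\fc^\natural=(\mathrm{Res}_{\cO'/\cO}\bbc^\natural)^{\mu_e}\to\fg^{\natural,\mathrm{reg}}=(\mathrm{Res}_{\cO'/\cO}\bbg^{\natural,\mathrm{reg}})^{\mu_e}$ of $\chi_{G^\natural}^{\mathrm{reg}}$, and then, pulling back $(\chi_{G^\natural}^{\mathrm{reg}})^*J^\natural\cong I^\natural|_{\fg^{\natural,\mathrm{reg}}}$ (Proposition \ref{prop:reg-centralizer}) along $\kappa^\natural$ and using $\chi_{G^\natural}^{\mathrm{reg}}\circ\kappa^\natural=\mathrm{id}$, an isomorphism $(\kappa^\natural)^*I^\natural\cong J^\natural$ of $\fc^\natural$-group schemes; both are manifestly defined over $\cO$. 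For the smoothness of $J^\natural$ over $\fc^\natural$: the morphism $\mathrm{Res}_{\cO'/\cO}(\bbI^\natural|_{\bbg^{\natural,\mathrm{reg}}})\to\mathrm{Res}_{\cO'/\cO}\bbg^{\natural,\mathrm{reg}}$ is smooth by the infinitesimal lifting criterion (as in the proof of Proposition \ref{prop:reg-centralizer}); base-changing along $\fg^{\natural,\mathrm{reg}}\into\mathrm{Res}_{\cO'/\cO}\bbg^{\natural,\mathrm{reg}}$ and applying Lemma \ref{lem:fixed-point-locus-smooth} (with $\Sigma=\mu_e$ and base $\fg^{\natural,\mathrm{reg}}$, on which $\mu_e$ acts trivially) shows that the $\mu_e$-fixed-point scheme is smooth over $\fg^{\natural,\mathrm{reg}}$; since $I^\natural|_{\fg^{\natural,\mathrm{reg}}}$ is an open subscheme of it, it too is smooth over $\fg^{\natural,\mathrm{reg}}$, and therefore $J^\natural\cong(\kappa^\natural)^*(I^\natural|_{\fg^{\natural,\mathrm{reg}}})$ is smooth over $\fc^\natural$ by base change along the section $\kappa^\natural$.

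I expect the main obstacle to be the split case, and specifically the claim that the pinning-adapted slice $f+\mathfrak{z}_{\bbg^\natural}(e)$ remains transverse to the fibres of $\chi_{\bbG^\natural}$ after reduction modulo $\varpi$, so that $\kappa_0$ really exists over $\cO$ while staying $\mathrm{Out}(\bbG^\natural)$-equivariant. This is precisely where the good-characteristic hypothesis and the specific choice of $\bbG^\natural$ carrying the perfect trace pairing (Lemma \ref{lem:pairing-split-case}) are indispensable, and it is the reason the corollary is formulated for $\bbG^\natural$ rather than for $G$ itself: for bad $p$ the slice can degenerate on the special fibre. Once the split case is settled, the remainder is a routine manipulation of restriction of scalars and fixed points via Lemmas \ref{lem:res} and \ref{lem:fixed-point-locus-smooth}.
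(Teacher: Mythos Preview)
Your overall strategy---prove smoothness of $\bbJ^\natural$ and construct an $\mathrm{Out}(\bbG^\natural)$-equivariant Kostant section over $\cO$ in the split case, then descend through $\mathrm{Res}_{\cO'/\cO}$ and $\mu_e$-fixed points via Lemmas~\ref{lem:res} and~\ref{lem:fixed-point-locus-smooth}---is exactly the paper's, and your descent step matches it. Two points of execution differ and are worth correcting. First, \cite[Theorem~4.2.8]{BC22} (as invoked in Proposition~\ref{prop:reg-centralizer}) yields $\bbJ^\natural$ but not its smoothness; the paper instead proceeds by cases: for type~$\mathrm{A}$ one has $\bbG^\natural=\mathrm{GL}_n$ and $\bbJ^\natural$ is a Weil restriction of $\bbG_m$ along the finite flat cover of $\bbc^\natural$ defined by the universal characteristic polynomial, hence smooth directly; for the remaining types the hypothesis that $p$ is good forces $p\nmid|\pi_1(\bbG^\natural_{\mathrm{ad}})|$, and then \cite[Proposition~4.2.11]{BC22} applies. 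Second, for the Kostant section the paper bypasses the $\mathfrak{sl}_2$-triple altogether, so there is no need to justify the integral existence and uniqueness of your element $f$: it takes the slice $x_++\mathfrak{s}^\natural$ with $\mathfrak{s}^\natural:=\ker(\mathrm{ad}(x_+))$, and uses the perfect $\mathrm{Out}(\bbG_{\mathrm{ad}})$-invariant pairing from Lemma~\ref{lem:pairing-split-case} to identify $\mathfrak{s}^\natural$ with the orthogonal complement of $[x_+,\bbg^\natural]$, whence $x_++\mathfrak{s}^\natural$ is a section of $\chi_{\bbG^\natural}$ by \cite[\S4.2.4, Remark~4.2.5]{BC22}; since $x_+$ is fixed by pinned automorphisms this slice is visibly $\mathrm{Out}$-stable and descends immediately. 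Both adjustments are minor, but they remove the need for the integral $\mathfrak{sl}_2$ argument you flagged as the main obstacle.
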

\begin{proof}
    Let $\bbJ^\natural$ be the universal regular centralizer of $\bbc^\natural$ defined for the split form $\bbG^\natural$ over $\cO$. 
    From the proof of Lemma \ref{lem:pairing-split-case}, we see that if $\bbG$ is of type $\mathrm{A}_{n-1}$ then $\bbG^\natural=\mathrm{GL}_n$ and $\bbJ^\natural$ can be written as a Weil restriction of $\bbG_m$ from a finite flat cover of $\bbc^\natural$ (defined by the universal characteristic polynomial) and hence smooth. If $\bbG$ is of other types, then the assumption on $p$ implies that $p$ does not divide the order of the algebraic fundamental group of $\bbG^\natural_{\mathrm{ad}}\cong \bbG_{\mathrm{ad}}$ and hence $\bbJ^\natural$ is smooth over $\bbc^\natural$ by \cite[Proposition 4.2.11]{BC22}. Then $J^\natural$ is also smooth since it is an open subgroup scheme of $(\mathrm{Res}_{\cO'/\cO}\bbJ^\natural)^{\mu_e}$ and the latter is smooth by Lemma \ref{lem:res} and Lemma \ref{lem:fixed-point-locus-smooth}.\par
    It remains to prove the existence of the section $\kappa^\natural$ (the isomorphism $(\kappa^\natural)^*I^\natural\cong J^\natural$ will then follow immediately). Recall that we have fixed a pinning $(\bbB,\bbT,(x_\alpha\in\bbg_\alpha(\cO))_{\alpha\in\Delta})$ of $\bbG$. This uniquely determines a pinning $(\bbB^\natural,\bbT^\natural,(x_\alpha\in\bbg^\natural_\alpha(\cO))_{\alpha\in\Delta})$ of $\bbG^\natural$ since we have a canonical identification of root spaces $\bbg_\alpha=\bbg^\natural_\alpha$. Then the action of $\mathrm{Out}(\bbG_\mathrm{ad})$ fixes $x_+:=\sum_{\alpha\in\Delta}x_\alpha$ and hence preserves the $\cO$-submodule $\mathfrak{s}^\natural:=\ker(\mathrm{ad}(x_+)\mid\bbg^\natural(\cO))$. By Lemma \ref{lem:pairing-split-case} there is a perfect pairing $(\cdot,\cdot)_\rho$ on $\bbg^\natural(\cO)$ that is invariant under $\bbG_{\mathrm{ad}}\rtimes\mathrm{Out}(\bbG_{\mathrm{ad}})$. 
    Under this pairing, $\mathfrak{s}^\natural$ is the orthogonal complement of $[x_+,\bbg^\natural(\cO)]$ in $\bbg^\natural(\cO)$ and hence 
    $x_++\mathfrak{s}^\natural$ is a section of $\chi_{\bbG^\natural}$ (see \cite[\S4.2.4, Remark 4.2.5]{BC22}). Since $x_++\mathfrak{s}^\natural$ is stable under the $\mathrm{Out}(G_\mathrm{ad})$-action, it induces a section on the twisted form $\fc^\natural\to\fg^\natural$ and we are done. 
\end{proof}
\begin{lem-def}\label{def:qlog-top-nilp}
    Let $G$ be a reductive group over $F$ with Lie algebra $\fg$ and let $\Phi:G\to\fg$ be a quasi-logarithm map. We say that $\Phi$ is \emph{bijective over topologically nilpotent elements} if it is defined over $\cO$ (in the sense of Definition \ref{def:qlog-over-O}) and satisfies any of the following equivalent conditions:
    \begin{enumerate}
        \item For any parahoric subgroup $\bfP\subset G(F)$, $\Phi$ restricts to a bijection $\bfP_{tu}\xrightarrow{\sim}\mathrm{Lie}(\bfP)_{tn}$, where $\bfP_{tu}$ (resp. $\mathrm{Lie}(\bfP)_{tn}$) is the set of topologically unipotent (resp. topologically nilpotent) elements in $\bfP$ (resp. $\mathrm{Lie}(\bfP)$)
        \item There exists a parahoric subgroup $\bfP\subset G(F)$ such that $\Phi$ restricts to a bijection $\bfP_{tu}\xrightarrow{\sim}\mathrm{Lie}(\bfP)_{tn}$.
    \end{enumerate}
    Moreover, when these conditions are satisfied, $\Phi$ restricts to a bijection $G(F)_{stu}\xrightarrow{\sim}\fg(F)_{tn}$ between the set of  \emph{strongly} topologically unipotent elements in $G(F)$ and the set of topologically nilpotent elements in $\fg(F)$. 
\end{lem-def}
\begin{proof}
    This follows from arguments in \cite[proof of Proposition 1.8.16]{KV}. In \emph{loc. cit.} it is assumed that $G$ is split but this assumption is not necessary for the current result. Let us review the arguments for (2)$\Rightarrow$(1). Note that by Lemma \ref{lem:top-nil}, our notion of ``topological nilpotent" agrees with the one used in \cite{KV}.\par 
    Suppose $\Phi$ restricts to a bijection $\bfP_{tu}\xrightarrow{\sim}\mathrm{Lie}(\bfP)_{tn}$. Let $\bfI\subset G(F)$ be an Iwahori subgroup contained in $\bfP$. Then $\bfI_{tu}=\bfP_{tu}\cap\bfI$ and $\mathrm{Lie}(\bfI)_{tn}=\mathrm{Lie}(\bfP)_{tn}\cap\mathrm{Lie}(\bfI)$ by \emph{loc. cit.} Lemma 1.8.15. Thus $\Phi$ restricts to a bijection $\bfI_{tu}\xrightarrow{\sim}\mathrm{Lie}(\bfI)_{tn}$. To prove statement (1) for any parahoric subgroup $\bfP'$, by $G(F)$-equivariance we may assume that $\bfI\subset\bfP'$. Then by \emph{loc.cit.} Lemma 1.8.15 we have $\bfP'_{tu}=\cup_{g\in\bfP'}g\bfI_{tu}g^{-1}$ and $\mathrm{Lie}(\bfP')_{tn}=\cup_{g\in\bfP'}\mathrm{ad}(g)\mathrm{Lie}(\bfI)_{tn}$, so $\Phi$ restricts to a bijection $\bfP'_{tu}\xrightarrow{\sim}\mathrm{Lie}(\bfP')_{tn}$. 
\end{proof}
\begin{lem}\label{lem:qlog-bij-top-nilp}
    Let $G$ be a tamely ramified reductive group over $F$ obtained as an outer twist of a split reductive group $\bbG$. Suppose there is an algebraic representation $\rho$ of $\bbG$ defined over $\cO$ such that the associated trace form $(\cdot,\cdot)_\rho$ on the Lie algebra $\bbg=\mathrm{Lie}(\bbG)$ is non-degenerate over $\cO$. Then there exists a quasi-logarithm map $\Phi:G\to\fg=\mathrm{Lie}(G)$ that is bijective over topologically nilpotent elements. 
\end{lem}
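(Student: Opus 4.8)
The plan is to reduce to the split group $\bbG$, where the statement is essentially due to Kottwitz-Viehmann, and then to descend along the tamely ramified splitting extension $F'/F$ by means of the explicit realization of $G$ in \secref{sec:group-model}. First I would observe that the hypothesis on $\rho$ is exactly the condition of Lemma-Definition~\ref{lem:quasi-log}, so the associated quasi-logarithm $\Phi_\rho\colon\bbG\to\bbg$, $g\mapsto\mathrm{pr}_{\bbg}(\rho(g)-\mathrm{Id})$, is a quasi-logarithm for $\bbG$ defined over $\cO$; by \cite[Proposition 1.8.16]{KV} (whose proof, recalled in the proof of Lemma-Definition~\ref{def:qlog-top-nilp}, only uses that the quasi-logarithm is defined over $\cO$) it is moreover bijective over topologically nilpotent elements for $\bbG$. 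Since self-duality of $\bbg(\cO)$ under $(\cdot,\cdot)_\rho$ passes to $\bbg(\cO')=\bbg(\cO)\otimes_\cO\cO'$, the same conclusion holds after base change to $\cO'$; in particular, at the hyperspecial parahoric of $\bbG(F')$ the map $\Phi_\rho$ restricts to a bijection $\bbG(\cO')_{tu}\xrightarrow{\sim}\bbg(\cO')_{tn}$.

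Next I would descend to $G$. Recall from \secref{sec:group-model} that $\cG_0^\dagger=(\mathrm{Res}_{\cO'/\cO}\bbG)^{\mu_e}$, with fibrewise identity component $\cG_0$, where $\mu_e\cong\mathrm{Gal}(F'/F)$ acts simultaneously on $\cO'$ and on $\bbG$ through $\rho_G$, and correspondingly $\fg=(\mathrm{Res}_{\cO'/\cO}\bbg)^{\mu_e}=\mathrm{Lie}(\cG_0)$. For $\Phi_\rho$ to survive this construction it must be equivariant for the $\mathrm{Out}(\bbG)$-action given by $\rho_G$, equivalently $(\cdot,\cdot)_\rho$ must be $\rho_G(\mu_e)$-invariant; replacing $\rho$ by $\bigoplus_{\theta\in\rho_G(\mu_e)}\rho\circ\widetilde\theta$ (pinned lifts $\widetilde\theta$) one may assume this, with non-degeneracy over $\cO$ preserved — and in the situations where this lemma is used one can take $\rho$ already $\rho_G(\mu_e)$-invariant, e.g.\ the representation furnished by Lemma~\ref{lem:pairing-split-case}. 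Then $\mathrm{Res}_{\cO'/\cO}\Phi_\rho$ is equivariant for the combined $\mu_e$-action, hence restricts to a morphism of $\cO$-schemes $\cG_0^\dagger\to\fg$ between the fixed-point subschemes, and by further restriction to $\cG_0\to\fg=\mathrm{Lie}(\cG_0)$. Its generic fibre $\Phi\colon G\to\fg$ inherits $\Phi(1)=0$, $d\Phi_1=\mathrm{Id}$, and $G_{\mathrm{ad}}$-equivariance from $\Phi_\rho$, so it is a quasi-logarithm for $G$; and since it extends over $\cO$ to the special parahoric $\cG_0$, it is defined over $\cO$ by Lemma-Definition~\ref{def:qlog-over-O}.

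Finally I would verify bijectivity over topologically nilpotent elements via the criterion of Lemma-Definition~\ref{def:qlog-top-nilp}: it suffices to exhibit one parahoric $\bfP\subset G(F)$ for which $\Phi$ restricts to a bijection $\bfP_{tu}\xrightarrow{\sim}\mathrm{Lie}(\bfP)_{tn}$, and I would take $\bfP=\bfP_0=\cG_0(\cO)$. Since $G_{\overline F}\cong\bbG_{\overline F}$, the notions of topological unipotence for an element of $G(F)$ and of topological nilpotence for an element of $\fg(F)$ are unchanged when the element is regarded inside $\bbG(F')$; combined with $\bfP_{0,tu}=\bbG(\cO')_{tu}^{\mu_e}$ and $\mathrm{Lie}(\bfP_0)_{tn}=\bbg(\cO')_{tn}^{\mu_e}$ (such elements lying in the fibrewise identity component), this identifies $\Phi=\Phi_\rho|_{G(F)}$ with the restriction to $\mu_e$-fixed loci of the $\mu_e$-equivariant bijection of the first step, whence it is itself a bijection $\bfP_{0,tu}\xrightarrow{\sim}\mathrm{Lie}(\bfP_0)_{tn}$. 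The concluding clause of Lemma-Definition~\ref{def:qlog-top-nilp} then gives the bijection $G(F)_{stu}\xrightarrow{\sim}\fg(F)_{tn}$ automatically.

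I expect the main obstacle to be the descent in the second step: arranging that the quasi-logarithm of the split form is equivariant for the outer twisting action of $\rho_G(\mu_e)$ without destroying non-degeneracy over $\cO$ (so that the integral extension to the special parahoric persists), and checking that the restriction-of-scalars-and-fixed-points construction of \secref{sec:group-model} is compatible with the decompositions into topologically unipotent and nilpotent parts. Everything else is formal, given \cite{KV} and Lemma-Definitions~\ref{def:qlog-over-O} and~\ref{def:qlog-top-nilp}.
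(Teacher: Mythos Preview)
Your descent strategy has a genuine gap at the equivariance step. You assert that equivariance of $\Phi_\rho$ under the $\rho_G(\mu_e)$-action is \emph{equivalent} to $\rho_G(\mu_e)$-invariance of the trace form $(\cdot,\cdot)_\rho$, but this is false. Take $\bbG=\mathrm{GL}_n$ with the standard representation, so that $\Phi_\rho(g)=g-1$, and let $\theta(g)={}^tg^{-1}$ be the pinned outer involution. The trace form $\mathrm{Tr}(XY)$ is $\theta$-invariant (as the paper verifies in Lemma~\ref{lem:pairing-split-case}), yet $\Phi_\rho(\theta(g))={}^tg^{-1}-1$ while $\theta(\Phi_\rho(g))=1-{}^tg$, and these differ. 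So even in the applications furnished by Lemma~\ref{lem:pairing-split-case}, the split quasi-logarithm is \emph{not} $\mu_e$-equivariant, and your restriction-to-fixed-points argument does not go through. Your fallback of replacing $\rho$ by $\bigoplus_\theta\rho\circ\widetilde\theta$ does produce an equivariant quasi-logarithm, but the new trace form is $\sum_\theta(\theta(\cdot),\theta(\cdot))_\rho$, and a sum of perfect pairings need not be perfect over $\cO$; when the original pairing happens to be $\theta$-invariant this sum is $e\cdot(\cdot,\cdot)_\rho$, which is fine since $p\nmid e$, but the lemma as stated makes no invariance hypothesis, so you have only proved a weaker statement.

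The paper avoids this obstruction by a different construction: it defines $\Phi$ as the composite $G\hookrightarrow\mathrm{Res}_{\cO'/\cO}\bbG\xrightarrow{\mathrm{Res}\,\Phi_\rho}\mathrm{Res}_{\cO'/\cO}\bbg\xrightarrow{p_\fg}\fg$, where $p_\fg=\frac{1}{e}\sum_i\sigma^i$ is the averaging projector onto the $\mu_e$-fixed part. This forces $\Phi$ to land in $\fg$ without requiring $\Phi_\rho$ itself to be equivariant, at the cost that bijectivity can no longer be read off from the split case. The paper then verifies bijectivity by a direct inductive argument on the congruence filtration $\sU_0\supset\sU_1\supset\cdots\supset\sU_e=1$ of the special fibre $\sG=(\mathrm{Res}_{k[t]_e/k}\bbG)^{\sigma,\circ}$: on each graded piece $\sU_i/\sU_{i+1}\cong\bbg_{e-i-1,k}$ the map is linear and given by a morphism $f_i\colon\cU_i\to\mathrm{End}(\bbg_{e-i-1,k})$; one checks $f_i(1)=\mathrm{Id}$, that $f_i$ factors through the reductive quotient, and then uses $\sG_0$-equivariance and the fact that $1$ lies in the closure of every unipotent orbit to conclude that $f_i$ lands in $\mathrm{GL}$. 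This yields an isomorphism $\cU\xrightarrow{\sim}\cN$ on the special fibre, from which bijectivity over topologically nilpotent elements follows as in \cite{KV}. Your conceptual descent is cleaner when it applies, but the paper's hands-on argument is what actually proves the lemma in the stated generality.
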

When $G$ is split, this is proved in \cite[Proposition 1.8.16]{KV}. As far as we can see, the arguments in \emph{loc.cit.} do not immediately generalize to the current setting. However, see \cite[Appendix C]{BKV-r-proj} for closely related results. 
\begin{proof}
    Let $F'/F$ be the minimal tamely ramified cyclic extension such that $G_{F'}$ is split and let $\cO'$ be the valuation ring of $F'$. Fix a uniformizer $\varpi'\in\cO'$ such that $(\varpi')^e=\varpi$ is a uniformizer of $\cO$, where $e=[F':F]$. Then there is an outer automorphism $\sigma$ of $\bbG$ that generates an action of the cyclic group $\mathrm{Gal}(F'/F)\cong\mu_e$ on $\bbG$, from which we get the twisted form $G$. We extend $G$ to a smooth group scheme over $\cO$ by taking the fiberwise identity component of $(\mathrm{Res}_{\cO'/\cO}\bbG)^{\mu_e}$. Then the Lie algebra of $G$ is $\fg=\mathrm{Res}_{\cO'/\cO}\bbg$ and $G(\cO)$ is a special parahoric subgroup of $G(F)$. \par 
    For each $i\ge1$ we denote $k[t]_i:=k[t]/t^i$ and in particular $\cO'/(\varpi')^e\cO'\cong k[t]_e$. 
    Let $\sG=G_k=(\mathrm{Res}_{k[t]_e/k}\bbG)^{\sigma,\circ}$ be the special fiber of $G$. Define a sequence of normal subgroups $\sU_0=\sU\supset\sU_1\supset\dotsm\supset\sU_{e-1}\supset\sU_e=1$ in $\sG$ by the congruence condition 
    \[\sU_i:=\ker(\sG\to\mathrm{Res}_{k[t]_{i+1}/k}\bbG),\quad\forall 0\le i\le e\]
    and let $\sG_i:=\sG/\sU_i$. Then $\sU:=\sU_0$ is the unipotent radical of $\sG$ and $\sG_0=\sG/\sU$ is the reductive quotient. There are natural isomorphisms $\sG_0\cong(\bbG_k)^{\sigma,\circ}$ and $\sG\cong\sU\rtimes\sG_0$.\par      
    The action of $\sigma$ on the Lie algebra $\bbg=\mathrm{Lie}(\bbG)$ leads to a decomposition of $\cO$-modules $\bbg=\oplus_{i=0}^{e-1}\bbg_i$ where $\bbg_i$ is the submodule on which $\sigma$ acts by the scalar $\zeta^i$; here $\zeta\in\cO$ is a primitive $e$-th root of unity. Therefore $\fg(\cO)=\bbg_0(\cO)\oplus\bigoplus_{i=1}^{e-1}(\varpi')^{i}\bbg_{e-i}(\cO)$. Moreover we have $\mathrm{Lie}(\sG_0)=\bbg_0(k)$ and 
    \[\mathrm{Lie}(\sG_i)=\bbg_0(k)\oplus\bigoplus_{j=1}^i\bbg_{e-j}(k)t^j,\quad\forall 1\le i\le e.\]
    Let $\rho:\bbG\to\mathrm{GL}(\bbV)$ be the given representation on a finite rank free $\cO$-module $\bbV$. Let $\Phi_\rho:\bbG\to\bbg$ be the associated quasi-logarithm map, which is a morphism of $\cO$-group schemes. Define a morphism between $\cO$-schemes $\Phi:G\to\fg$ to be the following composition
    \[\Phi:G\into\mathrm{Res}_{\cO'/\cO}\bbG\xrightarrow{\mathrm{Res}_{\cO'/\cO}\Phi_{\rho,\cO'}}\mathrm{Res}_{\cO'/\cO}\bbg\xrightarrow{p_\fg}\fg\]
    where $p_\fg=\frac{1}{e}\sum_{0\le i\le e-1}\sigma^i$ is the averaging operator on $\bbg(\cO')$, with $\sigma$ acting on both $\bbg$ and the coefficient ring $\cO'$. From the fact that $\Phi_\rho$ is a quasi-logarithm map we deduce that $\Phi$ is a quasi-logarithm map. For any $g\in\sG$ and $u\in\sU_i$ we have 
    \[\Phi_\rho(gu)-\Phi_\rho(g)=\mathrm{pr}_\bbg(\rho(gu)-\rho(g))\in t^{i+1}\bbg(k[t]_e)\]
    and after applying $p_\fg$ we get that $\Phi(gu)-\Phi(g)\in\mathrm{Lie}(\sU_i)$. Therefore $\Phi$ induces quasi-logarithm maps
    \[\Phi_i:\sG_i=\sG/\sU_i\to\mathrm{Lie}(\sG_i),\quad\forall 0\le i\le e.\]
    Let $\cU_i\subset\sG_i$ be the variety of unipotent elements and let $\cN_i\subset\mathrm{Lie}(\sG_i)$ be the variety of nilpotent elements. We claim that for any $0\le i\le e$, $\Phi_i$ restricts to an isomorphism $\cU_i\xrightarrow{\sim}\cN_i$. When $i=0$, $\sG_0$ is reductive and this follows from the version of Luna's \'etale slice theorem valid in all characteristic as in \cite[Theorem 6.2]{BR-Luna} (cf. \cite[proof of Corollary 9.3.4]{BR-Luna} and \cite[proof of Proposition 1.8.16]{KV}). Suppose now that for some $0\le i\le e-1$, the morphism $\Phi_i$ induces an isomorphism $\cU_i\xrightarrow{\sim}\cN_i$. We note that
    $\sU_i/\sU_{i+1}=\bbg_{e-i-1, k}$ for any $0\le i\le e-1$ and that an element in $\sG_{i+1}$ is unipotent if and only if its image in $\sG_i$ is unipotent. Therefore $\cU_{i+1}$ is a $\bbg_{e-i-1, k}$-torsor over $\cU_i$ and similarly $\cN_{i+1}$ is a $\bbg_{e-i-1, k}$-torsor over $\cN_i$. Fix an element $\ga\in\cU_{i+1}$ and we examine the restriction of $\Phi_{i+1}$ to the fiber of $\cU_{i+1}\to\cU_{i}$ through $\ga$. For any $x\in\bbg_{e-i-1}(k)$ we have 
    \begin{equation}\label{eq:quasi-log}
        \begin{split}
            \Phi_{i+1}(x\ga)&=p_\fg\circ\mathrm{pr}_{\bbg}(\rho(x)\rho(\ga)-1)\\
            &=p_\fg\circ\mathrm{pr}_{\bbg}((\rho(x)-1)\rho(\ga))+p_\fg\mathrm{pr}_{\bbg}(\rho(\ga)-1)\\
            &=p_\fg\circ\mathrm{pr}_{\bbg}(d\rho(x)\circ\rho(\ga))+\Bar{\Phi}(\ga)
        \end{split}
    \end{equation}
    where in the last equality we view $x$ as an element in the Lie algebra $\bbg(k)$ and $d\rho:\bbg\to\mathrm{End}(\bbV)$ is the differential of $\rho$. 
    Therefore the restriction of $\Phi_{i+1}$ to each fiber of $\cU_{i+1}\to\cU_{i}$ is given by a linear endomorphism on $\bbg_{e-i-1,k}$ and hence $\Phi_{i+1}$ is determined by a morphism of $k$-schemes $f_i:\cU_i\to\mathrm{End}(\bbg_{e-i-1, k})$ such that 
    \[f_i(\ga)(x)=p_\fg\circ\mathrm{pr}_{\bbg}(d\rho(x)\circ\rho(\ga)).\]
    Since $f_i(1)=\mathrm{Id}$ is the identity, there is a nonempty open subset $\Omega_i\subset\cU_i$ that maps into the open subscheme $\mathrm{GL}(\bbg_{e-i-1, k})$ of invertible linear maps. If two elements $\ga,\ga'\in\cU_{i}$ have the same image in $\cU_0\subset\sG_0$, then $\ga\equiv\ga'(\mod t)$ and we see from the definition that $f_i(\ga)=f_i(\ga')$. Thus $\Omega_i$ is the inverse image of a nonempty open subset $\Omega_0\subset\cU_0$ under the natural morphism $\cU_i\to\cU_0$. By the $\sG_0$-equivariance of $\Phi_i$ (note that $\sG_0$ is a subgroup of $\sG_i$), we see that $\Omega_0$ is an open $\sG_0$-stable subset that contains the identity. Since the identity is contained in the closure of any $\sG_0$-orbits on $\cU_0$, we must have $\Omega_0=\cU_0$ and therefore $f_i(\cU_i)\subset\mathrm{GL}(\bbg_{e-i-1, k})$. This implies that $\Phi_{i+1}$ restricts to an isomorphism of $k$-schemes $\cU_{i+1}\xrightarrow{\sim}\cN_{i+1}$ and we are done by induction.\par 
    In particular, when $i=e$ we have shown that the special fiber $\Bar{\Phi}$ of $\Phi$ restricts to an isomorphism $\cU\xrightarrow{\sim}\cN$ between the unipotent variety $\cU\subset\sG$ and the nilpotent variety $\cN\subset\mathrm{Lie}(\sG)$.  As a consequence $\Bar{\Phi}$ restricts to an \'etale morphism on an affine open neighborhood of $\cU$ in $\sG$. Then by the arguments in the proof of \cite[Proposition 1.8.16]{KV}, we deduce that $\Phi$ restricts to a bijection $G(\cO)_{tu}\xrightarrow{\sim}\fg(\cO)_{tn}$ and hence $\Phi$ is bijective over topologically nilpotent elements by Lemma \ref{def:qlog-top-nilp}. 
\end{proof}
\begin{prop}\label{prop:pairing}
    Suppose that the residue characteristic $p$ is good for $G$ and that $G_{\mathrm{ad}}$ is tamely ramified over $F$. Then there exists a reductive group $G^\natural$ over $F$, whose adjoint group is isomorphic to $G_{\mathrm{ad}}$, that satisfies the following conditions:
    \begin{enumerate}
        \item The residue characteristic $p$ divides neither the order of $\pi_1(G^{\natural,\mathrm{der}})$ nor the order of the component group of the center of $G^\natural$.
        \item The natural homomorphism $G^\natural(F)\to G^\natural_{\mathrm{ad}}(F)\cong G_{\mathrm{ad}}(F)$ restricts to a surjection on the set of \emph{strongly} topologically unipotent elements.
        \item There exists a quasi-logarithm map $\Phi:G^\natural\to\fg^\natural=\mathrm{Lie}(G^\natural)$ that is bijective over topologically nilpotent elements in the sense of Definition \ref{def:qlog-top-nilp}.
    \end{enumerate}
\end{prop}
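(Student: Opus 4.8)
The plan is to reduce to the case where $G_{\mathrm{ad}}$ is absolutely simple, and there to take for $G^\natural$ exactly the outer twist of the split group $\bbG^\natural$ that was built out of Lemma~\ref{lem:pairing-split-case} in the paragraph preceding Corollary~\ref{cor:Kostant-section}; the three asserted properties should then follow from results already in hand, the only substantial point being property (2).

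First I would carry out the reduction. An adjoint group decomposes canonically as a finite product $G_{\mathrm{ad}}\cong\prod_i\mathrm{Res}_{F_i/F}H_i$ with each $H_i$ absolutely simple adjoint over a finite extension $F_i/F$, and since $G_{\mathrm{ad}}$ is tamely ramified each $F_i/F$ and each $H_i$ is tamely ramified (and each $F_i$ is again complete with algebraically closed residue field). One checks that all three properties are stable under finite products and under Weil restriction along a tamely ramified extension: for (1) because the orders of $\pi_1((-)^{\mathrm{der}})$ and $\pi_0(Z_{(-)})$ depend only on the geometric group and multiply; for (3) because the (tame) Weil restriction of a quasi-logarithm bijective over topologically nilpotent elements is again such a map; for (2) because strongly topologically unipotent elements of $\mathrm{Res}_{E/F}H(F)=H(E)$ are exactly those of $H(E)$, and likewise after passing to adjoint quotients. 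So we may assume $G_{\mathrm{ad}}$ absolutely simple, and then set $G^\natural$ by lifting the tame twisting data of $G_{\mathrm{ad}}$ through the isomorphism $\mathrm{Out}(\bbG^\natural)\cong\mathrm{Out}(G_{\mathrm{ad}})$ of Lemma~\ref{lem:pairing-split-case}(2), so that $G^\natural_{\mathrm{ad}}\cong G_{\mathrm{ad}}$.

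Properties (1) and (3) are then short. For (1), the orders of $\pi_1(G^{\natural,\mathrm{der}})$ and $\pi_0(Z_{G^\natural})$ are computed over $\overline{F}$, hence equal those of $\pi_1(\bbG^{\natural,\mathrm{der}})$ and $\pi_0(Z_{\bbG^\natural})$, which are prime to $p$ by Lemma~\ref{lem:pairing-split-case}(3). For (3), Lemma~\ref{lem:pairing-split-case}(1) furnishes a representation $\rho$ of $\bbG^\natural$ over $\cO$ whose trace form is perfect, in particular non-degenerate, over $\cO$; since $G^\natural$ is a tamely ramified outer twist of the split $\bbG^\natural$, Lemma~\ref{lem:qlog-bij-top-nilp} supplies a quasi-logarithm $\Phi\colon G^\natural\to\fg^\natural$ bijective over topologically nilpotent elements.

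The main work is property (2), where $\pi\colon G^\natural\to G^\natural_{\mathrm{ad}}=G_{\mathrm{ad}}$ is surjective with central kernel $Z=Z_{G^\natural}$, which in the absolutely simple case is trivial, $\mathbb{G}_m$, or finite of order prime to $p$ (as $p$ good for $G$ forces $p\neq2$ whenever the standard group has center $\mu_2$). Given $\bar g\in G_{\mathrm{ad}}(F)$ strongly topologically unipotent, say $\bar g\in\bfP_{\mathrm{ad},+}$ for a parahoric $\bfP_{\mathrm{ad}}$, I would first lift $\bar g$ to $G^\natural(F)$: the obstruction $\delta(\bar g)\in H^1(F,Z)$ vanishes since $H^1(F,\mathbb{G}_m)=0$, while for finite $Z$ of order $m$ prime to $p$ the connecting map $\delta$ is a homomorphism ($Z$ being central) and $\bfP_{\mathrm{ad},+}$ is pro-$p$, hence uniquely $m$-divisible, so writing $\bar g=\bar h^m$ gives $\delta(\bar g)=m\,\delta(\bar h)=0$ in the $m$-torsion group $H^1(F,Z)$. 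To make the lift strongly topologically unipotent, I would use that the buildings of $G^\natural$ and $G_{\mathrm{ad}}$ are canonically identified, choose a parahoric $\bfP^\natural$ over $\bfP_{\mathrm{ad}}$, obtain a smooth morphism of parahoric $\cO$-models $\cG_{\bfP^\natural}^\natural\to\cG_{\bfP_{\mathrm{ad}}}$ with kernel of multiplicative type, note that on special fibers such a map between connected groups over the algebraically closed $k$ is surjective and carries unipotent radical onto unipotent radical, and conclude by smoothness and successive approximation that $\bfP^\natural_+\epic\bfP_{\mathrm{ad},+}$; any preimage of $\bar g$ there is the desired lift. The delicate point I would be most careful about is precisely this last surjectivity of pro-unipotent radicals and the functoriality of parahoric models under $\pi$; an alternative would be to route (2) through the Lie algebra, combining the bijection of (3) with the surjectivity of $\fg^\natural\to\fg_{\mathrm{ad}}$ on topologically nilpotent loci.
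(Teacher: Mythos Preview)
Your overall strategy matches the paper's: reduce to the absolutely simple case, take $G^\natural$ to be an outer twist of the split group $\bbG^\natural$ from Lemma~\ref{lem:pairing-split-case}, and read off (1) and (3) from that lemma together with Lemma~\ref{lem:qlog-bij-top-nilp}. The paper differs in that it treats the absolutely simple case in three subcases (split, non-split with $[F':F]=2$, and ${}^3\mathrm{D}_4$), and it is precisely at the last of these that your uniform treatment breaks down.

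For $G$ of type ${}^3\mathrm{D}_4$ you propose to twist $\bbG^\natural=\mathrm{SO}_8$ by the order-$3$ outer automorphism, lifted through the isomorphism $\mathrm{Out}(\bbG^\natural)\cong\mathrm{Out}(G_{\mathrm{ad}})$ asserted in Lemma~\ref{lem:pairing-split-case}(2). But, as the footnote there already signals, $\mathrm{Out}(\mathrm{SO}_8)\cong\bbZ/2\bbZ$: the triality does not preserve the intermediate lattice realized by $\mathrm{SO}_8$, so it cannot be lifted and no such twist exists. The paper handles this case separately, taking instead $G^\natural=G_{\mathrm{ad}}$ with the adjoint representation; since $[F':F]=3$ and the group is tamely ramified one has $p>3$, so the Killing form is non-degenerate over $\cO$ and Lemma~\ref{lem:qlog-bij-top-nilp} still supplies the quasi-logarithm. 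In all other absolutely simple types only order-$2$ outer automorphisms occur, and there your construction coincides with the paper's.

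On property (2): in the split case the paper simply invokes \cite[Lemma~1.8.17]{KV}, and in the remaining cases records that it is checked immediately (the center of $G^\natural$ being trivial, $\bbG_m$, or $\mu_2$ with $p\neq 2$). Your direct argument via the vanishing of the boundary class and the surjection $\bfP^\natural_+\epic\bfP_{\mathrm{ad},+}$ is a reasonable alternative and should go through once the smoothness of the induced map of parahoric $\cO$-models is verified; you have correctly flagged this as the point needing care.
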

\begin{proof}
    We may assume that $G$ is absolutely simple. When $G$ is split, we take $G^\natural$ as in Lemma \ref{lem:pairing-split-case}. Then $G^\natural$ extends to a split reductive group scheme over $\cO$ (which we denote by the same symbol). Moreover, there exists a faithful algebraic representation $\rho:G^\natural\to\mathrm{GL}(V)$ defined over $\cO$, from which we construct an invariant pairing $(\cdot,\cdot)_\rho$ that is non-degenerate over $\cO$. Then statement (3) follows by Lemma \ref{lem:qlog-bij-top-nilp}, statement (1) is clear from the definition of $G^\natural$ while (2) follows from (1) and \cite[Lemma 1.8.17]{KV}. (Note that in \cite{KV} ``topologically unipotent" means strongly topologically unipotent in our Definition \ref{def:top-unip}, see  \emph{loc.cit.} Lemma 1.8.15.)\par
    Next we study the remaining case where $G$ is quasi-split but non-split over $F$. There are the following possibilities for the type of the split form $\bbG$: $\mathrm{A}_n(n\ge1)$, $\mathrm{D}_n(n\ge4)$, $\mathrm{E}_6$. We have either $[F':F]=2$, or $[F':F]=3$ and $\bbG$ is either adjoint or simply connected of $\mathrm{D}_4$ type (in which case $G$ is a tamely ramified triality group of type ${}^3\mathrm{D}_4$).\par  
    We first consider the case where $G$ is not of type ${}^3\mathrm{D}_4$. Then $[F':F]=2$. Let $\bbG^\natural$ be the split reductive group scheme over $\cO$ associated to $\bbG$ in Lemma \ref{lem:pairing-split-case}. It comes with an algebraic representation whose associated trace pairing on $\bbg^\natural=\mathrm{Lie}(\bbG^\natural)$ is non-degenerate over $\cO$. From the definition of $\bbG^\natural$ we see that the outer automorphism $\sigma$ of order $2$ on $\bbG^\natural_{\mathrm{ad}}\cong\bbG_{\mathrm{ad}}$ lifts to $\bbG^\natural$ and we let $G^\natural$ be the fiberwise identity component of $(\mathrm{Res}_{\cO'/\cO}\bbG^\natural)^{\sigma}$ over $\cO$, where the action of $\sigma$ on the $\cO$-scheme $\mathrm{Res}_{\cO'/\cO}\bbG^\natural$ is the composition of the outer automorphism on $\bbG^\natural$ with the automorphism induced by the nontrivial automorphism of the $\cO$-algebra $\cO'$. Then we have an isomorphism $G^\natural_{\mathrm{ad}}\cong G_{\mathrm{ad}}$. Statement (3) follows from Lemma \ref{lem:qlog-bij-top-nilp} while statements (1), (2) are checked immediately.\par 

    Finally we consider the case where $G$ is of type ${}^3\mathrm{D}_4$ so that $[F':F]=3$ and hence $p>3$ by the assumption that $G$ is tamely ramified. We take $G^\natural=G_{\mathrm{ad}}$ and let $\rho:\bbG^\natural_{\mathrm{ad}}\to\mathrm{End}(\bbg_{\mathrm{ad}})$ be the adjoint representation. Since $p>3$, statements (1), (2) are clear and from \cite[I.4.8]{SS-conj} we deduce that the Killing form on $\bbg_{\mathrm{ad}}$ is non-degenerate over $\cO$. Then statement (3) follows from Lemma \ref{lem:qlog-bij-top-nilp}.
\end{proof}

\section{Generalized affine Springer fibers and orbital integrals}\label{sec:transporter}
In this section we introduce a general class of perfect subspaces of Witt vector affine flag varieties and prove their basic finiteness properties under suitable assumptions. As special cases, we get finiteness properties of the Witt-vector affine Springer fibers for the group and the Lie algebra. Then we will discuss the relation between these spaces and $p$-adic orbital integrals.\par 
The generalized affine Springer fibers that we study are defined in an analogous way to classical transporter spaces. Let us recall this notion. Let $H$ be a group acting on a set $X$. For any two subsets $\Omega,\Omega'\subset X$, the associated \emph{transporter space} is the subspace of $H$ consisting of elements $h\in H$ such that $h\Omega'\subset\Omega$. The affine analogue of these spaces occur quite often in the context of Harmonic analysis on reductive $p$-adic groups. \par 
In this section we let $F$ be as in \S\ref{sec:local-fields} and let $G$ be the connected quasi-split reductive group over $F$ with pinning as in \S\ref{sec:parahoric}.
\subsection{Loop groups and affine flag varieties}\label{sec:affine-flag}
Let $\bfP\subset G(F)$ be a parahoric subgroup and let $\cG_\bfP$ be the associated Bruhat-Tits group scheme. Let $L_\cO G$ be the Witt vector loop group and let $L_\cO^+\cG_\bfP$ be the Witt vector positive loop group, both of which are perfect ind-schemes over $k$. Recall that for any perfect $k$-algebra $R$, we have $L_\cO^+\cG_\bfP(R)=G(W_\cO(R))$ and $L_\cO G(R)=G(W_\cO(R)[\frac{1}{p}])$. See  
\cite{Zhu-mixed} for more details. Sometimes we will abuse notation and use $\bfP$ to denote $L_\cO^+\cG_\bfP$.\par
Let $\mathrm{Fl}_\bfP:=L_\cO G/L_\cO^+\cG_\bfP$ be the Witt vector affine (partial) flag variety for $\bfP$. By \cite{BS-Witt} we know that $\mathrm{Fl}_\bfP$ is representable by an inductive limit of perfections of projective varieties over $k$. 
\begin{lem}\label{lem:affine-flag-decompose}
    \begin{enumerate}
        \item Suppose $G=G_1\times G_2$ is a product of two reductive groups over $F$ and $\bfP=\bfP_1\times\bfP_2$ is a product of parahoric subgroups $\bfP_1\subset G_1(F)$ and $\bfP_2\subset G_2(F)$. Then we have a corresponding decomposition of the functors
        \[L_\cO G=L_\cO G_1\times L_\cO G_2,\quad L^+_\cO\cG_\bfP=L^+_\cO\cG_{\bfP_1}\times L^+_\cO\cG_{\bfP_2},\quad\mathrm{Fl}_\bfP=\mathrm{Fl}_{\bfP_1}\times\mathrm{Fl}_{\bfP_2}\]
        \item Suppose $F'/F$ is a finite extension and $\cO'$ is the valuation ring of $F'$. Let $G'$ be a reductive group over $F'$ and $\bfP'\subset G'(F)$ a parahoric subgroup. Let $G=\mathrm{Res}_{F'/F}G$ and $\bfP\subset G(F)$ the parahoric subgroup that equals to $\bfP'$. Then $\cG_\bfP=\mathrm{Res}_{\cO'/\cO}\cG_{\bfP'}$ and we have 
        \[L_\cO G=L_{\cO'}G',\quad L_\cO^+\cG_\bfP=L_{\cO'}^+\cG_{\bfP'},\quad\mathrm{Fl}_\bfP=\mathrm{Fl}_{\bfP'}.\]
    \end{enumerate}
\end{lem}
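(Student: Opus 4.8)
The plan is to check all three identifications in each part directly on the level of $R$-points for an arbitrary perfect $k$-algebra $R$, using the functorial descriptions $L^+_\cO\cG_\bfP(R)=\cG_\bfP(W_\cO(R))$ and $L_\cO G(R)=G(W_\cO(R)[\tfrac{1}{p}])$ recalled above (and the analogous descriptions of $L^+_{\cO'}\cG_{\bfP'}$ and $L_{\cO'}G'$ built from $W_{\cO'}$, which make sense because the residue field of $\cO'$ is again $k$), and then to pass to the quotient sheaves. The only genuinely geometric inputs are the behaviour of Bruhat--Tits group schemes under products and under restriction of scalars; everything else is a manipulation of functors of points together with a base change formula for ramified Witt vectors.

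\emph{Part (1).} First I would note that the Bruhat--Tits group scheme attached to a product of reductive groups over $F$ equipped with the product parahoric is the product of the Bruhat--Tits group schemes, so $\cG_\bfP\cong\cG_{\bfP_1}\times_\cO\cG_{\bfP_2}$. Then for every perfect $k$-algebra $R$ one has
\[
L_\cO G(R)=G(W_\cO(R)[\tfrac{1}{p}])=G_1(W_\cO(R)[\tfrac{1}{p}])\times G_2(W_\cO(R)[\tfrac{1}{p}])=(L_\cO G_1\times L_\cO G_2)(R),
\]
\[
L^+_\cO\cG_\bfP(R)=\cG_\bfP(W_\cO(R))=\cG_{\bfP_1}(W_\cO(R))\times\cG_{\bfP_2}(W_\cO(R))=(L^+_\cO\cG_{\bfP_1}\times L^+_\cO\cG_{\bfP_2})(R).
\]
These identifications are compatible with the closed immersions $L^+_\cO\cG_{\bfP_i}\hookrightarrow L_\cO G_i$, so taking the presheaf quotient (which for a product of group actions is the product of the presheaf quotients) and sheafifying (sheafification commutes with finite products) gives $\mathrm{Fl}_\bfP=\mathrm{Fl}_{\bfP_1}\times\mathrm{Fl}_{\bfP_2}$.

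\emph{Part (2).} Here the key point I would establish first is the Witt-vector base change identity: since the residue field $k$ is algebraically closed, $\cO'$ has residue field $k$, hence $W_{\cO'}(R)=W(R)\otimes_{W(k)}\cO'$, and therefore for any perfect $k$-algebra $R$
\[
W_\cO(R)\otimes_\cO\cO'=\big(W(R)\otimes_{W(k)}\cO\big)\otimes_\cO\cO'=W(R)\otimes_{W(k)}\cO'=W_{\cO'}(R);
\]
inverting $p$ and using $F'=F\otimes_\cO\cO'$ one gets $W_\cO(R)[\tfrac{1}{p}]\otimes_F F'=W_{\cO'}(R)[\tfrac{1}{p}]$. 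Next, the identification $\cG_\bfP=\mathrm{Res}_{\cO'/\cO}\cG_{\bfP'}$ is the standard compatibility of Bruhat--Tits theory with restriction of scalars: under $G(F)=G'(F')$ the buildings are identified and $\bfP=\bfP'$, while $\mathrm{Res}_{\cO'/\cO}\cG_{\bfP'}$ is (by Lemma~\ref{lem:res}) a smooth affine $\cO$-model of $G$ with $\cO$-points $\cG_{\bfP'}(\cO')=\bfP'$ and connected special fibre, hence equals $\cG_\bfP$; see \cite{KP23}. Granting this, for any perfect $k$-algebra $R$,
\[
L_\cO G(R)=G\big(W_\cO(R)[\tfrac{1}{p}]\big)=G'\big(W_\cO(R)[\tfrac{1}{p}]\otimes_F F'\big)=G'\big(W_{\cO'}(R)[\tfrac{1}{p}]\big)=L_{\cO'}G'(R),
\]
\[
L^+_\cO\cG_\bfP(R)=\cG_\bfP(W_\cO(R))=\cG_{\bfP'}\big(W_\cO(R)\otimes_\cO\cO'\big)=\cG_{\bfP'}\big(W_{\cO'}(R)\big)=L^+_{\cO'}\cG_{\bfP'}(R),
\]
compatibly with the natural inclusions; passing to quotient sheaves as in Part (1) gives $\mathrm{Fl}_\bfP=\mathrm{Fl}_{\bfP'}$.

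The main obstacle I anticipate is not in the functorial computations, which are routine, but in pinning down the identification $\cG_\bfP=\mathrm{Res}_{\cO'/\cO}\cG_{\bfP'}$ of integral models; the delicate point there is the connectedness of the special fibre of $\mathrm{Res}_{\cO'/\cO}\cG_{\bfP'}$, which follows because $\cO'\otimes_\cO k$ is a local Artinian $k$-algebra with residue field $k$, so the restriction of scalars of $\cG_{\bfP'}\otimes_{\cO'}(\cO'\otimes_\cO k)$ along this nilpotent thickening is an iterated extension of the (connected) special fibre of $\cG_{\bfP'}$ by vector groups. Once this and the Witt-vector base change formula are in place, the rest is a direct reading off of the functors of points.
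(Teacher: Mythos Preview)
Your proof is correct and follows exactly the approach the paper takes: the paper's proof consists of the single sentence ``This follows immediately from the definitions,'' and your argument is precisely a careful unpacking of those definitions on $R$-points. The extra care you take with the Witt-vector base change $W_\cO(R)\otimes_\cO\cO'=W_{\cO'}(R)$ and with the identification $\cG_\bfP=\mathrm{Res}_{\cO'/\cO}\cG_{\bfP'}$ (including connectedness of the special fibre) is welcome detail that the paper leaves entirely to the reader.
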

This follows immediately from the definitions. In the following, when the base field $F$ is clear from the context we will drop the subscript ``$\cO$" and simply write $LG,L^+\cG_\bfP$ etc.

\subsection{Generalized affine Springer fibers}\label{sec:generalization}
Let $\bfP\subset G(F)$ be a parahoric subgroup. Let $S$ be an $F$-scheme with a right action of $G$ and let $v\in S(F)$. Let $LS$ be the Witt-vector loop space and let $\Omega\subset LS$ be a $\bfP$-stable locally closed subspace.
\begin{defn}
    The \emph{generalized affine Springer fiber} for the triple $(\bfP,\Omega,v)$ is the subspace of $\mathrm{Fl}_\bfP$ defined by
    \[\mathrm{Fl}_{\bfP,v}^\Omega:=\{g\in G(F)/\bfP | v\cdot g\in\Omega\}\]
\end{defn}
Let $G_v$ be the $F$-group of stabilizer of $v$ in $G$ and let $LG_v$ be the corresponding loop group over $k$. Then $LG_v$ acts on $\mathrm{Fl}_{\bfP,v}^{\Omega}$ by left multiplication. When $\bfP=G(\cO)$ is the special parahoric, we denote $\mathrm{Gr}_v^\Omega:=\mathrm{Fl}_{G(\cO),v}^\Omega$.
\begin{lem}\label{lem:open-subspace}
    Let $\Omega'\subset\Omega$ be an open (resp. closed) subspace. Then $\mathrm{Fl}_{\bfP,v}^{\Omega'}$ is an open (resp. closed) subspace of $\mathrm{Fl}_{\bfP,v}^{\Omega}$. 
\end{lem}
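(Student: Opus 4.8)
The plan is to realize both transporter spaces as saturated subspaces of the loop group $L_\cO G$ and to transport the topological relation between $\Omega'$ and $\Omega$ along the quotient map $\pi\colon L_\cO G\to\mathrm{Fl}_\bfP$. First I would introduce the orbit morphism
\[a_v\colon L_\cO G\longrightarrow LS,\qquad a_v=L_\cO(g\mapsto v\cdot g),\]
obtained by applying the Witt vector loop functor to the $F$-scheme morphism $G\to S$, $g\mapsto v\cdot g$; it is a morphism of perfect spaces over $k$ and satisfies $a_v(gh)=(v\cdot g)\cdot h$ for $h\in L_\cO^+\cG_\bfP$. Because $\Omega$ is $\bfP$-stable, the preimage $a_v^{-1}(\Omega)\subset L_\cO G$ is stable under right translation by $L_\cO^+\cG_\bfP$, hence is saturated for $\pi$; unwinding the definition of the affine transporter space gives $\pi^{-1}\bigl(\mathrm{Fl}_{\bfP,v}^{\Omega}\bigr)=a_v^{-1}(\Omega)$, and likewise $\pi^{-1}\bigl(\mathrm{Fl}_{\bfP,v}^{\Omega'}\bigr)=a_v^{-1}(\Omega')$.

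Next, since $\Omega'\subset\Omega$ is open (resp. closed), continuity of $a_v$ shows that $a_v^{-1}(\Omega')$ is open (resp. closed) in $a_v^{-1}(\Omega)$. It then remains to descend this along $\pi$. For this I would use that $\pi$ is an $L_\cO^+\cG_\bfP$-torsor, hence an fppf covering admitting sections locally on $\mathrm{Fl}_\bfP$; in particular $\pi$ is surjective and open, so it is a topological quotient map. Consequently a subspace $Y\subset\mathrm{Fl}_{\bfP,v}^{\Omega}$ is open (resp. closed) if and only if $\pi^{-1}(Y)$ is open (resp. closed) in $\pi^{-1}\bigl(\mathrm{Fl}_{\bfP,v}^{\Omega}\bigr)=a_v^{-1}(\Omega)$. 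Applying this with $Y=\mathrm{Fl}_{\bfP,v}^{\Omega'}$ concludes the argument; for the closed case one may instead pass to complements and reduce to the open case.

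The only genuinely delicate point is this last descent step: one must know that openness and closedness of subspaces of $\mathrm{Fl}_\bfP$ (and of its locally closed subspaces such as $\mathrm{Fl}_{\bfP,v}^{\Omega}$) may be checked after pullback along the positive-loop-group torsor $\pi$. This is part of the standard formalism of the Witt vector affine flag variety (cf. \cite{Zhu-mixed}, \cite{BS-Witt}): $L_\cO^+\cG_\bfP$-torsors are fppf-locally trivial, so $\pi$ is open and surjective and the quotient-topology argument applies verbatim. Everything else — functoriality of the loop construction, the $\bfP$-invariance of $a_v^{-1}(\Omega)$, and continuity of preimages — is formal.
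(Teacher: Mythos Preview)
Your argument is correct and follows essentially the same approach as the paper: pull back along the orbit morphism from $LG$ to $LS$, use continuity, and descend along the $\bfP$-torsor $\pi$. The paper's proof is terser (it simply observes $\mathrm{Fl}_{\bfP,v}^{\Omega}\cong\mathrm{Orb}_v^{-1}(\Omega)/\bfP$ and likewise for $\Omega'$, with $\mathrm{Orb}_v(g)=v\cdot g^{-1}$), but the content is the same as what you wrote.
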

\begin{proof}
    Consider the morphism $\mathrm{Orb}_v:LG\to LS$ defined by $\mathrm{Orb}_v(g)=v\cdot g^{-1}$. If $\Omega'\subset\Omega$ is an open (resp. closed) subspace, then $\mathrm{Orb}_v^{-1}(\Omega')$ is an open (resp. closed) subspace of $\mathrm{Orb}_v^{-1}(\Omega)$. Then we conclude by observing the natural isomorphisms
    \[\mathrm{Fl}_{\bfP,v}^{\Omega'}\cong\mathrm{Orb}_v^{-1}(\Omega')/\bfP,\quad\mathrm{Fl}_{\bfP,v}^{\Omega}\cong\mathrm{Orb}_v^{-1}(\Omega)/\bfP.\]
\end{proof}
Next we establish a finiteness property for generalized affine Springer fibers in many cases.
\begin{thm}\label{thm:finiteness}
Let $S$ be an affine scheme of finite type over $F$ with right $G$-action. Let $v\in S(F)$ be an element such that the stabilizer $G_v$ is a group of multiplicative type. Let $\bfP\subset G(F)$ be a parahoric subgroup and let $\Omega\subset LS$ be a $\bfP$-stable locally closed subspace that is bounded in the $\varpi$-adic topology. Then we have:
\begin{enumerate}
    \item[(1)] $\mathrm{Fl}_{\bfP,v}^\Omega$ is represented by a perfect scheme locally perfectly of finite type (see Definition \ref{def:perfect-finite-type}). 
    \item[(2)] If moreover $G$ is tamely ramified and $G_v$ is a maximal torus in $G$, then $\mathrm{Fl}_{\bfP,v}^\Omega$ is finite dimensional.
\end{enumerate}
\end{thm}
\begin{proof}
(1) First we assume that $G$ is split over $F$ and $\bfP=G(\cO)$ is a hyperspecial parahoric subgroup. Let $T\subset B$ be a maximal torus contained in a Borel subgroup of $G$ and let $U\subset B$ be the unipotent radical. Consider the ind-scheme
\[Z_v:=\{x\in LU/L^+U\mid v\cdot x\in \Omega\}.\]
Let $\La:=X_*(T)$ be the coweight lattice. From the Iwasawa decomposition \[G(F)=T(F)U(F)G(\cO)=\bigsqcup_{\la\in\La}\la(\varpi)U(F)G(\cO)\] 
we obtain a decomposition 
\begin{equation}\label{eq:decompose-Z-v}
    \mathrm{Gr}_v^\Omega=\bigsqcup_{\la\in\La}Z_{v\cdot\la(\varpi)}.
\end{equation}
Then it suffices to show that $Z_{v\cdot\la(\varpi)}$ is perfectly of finite type (and hence finite dimensional) for each $\la\in\La$. Without loss of generality it suffices to prove this for $Z_v$. \par
Choose a closed embedding $S\into\bbA^r$ into some $r$-dimensional affine space. Since $\Omega$ is bounded, its image in $L\bbA^r$ is contained in a closed subscheme $\varpi^{-m}L^+\bbA^r$ for $m$ sufficiently large. Compose this embedding with the orbit map $U\to S$ sending $g$ to $v\cdot g$, we obtain a morphism $f:U\into\bbA^r$ defined over $F$. By assumption we have $G_v\cap U=1$. Then the orbit map $U\to S$ is a closed embedding by \cite[Theorem 2]{Ros}. Hence $f$ is also a closed embedding. \par
Let $\{\alpha_1,\dotsc,\alpha_N\}=\Phi^+$ be an ordering of the positive roots of $G$. Then we have an isomorphism of schemes $\bbA^N\cong U$ under which the image of the $i$-th coordinate is the root subgroup $U_{\alpha_i}$ and any element $u\in U$ is expressed as a product $u=u_1\dotsm u_N$ where $u_i$ lies in the root subgroup $U_{\alpha_i}$. We let $v^{(0)}:=v$ and define inductively $v^{(i)}:=v^{(i-1)}\cdot u_i$. Then $v^{(N)}=v\cdot u$. Using the coordinates given by the embedding $S\into\bbA^r$ we write each $y\in S$ as $y=(y_1,\dotsc,y_r)$. Then the action map $S\times U\to S$ is given by $r$ polynomials $\{f_i(u,y),1\le i\le r\}$ in the variables $u=(u_1,\dotsc,u_N)$ and $y=(y_1,\dotsc,y_r)$. If $u$ represents a point in $Z_v$, then $v\cdot u\in\Omega$ so that the valuations of its coordinates $f_i(u,v)=f_i(0,\dotsc,0,u_N,v^{(N-1)})$ are bounded below by a fixed constant that is independent of $u$ (equivalently, the absolute value is bounded above). This implies that the valuation of $u_N$ and the coordinates of $v^{(N-1)}$ are bounded below by a constant $C_N$. Now the coordinates of $v^{(N-1)}$ are given by $v^{(N-1)}_i=f_i(0,\dotsc,0,u_{N-2},0,v^{(N-2)})$ and this leads to a lower bound on the valuation of $u_{N-2}$ and the coordinates of $v^{(N-2)}$ by a constant $C_{N-2}$. Repeating this procedure, we see that the valuations of all the coordinates $u_i$, $1\le i\le N$, are bounded below by a constant $C_0$ that only depends on $\Omega$, $v\in S$ and the chosen embedding $S\into\bbA^r$. This implies that $Z_v$ is representable by a finite dimensional perfect scheme perfectly of finite type and we are done with the special case.\par 
Now we consider the general case. By \cite[Proposition 1.4]{PR}, there exists a group scheme closed embedding $\cG_\bfP\into H:=\mathrm{GL}_n\times\bbG_m$ (defined over $\cO$) such that the quotient $H/\cG_\bfP$ is quasi-affine. (Although the setting of \emph{loc.cit.} is over equal-characteristic discrete valuation rings, the proof is valid also in the mixed-characteristic case.) Then by \cite[Proposition 1.20]{Zhu-mixed} the induced map $\mathrm{Fl}_{\bfP}\to\mathrm{Gr}_H$ is a locally closed embedding. Let $S_H:=S\times^GH$. Then $S_H$ is an affine scheme over $F$ by \cite[Proposition 9.7.8]{Alper}. We let $H$ acts on $S_H$ by right multiplication on the second factor. Let $w=(v,1)\in S_H$ and let $\Omega_H$ be the image of $\Omega\times L^+H$ under the natural map $LS\times LH\to LS_H$. Then one checks that $H_w\cong G_v$ and $\Omega_H$ is a bounded $H(\cO)$-stable subspace of $LS_H$. By construction, the image of $\mathrm{Fl}_{\bfP,v}^{\Omega}$ under the embedding $\mathrm{Fl}_\bfP\into\mathrm{Gr}_H$ lies in $\mathrm{Gr}_{w}^{\Omega_H}$, which is locally perfectly of finite type by the case we already proved. Thus $\mathrm{Fl}_{\bfP,v}^{\Omega}$ is locally perfectly of finite type.\par 
(2) First assume that $G$ is split and $\bfP=G(\cO)$ is hyperspecial. If $G_v$ is a \emph{split} maximal torus, which we may assume without loss of generality  that $G_v=T$. Then in the decomposition \eqref{eq:decompose-Z-v} above we have $v\cdot\la(\varpi)=v$ for all $\la\in\La$. Since we already proved that $Z_v$ is finite dimensional, then $\mathrm{Gr}_v^\Omega$ is finite dimensional. In general there is a finite Galois extension $F'/F$ such that $G_v$ splits over $F'$. Let $\cO'\subset F'$ be the valuation ring and let $G':=\mathrm{Res}_{\cO'/\cO}G_{\cO'}$. Then $G$ is a closed sub-group scheme of $G'$ by \cite[A.3.20]{KP23} and
since $G$ is a reductive group scheme over $\cO$, the quotient $G'/G$ is an affine scheme over $\cO$ by \cite[Corollary 9.7.7]{Alper}. Then by \cite[Proposition 1.20]{Zhu-mixed}, the induced map $\mathrm{Gr}_G\to\mathrm{Gr}_{G'}$ is a closed embedding. Let $S':=\mathrm{Res}_{F'/F}(S_{F'})$ and let $\Omega'\subset LS'$ be a bounded $L^+G'$-stable subspace containing $\Omega$. Then the map $\mathrm{Gr}_G\to\mathrm{Gr}_{G'}$ restricts to a locally closed embedding $\mathrm{Gr}_v^{\Omega}\to\mathrm{Gr}_{v}^{\Omega'}$. Since $\mathrm{Gr}_{v}^{\Omega'}$ is finite dimensional by what we already proved, $\mathrm{Gr}_v^{\Omega}$ is also finite dimensional.\par 
Next we assume $G$ is split and $\bfP=\bfI\subset G(\cO)$ is the standard Iwahori subgroup. Let $\Omega'$ be a bounded $L^+G$-stable subspace of $LS$ containing the $\bfI$-stable subspace $\Omega$. Then $\mathrm{Gr}^{\Omega'}_v$ is finite dimensional by the case already proved. Moreover, the fibers of the natural map $\mathrm{Fl}_{\bfI,v}^\Omega\to\mathrm{Gr}^{\Omega'}_v$ are subschemes of the (perfection of) flag variety $\sG/\sB$ over $k$ and hence $\mathrm{Fl}_{\bfI,v}$ is finite dimensional.\par 
Next we treat the case where $G$ is split and $\bfP$ is arbitrary. Without loss of generality we may assume that $\bfP\supset\bfI$. Since $\Omega$ is $\bfP$-stable, it is also $\bfI$-stable and hence $\mathrm{Fl}_{\bfI,v}^\Omega$ is finite dimensional by the case already proved. Observe that the natural map $\mathrm{Fl}_{\bfI,v}^\Omega\to\mathrm{Fl}_{\bfP,v}^\Omega$ is surjective with fibers all isomorphic to (the perfection of) a flag variety $\bfP/\bfI$. So $\mathrm{Fl}_{\bfP,v}^\Omega$ is finite dimensional. \par 
Finally we consider the general case where $G$ is tamely ramified. Let $F'/F$ be a finite tamely ramified Galois extension such that $G_{F'}$ is split and let $\cO'\subset F'$ be the valuation ring. There exists a parahoric subgroup $\bfP'\subset G(F')$ such that $\bfP=\bfP'\cap G(F)$ and $\cG_\bfP$ is the fiberwise identity component of $(\mathrm{Res}_{\cO'/\cO}\cG_{\bfP'})^{\mathrm{Gal}(F'/F)}$ (see, for example, \cite[Theorem 12.7.1(4)]{KP23}). In particular $\cG_\bfP$ is a closed subgroup scheme of $\mathrm{Res}_{\cO'/\cO}\cG_{\bfP'}$. Let $S':=\mathrm{Res}_{F'/F}(S_{F'})$ and let $\Omega'\subset LS'$ be a bounded $\bfP'$-stable subspace containing $\Omega$. Then we have a locally closed embedding $\mathrm{Fl}_{\bfP,v}^\Omega\into\mathrm{Fl}_{\bfP',v}^{\Omega'}$. In fact, one can show that the natural map $\mathrm{Fl}_\bfP\to\mathrm{Fl}_{\bfP'}$ is a closed embedding by an argument similar to \cite[proof of Proposition 8.1]{PZ}. Since $\mathrm{Fl}_{\bfP',v}^{\Omega'}$ is finite dimensional by the case already proved, we conclude that $\mathrm{Fl}_{\bfP,v}^\Omega$ is finite dimensional. 
\end{proof}
We remark that in \cite[Theorem 3.1]{BFN-Springer}, one finds a stronger version of the above result in a special case. \par

In this paper we are mainly interested in the following two special cases:
\begin{itemize}
    \item $S=\fg$ with adjoint action of $G$. Any $g\in G$ acts by sending $X\in\fg$ to $\mathrm{ad}(g)^{-1}X$; In this case we take $\Omega=\mathrm{Lie}(\bfP)$ to be the Lie algebra of a parahoric subgroup $\bfP$, which is an $\cO$-lattice in $\fg(F)$;
    \item $S=G$ with adjoint action of $G$. Any $g\in G$ acts by sending $x\in G$ to $g^{-1}xg$. In this case we will usually consider a parahoric subgroup $\bfP$ and take $\Omega=\widetilde{\bfP}$. 
\end{itemize}
These examples, together with their slight variants in which one allows another group $G'$ with the same adjoint group as $G$ to act on $G$ or $\fg$, will be studied in detail later. Before that, we explain the relation between generalized affine Springer fibers and orbital integrals.
\subsection{Orbital integrals}\label{sec:orbital-integrals}
In this subsection we assume that $k=\bbF_q$ is the finite field with $q$ elements where $q$ is a power of $p$. Then $F$ is a finite extension of $\bbQ_p$ with ring of integers $\cO$ and residue field $k$. For each integer $n\ge1$, let $k_n=\bbF_{q^n}$ be the unique degree $n$ extension of $k$, $\cO_n=W_\cO(k_n)$ and $F_n=\cO_n[\frac{1}{p}]$. Then $F_n/F$ is the unique unramified extension of degree $n$.\par 
We keep the notations from \S\ref{sec:generalization}. Let $f\in C_c^\infty(S(F))$ be a locally constant function with compact support and suppose that its support is contained in $\Omega$. Suppose we take $v\in S(F)$ such that the stabilizer $G_v(F)$ is unimodular. We fix Haar measures $dg$ (resp. $dg_v$) on $G(F)$ (resp. $G_v(F)$). They induce a $G(F)$-invariant quotient measure $d\dot{g}$ on $G_v(F)\setminus G(F)$.\par
The \emph{orbital integral of $f$ at $v$} is defined by
\[ O_v(f)=\int_{G_v(F)\setminus G(F)}f(v\cdot g)d\dot{g}\]
If we suppose moreover that $f$ is $\bfP$-invariant, then we can calculate it as
\[O_v(f)=\sum_{g\in G_v(F)\backslash\mathrm{Fl}_{\bfP,v}^\Omega(k)}\frac{f(v\cdot g)\vol(\bfP,dg)}{\vol(g\bfP g^{-1}\cap G_v(F),dg_v)}\]
where the sum is over the set of $G_v(F)$-orbits on $\mathrm{Fl}_{\bfP,v}^\Omega(k)$. \par 
We would like to relate $O_v(f)$ and certain weighted counting on the quotient stack $[LG_v\backslash\mathrm{Fl}_{\bfP,v}^\Omega]$. In most cases that we are interested in, including the case of the Witt vector affine Springer fibers, this quotient stack is essentially of finite type, which guarantee that the sum in the formula above is finite. To proceed further, we make more assumptions.
\begin{itemize}
    \item Assume that the stabilizer $G_v$ is a group scheme of multiplicative type (in particular, commutative). \par
    Let $\La_v:=X_*(G_{v,\Breve{F}})$ be the lattice of coweights of $G_v$ defined over $\Breve{F}$. Via the action of $\mathrm{Gal}(F^{\mathrm{ur}}/F)\cong\mathrm{Gal}(\Bar{k}/k)$, we view $\La_v$ as an \'etale group scheme over $k$. The uniformizer $\varpi$ induces an embedding of $k$-group schemes $\La_v\to LG_v$ 
    sending $\la\in\La_v$ to $\la(\varpi)\in LG_v$. Then for any $n\ge1$, $\La_v(k_n)$ is a discrete cocompact subgroup of $LG_v(k_n)=G_v(F_n)$.
    \item Replacing $\La_v$ by a finite index subgroup if necessary, we may assume that $\La_v$ acts freely on $\mathrm{Fl}_{\bfP,v}^\Omega$.  
    \item Finally, we assume moreover that the quotient $\La_v\backslash\mathrm{Fl}_{\bfP,v}$ is an algebraic space perfectly of finite type over $k$. 
\end{itemize}

In most examples that we are interested in, the function $f$ has geometric origin. To explain this let us continue to make more assumptions. 
\begin{itemize}
    \item Suppose $\Omega$ is the set of $k$-points of a $L^+\cG_\bfP$-stable perfect sub-indscheme, which we still denote by $\Omega$, of the loop space $LS$.
    \item Suppose there is an object $\cF\in D_c^b([L^+\cG_\bfP\backslash\Omega],\overline{\bbQ}_\ell)$ in the bounded derived category of $L^+\cG_\bfP$-equivariant $\ell$-adic sheaf on $\Omega$, where $\ell\ne p$ is another prime, which gives rise to $f$ by Grothendieck's sheaf-function dictionary. In other words, for any $x\in\Omega(k)\subset LS(k)=S(F)$, we have $f(x)=\mathrm{Tr}(\mathrm{Frob}_x,\cF_{\Bar{x}})$ where $\mathrm{Frob}_x$ is the geometric Frobenius at $x$. 
\end{itemize}
The sheaf $\cF$ (more precisely, complex of $\ell$-adic sheaves) contains much more information than the function $f$. For instance, by taking traces of geometric Frobenius on stalks of $\cF$ we get a family of functions: $f_n$ on $LS(k_n)=S(F_n)$ for each integer $n\ge1$, among which $f=f_1$ is only one of them. Denote $\bfP_n:=\cG_\bfP(\cO_n)$. Then we have
\begin{equation}
    \begin{split}
        O_v(f_n)&=\sum_{g\in G_v(F_n)\setminus\mathrm{Fl}_{\bfP,v}^\Omega(k_n)}\frac{f_n(v\cdot g)\vol(\bfP_n,dg)}{\vol(g\bfP_n g^{-1}\cap G_v(F_n),dg_v)}\\
        &=\frac{\vol(\bfP_n,dg)}{\vol(\La_v(k_n)\backslash G_v(F_n),dg_v)}\sum_{g\in\La_v(k_n)\backslash\mathrm{Fl}_{\bfP,v}^\Omega(k_n)}f_n(v\cdot g)
    \end{split}
\end{equation}
To proceed further and make this expression more geometric, one is confronted with the discrepancy between the set theoretic quotient 
\[\La_v(k_n)\backslash\mathrm{Fl}_{\bfP,v}^\Omega(k_n)\] 
and the set of $k_n$-points of the geometric quotient 
\[(\La_v\backslash\mathrm{Fl}_{\bfP,v}^\Omega)(k_n).\]
There difference is measured by the cohomology group $H^1(k_n,\La_v)$. Any $\xi\in H^1(k_n,\La_v)$ defines a twisted form $\mathrm{Fl}_{\bfP,v}^{\Omega,\xi}$ of $\mathrm{Fl}_{\bfP,v}^\Omega$ and we have the decomposition:
\[(\La_v\backslash\mathrm{Fl}_{\bfP,v}^\Omega)(k_n)=\bigsqcup_{\xi\in H^1(k_n,\La_v)}\La_v(k_n)\backslash\mathrm{Fl}_{\bfP,v}^{\Omega,\xi}(k_n).\]
Here we have used that $\La_v$ is commutative so that it does not have nontrivial inner forms. The nonempty pieces in this decomposition are those $\xi$ that lie in the subset
\[H^1(F_n,\La_v\to G):=\ker(H^1(k_n,\La_v)\cong H^1(F_n,\La_v)\to H^1(F_n,G(\Breve{F})))\]
Each such cohomology class $\xi$ can be represented by a cocycle of the form $\sigma^n\mapsto\sigma^n(g)g^{-1}$ where $\sigma\in\mathrm{Gal}(\Bar{k}/k)$ is the arithmetic Frobenius and $g\in G(\Breve{F})$. Then $g\bfP_n$ is a point in $\mathrm{Fl}_{\bfP,v}^{\Omega,\xi}(k_n)$. Let $v_\xi:=v\cdot g$. Then we have a canonical isomorphism $\mathrm{Fl}_{\bfP,v}^{\Omega,\xi}\cong\mathrm{Fl}_{\bfP,v_\xi}^\Omega$, from which we deduce that
\[\sum_{x\in(\La_v\backslash\mathrm{Fl}_{\bfP,v}^\Omega)(k_n)}\mathrm{Tr}(\mathrm{Frob}_x,\cF_{\Bar{x}})=\frac{\vol(\La_v(k_n)\backslash G_v(F_n),dg_v)}{\vol(\bfP_n,dg)}\sum_{\xi\in H^1(F_n,\La_v\to G)}O_{v_\xi}(f_n)\]
We note that there is a natural surjection from the set $H^1(F_n,\La_v\to G)$ to the set of $G(F_n)$-orbits inside the $G(\Breve{F})$-orbits of $v$. So one can define a suitable notion of stable orbital integral, which would be a certain multiple of the right hand side above.\par 
In particular, when $n$ is sufficiently large so that $H^1(k_n,\La_v)=1$, we have
\[O_v(f_n)=\frac{\vol(\bfP_n,dg)}{\vol(\La_v(k_n)\backslash G_v(F_n),dg_v)}\sum_{x\in(\La_v\backslash\mathrm{Fl}_{\bfP,v}^\Omega)(k_n)}\mathrm{Tr}(\mathrm{Frob}_x,\cF_{\Bar{x}}).\]
Here are some typical examples in which all the assumptions above are satisfied.
\begin{ex}
    Let $S=G$ with the adjoint action of $G$. For simplicity we assume that $G$ is split over $F$ and $\bfP\subset G(F)$ is a standard parahoric subgroup. Recall that we have the decomposition of $G(F)$ into $\bfP$-double cosets 
    \[G(F)=\bigsqcup_{w\in W_\bfP\backslash\widetilde{W}/W_\bfP}\bfP\dot{w}\bfP\]
    where $\widetilde{W}$ is the extended affine Weyl group of $G(F)$ and $W_\bfP$ is the Weyl group of the reductive quotient of $\bfP$. For any $w\in\widetilde{W}$ we use $\dot{w}\in G(F)$ to denote a representative of $w$. 
    We could take $\Omega$ to be a finite union of these double cosets and $\cF$ to be the constant sheaf $\overline{\bbQ}_\ell$. The resulting spaces have been studied in \cite{He2023affine}. In fact, the Witt vector affine Springer fibers for the groups, which will be introduced in \S\ref{sec:ASF-group-intro} and studied in detail in \S\ref{sec:dim-group}, are special cases of these spaces where $\Omega$ is a union of double cosets for certain \emph{length $0$} elements $w$.\par 
    In particular, when $\bfP=G(\cO)$ is a hyperspecial we get the Cartan decomposition labeled by the set of dominant coweights of a maximal split torus. In this case the equal-characteristic version of these spaces have been studied in \cite{Bou}, \cite{BC} and \cite{Chi}, using global methods which do not apply in the mixed-characteristic setting. 
\end{ex}
\begin{ex}
    Let $S=\fg$ with the adjoint action of $G$. We take $\Omega=\mathrm{Lie}(\bfP)$ and the sheaf $\cF$ to be the constant sheaf $\overline{\bbQ}_\ell$. Then we get the Witt vector affine Springer fibers for the Lie algebras to be introduced in \S\ref{sec:ASF-Lie-intro} and studied in detail in \S\ref{sec:dim-Lie-alg}.
\end{ex}

\section{Basic properties of Witt-vector affine Springer fibers}\label{sec:main-thm}
In this section we introduce the Witt-vector affine Springer fibers for the group and the Lie algebra, state the main theorems and prove the non-emptiness criteria. During the reduction steps in the proof of the main theorems, it will become necessary to slightly generalize the usual definitions in \S\ref{sec:intro}. We introduce the more general setup first. 

\subsection{The setup}\label{sec:setup-adjoint-pair}
Let $(G',G)$ be a pair of reductive groups over $F$ equipped with an isomorphism $G'_{\mathrm{ad}}\cong G_{\mathrm{ad}}$ between their adjoint groups. Then $G'$ acts on $G$ by conjugation and on $\fg=\mathrm{Lie}(G)$ through the adjoint representation. Let $\bfP'\subset G'(F)$ and $\bfP\subset G(F)$ be parahoric subgroups that have the same image in their adjoint groups. Then $\bfP'$ and $\bfP$ are uniquely determined by each other and $\bfP$ is preserved by the adjoint action of $\bfP'$. Let $\widetilde{\bfP}$ (resp. $\widetilde{\bfP'}$) be the normalizer of $\bfP$ (resp. $\bfP'$) in $G(F)$ (resp. $G'(F)$). 

\subsection{Witt-vector affine Springer fibers for the Lie algebras}\label{sec:ASF-Lie-intro}
We first introduce the Lie algebra version of the Witt-vector affine Springer fibers. We maintain the assumptions and notations in \S\ref{sec:setup-adjoint-pair}.\par 
\begin{defn}
    The \emph{Witt-vector affine Springer fiber in $\mathrm{Fl}_{\bfP'}=LG'/\bfP'$} for an element $\ga\in\fg(F)$ is the closed perfect sub-indscheme $X_{\bfP',\ga}^\fg$ of $\mathrm{Fl}_{\bfP'}=LG'/\bfP'$ whose set of $k$-points is 
\[X_{\bfP',\ga}^\fg(k)=\{g\in G'(F)/\bfP'| \mathrm{ad}(g)^{-1}\ga\in\mathrm{Lie}(\bfP)\}.\]
\end{defn}
We will simplify notations in the following situations. When $G=G'$ and $\bfP=\bfP'$ we will simply denote $X_{\bfP,\ga}:=X_{\bfP,\ga}^\fg$. If $\bfP=\bfI$ is an Iwahori subgroup of $G(F)$, we denote $Y_\ga:=X_{\bfI,\ga}^{\fg}$. If moreover $G$ is tamely ramified and $\bfP=G(\cO)$ is the special parahoric subgroup defined in \S\ref{sec:group-model}, we denote $X_\ga:=X_{G(\cO),\ga}^{\fg}$.\par
First let us make some initial reductions in the study of the spaces $X_{\bfP',\ga}^\fg$.\par 
Let $G_1:=G^{\mathrm{sc}}\cong (G')^{\mathrm{sc}}$ be the simply connected cover of the derived group of $G$ (and also $G'$) and let $\bfP_1$ be the inverse image of $\bfP$ in $G_1(F)$. Let $\fg_1:=\mathrm{Lie}(G_1)$ be the Lie algebra of $G_1$, which is also canonically isomorphic to the Lie algebra of the adjoint group of $G$ (and also $G'$). Let $\ga\in\fg(F)^{\mathrm{rs}}$ be a regular semisimple element and let $\ga_1\in\fg_1(F)$ be its image under the natural map $\fg\to\fg_1\cong\mathrm{Lie}(G_{\mathrm{ad}})$. Then we have the Witt-vector affine Springer fiber $X_{\bfP_1,\ga_1}=X_{\bfP_1,\ga_1}^{\fg_1}$ in $\mathrm{Fl}_{\bfP_1}=LG_1/\bfP_1$. For each element $n\in\widetilde{\bfP'}/\bfP'$, we define a map 
\[i_n:X_{\bfP_1,\ga_1}\to X_{\bfP',\ga}^{\fg}\]
sending $g_1\bfP_1$ to $\pi'(g_1)\dot{n}\bfP'$, where $\pi':G_1\cong(G')^\mathrm{sc}\to G'$ is the natural homomorphism and $\dot{n}\in\bfP'$ is a representative of $n$.
\begin{prop}\label{prop:reduction-Lie-alg}
    With notations and assumptions as above, the map $i_n$ is a closed embedding for each element $n\in\widetilde{\bfP'}/\bfP'$. Altogether they induce an isomorphism
    \[\bigsqcup_{n\in\widetilde{\bfP'}/\bfP'}i_n:\bigsqcup_{n\in\widetilde{\bfP'}/\bfP'}X_{\bfP_1,\ga_1}\cong X_{\bfP',\ga}^{\fg}.\]
\end{prop}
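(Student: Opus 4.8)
The plan is to deduce the statement from two ingredients: a structural identification of $\mathrm{Fl}_{\bfP_1}$ with a union of connected components of $\mathrm{Fl}_{\bfP'}$, and a verification that under this identification the two ``transporter conditions'' cutting out $X_{\bfP_1,\ga_1}$ and $X_{\bfP',\ga}^{\fg}$ correspond. For the geometry: since $G_1=G^{\mathrm{sc}}$ is simply connected we have $\pi_1(G_1)=0$, so by functoriality of the Kottwitz homomorphism the image of $G_1(F)$ in $G'(F)$ lies in $G'(F)^0=\ker\kappa_{G'}$; moreover $\ker(G_1(F)\to G'(F))$ is a finite central subgroup, hence contained in $\bfP_1$, while $G'(F)^0=\mathrm{im}(G_1(F))\cdot\bfP'$ by Bruhat--Tits theory (for Iwahori level this is the statement that $W_{\mathrm{aff}}$ is the Iwahori--Weyl group of $G^{\mathrm{sc}}$). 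Hence $\pi'$ induces a closed immersion $\mathrm{Fl}_{\bfP_1}=LG_1/\bfP_1\hookrightarrow\mathrm{Fl}_{\bfP'}$ identifying $\mathrm{Fl}_{\bfP_1}$ with the neutral connected component of $\mathrm{Fl}_{\bfP'}$. Next, for $\dot n\in\widetilde{\bfP'}$ a representative of $n\in\widetilde{\bfP'}/\bfP'$, right translation $g\bfP'\mapsto g\dot n\bfP'$ is a well-defined automorphism of $\mathrm{Fl}_{\bfP'}$ (as $\dot n$ normalizes $\bfP'$) which shifts the component label $\kappa_{G'}(g)\in\pi_1(G')_\Gamma$ by $\kappa_{G'}(\dot n)$; since $\widetilde{\bfP'}/\bfP'\hookrightarrow\pi_1(G')_\Gamma$ via $\kappa_{G'}$ (\cite[Proposition 11.6.1]{KP23}) these translations act freely and permute components. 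Composing, each $i_n$ is a closed immersion, the images of the $i_n$ are pairwise disjoint, and $\bigsqcup_n i_n$ is a closed immersion of $\bigsqcup_n\mathrm{Fl}_{\bfP_1}$ onto the open and closed union of those components of $\mathrm{Fl}_{\bfP'}$ that are $\widetilde{\bfP'}$-translates of the neutral one.

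It then remains to identify, inside $\mathrm{Fl}_{\bfP'}$, the closed sub-indscheme $X_{\bfP',\ga}^{\fg}$ with the image of $\bigsqcup_n X_{\bfP_1,\ga_1}$. The adjoint action of $G'$ on $\fg=\mathrm{Lie}(G)$ factors through $G_{\mathrm{ad}}\cong G'_{\mathrm{ad}}$; since $\dot n$ normalizes $\bfP'$, its image in $G_{\mathrm{ad}}(F)$ stabilizes the facet attached to $\bfP'$, which is the facet attached to $\bfP$, hence it preserves $\bfP$, the Bruhat--Tits group scheme $\cG_\bfP$, and the lattice $\mathrm{Lie}(\bfP)=\mathrm{Lie}(\cG_\bfP)$. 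Therefore the condition $\mathrm{ad}(\pi'(g_1)\dot n)^{-1}\ga\in\mathrm{Lie}(\bfP)$ is equivalent to $\mathrm{ad}(\overline{g_1})^{-1}\ga\in\mathrm{Lie}(\bfP)$, for $\overline{g_1}\in G_{\mathrm{ad}}(F)$ the image of $g_1$. One then checks that the natural map $\fg\to\fg_1$ is $G_{\mathrm{ad}}$-equivariant, sends $\ga$ to $\ga_1$, carries $\mathrm{Lie}(\bfP)$ to $\mathrm{Lie}(\bfP_1)$, and loses no information on the $G_{\mathrm{ad}}(F)$-orbit of $\ga$ — the point being that the central part of $\ga$ is automatically integral, since any bounded regular semisimple element is an average of its (integral) eigenvalues. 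This makes $\mathrm{ad}(\overline{g_1})^{-1}\ga\in\mathrm{Lie}(\bfP)$ equivalent to $\mathrm{ad}(g_1)^{-1}\ga_1\in\mathrm{Lie}(\bfP_1)$, so each $i_n$ maps $X_{\bfP_1,\ga_1}$ into $X_{\bfP',\ga}^{\fg}$; together with the first paragraph this already gives injectivity on $k$-points. Since both $X_{\bfP_1,\ga_1}$ and $X_{\bfP',\ga}^{\fg}$ are perfect, hence reduced, a closed immersion between them is an isomorphism as soon as it is bijective on $k$-points, so it suffices to prove surjectivity on $k$-points.

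The main obstacle is exactly this surjectivity: showing that every $g\bfP'\in X_{\bfP',\ga}^{\fg}(k)$ lies in a component of $\mathrm{Fl}_{\bfP'}$ in the image of $\bigsqcup_n i_n$ and, after right multiplication of $g$ by a suitable $\dot n^{-1}\in\widetilde{\bfP'}$, comes from a point of $X_{\bfP_1,\ga_1}(k)$. The second half is then routine: once $g\bfP'$ lies in the neutral component one may take $g\in\mathrm{im}(G_1(F))$ by the first paragraph, and the transporter-condition equivalence of the second paragraph applies verbatim. The first half — controlling which connected components $X_{\bfP',\ga}^{\fg}$ can meet and matching them with $\kappa_{G'}(\widetilde{\bfP'})$ — is where the structure theory of parahorics and the reduction to $G_{\mathrm{ad}}$ are genuinely needed, and is the step requiring the most care; here one exploits that $\ga$ being regular semisimple pins down its centralizer as (the preimage of) a maximal torus, whose loop group translates organize the components of $X_{\bfP',\ga}^{\fg}$ and reduce everything to the single component $X_{\bfP_1,\ga_1}$.
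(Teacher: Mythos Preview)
Your first two paragraphs are largely sound, though you tacitly assume $\ga$ is bounded when arguing that ``the central part of $\ga$ is automatically integral''; the proposition as stated makes no such hypothesis, so without it the maps $i_n$ need not even land in $X_{\bfP',\ga}^{\fg}$ (take $\ga=\mathrm{diag}(\varpi^{-1},\varpi^{-1}+\varpi)$ in $\mathfrak{gl}_2$). The real gap is your third paragraph: you correctly isolate the crux --- that every point of $X_{\bfP',\ga}^{\fg}(k)$ must lie in a component of $\mathrm{Fl}_{\bfP'}$ indexed by $\kappa_{G'}(\widetilde{\bfP'})$ --- but you do not prove it; you only call it ``the step requiring the most care'' and gesture at the centralizer torus.

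The paper's surjectivity argument is concrete and different from your sketch: given $g\bfP'\in X_{\bfP',\ga}^{\fg}(k)$, it chooses $h\in G_1(F)$ with $h\bfP_1 h^{-1}$ equal to the preimage of $\mathrm{ad}(g)\bfP$ in $G_1(F)$, whence $\pi'(h)^{-1}g\in\widetilde{\bfP'}$, and sets $n$ to be its class. But this faces exactly the obstacle you flagged: such an $h$ need not exist, because $g$ acts on the building through $G_{\mathrm{ad}}(F)$ and may carry the facet of $\bfP$ to one of a different $G_1(F)$-type. Concretely, for $G=G'=\mathrm{GL}_2$, $\bfP=\bfP'=\mathrm{GL}_2(\cO)$, $\ga=\mathrm{diag}(0,\varpi)$ and $g=\mathrm{diag}(\varpi,1)$, one has $g\bfP'\in X_{\bfP',\ga}^{\fg}$ with $\kappa_{G'}(g)=1$, while $\widetilde{\bfP'}=F^\times\cdot\mathrm{GL}_2(\cO)$ satisfies $\kappa_{G'}(\widetilde{\bfP'})=2\bbZ$, so no $i_n$ reaches this point and the stated isomorphism fails. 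A corrected statement replaces the index set by $\pi_0(\mathrm{Fl}_{\bfP'})=\pi_1(G')_\Gamma$, with the isomorphism on each component obtained by left translation by $G'_\ga(F)$, which surjects onto $\pi_1(G')_\Gamma$ --- precisely the idea in your last sentence. This does not affect the downstream applications, which use only $\dim X_{\bfP',\ga}^{\fg}=\dim X_{\bfP_1,\ga_1}$.
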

\begin{proof}
    The morphism $i_n$ is the restriction to $X_{\bfP_1,\ga_1}$ of the map $\mathrm{Fl}_{\bfP_1}=LG_1/\bfP_1\to\mathrm{Fl}_{\bfP'}=LG'/\bfP'$ defined by $g_1\bfP_1\mapsto\pi'(g_1)\dot{n}\bfP'$, the latter of which is an isomorphism onto a connected component of the target by \cite[Proposition 1.21]{Zhu-mixed}. Therefore each $i_n$ is a closed embedding. Clearly the image of the different embeddings are disjoint.\par  
    It remains to show surjectivity on $k$-points. For any $g\in G'(F)$ representing a closed point of $X_{\bfP',\ga}^\fg$, we choose an element $h\in G_1(F)$ such that the inverse image of $\mathrm{ad}(g)\bfP$ in $G_1(F)$ equals to $\mathrm{ad}(h)\bfP_1$. Then from the condition $\ga\in\mathrm{ad}(g)\mathrm{Lie}(\bfP)$ we deduce that $\ga_1\in\mathrm{ad}(h)\mathrm{Lie}(\bfP_1)$ and hence $h\bfP_1\in X_{\bfP_1,\ga_1}$. On the other hand we have $\mathrm{ad}(\pi'(g_1))\bfP'=\mathrm{ad}(g)\bfP'$ and hence $\pi'(h)^{-1}g\in\widetilde{\bfP'}$. Let $n\in\widetilde{\bfP'}/\bfP'$ be the class of $\pi'(h)^{-1}g$. Therefore $g\bfP'=i_n(h\bfP_1)$ and we are done.
\end{proof}

\begin{thm}\label{thm:nonempty-Lie}
    Suppose $G$ is essentially tamely ramified and the residue characteristic $p$ is not a torsion prime for $G$ (for example, this holds if $p$ is good for $G$). Then the space $X_{\bfP',\ga}^\fg$ is nonempty if and only if $\ga\in\fg(F)$ is bounded in the sense of Definition \ref{def:bounded-Lie-alg}. Furthermore, in this case it is finite dimensional if and only if $\ga$ is bounded and regular semisimple.
\end{thm}
\begin{proof}
    By Proposition \ref{prop:reduction-Lie-alg}, we may assume that $G$ is tamely ramified, semisimple simply connected. Then $p$ is not a torsion prime for the root datum of $G$ and the first statement follows from Corollary \ref{cor:chi-surjection}. It remains to prove the second statement.\par
    If $\ga\in\fg^{\mathrm{rs}}(F)$, then $X_{\bfP,\ga}$ is finite dimensional by Theorem~\ref{thm:finiteness}. Conversely if $\ga\in\fg(F)$ is not regular semisimple, then the centralizer $G_\ga$ contains a nontrivial unipotent subgroup $U$ and the orbit of its loop group $LU$ on $X_{\bfP,\ga}$ is infinite dimensional.
\end{proof}
Now we state the main theorem on the dimension of $X_{\bfP',\ga}^\fg$ for a bounded regular semisimple element $\ga$. The formulae involve two numerical invariants of the conjugacy class of $\ga$: its discriminant valuation $d_\fg(\ga)$ and its \emph{Artin conductor} $\mathrm{Art}_\ga:=\mathrm{Art}(G_\ga)$.
\begin{thm}\label{thm:main-Lie-algebra-case}
    Let $(G',G)$ be a pair of reductive groups over $F$ equipped with an isomorphism $G'_{\mathrm{ad}}\cong G_{\mathrm{ad}}$ between their adjoint groups. Suppose that $G$ is essentially tamely ramified (see Definition \ref{def:ess-tame-ram}) and the residue characteristic of $F$ is not bad for $G$. Let $\bfP'\subset G'(F)$ and $\bfP\subset G(F)$ be parahoric subgroups whose images in $G'_{\mathrm{ad}}(F)\cong G_{\mathrm{ad}}(F)$ coincide. For any bounded regular semisimple element $\ga\in\fg^{\mathrm{rs}}(F)$, the Witt-vector affine Springer fiber $X_{\bfP',\ga}^\fg$ is represented by a finite dimensional perfect scheme locally perfectly of finite type (see Definition \ref{def:perfect-finite-type}). Its dimension satisfies
    \[\dim X_{\bfP',\ga}^\fg\le\frac{1}{2}(d_\fg(\ga)+\mathrm{Art}(G)-\mathrm{Art}_\ga),\]
    and if $\bfP$ is contained in a special parahoric subgroup of $G(F)$, then equality holds:
    \[\dim X_{\bfP',\ga}^\fg=\frac{1}{2}(d_\fg(\ga)+\mathrm{Art}(G)-\mathrm{Art}_\ga).\]
    Furthermore, if $\bfP=\bfI$ is an Iwahori subgroup, then $X_{\bfI',\ga}^\fg$ is equi-dimensional. 
\end{thm}
\begin{rem}\label{rem:general-parahoric}
    We expect that the dimension formulae should hold for any parahoric subgroup $\bfP$. In the equal-characteristic case, when the residue field is $\bbC$, the dimension formulae for general $\bfP$ can be proved by the same methods as in \cite[\S4]{KL}, using the description of the representations of affine Weyl groups on the homology of the affine flag varieties. If this description also holds for the Witt vector affine flag varieties, then the dimension formulae for general $\bfP$ would follow. On the other hand, we do not know if the equi-dimensionality property holds for general parahoric subgroup $\bfP$. In the equal characteristic case, the equi-dimensionality property can be proved for hyperspecial parahoric subgroups by using the geometry of Hitchin fibrations, see \cite[3.3.1, 4.16.2]{Ngo10}. 
\end{rem}
The proof of Theorem \ref{thm:main-Lie-algebra-case} will be presented in \S\ref{sec:dim-Lie-alg}. Here we make some initial reductions.
\begin{cor}\label{cor:reduction-Lie-alg}
    Let $(G,G',\bfP,\bfP')$ be as in \S\ref{sec:setup-adjoint-pair}. Let $(H,H')$ be another pair of reductive groups over $F$ with an isomorphism between the adjoint groups $H_{\mathrm{ad}}\cong H'_{\mathrm{ad}}$ and let $\bfP_H\subset H(F)$, $\bfP_H'\subset H'(F)$ be parahoric subgroups whose images in the adjoint group coincide. Suppose there is an isomorphism $G_\mathrm{ad}\cong H_\mathrm{ad}$ such that the image of $\bfP$ and $\bfP_H$ in the adjoint group coincide. Let $\fh:=\mathrm{Lie}(H)$ be the Lie algebra of $H$. Let $\ga\in\fg(F)$ and $\ga_H\in\fh(F)$ be bounded regular semisimple elements whose image in $\mathrm{Lie}(G_{\mathrm{ad}})\cong\mathrm{Lie}(H_{\mathrm{ad}})$ coincide. Then Theorem \ref{thm:main-Lie-algebra-case} holds form $X_{\bfP',\ga}^\fg$ if and only if it holds for $X_{\bfP_H',\ga_H}^\fh$. 
\end{cor}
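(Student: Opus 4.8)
The plan is to use Proposition~\ref{prop:reduction-Lie-alg} to reduce both $X^\fg_{\bfP',\ga}$ and $X^\fh_{\bfP_H',\ga_H}$ to one and the same ``simply connected'' affine Springer fiber attached to the common adjoint data, and then to check that the right-hand side of the dimension formula in Theorem~\ref{thm:main-Lie-algebra-case} also depends only on that data. Concretely, I would first apply Proposition~\ref{prop:reduction-Lie-alg} to $(G',G)$ with parahorics $(\bfP',\bfP)$: with $G_1=G^{\mathrm{sc}}$, $\bfP_1\subset G_1(F)$ the preimage of $\bfP$, and $\ga_1\in\mathrm{Lie}(G_{\mathrm{ad}})(F)\cong\fg_1(F)$ the image of $\ga$, one obtains $X^\fg_{\bfP',\ga}\cong\bigsqcup_{n\in\widetilde{\bfP'}/\bfP'}X_{\bfP_1,\ga_1}$, and likewise $X^\fh_{\bfP_H',\ga_H}\cong\bigsqcup_{n\in\widetilde{\bfP_H'}/\bfP_H'}X_{\bfP_{H,1},\ga_{H,1}}$ with $H_1=H^{\mathrm{sc}}$. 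The isomorphism $G_{\mathrm{ad}}\cong H_{\mathrm{ad}}$ identifies $G_1=(G_{\mathrm{ad}})^{\mathrm{sc}}$ with $H_1=(H_{\mathrm{ad}})^{\mathrm{sc}}$; under this identification $\bfP_1$ and $\bfP_{H,1}$ correspond (they are the preimages of the single parahoric of $G_{\mathrm{ad}}(F)\cong H_{\mathrm{ad}}(F)$ cut out by $\bfP$ and by $\bfP_H$) and $\ga_1$ corresponds to $\ga_{H,1}$ (both are the common image of $\ga$, $\ga_H$ in $\mathrm{Lie}(G_{\mathrm{ad}})(F)$). Hence $X_{\bfP_1,\ga_1}\cong X_{\bfP_{H,1},\ga_{H,1}}$ as perfect $k$-schemes. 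Consequently the representability and finiteness assertions of Theorem~\ref{thm:main-Lie-algebra-case} transfer through the two disjoint unions, $\dim X^\fg_{\bfP',\ga}=\dim X_{\bfP_1,\ga_1}=\dim X_{\bfP_{H,1},\ga_{H,1}}=\dim X^\fh_{\bfP_H',\ga_H}$, and the hypothesis ``$\bfP$ lies in a special parahoric'' appearing in the equality part is a condition on the facet of $\bfP$ in the Bruhat--Tits building of $G_{\mathrm{ad}}(F)\cong H_{\mathrm{ad}}(F)$, so it holds for $\bfP$ precisely when it holds for $\bfP_H$.

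It then remains to see that the invariants $d_\fg(\ga)$, $\mathrm{Art}(G)$ and $\mathrm{Art}_\ga=\mathrm{Art}(G_\ga)$ combine into a quantity depending only on $(G_{\mathrm{ad}},\ga_1)$. For $d_\fg(\ga)$ this is clear from the expression $d_\fg(\ga)=\sum_{\alpha\in R}\mathrm{val}(d\alpha(\ga'))$, because each $d\alpha$ factors through $\ft\to\mathrm{Lie}(T_{\mathrm{ad}})$; thus $d_\fg(\ga)=d_{\fg_1}(\ga_1)=d_{\fh_1}(\ga_{H,1})=d_\fh(\ga_H)$. For the conductors, set $Z^\circ=Z_G^\circ$, a Galois-stable central subtorus lying both in a maximally split maximal torus $T_0$ of $G$ and in $T=G_\ga^\circ$. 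Over $\bbQ$, the Galois representation $X_*(Z^\circ)_\bbQ$ splits off as a direct summand of each of $X_*(T_0)_\bbQ$ and $X_*(T)_\bbQ$, the complements being the $\bbQ$-spans of the coroots of $G$ relative to $T_0$, respectively to $T$; as Galois representations these complements are $X_*(T_0^{\mathrm{sc}})_\bbQ$ and $X_*((G^{\mathrm{sc}})_{\ga_1})_\bbQ$ (with $T_0^{\mathrm{sc}}$ the maximally split maximal torus of $G^{\mathrm{sc}}$). By additivity of the Artin conductor, $\mathrm{Art}(G)=\mathrm{Art}(Z^\circ)+\mathrm{Art}(G^{\mathrm{sc}})$ and $\mathrm{Art}_\ga=\mathrm{Art}(Z^\circ)+\mathrm{Art}((G^{\mathrm{sc}})_{\ga_1})$, whence $\mathrm{Art}(G)-\mathrm{Art}_\ga=\mathrm{Art}(G^{\mathrm{sc}})-\mathrm{Art}((G^{\mathrm{sc}})_{\ga_1})$ depends only on $G^{\mathrm{sc}}=(G_{\mathrm{ad}})^{\mathrm{sc}}$ and $\ga_1$. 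The same computation for $H$, together with the identification $(G_{\mathrm{ad}},\ga_1)\cong(H_{\mathrm{ad}},\ga_{H,1})$, gives $d_\fg(\ga)+\mathrm{Art}(G)-\mathrm{Art}_\ga=d_\fh(\ga_H)+\mathrm{Art}(H)-\mathrm{Art}_{\ga_H}$, so the right-hand sides of the two formulas coincide and the corollary follows.

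The only non-formal point is the Galois-equivariant splitting of $X_*(Z^\circ)_\bbQ$ inside $X_*(T_0)_\bbQ$ and $X_*(T)_\bbQ$: it rests on the semisimplicity of $\bbQ[\mathrm{Gal}(L/F)]$-modules for a finite Galois extension $L/F$ splitting all the tori involved, and it is precisely what allows the \emph{wild} contributions to the conductors --- not merely their defects --- to cancel in the difference $\mathrm{Art}(G)-\mathrm{Art}_\ga$. I expect this to be the only real (and rather mild) obstacle, the remainder being bookkeeping built on Proposition~\ref{prop:reduction-Lie-alg}; note that the analogous cancellation is false integrally, so it is genuinely the difference, and not the individual conductors, that is adjoint-invariant.
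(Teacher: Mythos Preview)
Your proposal is correct and follows essentially the same approach as the paper's proof: reduce both sides via Proposition~\ref{prop:reduction-Lie-alg} to the common simply connected affine Springer fiber, then note that $d_\fg(\ga)$ depends only on the adjoint image and that $\mathrm{Art}(G)-\mathrm{Art}_\ga$ is invariant under passing to the adjoint group. The paper simply asserts the latter invariance, whereas you supply the Galois-representation argument splitting off $X_*(Z_G^\circ)_\bbQ$; this extra detail is correct and helpful.
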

\begin{proof}
    By the assumptions and Proposition \ref{prop:reduction-Lie-alg}, both spaces are disjoint unions of the same affine Springer fibers for the common derived group of $G$ and $H$ and hence have the same dimension. On the other hand, the invariant $d_\fg(\ga)$ depends only the image of $\ga$ in the adjoint Lie algebra and the difference $\mathrm{Art}(G)-\mathrm{Art}_\ga$ does not change when we replace $G$ by its adjoint group (and replace $\ga$ by its image in the adjoint Lie algebra). 
\end{proof}
Next we show that Theorem \ref{thm:main-Lie-algebra-case} can be reduced to the case of topologically nilpotent elements. We will employ this reduction procedure when comparing the group version and Lie algebra version of Witt-vector affine Springer fibers (cf. \S\ref{sec:finish-proof-group}). \par
Thanks to Corollary \ref{cor:reduction-Lie-alg}, we may assume that $G=G'$ and $\bfP=\bfP'$ and we have the freedom of replacing $G$ by any reductive group whose adjoint group is isomorphic to $G_{\mathrm{ad}}$. Therefore by Proposition \ref{prop:pairing} we may assume that the assumptions in Proposition \ref{prop:top-jordan-Lie} are satisfied. In particular $G$ is tamely ramified. Then a bounded regular semisimple element $\ga\in\fg(F)^{\mathrm{rs}}$ admits a topological Jordan decomposition $\ga=\ga_0+\ga_1$ where $\ga_0\in\fg(F)$ is strongly semisimple and $\ga_1\in\fg(F)$ is topologically nilpotent. By Proposition \ref{prop:top-jordan-Lie}, after $G(F)$-conjugation we may and do assume that $\ga_0\in\ft(\cO)$. Let $H=G_{\ga_0}$ be the centralizer of $\ga_0$ and let $\fh=\fg_{\ga_0}$ be its Lie algebra. Then $H$ is an $F$-Levi subgroup of $G$ containing the maximal torus $T$, and $\ga_1\in\fh(F)^{\mathrm{rs}}$ is bounded and regular semisimple. Let $\bfP_H:=\bfP\cap H(F)$. Then $\bfP_H$ is a parahoric subgroup of $H(F)$.
\begin{prop}\label{prop:HC-descent-Lie-alg}
    With notations as above, there is a canonical isomorphism of perfect schemes $X_{\bfP_H,\ga_1}\cong X_{\bfP,\ga}$. 
\end{prop}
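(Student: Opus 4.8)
The plan is to produce a natural morphism $i\colon X_{\bfP_H,\ga_1}\to X_{\bfP,\ga}$, induced by the inclusions $H\hookrightarrow G$ and $\bfP_H\hookrightarrow\bfP$, and to prove it is an isomorphism by showing it is an immersion of perfect (ind-)schemes that is bijective on $k$-points. The inclusion $H\hookrightarrow G$ gives a closed immersion $LH\hookrightarrow LG$ compatible with $L^+\cG_{\bfP_H}=LH\cap L^+\cG_\bfP$, and hence (by the structure theory of Witt vector affine flag varieties, cf.\ \cite{Zhu-mixed}) a locally closed immersion $\mathrm{Fl}_{\bfP_H}\hookrightarrow\mathrm{Fl}_\bfP$. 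It carries $X_{\bfP_H,\ga_1}$ into $X_{\bfP,\ga}$: since $\ga_0\in\ft(\cO)$ we may assume $\ga_0\in\mathrm{Lie}(\bfP)$, hence $\ga_0\in\mathrm{Lie}(\bfP)\cap\fh(F)=\mathrm{Lie}(\bfP_H)$, and for $h\in H(F)$ with $\mathrm{ad}(h)^{-1}\ga_1\in\mathrm{Lie}(\bfP_H)$ we get $\mathrm{ad}(h)^{-1}\ga=\ga_0+\mathrm{ad}(h)^{-1}\ga_1\in\mathrm{Lie}(\bfP)$, because $h$ centralizes $\ga_0$. The resulting map $i$ is injective on $k$-points because $\bfP\cap H(F)=\bfP_H$; being a locally closed immersion, it then suffices to prove it is surjective on $k$-points.

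The key input is the following \emph{integrality} statement: for every $g\in G(F)$ with $Y:=\mathrm{ad}(g)^{-1}\ga\in\mathrm{Lie}(\bfP)$, one also has $Y_0:=\mathrm{ad}(g)^{-1}\ga_0\in\mathrm{Lie}(\bfP)$ and $Y_1:=\mathrm{ad}(g)^{-1}\ga_1\in\mathrm{Lie}(\bfP)$. To see this, observe first that $Y=Y_0+Y_1$ is the topological Jordan decomposition of the bounded semisimple element $Y$: strong semisimplicity and topological nilpotence are preserved under $G(\overline F)$-conjugation, $Y_0,Y_1\in\fg_Y(F)$, and the decomposition is unique by Proposition~\ref{prop:top-jordan-Lie}. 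Since $Y_0$ commutes with $Y$ we have $Y\in\fg_{Y_0}(F)=\mathrm{Lie}(G_{Y_0})(F)$, and by the fact (established earlier via Bruhat--Tits theory) that $\bfP_{Y_0}:=\bfP\cap G_{Y_0}(F)$ is a parahoric subgroup of $G_{Y_0}(F)$, we get $Y\in\mathrm{Lie}(\bfP_{Y_0})=\mathrm{Lie}(\bfP)\cap\fg_{Y_0}(F)$. Under the standing hypotheses, $p$ is not a torsion prime for the (connected reductive) group $G_{Y_0}$ — this uses that such torsion conditions pass to centralizers of semisimple elements (Steinberg) — so $\fg_{Y_0}=Z(\fg_{Y_0})\oplus\mathrm{Lie}(G_{Y_0}^{\mathrm{der}})$ and this splitting is respected by every parahoric lattice of $G_{Y_0}$. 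By Proposition~\ref{prop:top-jordan-Lie}, $Y_0\in Z(\fg_{Y_0})$ and $Y_1\in\mathrm{Lie}(G_{Y_0}^{\mathrm{der}})$; since they are exactly the two components of $Y\in\mathrm{Lie}(\bfP_{Y_0})$ for this splitting, both lie in $\mathrm{Lie}(\bfP_{Y_0})\subset\mathrm{Lie}(\bfP)$.

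Now let $g\bfP\in X_{\bfP,\ga}(k)$. By the integrality statement, $Y_0=\mathrm{ad}(g)^{-1}\ga_0$ and $Y_1=\mathrm{ad}(g)^{-1}\ga_1$ both lie in $\mathrm{Lie}(\bfP)$, so $Y_0$ and $\ga_0$ are $G(F)$-conjugate strongly semisimple elements of $\mathrm{Lie}(\bfP)$. Such elements are in fact $\bfP$-conjugate: reducing modulo $\varpi$ they give semisimple elements of $\mathrm{Lie}(\sG_\bfP)$ with equal characteristic polynomials, hence $\sG_\bfP(k)$-conjugate ones (as $p$ is not torsion), and a conjugator lifts by Hensel's lemma using smoothness of $\cG_\bfP$ and of the scheme of conjugators between two semisimple elements. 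Choosing $p_0\in\bfP$ with $\mathrm{ad}(p_0)Y_0=\ga_0$, the element $h:=gp_0^{-1}$ satisfies $\mathrm{ad}(h)\ga_0=\mathrm{ad}(g)Y_0$, wait — $\mathrm{ad}(h)^{-1}\ga_0=\mathrm{ad}(p_0)\mathrm{ad}(g)^{-1}\ga_0=\mathrm{ad}(p_0)Y_0=\ga_0$, so $h\in G_{\ga_0}(F)=H(F)$; moreover $h\bfP=g\bfP$ and $\mathrm{ad}(h)^{-1}\ga_1=\mathrm{ad}(p_0)Y_1\in\mathrm{Lie}(\bfP)\cap\fh(F)=\mathrm{Lie}(\bfP_H)$, so $h\bfP_H\in X_{\bfP_H,\ga_1}(k)$ and $i(h\bfP_H)=g\bfP$. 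Thus $i$ is bijective on $k$-points, and being a locally closed immersion it is an isomorphism.

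The main obstacle is the integrality statement of the second paragraph, together with the $\bfP$-conjugacy of strongly semisimple elements in the third; both rely on three ingredients whose verification under our exact hypotheses on $p$ is the delicate point: that the non-torsion condition on $p$ descends to centralizers of semisimple elements, that the center/derived splitting of a reductive Lie algebra is compatible with parahoric lattices, and that the relevant centralizer group schemes are smooth. One should also check carefully that $\mathrm{Lie}(\bfP)\cap\fg_{Y_0}(F)$ is genuinely the Lie algebra of the parahoric $\bfP\cap G_{Y_0}(F)$, which is precisely where the hypothesis that this intersection is parahoric is used.
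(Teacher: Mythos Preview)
Your overall strategy (embed, then prove bijectivity on $k$-points) matches the paper's, but the two substantive steps are argued very differently, and your versions have genuine gaps.

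\textbf{The integrality step is circular.} You claim that $\bfP_{Y_0}:=\bfP\cap G_{Y_0}(F)$ is a parahoric subgroup of $G_{Y_0}(F)$, citing ``established earlier via Bruhat--Tits theory.'' But what the paper establishes earlier is that $\bfP_H=\bfP\cap G_{\ga_0}(F)$ is parahoric, and that uses the standing assumption $\ga_0\in\ft(\cO)\subset\mathrm{Lie}(\bfP)$: the facet of $\bfP$ lies in the image of the building of $H=G_{\ga_0}$. For the conjugate $Y_0=\mathrm{ad}(g)^{-1}\ga_0$, the analogous statement says the facet of $\bfP$ meets the building of $G_{Y_0}$, which amounts to $Y_0\in\mathrm{Lie}(\bfP)$ --- precisely what you are trying to prove. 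Without this, $\bfP\cap G_{Y_0}(F)$ is merely open and bounded, and there is no reason for $\mathrm{Lie}(\bfP)\cap\fg_{Y_0}(F)$ to split along the center/derived decomposition. So the argument as written does not establish $Y_0\in\mathrm{Lie}(\bfP)$.

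\textbf{The $\bfP$-conjugacy step is incomplete.} Even granting $Y_0,\ga_0\in\mathrm{Lie}(\bfP)$, your Hensel-lifting argument needs the transporter $\{x\in\cG_\bfP:\mathrm{ad}(x)Y_0=\ga_0\}$ to be smooth over $\cO$. This is a torsor under the centralizer of $Y_0$ in $\cG_\bfP$, but $\cG_\bfP$ is not reductive over $\cO$, so smoothness of such centralizers is not immediate from the non-torsion hypothesis on $p$. You also need to lift a conjugator from the reductive quotient $\sG_\bfP$ back up through the unipotent radical of the special fiber; this is an extra step you have not addressed.

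The paper handles both issues in one stroke by invoking two results of \cite{KM}: Lemma~2.3.3 gives the contrapositive of your integrality claim (if $\ga_0\notin\mathrm{ad}(g)\mathrm{Lie}(\bfP)$ then $\ga\notin\mathrm{ad}(g)\mathrm{Lie}(\bfP)$), and Theorem~2.3.1 gives directly that $g\bfP=h\bfP$ for some $h\in H(F)$ once $\ga_0\in\mathrm{ad}(g)\mathrm{Lie}(\bfP)$. These are precisely the nontrivial Bruhat--Tits-theoretic facts your argument is missing, and reproving them from scratch is substantially harder than you indicate in your final paragraph.
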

\begin{proof}
    Since $\ga_0\in\ft(\cO)$ lies in the center of $\fh(F)$, we see that the natural embedding $\mathrm{Fl}_{\bfP_H}\to\mathrm{Fl}_\bfP$ induces an embedding $X_{\bfP_H,\ga_1}\to X_{\bfP,\ga}$. It remains to show that it is surjective.\par 
    Let $g\bfP$ be a point in $X_{\bfP,\ga}$ so that $\ga\in\mathrm{ad}(g)\mathrm{Lie}\bfP$. Suppose that $\ga_0\notin\mathrm{ad}(g)\mathrm{Lie}\bfP$, then $g\bfP\ne h\bfP$ for any $h\in H(F)$ and by \cite[Lemma 2.3.3]{KM} we get that $\ga\notin\mathrm{ad}(g)\mathrm{Lie}\bfP$. This is a contradiction and hence we must have $\ga_0\in\mathrm{ad}(g)\mathrm{Lie}\bfP$. Then we deduce that $g\bfP=h\bfP$ for some $h\in H(F)$ by \emph{loc. cit.} Theorem 2.3.1. Then we get that $\ga\in\mathrm{ad}(g)\mathrm{Lie}\bfP\cap\fh=\mathrm{ad}(h)\bfP_H$ and since $\ga_0\in\mathrm{ad}(h)\mathrm{Lie}\bfP_H$ we deduce that $\ga_1\in\mathrm{ad}(g)\mathrm{Lie}\bfP_H$. Therefore $g\bfP=h\bfP$ lies in the image of $X_{\bfP_H,\ga_1}\to X_{\bfP,\ga}$ and we are done.
\end{proof}
\subsection{Witt-vector affine Springer fibers for the groups}\label{sec:ASF-group-intro}
We proceed to introduce the group version of Witt vector affine Springer fibers. Keep the assumptions and notations in \S\ref{sec:setup-adjoint-pair}.
\begin{defn}
The \emph{Witt vector affine Springer fiber in $\mathrm{Fl}_{\bfP'}=LG'/\bfP'$ for an element $\ga\in G(F)$} is the closed sub-indscheme of $\mathrm{Fl}_{\bfP'}$ whose set of $k$-points is
\[X^{G}_{\bfP',\ga}(k)=\{g\in G'(F)/\bfP' | \mathrm{ad}(g)^{-1}(\ga)\in\widetilde{\bfP}\}.\]
More precisely, for any perfect $k$-algebra $R$, $X^{G}_{\bfP',\ga}(R)$ is the set of isomorphism classes of triples $(\cE',\phi,\iota)$ in which 
\begin{itemize}
    \item $\cE'$ is a $\cG_{\bfP'}$-torsor on $\spec W(R)$, where $\cG_{\bfP'}$ is the $\cO$-model of $G'$ whose set of $\cO$-points is $\bfP'$;
    \item $\phi$ is a $W(R)$-point of the $W(R)$-scheme $\cE'\times^{\cG_{\bfP'}}\cG_{\widetilde{\bfP}}$ where $\cG_{\widetilde{\bfP}}$ is the $\cO$-model of $G$ whose set of $\cO$-points is $\widetilde{\bfP}$. Here we form the contracted product using the action of $\cG_{\bfP'}$ on $\cG_{\widetilde{\bfP}}$ that uniquely extends the adjoint action of $G'$ on $G$;
    \item $\iota$ is an isomorphism between the restriction of the pair $(\cE,\phi)$ to $\spec W(R)[\frac{1}{p}]$ and $(\cE_0',\ga)$, where $\cE_0'$ is the trivial $G'$-torsor on $\spec W(R)[\frac{1}{p}]$.
\end{itemize}
\end{defn}
Although we are mainly interested in the case where $G=G'$ and $\bfP=\bfP'$, the current slightly more general definition will be convenient in certain reduction steps in the proof of the main theorems. Two special cases are of particular importance. If $G=G'$ and $\bfP=\bfI$ is the standard Iwahori subgroup of $G(F)$, we denote $Y_\ga:=X^G_{\bfI,\ga}$. If $G=G'$ is tamely ramified and $\bfP=G(\cO)$ is the special parahoric subgroup defined in \S\ref{sec:group-model}, we denote $X_\ga:=X^G_{G(\cO),\ga}$. In the following, it will be clear from the context whether $\ga$ is an element of the Lie algebra or the group, so the conflict of notations in the two cases should not cause confusion.\par 
First we establish a non-emptiness criteria.
\begin{thm}\label{thm:nonempty-group}
    For any $\ga\in G(F)$, the Witt vector affine Springer fiber $X^{G}_{\bfP',\ga}$ is nonempty if and only if $\ga$ is bounded mod center and $\kappa_G(\ga)\in\pi_0(\widetilde{\bfP})$. In particular, when $G=G'$ and $\bfP=\bfI$, we have $Y_\ga=X_{\bfI,\ga}$ is nonempty if and only if $\ga$ is bounded mod center. Furthermore, suppose that $G$ is essentially tamely ramified (see Definition \ref{def:ess-tame-ram}) and $X^{G}_{\bfP',\ga}$ is nonempty, then $X^{G}_{\bfP',\ga}$ is finite dimensional if and only if $\ga$ is regular semisimple. 
\end{thm}
\begin{proof}
    First we show the non-emptiness criterion. The necessity is clear since any element in $\widetilde{\bfP}$ is bounded mod center. It remains to show sufficiency. We may assume that $\bfP\supset\bfI$ is standard. If $\ga\in G(F)$ is bounded mod center, then there exists $g\in G(F)$ such that $g^{-1}\ga g\in\widetilde{\bfI}$ by Lemma \ref{lem:bounded}. After multiplying by an element in $\widetilde{\bfI}$ we may assume that $g$ is the image of an element $g_1\in\mathrm{G}^{\mathrm{sc}}(F)$ under the natural homomorphism $\mathrm{G}^{\mathrm{sc}}(F)\to G(F)$ (cf. \cite[Definition 7.4.1, 7.4.5]{KP23}). Let $g'\in G'(F)$ be the image of $g_1$ under the natural homomorphism $\mathrm{G}^{\mathrm{sc}}(F)\to G'(F)$. Then we have $\mathrm{ad}(g')^{-1}(\ga)\in\widetilde{\bfI}$. By assumption we have $\kappa_G(\mathrm{ad}(g')^{-1}(\ga))\in\pi_0(\widetilde{\bfP})$ and hence there exists $x\in\widetilde{\bfP}$ such that $\kappa_G(x)=\kappa_G(\mathrm{ad}(g')^{-1}(\ga))$. After multiplying by an element of $\bfP$ we may assume that $x\in\widetilde{\bfI}\cap\widetilde{\bfP}$. Consequently we have $x^{-1}\mathrm{ad}(g')^{-1}(\ga)\in\bfI\subset\widetilde{\bfP}$ and then we get that $\mathrm{ad}(g')^{-1}(\ga)\in\widetilde{\bfP}$. This shows that $g'\bfP'$ is a point in $X^G_{\bfP',\ga}$ and in particular $X^G_{\bfP',\ga}$ is nonempty.\par 
    Now we prove the last statement. Suppose $G$ is essentially tamely ramified and $X^{G}_{\bfP',\ga}$ is nonempty. By Proposition \ref{prop:HC-descent} we may assume that $G$ is tamely ramified. (Note that this proposition only depends on the first part of the current theorem that we already proved, so there is no circular reasoning.) If $\ga\in G^{\mathrm{rs}}(F)$ is regular semisimple, then $X^{G}_{\bfP',\ga}$ is finite dimensional by Theorem \ref{thm:finiteness}. If $\ga$ is not regular semisimple, then there exists a nontrivial unipotent subgroup $U\subset G_\ga$ and the $LU$-orbits on $X^{G}_{\bfP',\ga}$ are infinite dimensional. 
\end{proof}

Now we can state the main theorem on the dimension formula for $X^{G}_{\bfP',\ga}$. It will involve the following numerical invariants of the conjugacy class of $\ga\in G^{\mathrm{rs}}(F)$: 
\begin{itemize}
    \item the discriminant valuation $d_G(\ga)$ (see Definition \ref{def:disc-group});
    \item the \emph{Artin conductor} of $\ga$, defined by $\mathrm{Art}_\ga:=\mathrm{Art}(G_\ga)$ (see Definition \ref{def:conductor-group});
    \item the Kottwitz invariant $\kappa_G(\ga)$ and its defect $\mathrm{def}(\kappa_G(\ga))$ (see Definition \ref{def:defect-Kott-invariant}).
\end{itemize}

\begin{thm}\label{thm:main-group-case}
Let $(G',G)$ be a pair of reductive groups over $F$ equipped with an isomorphism $G'_{\mathrm{ad}}\cong G_{\mathrm{ad}}$ between their adjoint groups. Suppose that $G$ is essentially tamely ramified (see Definition \ref{def:ess-tame-ram}). Let $\bfP'\subset G'(F)$ and $\bfP\subset G(F)$ be parahoric subgroups whose images in $G'_{\mathrm{ad}}(F)\cong G_{\mathrm{ad}}(F)$ coincide. Let $\ga\in G(F)^{\mathrm{rs}}$ be a regular semisimple element that is bounded mod center. Then the affine Springer fiber $X^{G}_{\bfP',\ga}$ is represented by a finite dimensional perfect scheme locally perfectly of finite type (see Definition \ref{def:perfect-finite-type}). Suppose moreover that the residue characteristic of $F$ is not bad for $G$. Then we have
    \[\dim X^{G}_{\bfP',\ga}\le\frac{1}{2}(d_G(\ga)+\mathrm{def}(\kappa_G(\ga))+\mathrm{Art}(G)-\mathrm{Art}_\ga).\]
and if $\bfP$ is contained in a special parahoric subgroup, then equality holds:
    \[\dim X^{G}_{\bfP',\ga}=\frac{1}{2}(d_G(\ga)+\mathrm{def}(\kappa_G(\ga))+\mathrm{Art}(G)-\mathrm{Art}_\ga).\]
Furthermore, if $\bfP=\bfI$ is an Iwahori subgroup, then $X_{\bfI',\ga}^G$ is equi-dimensional.
\end{thm}
This will be established by reducing to similar statements for Lie algebras that we introduced in the previous subsection. Similar to Lie algebra case, we expect that the equality should hold for any reductive group with no assumption on residue characteristic and any parahoric subgroup $\bfP$, but we do not know if $X_{\bfP',\ga}^{G}$ is equi-dimensional in general (cf. Remark \ref{rem:general-parahoric}).\par 
To finish this subsection, we establish the equi-dimensionality property in some special cases, which generalizes similar results of \cite{KL} in the equal characteristic setting. 
\begin{prop}\label{prop:equi-dim-group}
    Let $G$ be a reductive group over $F$ and let $G'=G^{\mathrm{sc}}$. 
    Let $\bfI\subset G(F)$ be an Iwahori subgroup and let $\bfI'\subset G'(F)$ be the inverse image of $\bfI$. Let $\ga\in G^{\mathrm{rs}}(F)$ be a regular semisimple element that is strongly topologically unipotent. Then the Witt-vector affine Springer fiber $X_{\bfI',\ga}=X_{\bfI',\ga}^G$ in $LG'/\bfI'$ is connected and equi-dimensional.   
\end{prop}
\begin{proof}
    We follow the arguments in \cite[\S4]{KL}. For a parahoric subgroup $\bfP\subset G(F)$ containing $\bfI$ whose inverse image in $G'(F)$ is denoted by $\bfP'$, we consider the space 
    \[X_{\bfP',\ga}^+:=\{g\in G'(F)/\bfP'_+ | \mathrm{ad}(g)^{-1}(\ga)\in\bfI_+\}\]
    on which $\sB_{\bfP'}=\bfI'/\bfP'_+$ acts by right multiplication. Since $\bfI_{tu}=\bfI_+$ and $\ga$ is topologically unipotent, for each $g\bfI'\in X_{\bfI',\ga}$ (i.e. $g^{-1}\ga g\in\bfI$), we have automatically $g^{-1}\ga g\in\bfI_+$. Hence $X_{\bfP',\ga}^+$ is a $\sB_{\bfP'}$-torsor over $X_{\bfI',\ga}$. Consider the associated adjoint bundle on $X_{\bfI',\ga}$ defined by 
    \[\cV_{\bfP',\ga}:=X_{\bfP',\ga}^+\times^{B_{\bfP'}}\mathrm{Lie}(\bfI_+/\bfP_+),\] 
    i.e. taking the quotient of $X_{\bfP',\ga}^+\times\mathrm{Lie}(\bfI_+/\bfP_+)$ by the diagonal action of $\sB_{\bfP'}=\bfI'/\bfP'_+$.\par
    Suppose that $\bfI_+/\bfP_+$ has dimension $1$, then $\cV_{\bfP',\ga}$ is a line bundle. We fix an isomorphism $\bfI_+/\bfP_+\cong\mathrm{Lie}(\bfI_+/\bfP_+)$. Then the natural map $X_{\bfP',\ga}^+\to\mathrm{Lie}(\bfI_+/\bfP_+)$ sending $g\bfP'_+$ to the image of $g^{-1}\ga g$ in $\mathrm{Lie}(\bfI_+/\bfP_+)$ descends to a global section of $\cV_{\bfP',\ga}$ which we denote by $v_{\bfP',\ga}$. Let $Z_{\bfP',\ga}\subset X_{\bfI',\ga}$ be the vanishing loci of $v_{\bfP',\ga}$. Then we have 
    \begin{equation}\label{eq:vanishing-loci}
        Z_{\bfP',\ga}=\pi_{\bfP'}^{-1}(\pi_{\bfP'}(Z_{\bfP',\ga}))=\pi_{\bfP',\ga}^{-1}(\pi_{\bfP',\ga}(Z_{\bfP',\ga}))
    \end{equation}
    where $\pi_{\bfP',\ga}:X_{\bfI',\ga}\to X_{\bfP',\ga}$ is the restriction of the natural projection $\pi_{\bfP'}:\mathrm{Fl}_{\bfI'}=LG'/\bfI'\to\mathrm{Fl}_{\bfP'}=LG'/\bfP'$. 
    Indeed, the second equality follows from the first one and the first one follows from the fact that $g^{-1}\ga g$ lies in $\bfP_+$ if and only if the (classical) Springer fiber of the image of $g^{-1}\ga g$ in $\sG_\bfP=\bfP/\bfP_+$ equals to the whole flag variety $\sG_\bfP/\sB_\bfP=\bfP/\bfI=\bfP'/\bfI'$. In other words, $Z_{\bfP',\ga}$ is the union of the fibers of $\pi_{\bfP'}$ that are completely contained in $X_{\bfI',\ga}$.\par 
    For each simple affine root $\alpha$ with corresponding simple reflection $s_\alpha\in\widetilde{W}$, let $\bfP'_\alpha=\bfI'\cup\bfI' s_\alpha\bfI'$ be the corresponding parahoric subgroup of $G'(F)$ and let $\bfP'_{\alpha,+}$ be its unipotent radical. The fibers of the natural projection $\pi_\alpha:\mathrm{Fl}_{\bfI'}\to\mathrm{Fl}_{\bfP'_\alpha}$ are all isomorphic to the perfections of $\bbP^1$ and we call them lines of type $\alpha$.\par 
    Let $g\bfI',g'\bfI'\in X_{\bfI',\ga}$ be two points in relative position $w\in\widetilde{W}$ so that $g^{-1}g'\in\bfI' w\bfI'$. Let $w=s_1\dotsm s_r$ be the reduced expression in simple reflections. For each $1\le i\le r$ let $\alpha_i$ be the simple affine root corresponding to the simple reflection $s_i$ and fix a representative $\dot{s}_i\in G'(F)$ of $s_i$. Then we may assume that $g^{-1}g'=\dot{s}_1\dotsm\dot{s}_r$. We claim that there exists a unique sequence of points $\{g_i\bfI',0\le i\le r\}$ in the affine flag variety $\mathrm{Fl}_{\bfI'}=LG'/\bfI'$ satisfying
    \begin{itemize} 
        \item $g_0=g$ and $g_n=g'=g\dot{s_1}\dotsm\dot{s_r}$,
        \item $g_{i-1}^{-1}g_i\in\bfI' s_i\bfI'$ for all $1\le i\le r$.
    \end{itemize}
    Indeed the sequence $g_1:=g_0\dot{s}_1, g_2:=g_1\dot{s}_2,\dotsm, g_r:=g_{r-1}\dot{s}_r$ clearly satisfy the above conditions and the uniqueness follows from a general property of Tits system, see for example \cite[5.1.3(h)]{Kumar}. Since $G'$ is simply connected we may identify $\mathrm{Fl}_{\bfI'}$ as a connected component of the affine flag variety for $G$ so that $\ga\in G(F)^0$ acts on it by left multiplication. Then the sequence $\{\gamma g_i\bfI',0\le i\le r\}$ also satisfies the above conditions since $\gamma g_0\bfI'=g_0\bfI'$ and $\gamma g_r\bfI'=g_r\bfI'$ by the assumption that $g_0\bfI',g_r\bfI'\in X_{\bfI',\ga}$. Hence by uniqueness we get $\gamma g_i\bfI'=g_i\bfI'$, i.e. $g_i\bfI'\in X_{\bfI',\ga}$ for all $0\le i\le r$. For each $1\le i\le r$, $\ell_i:=g_{i-1}\bfP_{\alpha_i}/\bfI$ is a line of type $\alpha_i$ passing through $g_{i-1}\bfI$ and $g_i\bfI$. The intersection $\ell_i\cap X_{\bfI',\ga}$ is the perfection of a classical Springer fiber for a nilpotent element of $\mathfrak{sl}_2$ and contains at least $2$ distinct points $g_{i-1}\bfI',g_i\bfI'$ as we have just seen. Since the Springer fiber for a nilpotent element of $\mathfrak{sl}_2$ is either a single point or the whole 
    flag variety $\bbP^1$, we get $\ell_i\subset X_{\bfI',\ga}$ for all $1\le i\le r$. Thus we have shown that any two points in $X_{\bfI',\ga}$ can be connected by a sequence of lines of type $\alpha$ in $X_{\bfI',\ga}$ for various affine simple roots $\alpha$. In particular, $X_{\bfI',\ga}$ is connected. \par 
    To prove the equi-dimensionality of $X_{\bfI',\ga}$, it suffices to show that for any irreducible component $Z\subset X_{\bfI',\ga}$ of dimension $d=\dim X_{\bfI',\ga}$  and any line $\ell_\alpha\subset X_{\bfI',\ga}$ of type $\alpha$ (for some simple affine root $\alpha$) such that $\ell_\alpha\cap Z\ne\varnothing$ and $\ell\nsubseteq Z$, there exists an irreducible component $Z'\subset X_{\bfI',\ga}$ of dimension $d$ such that $\ell_\alpha\subset Z'$. To prove the claim we consider the line bundle $\cL_\alpha:=\cV_{\bfP'_\alpha,\ga}$ on $X_{\bfI',\ga}$ constructed as above. By \eqref{eq:vanishing-loci} we have $\ell_\alpha\subset Z_{\bfP'_\alpha,\ga}$ and hence $Z$ is not contained in $Z_{\bfP'_\alpha,\ga}$ since $\ell_\alpha\nsubseteq Z$. Then the intersection $Z\cap Z_{\bfP'_\alpha,\ga}$ has pure dimension $d-1$. Let $Z_1$ be an irreducible component of $Z\cap Z_{\bfP'_\alpha,\ga}$ that has nonempty intersection with $\ell_\alpha$. Then $Z':=\pi_\alpha^{-1}(Z_1)$ is contained in $Z_{\bfP'_\alpha,\ga}$ by \eqref{eq:vanishing-loci}. Hence $Z'\subset X_{\bfI',\ga}$ and $\dim Z'=\dim Z_1+1=d$. Therefore $Z'$ is an irreducible component of $X_{\bfI',\ga}$ containing $\ell_\alpha$ and we are done.
\end{proof}

\subsection{Comparison between the group case and the Lie algebra case}
We will relate the affine Springer fibers for the group and the Lie algebra by a quasi-logarithm map. The comparison is based on the following simple observation:
\begin{lem}\label{lem:compare-group-Lie}
    Let $G$ be a reductive group over $F$ with Lie algebra $\fg$ and let $\bfP\subset G(F)$ be a parahoric subgroup. Suppose that there exists a $\bfP$-equivariant bijection $\Phi:\bfP_{tu}\xrightarrow{\sim}\mathrm{Lie}(\bfP)_{\mathrm{tn}}$, where
    \begin{itemize}
        \item $\bfP_{tu}$ is the set of topologically unipotent elements in $\bfP$ on which $\bfP$ acts by conjugation;
        \item $\mathrm{Lie}(\bfP)_{\mathrm{tn}}$ is the set of topologically nilpotent elements in the Lie algebra $\mathrm{Lie}(\bfP)$, on which $\bfP$ acts by the adjoint representation.
    \end{itemize}
    Then for any \emph{strongly} topologically unipotent element $\ga\in G(F)$ we have an isomorphism between the centralizers $G_\delta\cong G_{\Phi(\delta)}$ and an identification between Witt vector affine Springer fibers $X_{\bfP,\gamma}^G=X_{\bfP,\Phi(\gamma)}^\fg$, i.e. they are the same subspace of the Witt-vector affine partial flag variety $\mathrm{Fl}_\bfP$.
\end{lem}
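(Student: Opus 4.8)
The plan is to reduce everything to two features of $\Phi$ that are available in every application of this lemma: by Proposition~\ref{prop:pairing} we may assume $\Phi$ is the restriction to $\bfP_{tu}$ of a $G_{\mathrm{ad}}$-equivariant quasi-logarithm map $\Phi\colon G\to\fg$ which is defined over $\cO$ and bijective over topologically nilpotent elements in the sense of Lemma-Definition~\ref{def:qlog-top-nilp}. I will only use that such a $\Phi$ is then defined on all strongly topologically unipotent elements of $G(\overline F)$, that it restricts to bijections $G(E)_{stu}\xrightarrow{\sim}\fg(E)_{tn}$ for every finite extension $E/F$, and that it satisfies $\Phi(\mathrm{ad}(h)x)=\mathrm{ad}(h)\Phi(x)$; in particular $\Phi(\ga)$ makes sense and lies in $\fg(F)_{tn}$.

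For the centralizers I would argue as follows. A conjugate of a strongly topologically unipotent element is again strongly topologically unipotent (if $\ga\in\bfQ_+$ then $h^{-1}\ga h\in(h^{-1}\bfQ h)_+$). Hence for $h\in G(\overline F)$, applying $\Phi$ and using the equivariance identity together with the injectivity of $\Phi$ on strongly topologically unipotent elements over $\overline F$, the conditions ``$h$ centralizes $\ga$'' and ``$h$ centralizes $\Phi(\ga)$'' are equivalent, so $G_\ga$ and $G_{\Phi(\ga)}$ agree on $\overline F$-points compatibly with the Galois action. To upgrade this to an isomorphism of $F$-group schemes, I would invoke that $\Phi$ is \'etale on a $G_{\mathrm{ad}}$-stable open neighbourhood of the (strongly topologically) unipotent locus and restricts there to an isomorphism onto the corresponding locus in $\fg$ --- exactly what comes out of the special-fibre analysis in the proof of Lemma~\ref{lem:qlog-bij-top-nilp} --- so that $\Phi$ identifies the stabilizer group schemes of $\ga$ and $\Phi(\ga)$.

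For the affine Springer fibers I would compare $k$-points directly. Both $X^G_{\bfP,\ga}$ and $X^\fg_{\bfP,\Phi(\ga)}$ are closed sub-indschemes of the perfect ind-scheme $\mathrm{Fl}_\bfP$ and hence reduced, so since $k=\overline k$ it is enough to show $X^G_{\bfP,\ga}(k)=X^\fg_{\bfP,\Phi(\ga)}(k)$. Fix $g\bfP$ and put $\delta:=\mathrm{ad}(g)^{-1}(\ga)$, which is again strongly topologically unipotent. Because $\kappa_G(\ga)=1$, $\kappa_G$ is conjugation-invariant, and $\widetilde\bfP/\bfP\hookrightarrow\pi_1(G)_\Gamma$, the condition $\delta\in\widetilde\bfP$ is equivalent to $\delta\in\bfP$, hence to $\delta\in\bfP_{tu}$. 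On the other side, $\mathrm{ad}(g)^{-1}(\Phi(\ga))=\Phi(\delta)$ is topologically nilpotent in $\fg(F)$, so the condition $\mathrm{ad}(g)^{-1}(\Phi(\ga))\in\mathrm{Lie}(\bfP)$ is equivalent to $\Phi(\delta)\in\mathrm{Lie}(\bfP)_{tn}$. Now the bijection $\Phi\colon\bfP_{tu}\xrightarrow{\sim}\mathrm{Lie}(\bfP)_{tn}$ and the injectivity of $\Phi$ on $G(F)_{stu}$ make ``$\delta\in\bfP_{tu}$'' and ``$\Phi(\delta)\in\mathrm{Lie}(\bfP)_{tn}$'' equivalent: for the nontrivial direction one chooses $\delta'\in\bfP_{tu}$ with $\Phi(\delta')=\Phi(\delta)$ and notes that $\delta,\delta'\in G(F)_{stu}$ force $\delta=\delta'$. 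Therefore $g\bfP$ lies in $X^G_{\bfP,\ga}(k)$ if and only if it lies in $X^\fg_{\bfP,\Phi(\ga)}(k)$, which is the asserted identification.

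The step I expect to be the main obstacle is the passage from equality of $k$-points to equality of subspaces of $\mathrm{Fl}_\bfP$: I plan to obtain it for free from the reducedness of perfect (ind-)schemes, but if one preferred to avoid that, one would work at the level of loop spaces, writing $X^G_{\bfP,\ga}$ and $X^\fg_{\bfP,\Phi(\ga)}$ as quotients of $\mathrm{Orb}_\ga^{-1}(-)$ as in the proof of Lemma~\ref{lem:open-subspace}, using $L\Phi\circ\mathrm{Orb}_\ga=\mathrm{Orb}_{\Phi(\ga)}$, and showing that $\Phi$ induces an isomorphism of perfect schemes from the topologically unipotent locus of $L^+\cG_\bfP$ onto the topologically nilpotent locus of $L^+\mathrm{Lie}\,\cG_\bfP$; this last point is where the deformation-theoretic core of the proof of Lemma~\ref{lem:qlog-bij-top-nilp} (the inductive isomorphisms $\cU_{i+1}\cong\cN_{i+1}$) would be reused.
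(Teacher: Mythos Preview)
Your approach is essentially the same as the paper's: reduce to equality of $k$-points (using that closed perfect sub-indschemes of $\mathrm{Fl}_\bfP$ are determined by their $k$-points), then run the chain of equivalences $g^{-1}\ga g\in\bfP\Leftrightarrow g^{-1}\ga g\in\bfP_{tu}\Leftrightarrow \mathrm{ad}(g)^{-1}\Phi(\ga)\in\mathrm{Lie}(\bfP)_{tn}\Leftrightarrow \mathrm{ad}(g)^{-1}\Phi(\ga)\in\mathrm{Lie}(\bfP)$. Your write-up is more careful than the paper's in two respects. First, you correctly observe that the lemma as literally stated only gives a $\bfP$-equivariant map on $\bfP_{tu}$, whereas both the statement (which speaks of $\Phi(\ga)$ for an arbitrary strongly topologically unipotent $\ga$) and the middle equivalence (which needs $\Phi(g^{-1}\ga g)=\mathrm{ad}(g)^{-1}\Phi(\ga)$ for all $g\in G(F)$) really require $\Phi$ to extend to a $G_{\mathrm{ad}}$-equivariant map on all of $G(F)_{stu}$; you supply this by invoking the global quasi-logarithm, which is exactly how the lemma is applied in the paper. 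Second, you make explicit the step from $\widetilde{\bfP}$ to $\bfP$ via $\kappa_G(\ga)=1$, and you spell out the backward direction of the middle equivalence using injectivity of $\Phi$ on $G(F)_{stu}$. Your alternative route through loop spaces is unnecessary here but harmless.
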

\begin{proof}
    Since both spaces are closed in the perfect ind-scheme $\mathrm{Fl}_\bfP$, it suffices to check that they have the same set of $k$-points. For any $g\in G(F)$ we have the following equivalences
    \[g^{-1}\ga g\in\bfP\Leftrightarrow g^{-1}\ga g\in\bfP_{tu}\Leftrightarrow\mathrm{ad}(g)^{-1}\Phi(\ga)\in\mathrm{Lie}(\bfP)_{tn}\Leftrightarrow\mathrm{ad}(g)^{-1}\Phi(\ga)\in\mathrm{Lie}(\bfP)\]
    where the middle equivalence follows from the assumptions. This shows that $X_{\bfP,\ga}^G=X_{\bfP,\Phi(\ga)}^\fg$.\par 
    After $G(F)$-conjugation we may assume that $\ga\in\bfP_{tu}$ and then $\Phi$ induces an isomorphism between the centralizers $G_\delta\cong G_{\Phi(\delta)}$.
\end{proof}
One also expects that the discriminant valuation functions $d_G$ on $G(F)$ and $d_\fg$ on $\fg(F)$ should be compatible with a quasi-logarithm map. This is indeed the case, at least for strongly topologically unipotent elements. Although there might be a straightforward proof of this fact, we will deduce it from Lemma \ref{lem:compare-group-Lie} and special cases of the main theorems later (see Corollary \ref{cor:disc-group-Lie-alg}).

\section{Dimension of affine Springer fibers for the Lie algebras}\label{sec:dim-Lie-alg}
In this section we establish the dimension formula of the Witt-vector affine Springer fibers for the Lie algebras and finish the proof of Theorem \ref{thm:main-Lie-algebra-case}.

\subsection{Further assumptions on the group}\label{sec:more-assumption}
By Corollary \ref{cor:reduction-Lie-alg} we may assume that $G=G'$ and we have the freedom of replacing $G$ by any reductive group over $F$ whose adjoint group is isomorphic to $G_{\mathrm{ad}}$. Meanwhile, the adjoint group $G_\mathrm{ad}$ is a product of $F$-simple groups and each simple factor of $G_\mathrm{ad}$ is the Weil restriction of an absolutely simple group (i.e. whose base change to $\overline{F}$ is simple). Let us examine the effect of Weil restrictions on the numerical invariants in Theorem \ref{thm:main-Lie-algebra-case}. Suppose $G=\mathrm{Res}_{\widetilde{F}/F}\widetilde{G}$ where $\widetilde{F}/F$ is a finite field extension. From the definition we easily see that the discriminant valuation $d_\fg(\ga)$ does not change if we replace $F$ (resp. $G$) by $\widetilde{F}$ (resp. $\widetilde{G}$). By Proposition \ref{prop:Artin-conductor-Res}, the same is true for the difference $\mathrm{Art}(G)-\mathrm{Art}(G_\ga)$. Also it is clear that all these invariants are additive when we decompose $G$ into products of simple factors. Then by Lemma \ref{lem:affine-flag-decompose} we may and do assume that $G_\mathrm{ad}$ is absolutely simple. Furthermore, we assume that $G$ splits over a \emph{tamely ramified} extension $F'/F$ of degree $e\ge1$, $p\nmid e$. We use the notations and definitions from \S\ref{sec:parahoric} and \S\ref{sec:group-model}. In particular, $G$ is an outer twist of its split form $\bbG$ defined by a homomorphism $\rho_G:\mu_e\to\mathrm{Out}(\bbG)$. 

By Lemma \ref{lem:pairing-split-case} and Corollary \ref{cor:Kostant-section}, after replacing $G$ by a group $G^\natural$ with the same adjoint group, we may assume that all of the following conditions are satisfied:
\begin{itemize}
    \item $G$ is a tamely ramified, $G_{\mathrm{ad}}$ is absolutely simple and $p$ is good for $G$.
    \item There exists a $\bbG\rtimes\mathrm{Out}(\bbG)$-invariant perfect symmetric bilinear form $(\cdot,\cdot)$ on the $\cO$-module $\bbg(\cO)$.
    \item The regular centralizer $J$ for $G$ is smooth over the $\cO$-scheme $\fc=(\mathrm{Res}_{\cO'/\cO}\bbc)^{\mu_e}$ and there exists a section $\kappa:\fc\to\fg$ (of the Chevalley morphism $\chi_G$) between $\cO$-schemes.
\end{itemize}
We will impose these assumptions until the end of \S\ref{sec:dim-Lie-alg}.\par
Let $\ga\in\fg(F)$ be a bounded and regular semisimple element. By Corollary \ref{cor:chi-surjection}, $\ga$ is $G(\overline{F})$-conjugate to an element in $\fg^{\mathrm{reg}}(\cO)$ since $\chi(\ga)\in\fc(\cO)$. Since $G_\ga$ is a torus over $F$ we have $H^1(F,G_\ga)=1$ by Steinberg's theorem. This implies that $\ga$ is $G(F)$-conjugate to an element in $\fg^{\mathrm{reg}}(\cO)$.\par 
We assume for the rest of this section that $\ga\in\fg^{\mathrm{reg}}(\cO)\cap\fg^{\mathrm{rs}}(F)$. Let $G_\ga$ be the $\cO$-group scheme of centralizers of $\ga$. Let $a:=\chi(\ga)\in\fc(F)^{\mathrm{rs}}\cap\fc(\cO)$. Let $J_a$ be the pullback of the regular centralizer $J$ along $a$, viewed as a morphism $\spec(\cO)\to\fc$. By Proposition \ref{prop:reg-centralizer} we have a canonical isomorphism $G_\ga\cong J_a$ and then by the assumptions above $G_\ga$ is a smooth commutative group scheme over $\cO$ whose generic fiber is a torus. Then we have the Witt-vector loop group $LG_\ga$, the positive loop group $L^+G_\ga$ and their quotient 
\[P_\ga:=\mathrm{Gr}_{G_\ga}=LG_\ga/L^+G_\ga.\]

\subsection{Centralizer action}
The loop group $LG_\ga$ acts on $X^\fg_{\bfP,\ga}$ and we will show that the action factors through the quotient $P_\ga$. 
\begin{lem}
    The positive loop group $L^+G_\ga$ acts trivially on $X^\fg_{\bfP,\ga}$. 
\end{lem}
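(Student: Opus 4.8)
The plan is to check triviality of the action on $R$-points, for every perfect $k$-algebra $R$. Writing $L^+G_\ga(R)=G_\ga(W_\cO(R))$ and representing a point of $X^\fg_{\bfP,\ga}(R)$, after an fppf localization on $\mathrm{Spec}\,R$, by an element $g\in LG(R)=G(W_\cO(R)[\tfrac1p])$ with $\mathrm{ad}(g)^{-1}\ga\in\mathrm{Lie}(\bfP)(R)$, it suffices to prove that for every $h\in G_\ga(W_\cO(R))$ one has $g^{-1}hg\in\cG_\bfP(W_\cO(R))$; then $h\cdot g\bfP=(hg)\bfP=g\bfP$. Since the statement is invariant under $G(F)$-conjugation we may assume $\bfP\supseteq\bfI$. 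Recall that here $\ga\in\fg^{\mathrm{reg}}(\cO)\cap\fg^{\mathrm{rs}}(F)$, that $a:=\chi(\ga)\in\fc(\cO)$, and that $G_\ga\cong J_a$ as $\cO$-group schemes by Proposition \ref{prop:reg-centralizer}, so $h$ corresponds to some $\tilde h\in J_a(W_\cO(R))$.

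The core of the argument is to recover $g^{-1}hg$ from $\tilde h$ through the regular centralizer. Put $\ga':=\mathrm{ad}(g)^{-1}\ga$, an element of $\mathrm{Lie}(\bfP)(R)=\mathrm{Lie}(\cG_\bfP)(W_\cO(R))$, and note $\mathrm{ad}(g^{-1}hg)\ga'=\ga'$. Since $\chi$ is $\cG_\bfP$-invariant, the equality $\chi(\ga')=\chi(\ga)=a$ holds over $W_\cO(R)[\tfrac1p]$, hence over $W_\cO(R)$ because the latter is $p$-torsion free and $\fc$ is separated. Pulling back along $\ga'$ the canonical homomorphism $\Psi\colon\chi^*J\to I$ of Proposition \ref{prop:reg-centralizer} — or rather its analogue for the parahoric model $\cG_\bfP$, obtained by the same construction — yields a homomorphism of $W_\cO(R)$-group schemes $J_a\otimes_\cO W_\cO(R)\to\mathrm{Cent}_{\cG_\bfP}(\ga')$; let $h'$ be the image of $\tilde h$ under it, so that $h'\in\mathrm{Cent}_{\cG_\bfP}(\ga')(W_\cO(R))\subseteq\cG_\bfP(W_\cO(R))$.

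It then remains to see $h'=g^{-1}hg$. This follows from the $G$-equivariance of $\Psi$ (adjoint action on $I$, action on the $\fg$-factor of $\chi^*J$, trivial action on $J$): since $g\in LG(R)$ sends $\ga'$ to $\ga$ under the adjoint action, equivariance gives $\Psi_\ga(\tilde h)=g\,\Psi_{\ga'}(\tilde h)\,g^{-1}$, while $\Psi_\ga(\tilde h)=h$ because the isomorphism $G_\ga\cong J_a$ is precisely $\Psi_\ga$ on the regular locus. Hence $h'=\Psi_{\ga'}(\tilde h)=g^{-1}hg$ over $W_\cO(R)[\tfrac1p]$, and therefore over $W_\cO(R)$ since $\cG_\bfP$ is separated. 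This gives $g^{-1}hg\in\cG_\bfP(W_\cO(R))$, as wanted.

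The one nontrivial input is the parahoric version of $\Psi$ used above: Proposition \ref{prop:reg-centralizer} produces it only for the special model $\cG_0$, and landing in the centralizer inside $\cG_0$ is genuinely insufficient here, since when $\mathrm{Lie}(\bfP)\subsetneq\fg(\cO)$ membership in $\cG_0(W_\cO(R))$ does not force membership in $\cG_\bfP(W_\cO(R))$. I expect this to be the main point to nail down. It can be supplied by running the proof of Proposition \ref{prop:reg-centralizer} with $\cG_\bfP$ in place of $\cG_0$: the isomorphism $I^{\mathrm{reg}}\cong(\chi^{\mathrm{reg}})^*J$ is a statement about the regular locus and is insensitive to the integral model, $\chi$ restricted to the regular locus of $\mathrm{Lie}(\cG_\bfP)$ is smooth over $\fc$ (using Corollary \ref{cor:chi-surjection}), and the resulting isomorphism extends over all of $\mathrm{Lie}(\cG_\bfP)$ by the smoothness of $J$ over $\fc$ and normality of the total space, exactly as in \emph{loc. cit.} and \cite{BC22}. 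With this in hand the remaining verifications are routine.
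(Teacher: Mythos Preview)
Your overall plan is sound, but your final paragraph contains a genuine gap. The isomorphism $(\chi^{\mathrm{reg}})^*J\cong I^{\mathrm{reg}}$ in Proposition~\ref{prop:reg-centralizer} identifies $J$ with the centralizer group scheme inside $\cG_0$, and the homomorphism $\chi^*J\to I$ lands in $\cG_0$. The parahoric group schemes $\cG_\bfP$ and $\cG_0$ agree over $F$ but are genuinely different smooth $\cO$-models of $G$ --- neither is a closed subscheme of the other --- so there is no formal reason why a map landing in $\cG_0$ should factor through $\cG_\bfP$. Your extension-by-normality argument addresses the wrong problem: it would let you extend a morphism from a dense open of $\mathrm{Lie}(\cG_\bfP)$ to all of $\mathrm{Lie}(\cG_\bfP)$, but the issue here is not extending across a closed subset, it is showing that on the special fiber the image lands in the strictly smaller group $\cG_{\bfP,k}$ rather than $\cG_{0,k}$. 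That is a statement about where centralizers of elements of $\mathrm{Lie}(\bfP)_k$ sit inside $\sG$, and it is not automatic.

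The paper's proof supplies exactly this missing step. It first reduces to the Iwahori case by observing that $Y_\ga\to X_{\bfP,\ga}$ is surjective and $LG_\ga$-equivariant, so triviality on $Y_\ga$ suffices. For $g\bfI\in Y_\ga$ with $\delta=\mathrm{ad}(g)^{-1}\ga\in\mathrm{Lie}(\bfI)\subset\bbg(\cO')$, the split-form regular centralizer gives $\mathrm{ad}(g)^{-1}G_\ga(\cO)\subset\bbG_\delta(\cO')\subset\bbG(\cO')$. The crucial point --- that this actually lies in $\bfI$ --- is then checked on the special fiber by invoking \cite[Lemma~2.3.1]{Yun-GS}: the restriction of $\chi^*\bbJ\to\bbI$ to $\mathrm{Lie}(\bbB)$ factors through the universal centralizer in $\bbB$, so the image of $\bbG_\ga(k)$ lands in $\bbB(k)$. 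This lemma of Yun is precisely the substantive content your sketch omits; it is a specific fact about Borel subgroups and regular centralizers, not a consequence of smoothness of $\chi$ or normality of the total space.
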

\begin{proof}
    We may assume that $\bfP\supset\bfI$. Since the natural morphism $Y_\ga=X_{\bfI,\ga}^\fg\to X_{\bfP,\ga}^\fg$ is surjective and equivariant for the $LG_\ga$-action, it suffices to prove the claim for $Y_\ga$. \par 
    Let $g\bfI\in Y_\ga$ so that $\delta:=\mathrm{ad}(g)^{-1}\ga\in\mathrm{Lie}(\bfI)\subset\bbg(\cO')$. Since $\ga\in\fg^{\mathrm{reg}}(\cO)$, the restriction of the homomorphism $\chi^*\bbJ\to\bbI$ to $\delta\in\bbg(\cO')$ induces the inclusion 
    \[\mathrm{ad}(g)^{-1}G_\ga(\cO)\subset\bbG_{\delta}(\cO')\subset\bbG(\cO').\]
    We need to show furthermore that $\mathrm{ad}(g)^{-1}G_\ga(\cO)\subset\bfI$. Let $\bar{\delta}\in\bbg(k)$ be the image of $\delta$
    under the reduction map $\bbg(\cO')\to\bbg(k)$. It suffices to show that the image of $\bbG_\ga(k)$ under $\mathrm{ad}(g)^{-1}$ lies in $\bbB(k)$. This follows from \cite[Lemma 2.3.1]{Yun-GS}, which says that the restriction of the homomorphism $\chi^*\bbJ\to\bbI$ (see Proposition \ref{prop:reg-centralizer}) to $\mathrm{Lie}(\bbB)$ factors through the universal centralizer of elements of $\mathrm{Lie}(\bbB)$ in $\bbB$.    
\end{proof}
As a consequence, the action of $LG_\ga$ on $X_{\bfP,\ga}$ factors through the quotient $P_\ga=LG_\ga/L^+G_\ga$.\par 
Let us describe the $LG_\ga$-orbits on $X_{\bfP,\ga}$ in more detail. For any $\ga\in\mathrm{Lie}(\bfP)\cap\fg(F)^{\mathrm{rs}}$. Let $\cG_{\bfP,\ga}$ be the group smoothening of the schematic closure of $G_\ga$ in the Bruhat-Tits group scheme $\cG_\bfP$. Then $\cG_{\bfP,\ga}$ is a smooth commutative group scheme over $\cO$ whose generic fiber is isomorphic to the torus $G_\ga$ and whose group of $\cO$-points is $\cG_{\bfP,\ga}(\cO)=G_\ga(F)\cap\bfP$. Let $\cG_\ga^\dagger$ be the finite type Neron model of $G_\ga$. Then $\cG_\ga^\dagger(\cO)$ is the unique maximal bounded subgroup of $G_\ga(F)$ and the quotient $LG_\ga/L^+\cG_\ga^\dagger$ is discrete.\par We have natural inclusions of $\cO$-lattices in $\mathrm{Lie}(G_\ga)=\fg_\ga(F)$: 
\[\mathrm{Lie}(\cG_{\bfP,\ga})\subset\fg_\ga(F)\cap\mathrm{Lie}(\bfP)\subset\mathrm{Lie}(\cG_\ga^\dagger)\]
in which the first inclusion is an equality if and only if the schemetic closure of $G_\ga(F)$ in $\cG_\bfP$ is smooth. Indeed,  by \cite[Lemma A.2.3]{KP23} the intersection $\fg_\ga(F)\cap\mathrm{Lie}(\bfP)$ is the Lie algebra of the schematic closure of $G_\ga$ in $\cG_\bfP$.

\begin{lem}\label{lem:orbit-dim}
     For any regular semisimple element $\ga\in\fg(F)^{\mathrm{rs}}$ and any $g\in X_{\bfP,\ga}$, let $\ga_g:=\mathrm{ad}(g)^{-1}\ga\in\mathrm{Lie}\bfP$. Then the $LG_\ga$-orbit of $g\in X_{\bfP,\ga}$ is isomorphic to $\mathrm{Gr}_{\cG_{\bfP,\ga_g}}=LG_\ga/L^+\cG_{\bfP,\ga_g}$, and is represented by a perfect $k$-scheme locally perfectly of finite type of dimension
    \[\dim\mathrm{Gr}_{\cG_{\bfP,\ga_g}}=\mathrm{length}_{\cO}\left(\frac{\mathrm{Lie}(\cG^\dagger_{\ga_g})}{\mathrm{Lie}(\cG_{\bfP,\ga_g})}\right).\]
\end{lem}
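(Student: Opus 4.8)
The plan is to identify the $LG_\ga$-orbit of $g$ with the homogeneous space $\mathrm{Gr}_{\cG_{\bfP,\ga_g}}$ by an orbit--stabilizer argument, and then to compute the dimension of the latter by comparing $\cG_{\bfP,\ga_g}$ with the finite type N\'eron model $\cG^\dagger_{\ga_g}$, whose loop quotient $LG_{\ga_g}/L^+\cG^\dagger_{\ga_g}$ is discrete.

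First I would treat the orbit. After $G(F)$-conjugation we may assume $\bfP\supset\bfI$. By the previous lemma $L^+G_\ga$ acts trivially on $X_{\bfP,\ga}$, so the orbit of $g\bfP$ is $LG_\ga/\mathrm{Stab}$ with $\mathrm{Stab}$ the stabilizer of $g\bfP$. An element $h\in LG_\ga$ fixes $g\bfP$ exactly when $g^{-1}hg$ lies in $\cG_\bfP(W_\cO(-))$; since $\ga_g=g^{-1}\ga g$ one has $g^{-1}G_\ga g=G_{\ga_g}$, so conjugation by $g$ identifies $\mathrm{Stab}$ with the subgroup of $LG_{\ga_g}$ whose $R$-points are $G_{\ga_g}(W_\cO(R)[\tfrac1p])\cap\cG_\bfP(W_\cO(R))$. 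Because $W_\cO(R)$ is $\varpi$-torsion free for $R$ perfect, this intersection is exactly the set of $W_\cO(R)$-points of the flat schematic closure $\overline{G_{\ga_g}}$ of $G_{\ga_g}$ in $\cG_\bfP$. By definition $\cG_{\bfP,\ga_g}$ is the (group) smoothening of $\overline{G_{\ga_g}}$; the smoothening morphism is an isomorphism over $F$ and on $\cO=W_\cO(k)$-points, and since all the spaces in sight are perfect, hence reduced, the resulting surjection
\[\mathrm{Gr}_{\cG_{\bfP,\ga_g}}=LG_{\ga_g}/L^+\cG_{\bfP,\ga_g}\longrightarrow LG_\ga\cdot(g\bfP)\]
is a universal bijection of perfect ind-schemes and therefore an isomorphism. (Alternatively, under the running assumptions of \S\ref{sec:more-assumption} one can invoke the homomorphism $\chi^*J\to I$ of Proposition \ref{prop:reg-centralizer} to see directly that $\cG_{\bfP,\ga_g}$ is the smooth $\cO$-model of $G_{\ga_g}$ whose positive loop group is $\mathrm{Stab}$.)

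For the dimension I would reduce to a change-of-lattice computation. The inclusions of $\cO$-lattices $\mathrm{Lie}(\cG_{\bfP,\ga_g})\subset\mathrm{Lie}(\cG^\dagger_{\ga_g})$ already recorded in the text come from a homomorphism $\cG_{\bfP,\ga_g}\to\cG^\dagger_{\ga_g}$ (the N\'eron property of the finite type N\'eron model), which is a closed immersion as both are flat over $\cO$ with generic fiber $G_{\ga_g}$; hence $L^+\cG_{\bfP,\ga_g}\subset L^+\cG^\dagger_{\ga_g}\subset LG_{\ga_g}$. Since $LG_{\ga_g}/L^+\cG^\dagger_{\ga_g}$ is discrete, the projection $\mathrm{Gr}_{\cG_{\bfP,\ga_g}}\to LG_{\ga_g}/L^+\cG^\dagger_{\ga_g}$ realizes $\mathrm{Gr}_{\cG_{\bfP,\ga_g}}$, over each point of the discrete base, as a copy of $L^+\cG^\dagger_{\ga_g}/L^+\cG_{\bfP,\ga_g}$; in particular $\mathrm{Gr}_{\cG_{\bfP,\ga_g}}$ is a disjoint union of copies of this space, hence a locally perfectly of finite type ind-scheme once we know the latter is perfectly of finite type. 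It then suffices to prove the standard fact that for a closed immersion $\cH\hookrightarrow\cH'$ of smooth affine commutative $\cO$-group schemes that is an isomorphism on generic fibers, $L^+\cH'/L^+\cH$ is represented by a perfect affine scheme perfectly of finite type of dimension $\mathrm{length}_\cO(\mathrm{Lie}(\cH')/\mathrm{Lie}(\cH))$. This is proved by d\'evissage along the Witt-vector congruence filtrations of $L^+\cH$ and $L^+\cH'$: for $m\ge1$ the $m$-th graded pieces are the perfections of the vector groups attached to $\mathrm{Lie}(\cH_k)$, resp. $\mathrm{Lie}(\cH'_k)$, so choosing $N$ with $\varpi^N\mathrm{Lie}(\cH')\subset\mathrm{Lie}(\cH)$ reduces the comparison to finitely many levels, and the successive subquotients assemble into an iterated extension of affine spaces of total dimension $\mathrm{length}_\cO(\mathrm{Lie}(\cH')/\mathrm{Lie}(\cH))$; this is the exact Witt-vector analogue of the familiar equal-characteristic computation. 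Applying it with $\cH=\cG_{\bfP,\ga_g}$, $\cH'=\cG^\dagger_{\ga_g}$ and combining with the orbit identification yields the asserted formula.

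The one step that genuinely requires care is the claim in the second paragraph that replacing $\overline{G_{\ga_g}}$ by its smoothening $\cG_{\bfP,\ga_g}$ does not alter the orbit, i.e. that the displayed surjection has trivial fibers; this is where one uses that the smoothening is an isomorphism on $\cO$-points together with the fact that perfect schemes are reduced (so a universal bijection between them is an isomorphism), or else the identification of $\cG_{\bfP,\ga_g}$ via the regular centralizer. Everything else is routine d\'evissage, using the properties of $\cG^\dagger$, of the regular centralizer $J$, and of the Witt-vector loop spaces already established.
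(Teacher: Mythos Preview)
Your proof is correct and follows essentially the same route as the paper's. The paper simply reduces to $g=1$, observes that $LG_\ga/L^+\cG^\dagger_\ga$ is discrete (a free abelian group of finite rank), and then cites \cite[Lemma~2.6(c)]{LLM} for the dimension of $L^+\cG^\dagger_\ga/L^+\cG_{\bfP,\ga}$; you spell out the orbit--stabilizer identification and replace the citation by a direct d\'evissage along the Witt-vector congruence filtration, which is exactly the content of that lemma. Your caution about the smoothening step is well placed but not an issue for the dimension statement, and your alternative via the regular centralizer is the one the paper implicitly relies on anyway under the running assumptions of \S\ref{sec:more-assumption}.
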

\begin{proof}
    To ease notation we may and do assume that $g=1$ so that $\ga=\ga_g\in\mathrm{Lie}\bfP$. Note that $\pi_0(LG_\ga/L^+\cG^\dagger_{\ga})$ is a free abelian group of finite rank (which is the free part of $\pi_0(LG_\ga)$). Therefore it is enough to study the $L^+\cG^\dagger_{\ga}$-orbit of $1$ and the equality follows from \cite[Lemma 2.6(c)]{LLM}.
\end{proof}
Our next goal is to study the $P_\ga$-orbits on $X_{\bfP,\ga}$ of maximal dimension.

\subsection{The regular locus}
Let $\bfP\subset G(F)$ be a parahoric subgroup. After $G(F)$-conjugation we may and do assume that $\bfI\subset\bfP$. Let $\sG_\bfP$ be the reductive quotient of the special fiber of the associated Bruhat-Tits group scheme $\cG_\bfP$. The image of $\bfI$ in $\sG_\bfP$ under the reduction morphism is a Borel subgroup that we denote by $\sB_\bfP$. Let $\fg_\bfP=\mathrm{Lie}(\sG_\bfP)$ and $\fb_\bfP=\mathrm{Lie}(\sB_\bfP)$ be the Lie algebras. Let $\fg_\bfP^{\mathrm{reg}}\subset\fg_\bfP$ be the open subset of regular elements, i.e. those whose centralizer has minimal possible dimension. Let $\mathrm{Lie}(\bfP)^{\mathrm{reg}}$ be the inverse image of $\fg_\bfP^{\mathrm{reg}}$ under the natural reduction morphism. Then it is an open subscheme of $\mathrm{Lie}(\bfP)$. 
\begin{defn}
    The \emph{regular locus} of the Witt-vector affine Springer fiber $X_{\bfP,\ga}$ is the subspace defined by
    \[X_{\bfP,\ga}^{\mathrm{reg}}=\{g\in LG/\bfP|\mathrm{ad}(g)^{-1}(\ga)\in\mathrm{Lie}(\bfP)^{\mathrm{reg}}\}.\]
\end{defn}
By Lemma~\ref{lem:open-subspace} we see that $X_{\bfP,\ga}^{\mathrm{reg}}$ is an open subspace of $X_{\bfP,\ga}$. We expect that $X_{\bfP,\ga}^{\mathrm{reg}}$ is always nonempty and we can see this immediately when $\bfP=\bfP_0=G(\cO)$ is the special parahoric subgroup fixed in \S\ref{sec:group-model}, in which case we simply denote 
\[X_\ga:=X_{G(\cO),\ga}^\fg=\{g\in G(F)/G(\cO)|\mathrm{ad}(g)^{-1}(\ga)\in\fg(\cO)\}.\]
By the assumptions in \S\ref{sec:more-assumption} we have $\ga\in\fg^{\mathrm{reg}}(\cO)$ so that the base point $1\in\mathrm{Gr}_G$ lies in $X_\ga^{\mathrm{reg}}$ and in particular $X_\ga^{\mathrm{reg}}$ is non-empty.\par 
Recall that $a:=\chi_G(\ga)$. In the following, we let $\fg(\cO)_a:=\chi_G^{-1}(a)\cap\fg(\cO)$ and $\fg^{\mathrm{reg}}(\cO)_a:=\chi_G^{-1}(a)\cap\fg^{\mathrm{reg}}(\cO)$. The following result is a coarse space version of \cite[Lemma 2.4]{BFN-Springer} and \cite[Lemma 7.5]{EKO}.
\begin{lem}\label{lem:orbits-bijection}
    There is a natural bijection between the following sets
    \[\{G_\ga(F)\text{ orbits on }X_\ga\}\longleftrightarrow\{G(\cO)\text{ orbits on }\fg(\cO)_a\}\]
     which restricts to a bijection between their subsets
     \[\{G_\ga(F)\text{ orbits on }X_\ga^{\mathrm{reg}}\}\longleftrightarrow\{G(\cO)\text{ orbits on }\fg^{\mathrm{reg}}(\cO)_a\}.\]
\end{lem}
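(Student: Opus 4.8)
The plan is to write the bijection down explicitly; the only non-formal input for the first part is the vanishing of $H^1(F,G_\ga)$. Given a $k$-point $g\bfP_0\in X_\ga$, that is $g\in G(F)$ with $\delta_g:=\mathrm{ad}(g)^{-1}\ga\in\fg(\cO)$, the Chevalley morphism gives $\chi_G(\delta_g)=\chi_G(\ga)=a$, so $\delta_g\in\fg(\cO)_a$. Replacing $g$ by $gh$ with $h\in G(\cO)$ replaces $\delta_g$ by the $G(\cO)$-translate $\mathrm{ad}(h)^{-1}\delta_g$, while replacing $g$ by $\gamma g$ with $\gamma\in G_\ga(F)$ leaves $\delta_g$ fixed since $\mathrm{ad}(\gamma)^{-1}\ga=\ga$. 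Hence $g\bfP_0\mapsto(G(\cO)\text{-orbit of }\delta_g)$ descends to a well-defined map $\Phi$ from the set of $G_\ga(F)$-orbits on $X_\ga$ to the set of $G(\cO)$-orbits on $\fg(\cO)_a$.

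For the inverse I would argue as follows. Given $\delta\in\fg(\cO)_a$, since $\chi_G(\delta)=a\in\fc^{\mathrm{rs}}(F)$ and $\fg^{\mathrm{rs}}=\chi_G^{-1}(\fc^{\mathrm{rs}})$, the element $\delta$ is regular semisimple in $\fg(F)$ with the same Chevalley invariant as $\ga$, hence is $G(\overline F)$-conjugate to $\ga$; the obstruction to $G(F)$-conjugacy lies in $\ker\!\big(H^1(F,G_\ga)\to H^1(F,G)\big)$, and $H^1(F,G_\ga)=1$ because $G_\ga$ is an $F$-torus and $k$ is algebraically closed (Steinberg's theorem, as recalled in \S\ref{sec:more-assumption}). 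Choosing $g\in G(F)$ with $\mathrm{ad}(g)^{-1}\ga=\delta$ gives a point $g\bfP_0\in X_\ga$; two such $g$ differ by left multiplication by an element of $G_\ga(F)$, and replacing $\delta$ within its $G(\cO)$-orbit replaces $g$ by a right $G(\cO)$-translate, so $\delta\mapsto(G_\ga(F)\text{-orbit of }g\bfP_0)$ is well defined and is visibly a two-sided inverse of $\Phi$. This gives the first bijection; everything here is bookkeeping once the cohomology vanishing is in place.

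It remains to check that $\Phi$ matches $G_\ga(F)$-orbits on $X_\ga^{\mathrm{reg}}$ with $G(\cO)$-orbits on $\fg^{\mathrm{reg}}(\cO)_a$, and this is the step I expect to require the most care. By definition $X_\ga^{\mathrm{reg}}=\{g\bfP_0:\mathrm{ad}(g)^{-1}\ga\in\mathrm{Lie}(\bfP_0)^{\mathrm{reg}}\}$, and $\mathrm{ad}(g)^{-1}\ga\in\fg(\cO)_a$ automatically, so it suffices to show $\delta\in\mathrm{Lie}(\bfP_0)^{\mathrm{reg}}\Leftrightarrow\delta\in\fg^{\mathrm{reg}}(\cO)$ for $\delta\in\fg(\cO)_a$. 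Both conditions are detected on the reduction $\bar\delta$ of $\delta$ modulo $\varpi'$, which lies in $\mathrm{Lie}(\sG)^{\mu_e}=\mathrm{Lie}(\sG_0)$: since $\fg^{\mathrm{reg}}=(\mathrm{Res}_{\cO'/\cO}\bbg^{\mathrm{reg}})^{\mu_e}$ is open in $\fg$, one has $\delta\in\fg^{\mathrm{reg}}(\cO)$ iff $\bar\delta$ is regular in $\bbg_k=\mathrm{Lie}(\sG)$, whereas $\delta\in\mathrm{Lie}(\bfP_0)^{\mathrm{reg}}$ iff $\bar\delta$ is regular in $\sG_0=\sG^{\mu_e,\circ}$. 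So the claim reduces to the purely geometric statement that a point of $\mathrm{Lie}(\sG)^{\mu_e}$ is regular in $\sG$ if and only if it is regular in $\sG_0$. I would prove this using the Jordan decomposition of such an element (available since $p$ is good for $G$, cf.~Proposition~\ref{prop:top-jordan-Lie}): for a regular semisimple element the centralizer in $\sG$ is the unique maximal torus containing it, which is $\mu_e$-stable and descends to a maximal torus of $\sG_0$, so regularity in $\sG$ and in $\sG_0$ agree; the regular nilpotent case follows from the compatibility of principal nilpotent orbits with the folding $\sG_0\subset\sG$, which can be read off from the $\cO$-valued Kostant section of Corollary~\ref{cor:Kostant-section} and smoothness of $\chi_G^{\mathrm{reg}}$ (Proposition~\ref{prop:reg-centralizer}); the general case reduces to these two by passing to the reductive centralizer of the semisimple part. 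This matching of the two notions of \emph{regular} is exactly where the good-characteristic hypothesis is used and is the main point to get right; granting it, restricting $\Phi$ to the regular loci yields the second bijection.
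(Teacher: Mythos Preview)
Your argument for the first bijection is exactly the paper's: send $g\bfP_0$ to the $G(\cO)$-orbit of $\mathrm{ad}(g)^{-1}\ga$, and use $H^1(F,G_\ga)=1$ (Steinberg) for surjectivity.

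For the restriction to regular loci you are more careful than the paper, which simply asserts ``the second statement follows directly from the definition of regular locus.'' You correctly isolate the point that is being glossed over: with $\bfP_0=G(\cO)$ the special model of \S\ref{sec:group-model}, membership in $\mathrm{Lie}(\bfP_0)^{\mathrm{reg}}$ is tested by regularity of the reduction $\bar\delta_0\in\bbg_0(k)$ for the group $\sG_0=\bbG_k^{\mu_e,\circ}$, while membership in $\fg^{\mathrm{reg}}(\cO)$ is tested by regularity of the very same $\bar\delta_0$ for the ambient group $\bbG_k$; so one needs the folding statement $\bbg_0(k)\cap\bbg^{\mathrm{reg}}(k)=\bbg_0(k)^{\mathrm{reg}}$. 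Your reduction to this statement is correct. Your Jordan-decomposition outline for it is reasonable, but as written the implication ``regular in $\sG_0\Rightarrow$ regular in $\bbG_k$'' is asserted rather than proved in the semisimple case (why should $(\bbG_x)^{\mu_e,\circ}$ being a torus force $\bbG_x$ itself to be a torus?). A cleaner route that avoids case analysis: first prove the inclusion $\bbg_0\cap\bbg^{\mathrm{reg}}\subset\bbg_0^{\mathrm{reg}}$ by the Lie-algebra count $(\bbg_0)_x=(\bbg_x)^{\mu_e}$ together with $\dim(\bbg_x)^{\mu_e}\le\mathrm{rk}(\sG_0)$; then observe, using the $\mu_e$-invariant Kostant section built in the proof of Corollary~\ref{cor:Kostant-section}, that the open $\sG_0$-stable subset $\bbg_0\cap\bbg^{\mathrm{reg}}\subset\bbg_0^{\mathrm{reg}}$ meets every fibre of $\chi_{\sG_0}^{\mathrm{reg}}$; since each such fibre is a single $\sG_0$-orbit, the inclusion is an equality.
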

\begin{proof}
    The map is defined by sending the $G_\ga(F)$-orbit of $g\in X_\ga$ to the $G(\cO)$-orbit of $\mathrm{ad}(g)^{-1}(\ga)$, which clearly is injective. For any $\ga'\in\fg(\cO)_a^{\mathrm{reg}}$, since $\ga',\ga\in\fg(F)^{\mathrm{rs}}$ are both generically regular semisimple and $\chi_G(\ga)=\chi_G(\ga')$, there exists $g\in G(F)$ such that $\mathrm{ad}(g)^{-1}(\ga)=\ga'$. Then $g G(\cO)\in X_\ga$ and its $G_\ga(F)$-orbit is mapped to the $G(\cO)$-orbit of $\ga'$. This shows surjectivity. The second statement follows directly from the definition of regular locus. 
\end{proof}

\begin{lem}\label{lem:orbits-cohomology}
    There is a bijection between the set of $\bfP_0^\dagger=\bbG(\cO')^{\mu_e}$-orbits in $\fg^{\mathrm{reg}}(\cO)_a$ and the set
    \[\ker(H^1(\mu_e,\bbG_\ga(\cO'))\to H^1(\mu_e,\bbG(\cO'))).\]
    Here $\bbG_\ga$ is the centralizer group scheme (over $\cO'$) of $\ga\in\fg^{\mathrm{reg}}(\cO)\subset\bbg^{\mathrm{reg}}(\cO')$.\par 
    If $G$ is split so that $\bfP_0=G(\cO)$ is hyperspecial, then $\fg^{\mathrm{reg}}(\cO)_\ga$ consists of a single $G(\cO)$-orbit. 
\end{lem}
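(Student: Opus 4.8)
The plan is to reduce everything to the split case over $\cO$ (applied to $\bbG$ over $\cO'$) and then take $\mu_e$-invariants.

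First I would treat the case where $G$ is split, i.e.\ the second assertion, so $\bfP_0^\dagger=\bfP_0=G(\cO)$ and $\cG_0=\bbG$. Thanks to the standing hypotheses of \S\ref{sec:more-assumption} together with Proposition~\ref{prop:reg-centralizer} and Corollary~\ref{cor:Kostant-section} (whose inputs from \cite{BC22} also yield fibrewise connectedness of the regular centralizer), the morphism $\chi_G^{\mathrm{reg}}\colon\fg^{\mathrm{reg}}\to\fc$ is smooth, $J$ is a smooth commutative $\cO$-group scheme over $\fc$ with connected fibres, and $J_a\cong G_\ga$ by Proposition~\ref{prop:reg-centralizer}. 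I would then show that the orbit map $o\colon\bbG\to\fg^{\mathrm{reg}}_a:=\chi_G^{-1}(a)\cap\fg^{\mathrm{reg}}$, $g\mapsto\mathrm{ad}(g)^{-1}\ga$, is a torsor under $J_a$: it is $\bbG$-equivariant, its differential at the identity is $X\mapsto[\ga,X]$ with kernel $\mathrm{Lie}(G_\ga)$ of dimension $\mathbbm{r}$, so it is smooth of the expected relative dimension, and it is surjective because its generic fibre is surjective (regular semisimple elements with a fixed characteristic polynomial are conjugate over $\overline F$) and its special fibre is surjective (the geometric fibres of $\chi_\bbG^{\mathrm{reg}}$ are single $\bbG$-orbits when $p$ is not a torsion prime for the root datum). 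Hence $\fg^{\mathrm{reg}}_a\cong J_a\backslash\bbG$ as $\cO$-schemes. Since $\bbG$ and $J_a$ are smooth affine $\cO$-group schemes with connected special fibres and $k$ is algebraically closed, Lang's theorem gives $H^1(\cO,\bbG)=H^1(\cO,J_a)=1$, and the exact sequence $\bbG(\cO)\to(J_a\backslash\bbG)(\cO)\to H^1(\cO,J_a)$ shows that $\fg^{\mathrm{reg}}(\cO)_a$ is a single $G(\cO)$-orbit.

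For the general statement I would descend along $\cO'/\cO$. Let $b\in\bbc^{\mathrm{rs}}(F')\cap\bbc(\cO')$ be the $\mu_e$-fixed point underlying $a\in\fc(\cO)=\bbc(\cO')^{\mu_e}$; since $\fg^{\mathrm{reg}}(\cO)=\fg(\cO)\cap\bbg^{\mathrm{reg}}(\cO')=\bbg^{\mathrm{reg}}(\cO')^{\mu_e}$ and $\chi_G$ is the restriction of $\mathrm{Res}_{\cO'/\cO}\chi_\bbG$ to $\mu_e$-invariants, one identifies $\fg^{\mathrm{reg}}(\cO)_a=(\bbg^{\mathrm{reg}}(\cO')_b)^{\mu_e}$. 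Applying the already-proven split case to $\bbG$ over $\cO'$ (again a complete discrete valuation ring with algebraically closed residue field), $\bbg^{\mathrm{reg}}(\cO')_b$ is a single $\bbG(\cO')$-orbit; using $\ga$ itself as base point and $\mathrm{Stab}_{\bbG(\cO')}(\ga)=\bbG_\ga(\cO')$, this gives a $\mu_e$-equivariant identification $\bbg^{\mathrm{reg}}(\cO')_b\cong\bbG_\ga(\cO')\backslash\bbG(\cO')$. Now $\bbG_\ga\cong J_b$ is commutative (Proposition~\ref{prop:reg-centralizer}), so taking $\mu_e$-invariants and invoking the exact sequence of pointed-set cohomology attached to the $\bbG_\ga(\cO')$-torsor $\bbG(\cO')\to\bbG_\ga(\cO')\backslash\bbG(\cO')$,
\[\bbG(\cO')^{\mu_e}\to\bigl(\bbG_\ga(\cO')\backslash\bbG(\cO')\bigr)^{\mu_e}\xrightarrow{\ \partial\ }H^1(\mu_e,\bbG_\ga(\cO'))\to H^1(\mu_e,\bbG(\cO')),\]
in which $\bbG(\cO')^{\mu_e}=\bfP_0^\dagger$ acts by right translation with orbits equal to the fibres of $\partial$, I would conclude that $\bfP_0^\dagger$-orbits on $\fg^{\mathrm{reg}}(\cO)_a$ correspond bijectively to $\mathrm{im}(\partial)=\ker\bigl(H^1(\mu_e,\bbG_\ga(\cO'))\to H^1(\mu_e,\bbG(\cO'))\bigr)$. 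The split case is recovered by taking $e=1$.

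The hard part, I expect, is the split case, specifically the assertion that $\fg^{\mathrm{reg}}(\cO)_a$ is a single $G(\cO)$-orbit. This hinges on the regular centralizer $J_a$ being smooth \emph{with connected special fibre}, which is exactly what the reduction in \S\ref{sec:more-assumption} to the auxiliary group $G^\natural$ guarantees (so that $\bbG$ is $\mathrm{GL}_n$, $\mathrm{SO}$, $\mathrm{Sp}$, or an adjoint exceptional group and $p$ avoids the relevant torsion primes of the root datum); for a group such as $\mathrm{SL}_2$ in residue characteristic $2$ the assertion already fails. The remaining ingredients --- smoothness of $\chi^{\mathrm{reg}}$, Lang's theorem, the $\mu_e$-equivariant base-point bookkeeping, and the exact sequence of non-abelian cohomology for a possibly non-normal subgroup --- are routine.
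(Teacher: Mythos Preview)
Your proof is correct and follows essentially the same route as the paper: first establish that $\bbG(\cO')$ acts transitively on $\bbg^{\mathrm{reg}}(\cO')_a$ (the paper does this by lifting from the single orbit on the special fibre via smoothness of the orbit map; you phrase it as vanishing of $H^1(\cO',J_a)$), and then use the standard cocycle/long-exact-sequence argument for $\mu_e$-invariants to obtain the bijection with $\ker\bigl(H^1(\mu_e,\bbG_\ga(\cO'))\to H^1(\mu_e,\bbG(\cO'))\bigr)$.

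Two small remarks. First, what you invoke is not Lang's theorem (that is a statement over finite fields); here $k$ is algebraically closed, and the vanishing of $H^1(\cO',J_a)$ follows simply from the fact that $\cO'$ is henselian, $J_a$ is smooth, and any torsor under a smooth affine group over an algebraically closed field has a rational point. In particular, connectedness of the special fibre of $J_a$ is not needed for this step, so your closing paragraph overstates where the difficulty lies. Second, the paper's more hands-on lifting argument (pick a conjugating element over $k$ and lift it through the smooth map $g\mapsto\mathrm{ad}(g)^{-1}\ga$) is exactly the unwinding of your torsor-cohomology statement, so there is no genuine difference in content.
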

\begin{proof}
    This essentially follows from the smoothness of the universal regular centralizer $\bbJ$ over $\bbc$ and the fact that $[\bbg^{\mathrm{reg}}/\bbG]$ is a $B\bbJ$-gerbe neutralized by a Kostant section. Let us explain in more concrete terms. \par
    Let $\bbJ_a$ be the pull-back of $\bbJ$ along $a=\chi_G(\ga)\in\fc(\cO)\subset\bbc(\cO')$, viewed as a morphism $\spec\cO'\to\bbc$.
    By Proposition \ref{prop:reg-centralizer} we have $\bbG_\ga\cong\bbJ_a$ and hence $\bbG_\ga$ is smooth over $\cO'$ by the assumptions in \S\ref{sec:more-assumption}.\par
    Let $\bbg^{\mathrm{reg}}_a$ be the $\cO'$-scheme defined by the fiber product 
    \[\xymatrix{
    \bbg^{\mathrm{reg}}_a\ar[r]\ar[d] & \bbg^{\mathrm{reg}}\ar[d]^{\chi_\bbG} \\
    \spec\cO'\ar[r]^{a} & \bbc
    }\]
    Then the morphism of $\cO'$-schemes $f_\ga:\bbG\to\bbg_a^{\mathrm{reg}}$ sending $g$ to $\mathrm{ad}(g)^{-1}\ga$ induces an isomorphism $\bbG/\bbG_\ga\cong\bbg_a^{\mathrm{reg}}$. Since the special fiber $\bbg_a^{\mathrm{reg}}(k)$ consists of a single $\bbG(k)$-orbit, for any $\ga'\in\bbg_a^{\mathrm{reg}}(\cO')$ there exists $\Bar{g}\in\bbG(k)$ such that $\mathrm{ad}(\Bar{g})(\bar{\ga})=\bar{\ga}'$, where $\Bar{\ga}$ (resp. $\Bar{\ga}'$) denotes the reduction of $\ga$ (resp. $\ga'$) in $\bbg^{\mathrm{reg}}(k)$. Since $\bbG_\ga$ is smooth, the morphism $f_\ga$ is smooth and therefore we can lift $\Bar{g}\in\bbG(k)$ to $g\in\bbG(\cO')$ such that $\mathrm{ad}(g)(\ga)=\ga'$.\par
    Now suppose $\ga'\in\fg^{\mathrm{reg}}(\cO)_a=\bbg_a^{\mathrm{reg}}(\cO')\cap\fg(F)$ and we find as above an element $g\in\bbG(\cO')$ such that $\mathrm{ad}(g)^{-1}(\ga)=\ga'$. Let $\sigma\in\mu_e$ be a generator. Then $\sigma(\ga')=\ga'$ and $\sigma(\ga)=\ga$, from which we get that $g\sigma(g)^{-1}\in\bbG_{\ga}(\cO')$. This defines a cocycle of $\mu_e$ in $\bbG_\ga(\cO')$ that becomes a coboundary in $\bbG(\cO')$. Clearly $g\sigma(g)^{-1}$ only depends on the $\bbG(\cO')^{\mu_e}$-orbit of $\ga'$. For another choice $g'\in\bbG(\cO')$ such that $\mathrm{ad}(g')^{-1}(\ga)=\ga'$ we have $g'g^{-1}\in\bbG_\ga(\cO')$ and hence $g\sigma(g)^{-1}$ and $g'\sigma(g')^{-1}$ differ by a coboundary in $\bbG_\ga(\cO')$. Thus the cohomology class of $g\sigma(g)^{-1}$ is well-fined and we obtain a map from the set of $\bbG(\cO')^{\mu_e}$-orbits in $\fg^{\mathrm{reg}}(\cO)_a$ to the cohomology set
    \[\ker(H^1(\mu_e,\bbG_\ga(\cO'))\to H^1(\mu_e,\bbG(\cO'))).\]
    Then one checks immediately that this map is bijective. When $G$ is split we have $e=1$ and $\bfP_0^\dagger=\bfP_0=G(\cO)$, so the cohomology set is trivial.
\end{proof}

\begin{cor}\label{cor:reg-locus-description}
    The set of $LG_\ga$-orbits in $X_\ga^{\mathrm{reg}}$ is finite nonempty and admits a surjection to the set
    \[\ker(H^1(\mu_e,\bbG_\ga(\cO'))\to H^1(\mu_e,\bbG(\cO'))).\]
    Moreover, each $LG_\ga$-orbit in $X_\ga^{\mathrm{reg}}$ is a $P_\ga$-torsor and is open in $X_\ga$. \par 
    If $G$ is split so that $G(\cO)$ is hyperspecial, $X_\ga^{\mathrm{reg}}$ is a $P_\ga$-torsor (and hence form a single $LG_\ga$-orbit). 
\end{cor}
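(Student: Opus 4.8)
The plan is to translate the orbit description on the affine Springer fiber into the orbit description on the Lie algebra via Lemma \ref{lem:orbits-bijection}, and then invoke Lemma \ref{lem:orbits-cohomology}. First I would recall from Lemma \ref{lem:orbits-bijection} that sending the $G_\ga(F)$-orbit of $g\in X_\ga^{\mathrm{reg}}$ to the $G(\cO)$-orbit of $\mathrm{ad}(g)^{-1}(\ga)\in\fg^{\mathrm{reg}}(\cO)_a$ is a bijection onto the set of $G(\cO)$-orbits on $\fg^{\mathrm{reg}}(\cO)_a$. The $G(\cO)$-orbits refine the $\bfP_0^\dagger$-orbits, and Lemma \ref{lem:orbits-cohomology} identifies the set of $\bfP_0^\dagger$-orbits on $\fg^{\mathrm{reg}}(\cO)_a$ with $\ker(H^1(\mu_e,\bbG_\ga(\cO'))\to H^1(\mu_e,\bbG(\cO')))$. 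Composing these maps gives the asserted surjection from the set of $LG_\ga$-orbits on $X_\ga^{\mathrm{reg}}$ (which equals the set of $G_\ga(F)$-orbits, since the action factors through $P_\ga$) onto the cohomology kernel; finiteness is immediate because the target is finite and every $\bfP_0^\dagger$-orbit on $\fg^{\mathrm{reg}}(\cO)_a$ is a union of finitely many $G(\cO)$-orbits (as $\bfP_0^\dagger/\bfP_0 = \sG_0^\dagger(k)/\sG_0(k)$ is finite). Nonemptiness follows from the observation made just before the statement: the assumption $\ga\in\fg^{\mathrm{reg}}(\cO)$ forces the base point $1\in\mathrm{Gr}_G$ to lie in $X_\ga^{\mathrm{reg}}$.

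Next I would address the stabilizer claim. Fix $g\in X_\ga^{\mathrm{reg}}$ and set $\ga_g:=\mathrm{ad}(g)^{-1}(\ga)\in\fg^{\mathrm{reg}}(\cO)$. By Lemma \ref{lem:orbit-dim} the $LG_\ga$-orbit through $g$ is isomorphic to $\mathrm{Gr}_{\cG_{\bfP_0,\ga_g}} = LG_\ga/L^+\cG_{\bfP_0,\ga_g}$, so it suffices to show that the schematic closure of $G_{\ga_g}$ in $\cG_{\bfP_0}$ is already smooth, i.e.\ that $\cG_{\bfP_0,\ga_g} = L^+G_{\ga_g}$ as $\cO$-group schemes; this would give the orbit as $LG_\ga/L^+G_\ga = P_\ga$ acting simply transitively, i.e.\ a $P_\ga$-torsor. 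For this I would use that $\ga_g\in\fg^{\mathrm{reg}}(\cO)$ together with Proposition \ref{prop:reg-centralizer}: the isomorphism $(\chi_G^{\mathrm{reg}})^*J\cong I^{\mathrm{reg}}$ over $\cO$ and the smoothness of $J$ over $\fc$ (guaranteed by the assumptions of \S\ref{sec:more-assumption}) show that $G_{\ga_g}\cong J_{a}$ extends to a smooth $\cO$-group scheme whose special fiber is the centralizer; the stabilizer group scheme $I^{\mathrm{reg}}$ pulled back along $\ga_g\colon\spec\cO\to\fg^{\mathrm{reg}}$ is precisely the smooth closure $\cG_{\bfP_0,\ga_g}$, so the two lattices $\mathrm{Lie}(\cG_{\bfP_0,\ga_g})$ and $\mathrm{Lie}(\cG^\dagger_{\ga_g})\cap\mathrm{Lie}(\bfP_0)$ coincide and the orbit has dimension $0$. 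Openness of each $LG_\ga$-orbit in $X_\ga$ then follows from Lemma \ref{lem:open-subspace} applied to the open subspace $\fg^{\mathrm{reg}}(\cO)\subset\fg(\cO)$, together with the fact that the $P_\ga$-orbits exhaust $X_\ga^{\mathrm{reg}}$ and each is both open and closed in $X_\ga^{\mathrm{reg}}$ (being a torsor under the open-and-closed perfect group ind-scheme $P_\ga$, which is itself locally perfectly of finite type).

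Finally, the split case: when $G$ is split we have $e=1$, so $\mu_e$ is trivial and the cohomology kernel $\ker(H^1(\mu_e,\bbG_\ga(\cO'))\to H^1(\mu_e,\bbG(\cO')))$ is a singleton by Lemma \ref{lem:orbits-cohomology}; hence there is a unique $G_\ga(F)$-orbit on $X_\ga^{\mathrm{reg}}$, and by the previous paragraph this orbit is a $P_\ga$-torsor, so $X_\ga^{\mathrm{reg}}$ itself is a $P_\ga$-torsor. I expect the main obstacle to be the rigorous identification of the schematic closure $\cG_{\bfP_0,\ga_g}$ with the pullback of the smooth universal regular centralizer $J$ — one needs to check carefully, using \cite[Lemma A.2.3]{KP23} (that $\fg_{\ga_g}(F)\cap\mathrm{Lie}(\bfP_0)$ is the Lie algebra of the schematic closure of $G_{\ga_g}$ in $\cG_{\bfP_0}$) and the compatibility of the Kostant section with reduction, that regularity of $\ga_g$ over $\cO$ forces this closure to be smooth, so that the orbit is genuinely a single $P_\ga$-torsor rather than a quotient by a larger group. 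Everything else is a bookkeeping exercise combining the two preceding lemmas.
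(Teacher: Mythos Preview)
Your first paragraph (the surjection onto the cohomology set, finiteness, nonemptiness) is correct and is exactly the paper's argument: combine Lemma~\ref{lem:orbits-bijection} with Lemma~\ref{lem:orbits-cohomology}. The split case and the openness argument are also fine in outline.

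The torsor claim, however, has two problems. First, a misreading of Lemma~\ref{lem:orbit-dim}: the dimension of the orbit there is $\mathrm{length}_\cO\bigl(\mathrm{Lie}(\cG^\dagger_{\ga_g})/\mathrm{Lie}(\cG_{\bfP_0,\ga_g})\bigr)$, with no intersection with $\mathrm{Lie}(\bfP_0)$ in the numerator. If the schematic closure is smooth and equals the model $G_{\ga_g}$ coming from $J$, the orbit has dimension $\mathrm{length}_\cO(\fg_{\ga_g}^\dagger/\fg_{\ga_g})=\dim P_\ga$, \emph{not} $0$. That nonzero dimension is exactly what you want (the orbit should be a full $P_\ga$-torsor), so the line ``the orbit has dimension $0$'' is simply wrong.

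Second, and more substantively: establishing $\cG_{\bfP_0,\ga_g}=G_{\ga_g}$ only identifies the stabilizer of $gG(\cO)$ in $LG_\ga$ with $\mathrm{ad}(g)\bigl(G_{\ga_g}(\cO)\bigr)$. For the orbit to be a $P_\ga$-torsor you need this to equal $G_\ga(\cO)$, i.e.\ that conjugation by $g$ matches the integral points of the two smooth $\cO$-models. Since $g\notin G(\cO)$ in general, this does \emph{not} follow from smoothness of the closure alone; it is the actual content of the statement, and your proposal skips over it. The paper handles this by passing to the split form over $\cO'$: because $\bbG(\cO')$ acts transitively on $\bbg_a^{\mathrm{reg}}(\cO')$ (this is what the smoothness of $\bbJ$ really buys), one can pick $h\in\bbG(\cO')$ with $\mathrm{ad}(h)^{-1}\ga=\ga'$; then $gh^{-1}\in\bbG_\ga(F')$, which is commutative, so $\mathrm{ad}(g)^{-1}$ and $\mathrm{ad}(h)^{-1}$ agree on $\bbG_\ga$, whence $\mathrm{ad}(g)^{-1}\bbG_\ga(\cO')=\bbG_{\ga'}(\cO')$; intersecting with $G(F)$ yields $\mathrm{ad}(g)^{-1}G_\ga(\cO)=G_{\ga'}(F)\cap G(\cO)$, which is the stabilizer. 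Your route can be salvaged either by this integral-conjugacy trick or by explicitly invoking $G$-equivariance of the isomorphism $(\chi_G^{\mathrm{reg}})^*J\cong I^{\mathrm{reg}}$, but neither is present in your argument; the obstacle you flagged (identifying the closure with the pullback of $J$) is the easy part.
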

\begin{proof}
    The first statement follows by combinning Lemma \ref{lem:orbits-bijection} and Lemma \ref{lem:orbits-cohomology}. Now we prove the second statement. Note that $G_\ga(\cO)=\bbG_\ga(\cO')\cap G(\cO)$.
    Let $g\in G(F)$ whose coset $gG(\cO)$ determines a point in $X_\ga^{\mathrm{reg}}$. Then $\ga':=\mathrm{ad}(g)^{-1}\ga$ lies in $\fg^{\mathrm{reg}}(\cO)$. To show that the $LG_\ga$-orbit through $gG(\cO)$ is a torsor under $P_\ga$, we need to prove that the stabilizer of $g G(\cO)$ in $G_\ga(F)$ coincides with $G_\ga(\cO)$. By definition, this stablizer equals to $G_\ga(F)\cap gG(\cO)g^{-1}$. We note that $\mathrm{ad}(g)^{-1}$ induces a canonical isomorphism $\bbG_\ga(\cO')\cong\bbG_{\ga'}(\cO')=\bbG_{\ga'}(F')\cap\bbG(\cO')$. Intersecting with $G(F)$ we see that $\mathrm{ad}(g)^{-1}G_\ga(\cO)=G_{\ga'}(F)\cap G(\cO)$ and hence
    \[G_\ga(\cO)=\mathrm{ad}(g)(G_{\ga'}(F)\cap G(\cO))=G_\ga(F)\cap gG(\cO)g^{-1}\] 
    as desired. Since $X_\ga^{\mathrm{reg}}$ is open and a finite union of $LG_\ga$-orbits, each such orbit is also open. When $G$ is split there is only one such orbit. 
\end{proof}
Let us provide two simple examples of ramified groups to illustrate the description of regular orbits. 
\begin{ex}
    Suppose $p>2$ and let $F'/F$ be a tamely ramified quadratic extension. We consider the associated special unitary group $G=\mathrm{SU}_n$ whose split form is $\bbG=\mathrm{SL}_n$.  Let $\varpi'\in\cO'$ be a uniformizer with $\varpi:=(\varpi')^2\in\cO$. For a square matrix $A$ we denote ${}^\iota A:=J\;{}^tAJ^{-1}$ where $J$ is the square matrix whose anti-diagonal entries are $1$ and all other entries are $0$. The group $G$ is the twisted form of $\bbG$ associated to the involution $\theta(g):={}^\iota g^{-1}$ of $\bbG$. This is an outer automorphism if $n>2$, and if $n=2$ then $\theta(g)=\mathrm{ad}\bm{1&0\\0&-1}(g)$ is an inner automorphism. Let $\sigma\in\mathrm{Gal}(F'/F)$ be the nontrivial element and let $\sigma_G$ denotes the Galois action (with respect to $G$) on $G(F')=\mathrm{SL}_n(F')$. It is given by the formula
    \[\sigma_G(g)=\theta(\bar{g})={}^\iota\bar{g}^{-1},\quad\forall g\in\mathrm{SL}_n(F')\]
    where $\bar{g}$ is obtained from the matrix $g$ by applying $\sigma$ on each entry. We have $\cG_0=\cG_0^\dagger$ and for any $\cO$-algebra $R$ with $R':=R\otimes_\cO\cO'$, 
    \[\cG_0(R)=\{g\in\mathrm{SL}_n(R'),{}^\iota\Bar{g}g=1\}.\]
    The reductive quotient of the special fiber of $\cG_0$ is $\bbG^\theta\cong\mathrm{SO}_n$. The Lie algebra of $\bbG$ decomposes into eigen-spaces of $\theta$ as $\bbg=\bbg_0\oplus\bbg_1$ where $\bbg_0$ (resp. $\bbg_1$) consists of trace zero anti-symmetric (resp. symmetric, with respect to the bilinear form defined by $J$) matrices. Then we have $\fg(\cO)=\bbg_0(\cO)\oplus\varpi'\bbg_1(\cO)$ and $\fg^{\mathrm{reg}}(\cO)=(\bbg_0\cap\bbg^{\mathrm{reg}})(\cO)\oplus\varpi'\bbg_1(\cO)$. \par 
    Take an element $\ga\in\fg^{\mathrm{reg}}(\cO)\cap\ft(F)$. Then $G_\ga=\cT$ as group scheme over $\cO$ and after base change to $\cO'$ we have $\bbG_\ga=\bbT$. An element $t=\mathrm{diag}(t_1,\dotsc,t_n)\in\bbT(\cO')$ with $\prod_{i=1}^nt_i=1$ defines a cocycle in $Z^1(\sigma_G,\bbT(\cO'))$ if $t\sigma_G(t)=1$, or equivalently $t_i=\bar{t}_{n+1-i}$ for all $1\le i\le n$. It is a co-boundary if there exists $s=\mathrm{diag}(s_1,\dotsc,s_n)\in\bbT(\cO')$ with $\prod_{i=1}^ns_i=1$ such that $s\cdot\sigma_G(s)^{-1}=t$, or equivalently $t_i=s_i\bar{s}_{n+1-i}$ for all $1\le i\le n$. When $n=2m+1$ is odd we have $H^1(\sigma_G,\bbT(\cO'))=1$. Indeed, for a one cocycle represented by $t\in\bbT(\cO')$ as above, one can take $s_{m+1}=(\bar{t}_1\dotsm\bar{t}_m)^{-1}$, $s_1=\dotsm=s_m=1$ and for $i\ge m+2$, $s_i:=\bar{t}_{n+1-i}$. Then one checks that $\prod_{i=1}^{n}s_i=1$ and $t=s\cdot\sigma_G(s)^{-1}$. \par
    Now assume that $n=2m$ is even. Suppose $t=(t_1,\dotsc,t_m,\bar{t}_m,\dotsc,\bar{t}_1)\in\bbT(\cO')$ represents a cocycle in $Z^1(\sigma_G,\bbT(\cO'))$. Let $u:=t_1\dotsm t_m\in(\cO')^\times$. Then we have $u\bar{u}=1$ (so $u\equiv\pm1\mod\varpi'$) and one checks that $t$ is a co-boundary if and only if there exists $v\in(\cO')^\times$ such that $u=v\bar{v}^{-1}$, if and only if $u\equiv1(\mod\varpi')$. Indeed, suppose that such $v$ exists, let $s_1=\dotsm=s_{m-1}=1$, $s_m=v$, $s_{m+1}=\bar{t}_m/\bar{v}$, and $s_i=\bar{t}_{2m+1-i}$ for all $i\ge m+2$. Then we have $t_i=s_i\bar{s}_{n+1-i}$ for all $1\le i\le n$. Thus if $n=2m$ we have $H^1(\sigma_G,\bbT(\cO'))=\bbZ/2\bbZ$. Now suppose moreover that $n=2m$ where $m$ is odd. Suppose the element $t=\mathrm{diag}(t_1,\dotsc,t_m,\bar{t}_m,\dotsc,\bar{t}_1)\in\bbT(\cO')$ as above represents a nontrivial cohomology class. Then $u=t_1\dotsm t_m\equiv-1(\mod\varpi')$ and there exists $v\in(\cO')^\times$ with $u=-v\bar{v}^{-1}$. Take an element $\alpha\in(\cO')^\times$ with $\alpha^m=v^{-1}$. Let $g=\bm{0&B\\C&0}$ be the $n\times n$ matrix where $B=\alpha\cdot\mathrm{diag(t_1,\dotsc,t_m)}$ and $C=\bar{\alpha}^{-1}\cdot\mathrm{Id}_m$. Then one verifies that $g\in\bbG(\cO')=\mathrm{SL}_n(\cO')$ and $g\cdot\sigma_G(g)^{-1}=t$. This shows that $\ker(H^1(\sigma_G,\bbT(\cO'))\to H^1(\sigma_G,\bbG(\cO')))=\bbZ/2\bbZ$. \par 
    The case where $m=1$ and $n=2$ is exotic since we have $G=\mathrm{SU}_2\cong\mathrm{SL}_2$. In fact, conjugation by $\bm{0&\varpi'\\1&0}$ identifies $G(F)$ with $\mathrm{SL}_2(F)$ and takes $\bfP_0=\cG_0(\cO)$ to an Iwahori subgroup of $\mathrm{SL}_2(F)$. So $\bfP_0$ is not a special parahoric in this case (there is no contradiction with \S\ref{sec:group-model} since $F'$ is not the \emph{minimal} extension of $F$ that splits $G$ in this exotic case). However the discussion above is still valid. Consider the element $\ga=\bm{1&0\\0&-1}\in\fg^{\mathrm{reg}}(\cO)$ with $a=\chi_G(\ga)=-1\in\fc(\cO)$. Then there are two $\bfP_0$ orbits in $\fg^{\mathrm{reg}}(\cO)_a$ and it is easy to see that $\ga$ and $-\ga$ belong to different $\bfP_0$-orbits. 
\end{ex}
\begin{ex}
    Let $F'/F$ be as above and take $G=\mathrm{U}_2$ whose split form is $\bbG=\mathrm{GL}_2$. We still take $\ga=\bm{1&0\\0&-1}$. This time $\bfP_0$ has index two in $\bfP_0^\dagger$ and $\bfP_0^\dagger$ acts transitively on $\fg^{\mathrm{reg}}(\cO)_a$ but $\bfP_0$ has two orbits.
\end{ex}

\subsection{Dimension of the regular locus}                                 
From Corollary \ref{cor:reg-locus-description} and Theorem \ref{thm:finiteness}, we deduce that $P_\ga$ is a finite dimensional commutative $k$-group. In this section we will compute its dimension. \par
Let $G_\ga^\dagger$ be the finite type N\'eron model of the generic fiber of $G_\ga$. Then $G_\ga^\dagger(\cO)$ is the maximal bounded subgroup of $G_\ga(F)$. The $\cO$-modules $\mathrm{Lie}(G_\ga)$ (resp. $\mathrm{Lie}(G_\ga^\dagger)$) are identified with $\cO$-lattices in $\fg_\ga(F)$ that we denote by $\fg_\ga$ (resp. $\fg_\ga^\dagger$).

\begin{thm}\label{thm:dim-reg}
    With notations as above, we have 
    \[\dim X_\ga^{\mathrm{reg}}=\mathrm{length}_\cO(\fg_\ga^\dagger/\fg_\ga)=\frac{1}{2}(d_\fg(\ga)+\mathrm{Art}(G)-\mathrm{Art}_\ga).\]
\end{thm}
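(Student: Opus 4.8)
The plan is to prove the two equalities in turn; the first is a quick consequence of the orbit description already obtained, while the second is where all the work lies.

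\emph{First equality: $\dim X_\ga^{\mathrm{reg}}=\mathrm{length}_\cO(\fg_\ga^\dagger/\fg_\ga)$.} By Corollary~\ref{cor:reg-locus-description} the regular locus $X_\ga^{\mathrm{reg}}$ is a nonempty finite union of $P_\ga$-torsors, so $\dim X_\ga^{\mathrm{reg}}=\dim P_\ga$. Since $\ga\in\fg^{\mathrm{reg}}(\cO)$ (by \S\ref{sec:more-assumption}) the base point $1\in\mathrm{Gr}_G$ lies in $X_\ga^{\mathrm{reg}}$; moreover, by Proposition~\ref{prop:reg-centralizer} the scheme-theoretic centralizer of $\ga$ in $\cG_0$ is already smooth, hence coincides both with its schematic closure (so with $\cG_{\bfP_0,\ga}$) and with $G_\ga\cong J_a$, and therefore $\mathrm{Lie}(\cG_{\bfP_0,\ga})=\fg_\ga$. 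Applying Lemma~\ref{lem:orbit-dim} to $g=1$ identifies the $LG_\ga$-orbit of $1$ with $\mathrm{Gr}_{\cG_{\bfP_0,\ga}}$ and gives $\dim P_\ga=\mathrm{length}_\cO\!\big(\mathrm{Lie}(\cG_\ga^\dagger)/\mathrm{Lie}(\cG_{\bfP_0,\ga})\big)=\mathrm{length}_\cO(\fg_\ga^\dagger/\fg_\ga)$.

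\emph{Second equality: reduction to a discriminant identity.} Under the running assumptions of \S\ref{sec:more-assumption} we have the $\bbG\rtimes\mathrm{Out}(\bbG)$-invariant perfect symmetric pairing on $\bbg(\cO)$; extending it $\cO'$-linearly to $\bbg(\cO')$ and restricting to $\fg(\cO)=\bbg(\cO')^{\mu_e}$ produces an $\cO$-valued symmetric pairing on $\fg(\cO)$ whose $F$-linear extension is nondegenerate on $\fg(F)$, and in particular on the Cartan subalgebra $\fg_\ga(F)$. Writing $M^\vee\subset\fg_\ga(F)$ for the dual lattice of an $\cO$-lattice $M$, the Néron mapping property gives $\fg_\ga\subseteq\fg_\ga^\dagger$, and one checks — using that $T=G_\ga$ splits over the tame extension $F'$ — that $\fg_\ga^\dagger$ is integral for (a suitable normalization of) the pairing, so that there is a chain
\[\fg_\ga\subseteq\fg_\ga^\dagger\subseteq\fg_\ga^{\dagger,\vee}\subseteq\fg_\ga^\vee .\]
By duality $\mathrm{length}_\cO(\fg_\ga^\vee/\fg_\ga^{\dagger,\vee})=\mathrm{length}_\cO(\fg_\ga^\dagger/\fg_\ga)$, hence
\[\mathrm{length}_\cO(\fg_\ga^\dagger/\fg_\ga)=\tfrac12\Big(\mathrm{length}_\cO(\fg_\ga^\vee/\fg_\ga)-\mathrm{length}_\cO(\fg_\ga^{\dagger,\vee}/\fg_\ga^\dagger)\Big).\]
I will compute $\mathrm{length}_\cO(\fg_\ga^{\dagger,\vee}/\fg_\ga^\dagger)$ by Chai--Yu's theorem and $\mathrm{length}_\cO(\fg_\ga^\vee/\fg_\ga)$ by a direct discriminant computation. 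For the former, Theorem~\ref{thm:Chai-Yu} (equivalently Corollary~\ref{cor:length-artin-conductor}) applied to the torus $T=G_\ga$ gives, after base change to a finite extension splitting $G_\ga$, that $\ft_L^\dagger/(\fg_\ga^\dagger\otimes\cO_L)$ has length $\tfrac{e(L|F)}{2}\mathrm{Art}_\ga$; the only subtlety is reconciling the normalization of the pairing with the invariant pairing, which is where the term $\mathrm{Art}(G)=\mathrm{Art}(T_{qs})$ for the maximally split maximal torus $T_{qs}$ enters (its ramification being exactly what is built into the non-hyperspecial special model $\cG_0=\bfP_0$ by the construction of \S\ref{sec:group-model}). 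For the latter, Lemma~\ref{lem:orbits-cohomology} lets me replace $\ga$, up to $\bbG(\cO')$-conjugacy (which preserves the pairing, hence the length), by the image of a point of the Kostant section of Corollary~\ref{cor:Kostant-section}; identifying the restricted pairing via the regular centralizer, $\mathrm{length}_\cO(\fg_\ga^\vee/\fg_\ga)$ becomes the valuation of the discriminant of the ``Kostant form'' attached to $a=\chi_G(\ga)$, which in the split case is the discriminant of the associated spectral/cameral cover and equals $d_\fg(\ga)$, and in the tamely ramified case acquires the extra contribution $\mathrm{Art}(G)$ from the tame cover $\spec\cO'\to\spec\cO$. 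Substituting the two lengths yields $\mathrm{length}_\cO(\fg_\ga^\dagger/\fg_\ga)=\tfrac12(d_\fg(\ga)+\mathrm{Art}(G)-\mathrm{Art}_\ga)$.

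\emph{Main obstacle.} The delicate point is the discriminant computation, and within it the precise bookkeeping of the ramification correction $\mathrm{Art}(G)$: the invariant pairing is perfect only on the split Lie algebra $\bbg(\cO)$, and its restriction to the lattice $\fg_\ga=\fg(\cO)\cap\fg_\ga(F)$ coming from the special — generally non-hyperspecial — parahoric $\bfP_0$ carries a self-duality defect which must be matched with $\mathrm{Art}(G)$, using Chai--Yu's theorem for $T_{qs}$ as the bridge. The split case itself is essentially classical (it is the local form of the dimension formula for the usual affine Springer fibers, via the regular centralizer and the Kostant section), so the technical heart is the passage to the tamely ramified setting, i.e. controlling the interplay between the $\mu_e$-twist defining $G$, the torus $T=G_\ga$, and the conductor formula of Chai--Yu.
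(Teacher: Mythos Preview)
Your first equality and the reduction to the two lengths
\[
\mathrm{length}_\cO(\fg_\ga^\dagger/\fg_\ga)=\tfrac12\big(\mathrm{length}_\cO(\fg_\ga^\vee/\fg_\ga)-\mathrm{length}_\cO(\fg_\ga^{\dagger,\vee}/\fg_\ga^\dagger)\big)
\]
match the paper exactly, and so does the appeal to Chai--Yu (Corollary~\ref{cor:length-artin-conductor}) for the second term. But two things go wrong after that.

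First, the statement ``$T=G_\ga$ splits over the tame extension $F'$'' is false in general: $F'$ is the splitting field of $G$, not of the centralizer torus, and the whole point of using the Artin conductor rather than the defect is precisely that $G_\ga$ may be wildly ramified (e.g.\ $G=\mathrm{GL}_n$ with $p\mid n$). Relatedly, you place the appearance of $\mathrm{Art}(G)$ in the Chai--Yu step, as a ``normalization'' correction. That is not where it comes from. Corollary~\ref{cor:length-artin-conductor} applies cleanly to $G_\ga$ and gives $\mathrm{length}_\cO(\fg_\ga^{\dagger,\vee}/\fg_\ga^\dagger)=\mathrm{Art}_\ga$ with no extra term; the required self-duality of $\La\otimes\cO$ follows because the invariant pairing restricted to any split Cartan of $\bbg$ is a $\bbW$-invariant perfect pairing on the coweight lattice. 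The term $\mathrm{Art}(G)$ comes entirely from the other length, via the identity $\mathrm{length}_\cO(\fg_\ga^\vee/\fg_\ga)=d_\fg(\ga)+\mathrm{Art}(G)$.

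Second, and this is the real gap, you do not actually compute $\mathrm{length}_\cO(\fg_\ga^\vee/\fg_\ga)$. The appeal to a ``Kostant form'' and a spectral/cameral discriminant is not an argument: you would need to identify the restriction of the invariant pairing to $\fg_\ga$ explicitly and relate its discriminant to $d_\fg(\ga)$, and it is not clear how Lemma~\ref{lem:orbits-cohomology} or the Kostant section help with that. The paper's route is quite different and concrete. One uses the orthogonal decomposition $\fg(F)=\fg_\ga(F)\oplus\fp(F)$ with $\fp(F)=[\ga,\fg(F)]$, and a filtration $\fg(\cO)=\fg(\cO)_{\le0}\subset\dotsb\subset\fg(\cO)_{\le e-1}=\fg(\cO)^\vee$ coming from the $\mu_e$-eigenspace decomposition of $\bbg$. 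The key technical input is the saturation Lemma~\ref{lem:lattice-saturated}: for each $j$ the lattice $[\ga,\fg(\cO)_{\le j}]$ is saturated in $\fg(\cO)_{\le j}$, proved by showing that the reduction $\bar\ga_0\in\bbg_0(k)$ is regular in $\bbg(k)$. This allows one to compute $\mathrm{length}_\cO(\fp^\vee/\fp)$ in terms of $d_\fg(\ga)$, $\mathrm{length}_\cO(\fg(\cO)^\vee/\fg(\cO))$, and the quantity $\mathrm{length}_\cO\big((\fg_\ga(F)\cap\fg(\cO)^\vee)/(\fg_\ga(F)\cap\fg(\cO))\big)$; the latter is exactly $\bbr-r=\mathrm{Art}(G)$, again by regularity of $\bar\ga_0$. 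A short lattice-chain argument then yields $\mathrm{length}_\cO(\fg_\ga^\vee/\fg_\ga)=d_\fg(\ga)+\mathrm{Art}(G)$. None of this mechanism is present in your sketch, and I do not see how to avoid it.
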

By Lemma \ref{lem:orbit-dim} we have
\[\dim X_\ga^{\mathrm{reg}}=\mathrm{length}_\cO\mathrm{Lie}(G_\ga^\dagger)/\mathrm{Lie}(G_\ga)=\mathrm{length}_\cO(\fg_\ga^\dagger/\fg_\ga).\]
It remains to compute the length of the $\cO$-module $\fg_\ga^\dagger/\fg_\ga$. In the equal-characteristic setting, this computation is done in \cite[Appendix]{OY} (which generalizes results in \cite{KL} and \cite{Be96}) and we will follow the arguments therein.\par 
Since $G_\ga$ is smooth over $\cO$ and \emph{a fortiori} flat over $\cO$, it is the schematic closure of its generic fiber in $G=\cG_0$ and hence $\fg_\ga=\fg_\ga(F)\cap\fg(\cO)$ by \cite[Lemma A.2.3]{KP23}. Let $\fp(F):=[\ga,\fg(F)]$ be the image of the linear endomorphism $\mathrm{ad}(\ga)$ on $\fg(F)$. Since $\ga$ is semisimple, we have $\fp(F)\cap\fg_\ga(F)=0$ and hence a decomposition $\fg(F)=\fg_\ga(F)\oplus\fp(F)$.

Recall from our assumption in \S\ref{sec:more-assumption} that there is a $\bbG(F)\rtimes\mathrm{Out}(\bbG)$-invariant non-degenerate symmetric $F$-bilinear form $(\cdot,\cdot)$ on the split Lie algebra $\bbg(F)$ that is nondegenerate over $\cO$. Extending scalars we get an $F'$-bilinear form on $\bbg(F')$ that is $\bbG(F')\rtimes\mathrm{Out}(\bbG)$-invariant. Thus its restriction to $\fg(F)=\bbg(F')^{\mu_e}$ is an $G(F)$-invariant $F$-bilinear form that takes values in $F$. In particular for any $x,y\in\fg(F)$ we have
\[(\mathrm{ad}(\ga)(x),y)=-(x,\mathrm{ad}(\ga)(y))\]
which implies that $\fg_\ga(F)$ and $\fp(F)$ are the orthogonal complements of each other and hence $(\cdot,\cdot)$ restricts to non-degenerate symmetric bilinear forms on $\fg_\ga(F)$ and $\fp(F)$. Let $V$ denote any of the $F$-vector spaces $\fg(F)$, $\fg_\ga(F)$ or $\fp(F)$. For an $\cO$-lattice $L\subset V$, let $L^\vee:=\{v\in V, (v,L)\subset\cO\}$ be its dual lattice. \par
We have a sequence of inclusions $\fg_\ga\subset\fg_\ga^\dagger\subset\fg_\ga^{\dagger,\vee}\subset\fg_\ga^\vee$ and hence
\begin{equation}\label{eq:length-Lie-alg}
    \mathrm{length}_\cO(\fg_\ga^\dagger/\fg_\ga)=\frac{1}{2}\mathrm{length}_\cO(\fg_\ga^\vee/\fg_\ga)-\frac{1}{2}\mathrm{length}_\cO(\fg_\ga^{\dagger,\vee}/\fg_\ga^\dagger)
\end{equation}

The action of $\rho_G(\mu_e)$ on the $\cO$-module $\bbg$ induces a decomposition $\bbg=\oplus_{i=0}^{e-1}\bbg_i$ where $\bbg_i$ is the $\cO$-submodule on which $\zeta\in\mu_e$ acts by scalar multiplication by $\zeta^i$. Then we have
\[\fg(\cO)=\bbg_0(\cO)\oplus\bigoplus_{i=1}^{e-1}(\varpi')^{e-i}\bbg_i(\cO)\]
and hence $\fg(\cO)^\vee=\bigoplus_{i=0}^{e-1}(\varpi')^{-i}\bbg_i(\cO)$ by the fact that $(\cdot,\cdot)$ is $\mathrm{Out}(\bbG)$-invariant and non-degenerate over $\cO'$.\par
For each $0\le j\le e-1$, let $\fg(\cO)_{\le j}:=\fg(\cO)+\sum_{1\le i\le j}(\varpi')^{-i}\bbg_i(\cO)$. Then we get a chain of $\cO$-modules
\[\fg(\cO)=\fg(\cO)_{\le0}\subset\fg(\cO)_{\le1}\subset\dotsm\subset\fg(\cO)_{\le e-1}=\fg(\cO)^\vee\]
whose successive quotients are $\fg(\cO)_{\le j}/\fg(\cO)_{\le j-1}=\bbg_j(k)$. From the definition we see that each $\fg(\cO)_{\le j}$ is preserved by adjoint action of elements in $\fg(\cO)$. We record the following result from \cite[Appendix]{OY}.
\begin{lem}\label{lem:lattice-saturated}
For each $0\le j\le e-1$, the $\cO$-lattice $[\ga,\fg(\cO)_{\le j}]$ in $\fp(F)$ is saturated in $\fg(\cO)_{\le j}$. In other words, we have $[\ga,\fg(\cO)_{\le j}]=\fg(\cO)_{\le j}\cap\fp(F)$. As a special case we get that 
\[[\ga,\fg(\cO)^\vee]=\fg(\cO)^\vee\cap\fp(F).\]
\end{lem}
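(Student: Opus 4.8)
The plan is to rephrase saturation as torsion-freeness of a cokernel and then control this cokernel by comparing its generic and special ranks.

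Write $L:=\fg(\cO)_{\le j}$. Since $\ga\in\fg(\cO)$ and $L$ is stable under $\mathrm{ad}(\fg(\cO))$, we have $\mathrm{ad}(\ga)L\subseteq L$, and $[\ga,L]\subseteq\fp(F)$ by definition, so $[\ga,L]\subseteq L\cap\fp(F)$; moreover $[\ga,L]\otimes_\cO F=[\ga,\fg(F)]=\fp(F)$. Hence the claimed equality $[\ga,L]=L\cap\fp(F)$ is equivalent to $[\ga,L]$ being saturated in $L$, i.e. to $C:=\coker(\mathrm{ad}(\ga)\colon L\to L)$ being a torsion-free $\cO$-module. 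Because $C$ is finitely generated over the discrete valuation ring $\cO$ and $\coker$ commutes with $-\otimes_\cO F$ and with $-\otimes_\cO k$, torsion-freeness of $C$ is equivalent to $\dim_k(C\otimes_\cO k)=\dim_F(C\otimes_\cO F)$. The right-hand side is $\dim_F\coker(\mathrm{ad}(\ga)\mid\fg(F))=\dim_F\fg_\ga(F)=\bbr$, since $\ga$ is regular semisimple and hence $G_\ga^\circ$ is a maximal torus. The left-hand side is $\dim_k\coker(\bar T)$, where $\bar T$ is the endomorphism of $L/\varpi L$ induced by $\mathrm{ad}(\ga)$; it depends only on $\bar\ga:=\ga\bmod\varpi$, and it always satisfies $\dim_k\coker(\bar T)\ge\bbr$. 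As kernel and cokernel of an endomorphism of a finite-dimensional space have the same dimension, it therefore suffices to prove
\[\dim_k\ker\!\bigl(\bar T\colon L/\varpi L\to L/\varpi L\bigr)\le\bbr.\]

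To establish this I would exhibit an $\mathrm{ad}(\ga)$-stable filtration of $L/\varpi L$ whose associated graded is, as a module over $\bar\ga$, isomorphic to $\bbg(k)$ acted on by $\mathrm{ad}$ of a regular element. Set $L_i:=\fg(\cO)_{\le i}$ for $0\le i\le e-1$, so $L_0=\fg(\cO)\subseteq L_1\subseteq\cdots\subseteq L_{e-1}=\fg(\cO)^\vee$ with $L_i/L_{i-1}\cong\bbg_i(k)$; the explicit descriptions also give $\varpi\fg(\cO)^\vee\subseteq\fg(\cO)$ with $\fg(\cO)/\varpi\fg(\cO)^\vee\cong\bbg_0(k)$. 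Reindexing cyclically starting from $L=L_j$, the chain
\[L_j\supseteq L_{j-1}\supseteq\cdots\supseteq L_0\supseteq\varpi L_{e-1}\supseteq\varpi L_{e-2}\supseteq\cdots\supseteq\varpi L_{j+1}\supseteq\varpi L_j\]
consists of $\mathrm{ad}(\fg(\cO))$-stable lattices and, by the quotient computations just recalled, has successive subquotients $\bbg_i(k)$ with $i$ running through $\{0,1,\dots,e-1\}$ exactly once; thus it induces an $\mathrm{ad}(\ga)$-stable filtration of $L/\varpi L$ with associated graded $\bigoplus_{i=0}^{e-1}\bbg_i(k)=\bbg(k)$. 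The point is then that the action of $\mathrm{ad}(\ga)$ on each graded piece factors through the leading term $\ga_0\in\bbg_0(\cO)$ of $\ga$ (the higher $\varpi'$-components of $\ga$ strictly raise the $\varpi'$-level and so act by zero on the graded pieces), whose reduction coincides with $\ga\bmod\varpi'$ and is a \emph{regular} element of $\bbg(k)$ because $\ga\in\fg^{\mathrm{reg}}(\cO)=\fg(\cO)\cap\bbg^{\mathrm{reg}}(\cO')$. Since a regular element of the reductive Lie algebra $\bbg(k)$ has centralizer of dimension $\bbr$ under our standing hypotheses on $p$, sub-additivity of kernel dimension along a filtration yields
\[\dim_k\ker\!\bigl(\bar T\mid L/\varpi L\bigr)\le\dim_k\ker\!\bigl(\mathrm{ad}(\ga\bmod\varpi')\mid\bbg(k)\bigr)=\bbr,\]
as required.

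The main obstacle is the bookkeeping in the second paragraph: confirming that the cyclic chain of lattices really does have associated graded $\bbg(k)$ and, crucially, that the induced action on the graded pieces is $\mathrm{ad}$ of the regular reduction $\ga\bmod\varpi'$ --- this amounts to keeping careful track of the $\mu_e$-grading and of the $\varpi'$-adic levels inside $\bbg(\cO')$. Everything else (torsion-free $\Leftrightarrow$ equal generic/special ranks, the inequality $\dim_k\coker(\bar T)\ge\bbr$, and sub-additivity of kernels along filtrations) is formal. In the split case $e=1$ the filtration is trivial and the argument collapses to the familiar fact that $\coker(\mathrm{ad}(\ga)\mid\fg(\cO))$ is torsion-free because the reduction of a regular element remains regular.
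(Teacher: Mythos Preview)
Your proof is correct and follows essentially the same approach as the paper: both reduce saturation to torsion-freeness of the cokernel $Q$ of $\mathrm{ad}(\ga)$ on $L=\fg(\cO)_{\le j}$, then bound $\dim_k\ker(\bar T\mid L/\varpi L)$ by exhibiting the same cyclic filtration $L_j\supset\cdots\supset L_0\supset\varpi L_{e-1}\supset\cdots\supset\varpi L_j$ with associated graded $\bbg(k)$, on which the induced operator is $\mathrm{ad}(\bar\ga_0)$ for the regular element $\bar\ga_0=\ga\bmod\varpi'\in\bbg^{\mathrm{reg}}(k)$. The bookkeeping you flag as the main obstacle is exactly what the paper carries out, with the same conclusion $\dim_k\ker(\bar T)\le\dim_k\ker(\mathrm{ad}(\bar\ga_0)\mid\bbg(k))=\bbr$.
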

\begin{proof}
    Let $Q$ be the cokernel of the $\cO$-linear endomorphism $\mathrm{ad}(\ga)$ of $\fg(\cO)_{\le j}$. We need to show that $Q$ is torsion free, which would follow from the inequality $\dim_k Q\otimes_\cO k\le\dim_F Q\otimes_\cO F$ (the fact that $Q$ is a finitely generated $\cO$-module implies the reverse inequality). Since $\ga\in\fg(F)$ is regular semisimple, we have $\dim_F Q\otimes_\cO F=\dim_F\fg_\ga(F)=\bbr$, which is the absolute rank of $\fg$ (i.e. the rank of the split form $\bbg$), we
    are reduced to showing that $\dim_k Q\otimes_\cO k\le\bbr$.\par
    We can write $\ga=\ga_0+\sum_{i=1}^{e-1}(\varpi')^{e-i}\ga_i$ where $\ga_i\in\bbg_i(\cO)$ for each $0\le i\le e-1$. Let $\Bar{\ga}$ be the image of $\ga$ under the reduction map 
    \[\fg(\cO)\to\fg(k)=\bbg_0(k)\oplus\bigoplus_{i=1}^{e-1}\bbg_i(k)t^i\subset\bbg(\cO'/\varpi\cO')=\bbg(k[t]/t^e)\]
    where $t$ denotes the image of $\varpi'\in\cO'$ in $\cO'/\varpi\cO'$. Let $\Bar{\ga}_0\in\bbg_0(k)$ be the image of $\ga_0$ under the reduction map $\bbg_0(\cO)\to\bbg_0(k)$. Since $\ga\in\fg(\cO)\cap\bbg^{\mathrm{reg}}(\cO')$ we have $\Bar{\ga}_0\in\bbg_0(k)\cap\bbg^{\mathrm{reg}}(k)$. \par 
    Consider the $k$-linear endomorphism $\mathrm{ad}(\Bar{\ga})$ on $\fg(\cO)_{\le j}/\varpi\fg(\cO)_{\le j}$. We have $\dim_k(Q\otimes_\cO k)=\dim_k\coker(\mathrm{ad}(\Bar{\ga}))=\dim_k\ker(\mathrm{ad}(\Bar{\ga}))$ and it suffices to show that $\dim_k\ker(\Bar{\ga})\le\bbr$.\par 
    Consider the following chain of $\cO$-modules
    \[\fg(\cO)_{\le j}\supset\fg(\cO)_{\le j-1}\supset\dotsm\supset\fg(\cO)_{\le 0}=\fg(\cO)\supset\varpi\fg(\cO)_{\le e-1}\supset\dotsm\supset\varpi\fg(\cO)_{\le j}\]
    whose successive quotients are 
    \[\bbg_j(k), \bbg_{j-1}(k),\dotsc,\bbg_0(k),\dotsc,\bbg_{j+1}(k).\]
    This induces a filtration on the $k$-vector space $\fg(\cO)_{\le j}/\varpi\fg(\cO)_{\le j}$ preserved by $\mathrm{ad}(\Bar{\ga})$, which induces the $k$-linear map $\mathrm{ad}(\Bar{\ga}_0)$ on the associated graded space $\bbg(k)=\oplus_{i=0}^{e-1}\bbg_i(k)$. Since $\Bar{\ga}_0\in\bbg^{\mathrm{reg}}(k)$, we get the desired inequality
    \[\dim_k\ker(\mathrm{ad}(\Bar{\ga})|\fg(\cO)_{\le j}/\varpi\fg(\cO)_{\le j})\le\dim_k\ker(\mathrm{ad}(\Bar{\ga}_0)|\bbg(k))=\bbr.\]
\end{proof}
Consider the $\cO$-lattice $\fp:=[\ga,\fg(\cO)]$ in $\fp(F)$. We have $[\ga,\fp]\subset\fp$ and hence $[\ga,\fp^\vee]\subset\fp^\vee$. On the other hand, the dual lattice is described by    
\begin{equation}\label{eq:p-dual}
    \begin{split}
        \fp^\vee &=\{x\in\fp(F),(x,[\ga,y])\in\cO,\;\forall y\in\fg(\cO)\}\\
        &=\{x\in\fp(F),([\ga,x],y)\in\cO,\;\forall y\in\fg(\cO)\}\\
        &=\{x\in\fp(F),[\ga,x]\in\fg(\cO)^\vee\}.
    \end{split}
\end{equation}
Then we get that
\[[\ga,\fp^\vee]\subset\fg(\cO)^\vee\cap\fp(F)=[\ga,\fg(\cO)^\vee]\subset[\ga,\fp^\vee]\]
where the first inclusion follows from \eqref{eq:p-dual}, the middle equality follows from Lemma \ref{lem:lattice-saturated} and the last inclusion is seen as follows: for any $x\in\fg(\cO)^\vee$, let $x=x_1+x_2$ be its orthogonal decomposition where $x_1\in\fg_\ga(F)$ and $x_2\in\fp(F)$; then $[\ga,x_2]=[\ga,x]\in[\ga,\fg(\cO)^\vee]\subset\fg(\cO)^\vee$ and by \eqref{eq:p-dual} we have $x_2\in\fp^\vee$ so that $[\ga,x]=[\ga,x_2]\in[\ga,\fp^\vee]$.\par
Therefore we deduce that
\[\fp=[\ga,\fg(\cO)]\subset[\ga,\fg(\cO)^\vee]=[\ga,\fp^\vee]\subset\fp^\vee\]
and we can compute:
\begin{equation}
    \begin{split}
        \mathrm{length}_\cO(\fp^\vee/\fp)&=\mathrm{length}_\cO(\fp^\vee/[\ga,\fp^\vee])+\mathrm{length}_\cO([\ga,\fp^\vee]/\fp)\\
        &=\mathrm{length}_\cO(\fp^\vee/[\ga,\fp^\vee])+\mathrm{length}_\cO([\ga,\fg(\cO)^\vee]/[\ga,\fg(\cO)])
    \end{split}
\end{equation}
We have $\mathrm{length}_\cO(\fp^\vee/[\ga,\fp^\vee])=d_\fg(\ga)$. Indeed, by the definition of $d_\fg(\ga)$ this equality is even true if $\fp^\vee$ is replaced by any $\cO$-lattice in $\fp(F)$. Then from the exact sequence
\[0\to\frac{\fg_\ga(F)\cap\fg(\cO)^\vee}{\fg_\ga(F)\cap\fg(\cO)}\to\frac{\fg(\cO)^\vee}{\fg(\cO)}\xrightarrow{\mathrm{ad}(\ga)}\frac{[\ga,\fg(\cO)^\vee]}{[\ga,\fg(\cO)]}\to0\]
we get that 
\[\mathrm{length}_\cO(\fp^\vee/\fp)=d_\fg(\ga)+\mathrm{length}_\cO(\fg(\cO)^\vee/\fg(\cO))-\mathrm{length}_\cO\left(\frac{\fg_\ga(F)\cap\fg(\cO)^\vee}{\fg_\ga(F)\cap\fg(\cO)}\right).\]
For each $1\le j\le e-1$, we have the following diagram in which the rows are exact
\[\xymatrix{
0\ar[r]&\fg(\cO)_{\le j-1}\ar[r]\ar[d]^{\mathrm{ad}(\ga)} &\fg(\cO)_{\le j}\ar[r]\ar[d]^{\mathrm{ad}(\ga)} &\bbg_j(k)\ar[d]^{\mathrm{ad}(\Bar{\ga}_0)}\ar[r] &0\\
0\ar[r]&\fg(\cO)_{\le j-1}\ar[r] &\fg(\cO)_{\le j}\ar[r] & \bbg_j(k)\ar[r]&0
}\]
Since $\coker(\mathrm{ad}(\ga))$ is torsion free by Lemma \ref{lem:lattice-saturated}, the connecting homomorphism in the associated long exact sequence vanishes and hence we get a short exact sequence
\[0\to\fg_\ga(F)\cap\fg(\cO)_{\le j-1}\to\fg_\ga(F)\cap\fg(\cO)_{\le j}\to\ker(\mathrm{ad}(\Bar{\ga}_0)|\bbg_j(k))\to0.\]
This implies that 
\begin{equation}
    \begin{split}
        \mathrm{length}_\cO\left(\frac{\fg_\ga(F)\cap\fg(\cO)^\vee}{\fg_\ga(F)\cap\fg(\cO)}\right)&=\sum_{j=1}^{e-1}\dim_k \ker(\mathrm{ad}(\Bar{\ga}_0)|\bbg_j(k))\\
        &=\dim_k\ker(\mathrm{ad}(\Bar{\ga}_0)|\bbg(k))- \dim_k\ker(\mathrm{ad}(\Bar{\ga}_0|\bbg_0(k)) =\bbr-r=\mathrm{Art}(G)
    \end{split}
\end{equation}
where we use the fact that $\Bar{\ga}_0\in\bbg^{\mathrm{reg}}(k)$ for the second equality.\par  
Then we get
\[\mathrm{length}_\cO(\fp^\vee/\fp)=d_\fg(\ga)+\mathrm{length}_\cO(\fg(\cO)^\vee/\fg(\cO))-\mathrm{Art}(G).\]
Now consider the following chain of $\cO$-lattices in $\fg(F)$:
\[\fg_\ga\oplus\fp\subset\fg(\cO)\subset\fg(\cO)^\vee\subset(\fg_\ga\oplus\fp)^\vee=\fg_\ga^\vee\oplus\fp^\vee.\]
We note that $\mathrm{ad}(\ga)$ induces an isomorphism $\fg(\cO)/(\fg_\ga\oplus\fp)\cong\fp/\mathrm{ad}(\ga)(\fp)$ and hence
\[\mathrm{length}_\cO\left(\frac{(\fg_\ga\oplus\fp)^\vee}{\fg(\cO)^\vee}\right)=\mathrm{length}_\cO\left(\frac{\fg(\cO)}{\fg_\ga\oplus\fp}\right)=\mathrm{length}_\cO(\fp/\mathrm{ad}(\ga)(\fp))=d_\fg(\ga).\]
From the above discussions we get that
\begin{equation}\label{eq:length-g-gamma}
    \begin{split}
\mathrm{length}_\cO(\fg_\ga^\vee/\fg_\ga)&=2d_\fg(\ga)+\mathrm{length}_\cO(\fg(\cO)^\vee/\fg(\cO))-\mathrm{length}_\cO(\fp^\vee/\fp)\\
&=d_\fg(\ga)+\mathrm{Art}(G).
    \end{split}
\end{equation}
Finally by Corollary \ref{cor:length-artin-conductor} we have 
\[\mathrm{length}_\cO(\fg_\ga^{\dagger,\vee}/\fg_\ga^\dagger)=\mathrm{Art}_\ga\]

and when combined with \eqref{eq:length-Lie-alg} and \eqref{eq:length-g-gamma}, we get
\[\mathrm{length}_\cO(\fg_\ga^\dagger/\fg_\ga)=\frac{1}{2}(d_\fg(\ga)+\mathrm{Art}(G)-\mathrm{Art}_\ga).\]
Note that the assumptions in Corollary \ref{cor:length-artin-conductor} are satisfied since the restriction of the bilinear from on the split Lie algebra $\bbg(F)$ to any split Cartan subalgebra induces a $\bbW$-invariant perfect pairing on the $\cO$-module $\La\otimes_\bbZ\cO$, where $\La=X_*(G_{\ga,\overline{F}})$ is the coweight lattice.  

\subsection{Finishing the proof of main theorem in Lie algebra case}
\begin{prop}\label{prop:equidim}
Suppose $G$ is essentially tamely ramified and the residue characteristic is good for $G$. For any regular semisimple element $\ga\in\fg(F)^{\mathrm{rs}}$, the Witt-vector affine Springer fiber $Y_\ga$ is equidimensional. If moreover $G$ is semisimple simply connected and $\ga$ is topologically nilpotent, then $Y_\ga$ is connected.
\end{prop}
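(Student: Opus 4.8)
The plan is to reduce, via the topological Jordan decomposition and Harish--Chandra descent, to the case of a topologically nilpotent element, and then to run the line-counting argument of Kazhdan--Lusztig exactly as in the proof of Proposition~\ref{prop:equi-dim-group}. If $\ga$ is not bounded then $Y_\ga=\varnothing$ by Theorem~\ref{thm:nonempty-Lie}, so I may assume $\ga$ bounded. By Proposition~\ref{prop:reduction-Lie-alg}, $Y_\ga=X^{\fg}_{\bfI,\ga}$ is a disjoint union of copies of the affine Springer fiber for $G^{\mathrm{sc}}$, all of the same dimension; together with Corollary~\ref{cor:reduction-Lie-alg}, Lemma~\ref{lem:affine-flag-decompose} and Proposition~\ref{prop:pairing}, this lets me assume $G$ is tamely ramified and almost simple and that the hypotheses of Proposition~\ref{prop:top-jordan-Lie} and Lemma~\ref{lem:top-nil} are in force. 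Writing $\ga=\ga_0+\ga_1$ for the topological Jordan decomposition and conjugating $\ga_0$ into $\ft(\cO)$, the pseudo-Levi $H=G_{\ga_0}$ contains the maximal torus $T$; hence $\cB(H)$ has the same apartment as $\cB(G)$ and a subset of its walls, so the barycenter of the alcove of $\bfI$ --- lying on no wall of $\cB(G)$, hence on none of $\cB(H)$ --- sits in the interior of a unique alcove of $\cB(H)$, and $\bfI_H:=\bfI\cap H(F)$ is an Iwahori subgroup of $H(F)$. Since the roots of $H$ form a subsystem of those of $G$, the group $H$ is again tame with $p$ good. By Proposition~\ref{prop:HC-descent-Lie-alg}, $Y_\ga\cong X^{\fh}_{\bfI_H,\ga_1}$ with $\ga_1\in\fh(F)^{\mathrm{rs}}$ topologically nilpotent, and re-applying the reductions to $H$ I am left with the case where $\ga$ is topologically nilpotent and $G$ is almost simple and tamely ramified with the hypotheses of Lemma~\ref{lem:top-nil}; for connectedness one also has $G$ simply connected, so $\mathrm{Fl}_\bfI$ is connected.

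In that situation I would argue as in Proposition~\ref{prop:equi-dim-group}, with the group replaced by its Lie algebra. For a simple affine root $\alpha$, set $\bfP_\alpha=\bfI\cup\bfI s_\alpha\bfI$ and let $\pi_\alpha\colon\mathrm{Fl}_\bfI\to\mathrm{Fl}_{\bfP_\alpha}$ be the associated $\bbP^1$-bundle, whose fibres are the lines of type $\alpha$. Since $\ga$ is topologically nilpotent, Lemma~\ref{lem:top-nil} gives $\mathrm{ad}(g)^{-1}\ga\in\mathrm{Lie}(\bfI_+)$ for every $g\bfI\in Y_\ga$, so $X^+_{\bfP_\alpha,\ga}:=\{g:\mathrm{ad}(g)^{-1}\ga\in\mathrm{Lie}(\bfI_+)\}/(\bfP_\alpha)_+$ is a $\sB_{\bfP_\alpha}$-torsor over $Y_\ga$; the line bundle $\cL_\alpha:=X^+_{\bfP_\alpha,\ga}\times^{\sB_{\bfP_\alpha}}\mathrm{Lie}(\bfI_+/(\bfP_\alpha)_+)$ carries a tautological section whose zero locus $Z_\alpha$ is the union of the type-$\alpha$ lines contained in $Y_\ga$, a union of whole fibres of $\pi_\alpha$. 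The image of $\mathrm{ad}(g)^{-1}\ga$ in the reductive quotient $\sG_{\bfP_\alpha}$ lies in the nilradical of a Borel subalgebra, hence is nilpotent, so the intersection of $Y_\ga$ with a type-$\alpha$ line is a classical Springer fibre of a nilpotent element of a group of semisimple rank $\le1$, hence a single point or all of $\bbP^1$; therefore any type-$\alpha$ line meeting $Y_\ga$ in two points lies in $Y_\ga$. Two consequences then follow verbatim from the arguments of Proposition~\ref{prop:equi-dim-group}: (i) two points of $Y_\ga$ of relative position $w=s_1\cdots s_r$ are joined by a minimal gallery all of whose alcoves lie in $Y_\ga$ --- using the convexity identity $\mathrm{Lie}(\bfI_{C})\cap\mathrm{Lie}(\bfI_{C'})=\bigcap_{D}\mathrm{Lie}(\bfI_D)$ over the alcoves $D$ of the gallery --- hence by a chain of lines inside $Y_\ga$, which gives connectedness; (ii) for a top-dimensional component $Z$ of $Y_\ga$ and a type-$\alpha$ line $\ell\subset Y_\ga$ with $\ell\cap Z\neq\varnothing$ and $\ell\not\subset Z$, the zero locus of the section of $\cL_\alpha|_Z$ is a nonempty divisor, a component $Z_1$ of which meets $\ell$, and $\pi_\alpha^{-1}(Z_1)\subset Z_\alpha\subset Y_\ga$ is then a top-dimensional component containing $\ell$; with (i) this forces $Y_\ga$ to be equidimensional. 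Undoing the reductions gives the general equidimensionality, and the simply connected subcase gives the connectedness assertion.

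The step I expect to cause the most trouble is the bookkeeping for the reductions: confirming that $\bfI\cap H(F)$ is an Iwahori of $H(F)$, and --- more substantially --- arranging the technical hypotheses (tameness, $p$ good, and especially the conditions of Lemma~\ref{lem:top-nil} and Proposition~\ref{prop:pairing}, which in general require replacing $G$ by an auxiliary group $G^{\natural}$ with the same adjoint group) simultaneously for the pseudo-Levi $H$ while still transporting connectedness and equidimensionality back to $Y_\ga$ through the decomposition of Proposition~\ref{prop:reduction-Lie-alg}. In particular, in the simply connected case of the connectedness statement, when the hypotheses of Lemma~\ref{lem:top-nil} are not available (e.g.\ type $\mathrm A_{n-1}$ with $p\mid n$), one must argue directly that a topologically nilpotent element reduces to a nilpotent --- in particular non-regular-semisimple --- element in the reductive quotient of any parahoric, or instead pass to $G^{\natural}=\mathrm{GL}_n$ and use the quasi-logarithm map together with Lemma~\ref{lem:compare-group-Lie} and Proposition~\ref{prop:equi-dim-group}, the only new point being the identification of connected components of the Witt vector affine flag varieties under $(G^{\natural})^{\mathrm{sc}}=G$.
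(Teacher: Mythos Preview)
Your reductions match the paper's exactly: Proposition~\ref{prop:reduction-Lie-alg} and Proposition~\ref{prop:HC-descent-Lie-alg} bring you to the case where $G$ is tamely ramified, semisimple simply connected, and $\ga$ is topologically nilpotent. From there the two arguments diverge. The paper does \emph{not} rerun the Kazhdan--Lusztig line argument on the Lie algebra side; instead it passes to the auxiliary group $G^\natural$ of Proposition~\ref{prop:pairing}, pulls $\ga$ back through the quasi-logarithm $\Phi$ to a strongly topologically unipotent $\delta^\natural\in G^\natural(F)$, and then invokes Lemma~\ref{lem:compare-group-Lie} to identify $Y_\ga$ with a group affine Springer fiber $X_{\bfI,\delta}^{G_{\mathrm{ad}}}$ to which Proposition~\ref{prop:equi-dim-group} applies directly. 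Your route---running the \S4 argument of \cite{KL} directly on $\fg$, with the Tits-system gallery-uniqueness step replaced by the Bruhat--Tits convexity $\mathrm{Lie}(\bfI_C)\cap\mathrm{Lie}(\bfI_{C'})=\bigcap_D\mathrm{Lie}(\bfI_D)$ over a minimal gallery---is valid and in fact closer to the original \cite{KL}. The paper's approach is more economical (it recycles Proposition~\ref{prop:equi-dim-group} rather than duplicating its proof) and sidesteps precisely the bookkeeping you flagged as the likely trouble spot: by going through $\Phi$ the hypotheses of Lemma~\ref{lem:top-nil} never need to be verified on the Lie algebra side, and the type~$\mathrm A_{n-1}$, $p\mid n$ case requires no separate treatment. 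Your approach has the advantage of being self-contained and of making the equidimensionality on the Lie algebra side independent of the quasi-logarithm machinery.
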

\begin{proof}
    By Proposition \ref{prop:reduction-Lie-alg} and Proposition \ref{prop:HC-descent-Lie-alg} we may assume that $G$ is tamely ramified semisimple simply connected and $\ga$ is topologically nilpotent. By Lemma \ref{lem:quasi-log} and Proposition \ref{prop:pairing}, there exists a reductive group $G^\natural$ over $F$ with an isomorphism $G^\natural_{\mathrm{ad}}\cong G_{\mathrm{ad}}$ and a quasi-logarithm map $\Phi:G^\natural\to\mathrm{Lie}(G^\natural)$ that is bijective on topologically nilpotent elements in the sense of Definition \ref{def:qlog-top-nilp}. Since $G$ is semisimple, we can view $\ga$ as an element in $\mathrm{Lie}(G^\natural)$ via the natural embedding $\fg\into\mathrm{Lie}(G^\natural)$. Let $\delta^\natural\in G^\natural(F)$ be the regular semisimple strongly topologically unipotent element such that $\ga=\Phi(\delta^\natural)$ and let $\delta\in G_{\mathrm{ad}}(F)=G^\natural_{\mathrm{ad}}(F)$ be the image of $\delta^\natural$. Let $\bfI_{\mathrm{ad}}$ be the image of $\bfI$ in $G_{\mathrm{ad}}(F)$. Then by Lemma \ref{lem:compare-group-Lie} we have 
    \[Y_\ga=X_{\bfI,\delta}^{G_{\mathrm{ad}}}=\{g\in G(F)/\bfI\mid g^{-1}\delta g\in\bfI_{\mathrm{ad}}\}\]
    and we conclude by Proposition \ref{prop:equi-dim-group}. 
\end{proof}

\begin{cor}\label{cor:dim-eq-reg}
    For any $\ga\in\fg(F)^{\mathrm{rs}}$ and any parahoric subgroup $\bfP\subset G(F)$, we have $\dim X_{\bfP,\ga}\le\dim Y_\ga$ and equality holds if $\bfP$ is contained in a special parahoric subgroup. Moreover, in the case where $\bfP=G(\cO)$ is a special parahoric subgroup we have
    \[\dim X_\ga=\dim Y_\ga=\dim X_\ga^{\mathrm{reg}}.\] 
\end{cor}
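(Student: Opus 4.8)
The plan is to derive everything from three facts: for $\bfI\subseteq\bfP$ the projection $\mathrm{Fl}_\bfI\to\mathrm{Fl}_\bfP$ restricts to a \emph{surjective} morphism $Y_\ga=X_{\bfI,\ga}\epic X_{\bfP,\ga}$; over the regular locus the fibres of such a projection are \emph{finite}; and $Y_\ga$ is \emph{equidimensional} (Proposition~\ref{prop:equidim}). For the first fact, after conjugating by an element of $G(F)$ — which, via the left–translation automorphisms of the Witt vector affine flag varieties, affects none of $\dim X_{\bfP,\ga}$, $\dim Y_\ga$, $d_\fg(\ga)$, $\mathrm{Art}(G)$, $\mathrm{Art}_\ga$ — I may assume $\bfI\subseteq\bfP$; then if $\mathrm{ad}(g)^{-1}\ga\in\mathrm{Lie}(\bfP)$ its image in $\fg_\bfP=\mathrm{Lie}(\sG_\bfP)$ lies in a Borel subalgebra, hence $\mathrm{ad}(g)^{-1}\ga$ is $\bfP$-conjugate into $\mathrm{Lie}(\bfI)$ — this is the argument in the proof of Corollary~\ref{cor:chi-surjection} — so $\pi_\bfP\colon Y_\ga\to X_{\bfP,\ga}$ is surjective and $\dim X_{\bfP,\ga}\le\dim Y_\ga$.

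Consider next the case $\bfP=G(\cO)=\bfP_0$. By Lemma~\ref{lem:open-subspace}, $X_\ga^{\mathrm{reg}}$ is open in $X_\ga$, so $\dim X_\ga^{\mathrm{reg}}\le\dim X_\ga\le\dim Y_\ga$. For the reverse inequality I look at $\pi:=\pi_{\bfP_0}\colon Y_\ga\epic X_\ga$: the fibre of $\pi$ over a point $g\bfP_0\in X_\ga^{\mathrm{reg}}$ is the perfection of the Springer fibre in $\sG_{\bfP_0}$ of the regular element $\overline{\mathrm{ad}(g)^{-1}\ga}$, which is a finite set because $p$ is good for $G$. Hence $\pi^{-1}(X_\ga^{\mathrm{reg}})\to X_\ga^{\mathrm{reg}}$ has finite fibres and $\dim\pi^{-1}(X_\ga^{\mathrm{reg}})\le\dim X_\ga^{\mathrm{reg}}$. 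Since $X_\ga^{\mathrm{reg}}$ is nonempty — under the running hypotheses of \S\ref{sec:more-assumption} one has $\ga\in\fg^{\mathrm{reg}}(\cO)$, so the base point lies in $X_\ga^{\mathrm{reg}}$ — and $\pi$ is surjective, $\pi^{-1}(X_\ga^{\mathrm{reg}})$ is a nonempty open subspace of the equidimensional $Y_\ga$, so $\dim\pi^{-1}(X_\ga^{\mathrm{reg}})=\dim Y_\ga$. Combining, $\dim Y_\ga\le\dim X_\ga^{\mathrm{reg}}$, whence $\dim X_\ga=\dim Y_\ga=\dim X_\ga^{\mathrm{reg}}$ (equal, by Theorem~\ref{thm:dim-reg}, to $\tfrac12(d_\fg(\ga)+\mathrm{Art}(G)-\mathrm{Art}_\ga)$).

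Finally, suppose $\bfP$ is contained in a special parahoric $\bfP_0'$. Because the residue field is algebraically closed, $G_{\mathrm{ad}}(F)$ acts transitively on the special vertices of $\cB(G)=\cB(G_{\mathrm{ad}})$ — equivalently, the special vertices of the base alcove form a torsor under the group $\Omega\cong\pi_1(G_{\mathrm{ad}})_\Gamma$ of length-zero elements of the Iwahori–Weyl group, which one verifies on the list of affine Dynkin diagrams — so some $g\in G_{\mathrm{ad}}(F)$ carries $\bfP_0'$ to $\bfP_0$. The adjoint action of $G_{\mathrm{ad}}(F)$ on $\fg$ commutes with $\chi_G$ and preserves regularity, and the induced $F$-automorphism of $G$ identifies $X_{\bfP,\ga}$ and $Y_\ga$ with the corresponding spaces for the pair obtained by conjugating $(\bfP,\ga)$ by $g$ (up to the dimension-preserving $G(F)$-conjugations of the first paragraph), the new $\ga$ being again bounded regular semisimple with the same image under $\chi_G$. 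Thus, after replacing $(\bfP,\ga)$ by this conjugate and one more $G(F)$-conjugation, I may assume $\bfI\subseteq\bfP\subseteq\bfP_0$; then the surjections $Y_\ga\epic X_{\bfP,\ga}\epic X_{\bfP_0,\ga}=X_\ga$ give $\dim X_\ga\le\dim X_{\bfP,\ga}\le\dim Y_\ga$, and the previous case forces $\dim X_{\bfP,\ga}=\dim Y_\ga$.

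The genuinely delicate point is this last reduction: establishing (case by case on the affine Dynkin diagram, or by appeal to Bruhat–Tits theory) that $G_{\mathrm{ad}}(F)$ permutes the special parahorics transitively, and checking carefully that passing to a $G_{\mathrm{ad}}(F)$-conjugate leaves both $\dim X_{\bfP,\ga}$ and $\dim Y_\ga$ unchanged. Everything else is a soft dimension count resting on the equidimensionality of $Y_\ga$ and the vanishing of Springer fibre dimensions at regular elements.
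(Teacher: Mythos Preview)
Your proof is correct and follows the same route as the paper: surjectivity of $Y_\ga\to X_{\bfP,\ga}$ for the upper bound, finiteness of the fibres over the regular locus combined with equidimensionality of $Y_\ga$ (Proposition~\ref{prop:equidim}) for the equality at $\bfP_0=G(\cO)$, and then the factorization $Y_\ga\to X_{\bfP,\ga}\to X_{\bfP_0,\ga}$ for intermediate $\bfP$. The paper's proof is terser on the last step: it simply writes ``a special parahoric subgroup $\bfP_0=G(\cO)$'' without explaining why the special parahoric containing $\bfP$ may be taken to be the specific model of \S\ref{sec:group-model}. Your reduction via $G_{\mathrm{ad}}(F)$-transitivity on special vertices makes this explicit; the transitivity claim is indeed correct (for absolutely simple $G_{\mathrm{ad}}$ one checks from Tits' tables, or in the split case from Bourbaki, that $\Omega_{\mathrm{ad}}=\pi_1(G_{\mathrm{ad}})_I$ acts simply transitively on the special vertices of the base alcove), and since an element of $\Omega_{\mathrm{ad}}$ lifted to $G_{\mathrm{ad}}(F)$ normalizes $\bfI$, no further $G(F)$-conjugation is actually needed to arrange $\bfI\subset\bfP\subset\bfP_0$. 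One minor remark: the finiteness of the classical Springer fibre over a regular element holds without any hypothesis on $p$; regularity alone suffices.
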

\begin{proof}
    We may assume that $\bfP\supset\bfI$ is a standard parahoric subgroup. Consider the natural map $\pi_{\bfP,\ga}:Y_\ga\to X_{\bfP,\ga}$. For each $g\bfP\in X_\ga$, let $\bar{\ga}_g$ be the image of $\mathrm{ad}(g)^{-1}(\ga)$ under the reduction homomorphism $\mathrm{Lie}(\bfP)\to\mathrm{Lie}(\sG_\bfP)$. The conjugacy class of $\bar{\ga}_g$ is well defined and the fiber $\pi_{\bfP,\ga}^{-1}(g\bfP)$ is isomorphic to the perfection of the classical Springer fiber $\cB_{\bar{\ga}_g}$ of $\bar{\ga}_g$ in the flag variety of $\sG_\bfP$. Since the classical Springer fiber $\cB_{\bar{\ga}_g}$ is always non-empty and is $0$-dimensional if and only if $\bar{\ga}_g$ is regular, we see that $\pi_{\bfP,\ga}$ is surjective and is quasi-finite above the regular open subset $X_{\bfP,\ga}^{\mathrm{reg}}$. In particular we have
    \[\dim X_{\bfP,\ga}^{\mathrm{reg}}\le\dim X_{\bfP,\ga}\le\dim Y_\ga.\]
    When $\bfP=G(\cO)$ is a special parahoric subgroup, the regular locus $X_{\ga}^{\mathrm{reg}}$ is nonempty (since by the assumptions in \S\ref{sec:more-assumption} we have $\ga\in\fg^{\mathrm{reg}}(\cO)$ so that $1\in X_\ga^{\mathrm{reg}}$) and then by Proposition \ref{prop:equidim} we deduce that $\dim X_\ga=\dim Y_\ga=\dim X_\ga^{\mathrm{reg}}$.\par 
    More generally if the parahoric subgroup $\bfP$ is contained in a special parahoric subgroup $\bfP_0=G(\cO)$, the natural $\pi_{\bfP_0,\ga}:Y_\ga\to X_\ga$ factors into a composition $Y_\ga\xrightarrow{\pi_{\bfP,\ga}} X_{\bfP,\ga}\to X_\ga$ from which we deduce that $\dim X_{\bfP,\ga}=\dim X_\ga=\dim Y_\ga$. 
\end{proof}
Finally, by the remark at the beginning of this section, Theorem \ref{thm:main-Lie-algebra-case} follows from Theorem \ref{thm:dim-reg}, Proposition \ref{prop:equidim} and Corollary \ref{cor:dim-eq-reg}.

\section{Dimension of affine Springer fibers for the groups}\label{sec:dim-group}
In this section we will establish the dimension formula of the Witt vector affine Springer fiber for the groups, and thus finish the proof of Theorem \ref{thm:main-group-case}. After a series of reductions, this will eventually follow from Theorem \ref{thm:main-Lie-algebra-case}, the main result in the case of Lie algebras.
\subsection{The case of hyperbolic conjugacy classes}
We first consider the special case of a split regular semisimple conjugacy class that can be studied more directly. In the equal-characteristic setting, we have proved a more general result in \cite[Corollary 3.5.3]{Chi}, the argument
in \emph{loc. cit.} applies also in the current mixed-characteristic case. Here we give a more straightforward argument in the current setting, which is similar in spirit to the proof of \cite[\S5,Proposition 1]{KL}.\par 
We assume that $G$ is a split reductive group over $F$, $T\subset G$ is a split maximal torus over $F$ and $B=TU$ is a Borel subgroup of $G$ containing $T$, with unipotent radical $U$. Let $P=MN$ be a standard parabolic subgroup of $G$ with Levi $M$ and unipotent radical $N$. Then $B\subset P$ and $T\subset M$. Following \cite[\S5.3]{GHKR}, or \cite[\S3.4]{Chi}, we have a decreasing filtration of $N$ by normal subgroups $P$:
\[N=N[1]\supsetneq N[2]\supsetneq\dotsm\supsetneq N[l]\supsetneq N[l+1]=1\]
such that: 
\begin{itemize}
    \item the successive quotients $N\langle i\rangle=N[i]/N[i+1]$ are abelian;
    \item the commutator satisfies $[N,N[i]]\subset N[i+1]$ for all $1\le i\le l$. 
\end{itemize}
The second point can be deduced from, for example, \cite[\S3, Lemma 15]{Ste-Chevalley}. Each $N\langle i\rangle$ is a product of certain root groups $U_\alpha$ for roots $\alpha$ of $T$ occurring in $\mathrm{Lie}(N)$. In particular we have $N\cong\prod_{i=1}^l N\langle i\rangle$ as a scheme. Moreover, for each $1\le i\le l$ the adjoint action of $M$ on $N$ restricts to an action on $N\langle i\rangle$ and there is a natural $M$-equivariant isomorphism of group schemes $N\langle i\rangle\cong\mathrm{Lie}(N\langle i\rangle)$.\par 
Let $\bfP\subset G(F)$ be a parahoric subgroup corresponding to a facet in the apartment of $T$. Then we have 
\[N(F)\cap\bfP=\prod_{i=1}^{l}N\langle i\rangle(F)\cap\bfP.\]
Let $\cN$ be the $\cO$-model of $N$ such that $\cN(\cO)=N(F)\cap\bfP$. Then we get $\cO$-models $\cN[i]$ (resp. $\cN\langle i\rangle$) for $N[i]$ (resp. $N\langle i\rangle$) such that $\cN[i](\cO)=N[i](F)\cap\bfP$ and $\cN\langle i\rangle(\cO)=N\langle i\rangle(F)\cap\bfP$. We have the associated Witt vector affine Grassmannians $LN/L^+\cN$, $LN[i]/L^+\cN[i]$ and $LN\langle i\rangle/L^+\cN\langle i\rangle$ that are perfect indschemes. 
\par   
For each element $\ga\in M(F)$ define
\[r_N(\ga):=\mathrm{val}(\det(1-\mathrm{ad}(\ga)\mid\mathrm{Lie}(N)))\in\bbZ\cup\{\infty\}\]
and for each $1\le i\le l$,
\[r_i(\ga):=\mathrm{val}(\det(1-\mathrm{ad}(\ga)\mid\mathrm{Lie}(N\langle i\rangle)))\in\bbZ\cup\{\infty\}.\]
Then we have $r_N(\ga)=\sum_{i=1}^l r_i(\ga)$. 
\begin{prop}\label{prop:Levi-reduction}
    With notations as above, let $\ga\in M(F)$ be an element such that $\mathrm{ad}(\ga)\cN(\cO)\subset\cN(\cO)$ and $r_N(\ga)<\infty$. Then the subspace
    \[Z:=\{u\in LN/L^+\cN\mid u^{-1}\ga u\ga^{-1}\in L^+\cN\}\]
    is represented by a perfect scheme perfectly of finite type (see Definition \ref{def:perfect-finite-type}) of dimension $r_N(\ga)$.
\end{prop}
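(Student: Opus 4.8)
The plan is to argue by induction on the length $l$ of the central filtration $N=N[1]\supsetneq\dotsm\supsetneq N[l]\supsetneq N[l+1]=1$, following the strategy of \cite[\S5]{KL} and \cite[\S5.3]{GHKR} (see also \cite[\S3.4]{Chi}) and reducing at the end to a lattice computation. Throughout I would keep track of the basic bookkeeping: since $\ga\in M(F)$ normalises each $N\langle i\rangle$ and $N[i]$, and $\mathrm{ad}(\ga)$ preserves $\cN(\cO)$, it also preserves the $\cO$-lattices $\mathrm{Lie}(\cN\langle i\rangle)(\cO)$ and $\cN[i](\cO)=\cN(\cO)\cap N[i](F)$; hence each $r_i(\ga)$ is a non-negative integer and $r_N(\ga)=\sum_{i=1}^{l}r_i(\ga)$.

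First I would settle the base case $l=1$, where $N=N\langle 1\rangle$ is abelian. Here the $M$-equivariant isomorphism $N\cong\mathrm{Lie}(N)$ identifies $LN/L^+\cN$ with the lattice quotient attached to $L:=\mathrm{Lie}(\cN)(\cO)\subset\mathrm{Lie}(N)(F)$ and turns $u\mapsto u^{-1}\ga u\ga^{-1}$ into the $F$-linear map $A:=1-\mathrm{ad}(\ga)$ (the overall sign is irrelevant below). The hypotheses say precisely that $A$ preserves $L$ and is invertible on $\mathrm{Lie}(N)(F)$, so $Z\cong A^{-1}L/L$, a perfect affine space of dimension $\mathrm{length}_\cO(A^{-1}L/L)=\mathrm{length}_\cO(L/AL)=\mathrm{val}(\det(A\mid L))=r_N(\ga)$, the middle equality coming from the isomorphism $A\colon A^{-1}L/L\xrightarrow{\sim}L/AL$.

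For the inductive step I would pass to $\bar{N}:=N/N[l]$, the unipotent radical of $\bar{P}:=P/N[l]$ over the same Levi $M$, carrying the induced filtration of length $l-1$; set $\bar{\cN}:=\cN/\cN[l]$ and let $\bar{Z}$ be the analogue of $Z$ for $(\bar{N},\ga)$, so by induction $\bar{Z}$ is a perfect scheme perfectly of finite type with $\dim\bar{Z}=\sum_{i<l}r_i(\ga)$. The quotient map induces a morphism $\pi\colon Z\to\bar{Z}$, and the point is to describe its fibres. Here the key input is that $N[l]$ is central in $N$ (from $[N,N[i]]\subseteq N[i+1]$), so $N[l]=N\langle l\rangle\cong\mathrm{Lie}(N[l])$ and conjugation of $LN$ on $LN[l]$ is trivial: fixing $\bar{u}\in\bar{Z}$ and a lift $u_0\in LN$, and writing $u=u_0v$ with $v\in LN[l]$, centrality gives $u^{-1}\ga u\ga^{-1}=w_0\cdot(1-\mathrm{ad}(\ga))(v)$ with $w_0:=u_0^{-1}\ga u_0\ga^{-1}$ independent of $v$. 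Since $\bar{u}\in\bar{Z}$, the image of $w_0$ in $L\bar{N}$ lies in $L^+\bar{\cN}$, so $w_0$ differs from a chosen lift $\tilde{w}_0\in L^+\cN$ by an element $c_0\in LN[l]$; because $L^+\cN\cap LN[l]=L^+\cN[l]$, membership $u\in Z$ amounts to $(1-\mathrm{ad}(\ga))(v)\in c_0+L^+\cN[l]$. As $1-\mathrm{ad}(\ga)$ is invertible on $\mathrm{Lie}(N[l])(F)$ (from $r_l(\ga)<\infty$), this is a non-empty coset of the base-case space for $N[l]$, hence a perfect affine space of dimension $r_l(\ga)$. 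Thus $\pi$ is surjective with all fibres perfect affine spaces of constant dimension $r_l(\ga)$; being vector-group torsors they trivialise Zariski-locally on $\bar{Z}$, giving that $Z$ is perfect perfectly of finite type with $\dim Z=\dim\bar{Z}+r_l(\ga)=r_N(\ga)$. Representability of $Z$ itself I would deduce as in \lemref{lem:open-subspace} together with the boundedness estimates behind \thmref{thm:finiteness}.

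The main obstacle I anticipate is making this last fibre analysis precise as a statement about perfect schemes rather than merely about $k$-points: producing the splitting $u=u_0v$ and the error term $c_0$ in families, so that the fibres of $\pi$ are genuinely torsors under the base-case space, and then extracting from this both the additivity $\dim Z=\dim\bar{Z}+r_l(\ga)$ and the perfectly-of-finite-type conclusion. Once the central filtration is set up, everything else collapses onto the elementary lattice computation of the base case.
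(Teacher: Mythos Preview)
Your proof is correct and follows the same inductive philosophy as the paper --- both reduce along the central filtration to the abelian/lattice case --- but you peel off the filtration from the opposite end.

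The paper projects $Z$ onto the \emph{first} graded piece $Z^{\langle 1\rangle}$ (an affine space of dimension $r_1(\ga)$) and identifies the fibres with ``twisted'' analogues $Z^{[2]}_{x_1}$ for $N[2]$, where the base point $x_1$ depends on the chosen point of $Z^{\langle 1\rangle}$; it then iterates, carefully tracking the running base points $x_i$ through the descent. You instead quotient by the \emph{last} piece $N[l]$, which is central in $N$ (both arguments use $[N,N[i]]\subset N[i+1]$, a feature of the height filtration from the cited references that the paper itself invokes in its computation), and analyse the fibres of $Z\to\bar Z$. Centrality makes your fibre computation particularly transparent: the error term in $u^{-1}\ga u\ga^{-1}$ is genuinely additive in $v\in LN[l]$, so each fibre is a single affine translate of the abelian-case space, with no twisted base points to track. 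Your route is therefore a bit cleaner at the cost of needing a quotient construction $\bar N=N/N[l]$ and a lifting $L^+\cN\twoheadrightarrow L^+\bar\cN$, while the paper's route stays inside $N$ throughout. The caveat you flag --- making the splitting $u=u_0v$ and the error term $c_0$ work in families of perfect schemes --- is real but routine, exactly parallel to the corresponding issue in the paper's iterated fibration, and the paper does not dwell on it either.
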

\begin{proof}
    Define a \emph{right} action of $N$ on itself by requiring that any $u\in N$ acts by $x*u:=u^{-1}x\ga u\ga^{-1}$ for any $x\in N$. This also defines an action of the subgroups $N[i]$ and $N\langle i\rangle$ on themselves.\par 
    
    Define a morphism of indschemes $f_\ga:LN\to LN$ by \[f_\ga(u):=u^{-1}\ga u\ga^{-1}=1*u.\] 
    Then $f_\ga$ preserves the sub-indschemes $LN[i]$ and $LN\langle i\rangle$ and restricts to morphisms
    \[f_\ga^{[i]}:LN[i]\to LN[i],\quad f_\ga^{\langle i\rangle}:LN\langle i\rangle\to LN\langle i\rangle.\]
    By assumption we have inclusions 
    \[f_\ga(L^+\cN)\subset L^+\cN,\quad f_\ga^{[i]}(L^+\cN[i])\subset L^+\cN[i],\quad f_\ga^{\langle i\rangle}(L^+\cN\langle i\rangle)\subset L^+\cN\langle i\rangle.\]
    For each $1\le i
    \le l$, we consider the perfect indschemes
    \[Z^{[i]}:=(f_\ga^{[i]})^{-1}(L^+\cN[i])/L^+\cN[i],\quad Z^{\langle i\rangle}:=(f_\ga^{\langle i\rangle})^{-1}(L^+\cN\langle i\rangle)/L^+\cN\langle i\rangle.\]
    In particular when $i=1$ we get $Z^{[1]}=Z=f_\ga^{-1}(L^+\cN)/L^+\cN$. \par 
    More generally, for each $x\in N(F)$ we consider the perfect indscheme
    \[Z_x^{[i]}:=\{u\in LN[i]/L^+\cN[i],x*u\in L^+\cN\}.\]
    For any $u\in Z_x^{[i]}$ we have
    \[x*u=u^{-1}xu\cdot(1*u)=[u^{-1},x]\cdot x\cdot(1*u)\]
    where $[u^{-1},x]:=u^{-1}xux^{-1}$ is the commutator. Note that since $u\in LN[i]$, we have $[u^{-1},x]\in LN[i+1]$. So when $x\in L^+\cN\cdot LN[i+1]$, the condition $x*u\in L^+\cN$ implies that $1*u\in L^+\cN\cdot LN[i+1]$ and hence $1*\Bar{u}\in Z^{\langle i\rangle}$, where $\Bar{u}$ is the image of $u$ in $LN\langle i\rangle$. Thus the map $u\mapsto \Bar{u}$ induces a map 
    \[\pi_{i,x}:Z_x^{[i]}\to Z^{\langle i\rangle}.\]
    \textbf{Claim:} For any $1\le i\le l$ and any $x\in L^+\cN\cdot LN[i+1]$, the map $\pi_{i,x}$ above is surjective. Moreover, for any $u_i\in LN[i]$ with $\Bar{u}_i\in Z^{\langle i\rangle}$ we have $\pi_{i,x}^{-1}(u_i)=Z^{[i+1]}_{x*u_i}$.\par
    \begin{proof}[Proof of Claim]
    We prove the surjectivity by descending induction on $i$. When $i=l$ this is clear since $Z_{x}^{[l]}=Z^{[l]}=Z^{\langle l\rangle}$. Suppose the claim is true for $i+1$. For any $u_i\in LN[i]$ representing an element $\Bar{u}_i\in Z^{\langle i\rangle}$, we have $1*u_i\in L^+\cN[i] LN[i+1]=LN[i+1]L^+\cN[i]$. We want to find an element $u_{i+1}\in LN[i+1]$ such that $u_iu_{i+1}\in Z_{x}^{[i]}$. In other words, we look for $u_{i+1}$ such that 
    \[x*u_iu_{i+1}=u_{i+1}^{-1}u_i^{-1}xu_iu_{i+1}(1*u_iu_{i+1})=u_{i+1}^{-1}u_i^{-1}xu_i(1*u_i)\ga u_{i+1}\ga^{-1}\in L^+\cN.\]
    Write $1*u_i=yz$ with $y\in L^+\cN[i]$ and $z\in LN[i+1]$, then we have 
    \[x':=x*u_i=[u_i^{-1},x]xyz\in L^+\cN\cdot LN[i+1].\]
    Thus we can write $x'=y'z'$ with $y'\in L^+\cN$ and $z'\in LN[i+1]$. Then for any $u'\in LN[i+1]$ we have
    \[x'*u'=[(u')^{-1},x']y'z'(1*u').\]
    Since $\ga$ is regular semisimple, the action map $N[i+1](F)\to N[i+1](F)$ sending $u\mapsto 1*u$ is injective. So it defines a transitive action of $N[i+1](F)$ on itself and we can find $u'\in LN[i+1]$ such that $1*u'=(z')^{-1}$. Then we have $x'*u'\in LN[i+2]L^+\cN$. By induction hypothesis the space $Z^{[i+1]}_{x'*u'}$ is nonempty, so we can find $v\in LN[i+2]\subset LN[i+1]$ such that $x'*(u'v)\in L^+\cN$. Taking $u_{i+1}=u'v$ we finish the proof of the surjectivity of $\pi_{i,x}$. The description of its fiber is immediate from definition.
    \end{proof}
    Now we  analyze the structure of $Z=Z^{[1]}$. Let $x_1=1$. Take any $u_1\in Z$ and let $x_2:=1*u_1\in L^+\cN$, then we have $\pi_{1,x_1}^{-1}(\bar{u}_1)=Z_{x_2}^{[2]}$. Take any $u_2\in Z_{x_2}^{[2]}$ and let $x_3:=x_2*u_2\in L^+\cN$, then we have $\pi_{2,x_2}^{-1}=Z^{[3]}_{x_3}$. Continue in this way we get a descending sequence of spaces $\pi_{i,x_i}^{-1}(\bar{u}_i)=Z_{x_{i+1}}^{[i+1]}\subset LN[i+1]/L^+\cN[i+1], 1\le i\le l$, where the last one $Z_{x_{l+1}}^{[l+1]}$ is a single point. Note that for each $1\le i\le l$, $Z^{\langle i\rangle}$ is the perfection of an affine space of dimension $r_i(\ga)$. Then we deduce by descending induction on $i$ that $Z=Z^{[1]}$ is a perfect scheme perfectly of finite type of dimension $r_N(\ga)=\sum_{i=1}^l r_i(\ga)$. 
\end{proof}
\begin{cor}\label{cor:dim-split-case}
    Let $G$ be a split reductive group scheme over $\cO$ and let $T\subset G$ be a split maximal torus defined over $\cO$. For any bounded regular semisimple element $\ga\in T(\cO)\cap G^{\mathrm{rs}}(F)$, the Witt vector affine Springer fiber in the affine Grassmannian
    \[X_\ga=X_{G(\cO),\ga}^{G}=\{g\in LG/L^+G\mid g^{-1}\ga g\in L^+G\}\]
    is represented by a perfect scheme locally perfectly of finite type (see Definition \ref{def:perfect-finite-type}) of dimension 
    \[\dim X_\ga=\frac{1}{2}d_G(\ga).\]
\end{cor}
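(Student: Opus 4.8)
The plan is to follow the strategy of \cite[\S5, Proposition~1]{KL}: decompose $X_\ga$ into Iwasawa cells indexed by the coweight lattice and identify each cell with the space studied in Proposition~\ref{prop:Levi-reduction}, applied to the Borel subgroup $B = TU$.

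First I would set up the Iwasawa decomposition of the Witt vector affine Grassmannian. Fix a Borel subgroup $B = TU$ of $G$ defined over $\cO$ with unipotent radical $U$, and let $\cU$ be the $\cO$-model of $U$ with $\cU(\cO) = U(\cO) = U(F)\cap G(\cO)$ (this is the group scheme ``$\cN$'' of Proposition~\ref{prop:Levi-reduction} for the parabolic $B$). From $G(F) = U(F)T(F)G(\cO)$ and $T(F)/T(\cO)\cong X_*(T)$, $\la\mapsto\la(\varpi)$, one gets a stratification $\mathrm{Gr}_G = \bigsqcup_{\la\in X_*(T)} S_\la$ into locally closed perfect sub-ind-schemes $S_\la = LU\cdot\la(\varpi)L^+G/L^+G$, and conjugation by $\la(\varpi)^{-1}\in LT$ identifies $S_\la$ with $LU/L^+\cU$. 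Since $L^+G$ is closed in $LG$, Lemma~\ref{lem:open-subspace} shows that $X_\ga$ is closed in $\mathrm{Gr}_G$, so $X_\ga = \bigsqcup_{\la} (X_\ga\cap S_\la)$ is likewise stratified by locally closed perfect subschemes.

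Next I would identify each stratum. For $g = u\la(\varpi)$ with $u\in U(F)$, write $u^{-1}\ga u = \ga\cdot(\ga^{-1}u^{-1}\ga u)$ with $\ga^{-1}u^{-1}\ga u\in U(F)$; using that $\ga\in T(\cO)\subset G(\cO)$ commutes with $\la(\varpi)$ and normalizes $U(\cO)$, the condition $g^{-1}\ga g\in L^+G$ becomes $v^{-1}\ga v\ga^{-1}\in U(\cO)$, where $v := \mathrm{ad}(\la(\varpi))^{-1}(u)$. Hence under the identification $S_\la\cong LU/L^+\cU$, the stratum $X_\ga\cap S_\la$ corresponds to
\[
Z = \{\, v\in LU/L^+\cU \mid v^{-1}\ga v\ga^{-1}\in L^+\cU \,\},
\]
which is exactly the space of Proposition~\ref{prop:Levi-reduction} for $P = B$ (so $M = T$, $N = U$) and $\bfP = G(\cO)$. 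Its hypotheses hold: $\mathrm{ad}(\ga)\cU(\cO) = \cU(\cO)$ since $\ga\in T(\cO)$, and $r_U(\ga) := \mathrm{val}\det(1-\mathrm{ad}(\ga)\mid\mathrm{Lie}(U))$ is finite because $\ga$ is regular, so $\alpha(\ga)\ne 1$ for every root $\alpha$. Therefore each stratum $X_\ga\cap S_\la$ is a perfect scheme perfectly of finite type of the same dimension $r_U(\ga)$, independent of $\la$.

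Finally I would assemble the pieces. As $X_\ga$ is closed in $\mathrm{Gr}_G$, it is the increasing union of its intersections with the perfectly projective Schubert varieties, hence represented by a perfect scheme locally perfectly of finite type; the stratification by the equidimensional strata $X_\ga\cap S_\la$ then gives $\dim X_\ga = r_U(\ga)$. To finish I would compute $r_U(\ga)$ using the root space decomposition $\mathrm{Lie}(U) = \bigoplus_{\alpha>0}\fg_\alpha$, on which $\mathrm{ad}(\ga)$ acts by $\alpha(\ga)$: thus $r_U(\ga) = \sum_{\alpha>0}\mathrm{val}(1-\alpha(\ga))$, and since $\ga\in T(\cO)$ forces $\alpha(\ga)\in\cO^\times$, one has $\mathrm{val}(1-\alpha(\ga)) = \mathrm{val}(1-(-\alpha)(\ga))$, so $2\,r_U(\ga) = \sum_{\alpha\in R}\mathrm{val}(1-\alpha(\ga)) = d_G(\ga)$ by the formula following Definition~\ref{def:disc-group}. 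This yields $\dim X_\ga = \frac{1}{2}d_G(\ga)$. The real content of the argument is entirely contained in Proposition~\ref{prop:Levi-reduction}; what remains here is bookkeeping — upgrading the set-theoretic identification $X_\ga\cap S_\la\cong Z$ to an isomorphism of perfect ind-schemes (immediate, since it is effected by conjugation by an element of $LT$), checking that the semi-infinite Iwasawa cells stratify $\mathrm{Gr}_G$ by locally closed perfect subspaces in the Witt vector setting (available from \cite{BS-Witt}, \cite{Zhu-mixed}), and verifying that the dimension can be read off from this (infinite) stratification. I expect no serious obstacle beyond organizing these points carefully.
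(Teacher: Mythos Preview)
Your proposal is correct and follows essentially the same argument as the paper: both use the Iwasawa decomposition with respect to a Borel $B=TU$ to decompose $X_\ga$ into $X_*(T)$-indexed pieces, identify each piece with the space $Z$ of Proposition~\ref{prop:Levi-reduction} (applied to $M=T$, $N=U$, $\bfP=G(\cO)$), and then compute $r_U(\ga)=\tfrac12 d_G(\ga)$ using $\ga\in T(\cO)$. Your write-up is somewhat more explicit about the identification via conjugation by $\la(\varpi)^{-1}$ and the locally closed nature of the strata, but the substance is identical.
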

\begin{proof}
    Let $B$ be a Borel subgroup of $G$ containing $T$ and let $U$ be its unipotent radical, both defined over $\cO$. By the Iwasawa decomposition, $X_\ga$ can be decomposed into a disjoint union of $X_*(T)=T(F)/T(\cO)$ translates of 
    \[Z=\{u\in LU/L^+U,u^{-1}\ga u\ga^{-1}\in L^+U\}.\]
    By Proposition \ref{prop:Levi-reduction}, it is a perfectly of finite type perfect scheme of dimension 
    \[r_U(\ga)=\mathrm{val}(\det(1-\mathrm{ad}(\ga)\mid\mathrm{Lie}(U)))=\sum_{\alpha\in\Phi^+}\mathrm{val}(1-\alpha(\ga))\]
    where $\Phi^+$ is the set of positive roots. Since $\ga\in T(\cO)$ we have $\mathrm{val}(1-\alpha(\ga))=\mathrm{val}(1-\alpha(\ga)^{-1})$ for any root $\alpha$. This implies that $r_U(\ga)=\frac{1}{2}d_G(\ga)$ and we are done. 
\end{proof}

\begin{cor}\label{cor:disc-group-Lie-alg}
    Let $G$ be a reductive group over $F$. Suppose there exists a quasi-logarithm map $\Phi:G\to\fg$ that is bijective over topologically nilpotent elements. Then for any strongly topologically unipotent and regular semisimple element $u\in G^{\mathrm{rs}}(F)$, we have $d_G(\gamma)=d_\fg(\Phi(\gamma))$.
\end{cor}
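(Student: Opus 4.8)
The plan is to reduce to the case where $G$ is split and $\gamma$ is \emph{hyperbolic} (contained in a split maximal torus defined over $\cO$), where Corollary~\ref{cor:dim-split-case} supplies the clean formula $\dim X_\gamma^G = \tfrac{1}{2}d_G(\gamma)$, and then to match this against the Lie-algebra formula through the quasi-logarithm. Set $\delta := \Phi(\gamma)$. Because $\Phi$ is bijective over topologically nilpotent elements, $\delta$ is topologically nilpotent, and by Lemma~\ref{lem:compare-group-Lie} there is an isomorphism of centralizers $G_\gamma \cong G_\delta$; hence $G_\delta^\circ$ is a torus, $\delta \in \fg(F)^{\mathrm{rs}}$, and $\mathrm{Art}_\delta = \mathrm{Art}_\gamma$. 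The same lemma gives an equality of subspaces $X_{\bfP,\gamma}^G = X_{\bfP,\delta}^\fg$ in $\mathrm{Fl}_\bfP$ for every parahoric $\bfP$, so in particular $\dim X_{\bfP,\gamma}^G = \dim X_{\bfP,\delta}^\fg$. Note that finishing directly over $F$ would require the group dimension formula for $\gamma$, which is precisely what is being bootstrapped, so a field extension is needed.

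I would first dispose of the split hyperbolic case: assume $G$ split and $\gamma \in T(\cO) \cap G^{\mathrm{rs}}(F)$ strongly topologically unipotent. Taking $\bfP = G(\cO)$, Corollary~\ref{cor:dim-split-case} gives $\dim X_\gamma^G = \tfrac{1}{2}d_G(\gamma)$, and Theorem~\ref{thm:main-Lie-algebra-case} gives $\dim X_\delta^\fg = \tfrac{1}{2}\big(d_\fg(\delta) + \mathrm{Art}(G) - \mathrm{Art}_\delta\big)$; here both correction terms vanish, since $G$ is split ($\mathrm{Art}(G) = 0$) and $G_\delta^\circ \cong G_\gamma^\circ = T$ is split ($\mathrm{Art}_\delta = 0$). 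Combining with $\dim X_\gamma^G = \dim X_\delta^\fg$ I obtain $d_G(\gamma) = d_\fg(\delta)$ in this case.

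For general $G$ and $\gamma$, I would choose a finite extension $E/F$ over which both $G$ and the torus $T_\gamma := G_\gamma^\circ$ become split; such $E$ exists because $G$ is quasi-split and splits over a finite Galois extension and $T_\gamma$ splits over a finite extension, and $E/F$ is automatically totally ramified (the residue field is already algebraically closed), say of ramification index $e := [E:F]$. Over $E$ the bounded element $\gamma$ lies in the split maximal torus $T_{\gamma,E}$ of the split group $G_E$, so it is hyperbolic there; it remains strongly topologically unipotent over $E$; and $\Phi$ base-changes to a quasi-logarithm $\Phi_E : G_E \to \fg_E$ with $\Phi_E(\gamma) = \delta$. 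Applying the split hyperbolic case over $E$ then yields $d_{G_E}(\gamma) = d_{\fg_E}(\delta)$. Finally, the determinants $\det(\mathrm{Id} - \mathrm{ad}(\gamma)\mid\fg(F)/\fg_\gamma(F))$ and $\det(\mathrm{ad}(\delta)\mid\fg(F)/\fg_\delta(F))$ lie in $F^\times$ and the centralizer Lie algebras base-change trivially along $E/F$, so $d_{G_E}(\gamma) = e\, d_G(\gamma)$ and $d_{\fg_E}(\delta) = e\, d_\fg(\delta)$; dividing by $e$ gives the desired equality $d_G(\gamma) = d_\fg(\Phi(\gamma))$.

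The step I expect to be the real obstacle is checking that $\Phi_E$, the base change of $\Phi$ along $E/F$, is still bijective over topologically nilpotent elements in the sense of Lemma-Definition~\ref{def:qlog-top-nilp}, given that $E/F$ may be wildly ramified. The key observation making this plausible is that the residue field does not change under $E/F$, so the reduction modulo $\varpi$ of $\Phi_E$ coincides with that of $\Phi$, which restricts to an isomorphism between the unipotent variety and the nilpotent variety of the relevant special fibre (as in the proof of Lemma~\ref{lem:qlog-bij-top-nilp}); one then recovers the bijection on topologically unipotent and topologically nilpotent elements over $\cO_E$ by the same argument as in the proof of Lemma-Definition~\ref{def:qlog-top-nilp}. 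A minor secondary point is the bookkeeping, used above, that the two Artin-conductor terms in the Lie-algebra formula vanish once one has split over $E$.
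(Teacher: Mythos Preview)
Your argument is exactly the paper's: pass to a finite extension splitting $G$ and the centralizer, identify $X_\gamma^G = X_{\Phi(\gamma)}^\fg$ via Lemma~\ref{lem:compare-group-Lie}, and equate the dimensions from Corollary~\ref{cor:dim-split-case} and Theorem~\ref{thm:main-Lie-algebra-case} (both Artin terms vanishing in the split situation), then descend by dividing by the ramification index. The paper compresses this into three sentences and, like you, leaves implicit the verification that $\Phi_E$ remains bijective over topologically nilpotent elements.

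On that flagged point: your observation that the special fibre is unchanged under the totally ramified base change $E/F$ is correct, but be aware that $\cG_\bfP\otimes_\cO\cO_E$ is typically \emph{not} a parahoric group scheme of $G_E$ once $E/F$ is ramified (already for $G=\mathrm{SL}_2$ with $\bfP$ an Iwahori), so the equivalences in Lemma-Definition~\ref{def:qlog-top-nilp} do not apply to it directly. In the paper's intended use the issue evaporates: the quasi-logarithms there are built from a representation of the split form $\bbG$ defined over $\cO$ (Lemma~\ref{lem:qlog-bij-top-nilp}, Proposition~\ref{prop:pairing}), and over $E$ one simply works with the hyperspecial model $\bbG_{\cO_E}\cong G_E$ and invokes the split case of that construction.
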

\begin{proof}
     When computing the invariants $d_G(\gamma)$ and $d_\fg(\Phi(\ga))$ we may pass to a finite extension and hence assume that $G$ is split and $\ga$ is contained in a split maximal torus. By Lemma \ref{lem:compare-group-Lie} we have an isomorphism of the centralizers $G_\gamma\cong G_{\Phi(\gamma)}$, which is a split maximal torus. Also, there is an identification of affine Springer fibers in the affine Grassmannians $X_\gamma\cong X_{\Phi(\gamma)}$. Then the equality follows from Theorem~\ref{thm:main-Lie-algebra-case} and Corollary \ref{cor:dim-split-case}.
\end{proof}

\subsection{Harish-Chandra descent}\label{sec:HC-descent}
In this section we reduce Theorem \ref{thm:main-group-case}
to the case of topologically unipotent elements. We follow the method of \cite[Appendix B]{BV21} in the equal-characteristic setting. \par
We maintain the notations and assumptions in Theorem \ref{thm:main-group-case} and assume that the affine Springer fiber $X^{G'}_{\bfP',\ga}$ is non-empty. Then by Theorem~\ref{thm:nonempty-group}, the element $\ga$ is bounded modulo center and after $G(F)$-conjugation we may assume that $\ga\in\widetilde{\bfI}$ by Lemma \ref{lem:bounded}. Let $\ga=su=us$ be the topological Jordan decomposition where $s\in\widetilde{\bfI}$ is strongly semisimple mod center and $u\in\bfI$ is topologically unipotent mod center.\par 
Let $H':=(G_s'^\circ)^{\mathrm{sc}}$ be the simply connected cover of the identity component $G_s'^\circ$ of the centralizer of $s$ in $G'$ and let $\pi':H'\to G'$ be the natural homomorphism. Let $\bfP'_s:=G_s'(F)\cap\bfP'$ and $\bfP_{H'}:=(\pi')^{-1}(\bfP'_s)$. Then $\bfP_{H'}$ is a parahoric subgroup of $H'(F)$. Let $H:=H'_{\mathrm{ad}}=G_{s,\mathrm{ad}}^\circ$ be the common adjoint group of $H'$ and $G_s^\circ$. Let $\bfP_H$ be the image of $\bfP\cap G_s(F)$ in $H(F)$. Then $\bfP_H$ is also the image of $\bfP'_H$ in $H(F)$ and is a parahoric subgroup of $H(F)$. Let $\widetilde{\bfP}_H$ be the normalizer of $\bfP_H$ in $H(F)$. Let $u_0$ be the image of $u$ in $H(F)$. For any $g\in G_s'(F)$, we have $\mathrm{ad}(g)^{-1}u\in\widetilde{\bfP}$ if and only if $\mathrm{ad}(g)^{-1}u_0\in\widetilde{\bfP}_H$. We will relate the Witt vector affine Springer fiber $X^{G}_{\bfP',\ga}$ to the Witt vector affine Springer fiber $X^{H'}_{\bfP_{H},u_0}$, whose set of $k$-points is: 
\[X^{H}_{\bfP_{H'},u_0}(k)=\{h\in H'(F)/\bfP_{H'}\mid \mathrm{ad}(h)^{-1}u_0\in\widetilde{\bfP}_H\}.\]
Let $\widetilde{\bfP}'_s$ be the normalizer of $\pi'(\bfP_{H'})$ in $G_s'(F)$. Then we have $\bfP'_s\subset\widetilde{\bfP'}_s$ and the quotient group $\widetilde{\bfP}'_s/\bfP'_s$ is discrete.
\begin{prop}\label{prop:HC-descent}
    Let the notations and assumptions be as above. For any $n\in\widetilde{\bfP}_s/\bfP_s$ we consider the morphism $i_n:X^{H}_{\bfP_{H'},u_0}\to X^{G}_{\bfP',\ga}$ defined by sending $h\bfP_{H'}$ to $\pi'(h)n\bfP'$. Then the disjoint union of the $i_n$'s defines an isomorphism of perfect schemes
    \[i:=\bigsqcup_{n\in\widetilde{\bfP}'_s/\bfP'_s}i_n:\bigsqcup_{n\in\widetilde{\bfP}'_s/\bfP'_s}X^{H}_{\bfP_{H'},u_0}\to X^{G}_{\bfP',\ga}.\]
    Moreover we have $d_G(\ga)=d_H(u_0)$ and 
    \[\mathrm{Art}(G)+\mathrm{def}(\kappa_G(\ga))-\mathrm{Art}(G_\ga)=\mathrm{Art}(H)+\mathrm{def}(\kappa_H(u_0))-\mathrm{Art}(H_{u_0}).\]
\end{prop}
\begin{proof}
    First we show injectivity. Suppose there are $n_1,n_2\in\widetilde{\bfP}'_s/\bfP'_s$ with $i_{n_1}(h_1\bfP_H)=i_{n_2}(h_2\bfP_H)$ so that $\pi'(h_1)n_1\bfP'=\pi'(h_2)n_2\bfP'$. Then we get $\pi'(h_1^{-1}h_2)\in \pi'(H'(F))\cap n_1\bfP' n_2^{-1}$ which implies that $n_1=n_2$ (using the Kottwitz homomorphism) and $h_1\bfP_{H'}=h_2\bfP_{H'}$.\par 
    Next we show surjectivity. Let $g\bfP'\in X^{G}_{\bfP',\ga}$ so that $g^{-1}\ga g\in\widetilde{\bfP}$. Then we have $g^{-1}sg\in\widetilde{\bfP}$ and $g^{-1}ug\in\widetilde{\bfP}$. By 
    \cite[Proposition 2.33, Corollary 2.37]{Spice08}, this implies that $g\in G'_s(F)$ and therefore
    \[X^{G}_{\bfP',\ga}=\{g\in G_s'(F)/\bfP'_s | g^{-1}ug\in\widetilde{\bfP}\cap G_s(F)\}=\{g\in G_s'(F)/\bfP'_s | g^{-1}u_0g\in\widetilde{\bfP}_H\}.\]
    Take an element $h\in H'(F)$ such that $h\bfP_{H'}h^{-1}=(\pi')^{-1}(g\bfP' g^{-1})$ and let $n:=\pi'(h)^{-1}g$. Then we get that $n\in\widetilde{\bfP}'_s$ and $g\bfP'=i_n(h\bfP_{H'})$. \par
    It remains to relate the numerical invariants of $\ga\in G(F)$ and $u_0\in H(F)$. Note that $G_\ga$ is a maximal torus of both $G_s^0$ and $G$. Let $\Phi_G(G_\ga)$ be the set of roots of $G$ (over $\overline{F}$) with respect to $G_\gamma$. Then the set of roots of $G_s^\circ$ with respects to $G_\ga$ is $\Phi_{G_s^\circ}(G_\ga)=\{\alpha\in\Phi_G(G_\ga),\alpha(s)=1\}$. We have
    \begin{equation}
    \begin{split}
        d_G(\ga)&=\mathrm{val}\det(1-\mathrm{ad}(\ga)|\fg(F)/\fg_\ga(F))=\sum_{\alpha\in\Phi_G(G_\ga)}\mathrm{val}(1-\alpha(\ga))\\
        &=\sum_{\alpha\in\Phi_{G_s^\circ}(G_\ga)}\mathrm{val}(1-\alpha(\ga))=\sum_{\alpha\in\Phi_{G_s^\circ}(G_\ga)}\mathrm{val}(1-\alpha(u))=d_{G_s^\circ}(u)
    \end{split}
    \end{equation}
    since for $\alpha\in\Phi_G(G_\ga)$ such that $\alpha(s)\ne1$, we have $\mathrm{val}(1-\alpha(\ga))=\mathrm{val}(1-\alpha(s)\alpha(u))=0$ because $\mathrm{val}(\alpha(u)-1)\ge0$. By definition we see that $d_{G_s^\circ}(u)=d_H(u_0)$ and therefore $d_G(\ga)=d_H(u_0)$.\par 
    Finally let us prove the last equality involving Artin conductors and Kottwitz invariants. By construction we have $G_\ga=(G_s^\circ)_u$ and $H_{u_0}\cong(G_s^\circ)_u/Z(G_s^\circ)$. Let $S\subset G_s^\circ$ be a maximal $F$-split sub-torus. Let $S_G$ be a maximal $F$-split torus in $G$ containing $S$ and let $T$ be the centralizer of $S_G$ in $G$. Recall our assumption that $G$ is essentially tamely ramified. We first claim that
    \begin{equation}\label{eq:artin-conductor-difference}
        \mathrm{Art}(G_s^\circ)-\mathrm{Art}(G)=\dim S_G-\dim S=\mathrm{def}(\kappa_G(s)).
    \end{equation}
    To show this we make the following observations: 
    \begin{itemize}
        \item All invariants in \eqref{eq:artin-conductor-difference} do not change if we mod out a central subgroup of $G$, so we may assume that $G$ is adjoint;
        \item All invariants in \eqref{eq:artin-conductor-difference} are additive when we decompose $G$ into products of smaller groups, so we may assume further that $G$ is the Weil restriction of a tamely ramified absolutely simple adjoint group; 
        \item If $G=\mathrm{Res}_{\widetilde{F}/F}\widetilde{G}$ for a finite extension $\widetilde{F}/F$, then by Proposition \ref{prop:Artin-conductor-Res} the difference $\mathrm{Art}(G_s^\circ)-\mathrm{Art}(G)$ does not change if we replace $F$ (resp. $G$) by $\widetilde{F}$ (resp. $\widetilde{G}$). Also it follows easily from definition that the remaining quantities does not change if we make this replacement.
    \end{itemize}
    Thus to show \eqref{eq:artin-conductor-difference} we may assume that $G$ is tamely ramified. Then by Lemma \ref{lem:tame-centralizer} the group $G_s^\circ$ is also tamely ramified. So the Artin conductors $\mathrm{Art}(G_s^\circ)$ and $\mathrm{Art}(G)$ become the difference between the absolute rank and the $F$-rank and the first equality in \eqref{eq:artin-conductor-difference} follows. The second equality follows by definition and therefore \eqref{eq:artin-conductor-difference} is proved.\par 
    Since $H=G_s^\circ/Z(G_s^\circ)$ and $H_{u_0}=(G_s^\circ)_u/Z(G_s^\circ)=G_\ga/Z(G_s^\circ)$, we deduce from \eqref{eq:artin-conductor-difference} that 
    \[\mathrm{Art}(H)-\mathrm{Art}(H_{u_0})=\mathrm{Art}(G_s^\circ)-\mathrm{Art}(G_\ga)=\mathrm{Art}(G)+\mathrm{def}(\kappa_G(s))-\mathrm{Art}(G_\ga).\]
    We have an embedding $\cB(H)\into\cB(G)$ between the (reduced) Bruhat-Tits buildings (see \cite[Proposition 2.33]{Spice08}). Let $\fa_H\in\cB(H)$ be an alcove in the apartment corresponding to $S$ and let $\fa\in\cB(G)$ be the alcove containing $\fa_H$ in the apartment corresponding to $S_G$. Using notations from Definition \ref{def:defect-Kott-invariant}, we let 
    \[w_\ga:=w_\fa^G(\kappa_G(\ga)),\quad w_s:=w_\fa^G(\kappa_G(s)),\quad w_u:=w_\fa^G(\kappa_G(u)).\] 
    Then we have $w_\ga=w_sw_u$ and $w_s$ has order prime to $p$ while $w_u$ has $p$-power order. In particular, both $w_u$ and $w_s$ can be expressed as certain powers of $w_\ga$ and we deduce that
    \[X_*(S_G)_\bbQ^{w_\ga}=(X_*(S_G)_\bbQ^{w_s})^{w_u}=X_*(S)_\bbQ^{w_u}.\]
    This implies that 
    \[\mathrm{def}(\kappa_G(\ga))=\mathrm{def}(\kappa_G(s))+\dim X_*(S)_\bbQ-\dim X_*(S)_\bbQ^{w_u}.\]
    Let $\Tilde{u}\in G_s^\circ(F)$ be an element that stabilizes the alcove $\fa_H$ (and hence also $\fa$) and satisfies $\kappa_{G_s^\circ}(\Tilde{u})=\kappa_{G_s^\circ}(u)$. Then we also have $\kappa_G(\Tilde{u})=\kappa_G(u)$ by the functoriality of the Kottwitz homomorphism. We see that $\dim X_*(S)_\bbQ-\dim X_*(S)_\bbQ^{w_u}$ equals to the codimension of the fixed point loci $\fa_H^{\Tilde{u}}$ in $\fa_H$, which by Lemma \ref{lem:defect-Kott-invariant} equals to $\kappa_H(u_0)$. Combined with the above equations we are done. 
\end{proof}
As an immediate consequence we obtain the following:
\begin{cor}\label{cor:group-reduction}
    Let $(G,G',\bfP,\bfP')$ be as in \S\ref{sec:setup-adjoint-pair}. Let $(H,H')$ be another pair of reductive groups over $F$ with an isomorphism between the adjoint groups $H_{\mathrm{ad}}\cong H'_{\mathrm{ad}}$ and let $\bfP_H\subset H(F)$, $\bfP_H'\subset H'(F)$ be parahoric subgroups whose images in the adjoint group coincide. Suppose there is an isomorphism $G_\mathrm{ad}\cong H_\mathrm{ad}$ such that the image of $\bfP$ and $\bfP_H$ in the adjoint group coincide. Let $\ga\in G(F)^{\mathrm{rs}}$ and $\ga_H\in H(F)^{\mathrm{rs}}$ be bounded regular semisimple elements whose image in $G_{\mathrm{ad}}(F)\cong H_{\mathrm{ad}}(F)$ coincides. Then Theorem \ref{thm:main-group-case} holds for $X_{\bfP',\ga}^G$ if and only if it holds for $X_{\bfP_H',\ga_H}^H$
\end{cor}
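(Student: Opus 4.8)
The strategy runs parallel to the deduction of Corollary~\ref{cor:reduction-Lie-alg} from Proposition~\ref{prop:reduction-Lie-alg}: the plan is to reduce both affine Springer fibers, via Lemma~\ref{lem:HC-descent}, to one and the same object attached purely to the common adjoint data. Write $(H,H')$ for the second pair as in the statement. Essential tame ramification, goodness of $p$, and the integer $\mathrm{Art}(G)$ all depend only on $G_{\mathrm{ad}}\cong H_{\mathrm{ad}}$, so the hypotheses of Theorem~\ref{thm:main-group-case} transfer between $G$ and $H$; moreover $\ga$ and $\ga_H$ are bounded mod center because their common image $\bar\ga\in G_{\mathrm{ad}}(F)$ is. The empty case is harmless: the injectivity part of Lemma~\ref{lem:HC-descent} does not use the nonemptiness assumption, so emptiness of $X^G_{\bfP',\ga}$, of the intermediate space, and of $X^H_{\bfP_H',\ga_H}$ are all equivalent; we may therefore assume all three are nonempty and place ourselves in the set-up of Lemma~\ref{lem:HC-descent}, writing $\ga=su$ and $\ga_H=s_Hu_H$ for the topological Jordan decompositions.

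The crux is that the two Harish--Chandra descents produce the \emph{same} auxiliary data. Let $\bar\ga=\bar s\bar u$ be the topological Jordan decomposition in $G_{\mathrm{ad}}(F)$; by Spice's uniqueness theorem $\bar s,\bar u$ are the images of $s,u$, and equally of $s_H,u_H$. Hence the adjoint group $M:=(G^\circ_s)_{\mathrm{ad}}\cong(G^\circ_{s_H})_{\mathrm{ad}}$ of the centralizer, its simply connected cover $M'$, the topologically unipotent element $u_0\in M(F)$ (the image of $\bar u$), and the parahoric $\bfP_{M'}\subset M'(F)$ (determined by $\bar s$ together with the common facet of $\bfP$ and $\bfP_H$ in $\cB(G)=\cB(G_{\mathrm{ad}})$) depend only on the adjoint data; these are precisely the objects $H,H',u_0,\bfP_{H'}$ internal to Lemma~\ref{lem:HC-descent} when it is applied to $G$, respectively to $H$. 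That lemma then supplies isomorphisms of perfect schemes
\[X^G_{\bfP',\ga}\cong\bigsqcup X^{M}_{\bfP_{M'},u_0}\qquad\text{and}\qquad X^H_{\bfP_H',\ga_H}\cong\bigsqcup X^{M}_{\bfP_{M'},u_0},\]
each side a disjoint union of copies of a single space over a nonempty finite index set; in particular $X^G_{\bfP',\ga}$ is a perfect scheme perfectly of finite type of a given dimension if and only if $X^H_{\bfP_H',\ga_H}$ is, with equal dimensions.

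It remains to match the two right-hand sides of the dimension formula. Applying the numerical identities of Lemma~\ref{lem:HC-descent} to $G$ and to $H$ in turn yields $d_G(\ga)=d_M(u_0)=d_H(\ga_H)$ and
\[\mathrm{Art}(G)+\mathrm{def}(\kappa_G(\ga))-\mathrm{Art}_\ga=\mathrm{Art}(M)+\mathrm{def}(\kappa_M(u_0))-\mathrm{Art}(M_{u_0}),\]
together with the identical equation for $H$; hence $\tfrac{1}{2}\bigl(d_G(\ga)+\mathrm{def}(\kappa_G(\ga))+\mathrm{Art}(G)-\mathrm{Art}_\ga\bigr)$ agrees with its analogue for $(H,\ga_H)$. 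Finally, $\bfP$ lies in a special parahoric of $G(F)$ exactly when its facet in $\cB(G)=\cB(G_{\mathrm{ad}})$ does, which is the same condition for $\bfP_H$, so the dichotomy between the inequality and the equality in Theorem~\ref{thm:main-group-case} is respected. The one step demanding real care is the first assertion of the previous paragraph — that the adjoint group of a centralizer, its simply connected cover, and the parahoric it inherits are unchanged when $G$ is replaced by a group with the same adjoint group and $\ga$ by its adjoint image — which I expect to follow from functoriality of the topological Jordan decomposition under central isogenies (so that $s$ and $s_H$ share the image $\bar s$) together with the building-theoretic descriptions already used in the proof of Lemma~\ref{lem:HC-descent}.
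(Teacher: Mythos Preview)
Your approach—applying Lemma~\ref{lem:HC-descent} to both $(G,\ga)$ and $(H,\ga_H)$ and verifying that the descended data $(M,M',u_0,\bfP_{M'})$ depend only on the common adjoint picture—is exactly what the paper means by calling the corollary an ``immediate consequence'' of that lemma; the paper offers no further argument. One small correction: $\mathrm{Art}(G)$ does \emph{not} depend only on $G_{\mathrm{ad}}$ (the connected center contributes $\mathrm{Art}(Z_G^\circ)$), but this slip is harmless since your actual comparison of the right-hand sides proceeds through the numerical identities of Lemma~\ref{lem:HC-descent} rather than through that claim, and it is only the difference $\mathrm{Art}(G)-\mathrm{Art}_\ga$ that needs to be an adjoint invariant (cf.\ the proof of Corollary~\ref{cor:reduction-Lie-alg}).
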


Consequently the study of the Witt vector affine Springer fiber $X^{G}_{\bfP',\ga}$ associated to a regular semisimple element $\ga\in G(F)$ can be reduced to the case where $\ga$ is topologically unipotent. We would like to reduce further to the case where $\ga$ is \emph{stronly} topologically unipotent to make connection with the Lie algebra affine Springer fibers.

\subsection{A study of the case of general linear groups}\label{sec:GLn}
Under the assumption that the residue characteristic $p$ is good and the group is absolutely simple and tamely ramified, the only possible case where there exists topologically unipotent but not strongly topologically unipotent elements is when $G$ is split of type $\mathrm{A}_{n-1}$ and $p$ divides $n$. In this section we study this specific case and prove the following result: 
\begin{thm}\label{thm:GLn}
    Suppose $G_{\mathrm{ad}}\cong\mathrm{PGL}_n$ and $p$ divides $n$. Let $\ga\in G(F)$ be a bounded regular semisimple element that is topologically unipotent but not strongly topologically unipotent. Then for any parahoric subgroup $\bfP\subset G(F)$, the Witt vector affine Springer fiber $X_{\bfP,\ga}$ is represented by a finite dimensional perfect scheme, locally perfectly of finite type (see Definition \ref{def:perfect-finite-type}), whose dimension is\footnote{Although $\mathrm{Art}(G)=0$ when $G$ is split, we write the formula in this form in accordance with the general case in Theorem \ref{thm:main-group-case}.} 
    \[\dim X_{\bfP,\ga}=\frac{1}{2}(d_G(\ga)+\mathrm{def}(\kappa_G(\ga))+\mathrm{Art}(G)-\mathrm{Art}_\ga).\]
    Furthermore, if $\bfP=\bfI$ is an Iwahori subgroup, then $Y_\ga=X_{\bfI,\ga}$ is equidimensional. 
\end{thm}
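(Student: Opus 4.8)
The plan is to reduce everything to $\mathrm{GL}_n$ and then, by a (wildly) ramified base change that ``untwists'' $\ga$, to the Lie algebra formula of Theorem~\ref{thm:main-Lie-algebra-case}.

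\emph{Reduction to $\mathrm{GL}_n$.} Since $G_{\mathrm{ad}}\cong\mathrm{PGL}_n$, Corollary~\ref{cor:group-reduction} reduces the assertion (for each parahoric $\bfP$) to the case $G'=G=\mathrm{GL}_n$ with a lift $\tilde\ga\in\mathrm{GL}_n(F)$ of $\ga$: indeed $p$ is good for $\mathrm{GL}_n$ (type $\mathrm A$ has no bad primes), $p$ is not a torsion prime for the root datum of $\mathrm{GL}_n$, $\mathrm{Art}(\mathrm{GL}_n)=0$, and all the numerical invariants appearing in the formula depend only on the image in the adjoint group. Twisting $\tilde\ga$ by a scalar in $F^\times$ and a Teichm\"uller unit I may assume $0\le c:=\kappa_{\mathrm{GL}_n}(\tilde\ga)=\mathrm{val}(\det\tilde\ga)<n$; since $\ga$ is topologically unipotent, $c$ is $p$-power torsion in $\bbZ/n$, and since $\ga$ is \emph{not} strongly topologically unipotent, $c\ne0$, so $d:=\mathrm{ord}_{\bbZ/n}(c)$ is a power of $p$ larger than $1$.

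\emph{Base change and descent.} Choose a finite totally ramified Galois extension $F'/F$ with $d\mid e(F'/F)$ (for instance a suitable cyclotomic extension). As the Kottwitz homomorphism gets multiplied by $e(F'/F)$ under this base change, $\kappa_{\mathrm{PGL}_n,F'}(\ga)=0$; adjusting a lift again by a scalar in $F'^\times$ and a Teichm\"uller unit produces a \emph{strongly} topologically unipotent regular semisimple $\delta\in\mathrm{GL}_n(F')$ with the same image as $\tilde\ga$ in $\mathrm{PGL}_n(F')$ and with $(\mathrm{GL}_n)_\delta=(\mathrm{GL}_n)_{\tilde\ga}$. Using the Weil-restriction identification $\mathrm{Fl}^{\mathrm{GL}_{n,F'}}_{\bfP'}=\mathrm{Fl}^{\mathrm{Res}_{\cO'/\cO}\mathrm{GL}_{n,F'}}_{\bfP}$ of Lemma~\ref{lem:affine-flag-decompose} together with Galois descent along $\mathrm{PGL}_{n,F}\hookrightarrow\mathrm{Res}_{F'/F}\mathrm{PGL}_{n,F'}$ (applied to the Galois-fixed class of $\ga$ in $\mathrm{PGL}_n$, and transported to $\mathrm{GL}_n$ by the group analogue of Proposition~\ref{prop:reduction-Lie-alg}, cf.\ Lemma~\ref{lem:HC-descent}), I identify $X^{\mathrm{GL}_n}_{\tilde\bfP,\tilde\ga}$ with the $\mathrm{Gal}(F'/F)$-fixed locus inside $X^{\mathrm{GL}_n}_{\bfP',\delta}$ over $F'$, for an $F$-rational parahoric $\bfP'\subset\mathrm{GL}_n(F')$ compatible with $\tilde\bfP$. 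Over $F'$ the group $\mathrm{GL}_n$ carries the quasi-logarithm attached to the integral, perfect trace form on $\mathfrak{gl}_n$, which is bijective over topologically nilpotent elements by Lemma~\ref{lem:qlog-bij-top-nilp}; since $\delta$ is strongly topologically unipotent, Lemma~\ref{lem:compare-group-Lie} gives $X^{\mathrm{GL}_n}_{\bfP',\delta}=X^{\mathfrak{gl}_n}_{\bfP',\Phi(\delta)}$ with $\Phi(\delta)$ topologically nilpotent regular semisimple, and Theorem~\ref{thm:main-Lie-algebra-case} over $F'$ together with Corollary~\ref{cor:disc-group-Lie-alg} evaluates $\dim X^{\mathrm{GL}_n}_{\bfP',\delta}$ in terms of $d_{\mathrm{GL}_n,F'}(\delta)$ and $\mathrm{Art}_{F'}((\mathrm{GL}_n)_\delta)$.

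\emph{Descent of the numerical data.} It remains to convert the $F'$-formula back into the formula over $F$, tracking the three inputs across the base change and the passage to Galois-fixed points. The discriminant valuation, being the valuation of a product over roots of an element of $F$, scales by the ramification: $d_{\mathrm{GL}_n,F'}(\delta)=d_{\mathrm{PGL}_n,F'}(\ga)=e(F'/F)\,d_G(\ga)$. The Artin conductor of the centralizer torus is controlled by Chai--Yu: by Theorem~\ref{thm:Chai-Yu} and Corollary~\ref{cor:length-artin-conductor}, both $\mathrm{Art}_{F'}((\mathrm{GL}_n)_\delta)$ and $\mathrm{Art}_\ga=\mathrm{Art}_F(G_\ga)$ are read off from lengths of cokernels of the natural inclusions of Lie algebras of (finite-type) N\'eron models, before and after the base change, which are comparable up to the discrepancy between the base-changed N\'eron model and the N\'eron model over $\cO'$. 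Finally, the dimension of the fixed-point locus is computed directly over $F$ via the regular-locus/orbit description of Lemma~\ref{lem:orbit-dim} (which does not require strong topological unipotence), and comparing it with $\dim X^{\mathrm{GL}_n}_{\bfP',\delta}$ produces exactly the extra summand $\mathrm{def}(\kappa_G(\ga))$: it records the displacement of the facet of $\tilde\bfP$ inside $\cB(\mathrm{PGL}_{n,F})\hookrightarrow\cB(\mathrm{PGL}_{n,F'})$, which by Lemma~\ref{lem:defect-Kott-invariant} is the codimension of the alcove fixed by a normalizer representative of $\kappa_G(\ga)$. Assembling the three identities yields $\dim X^{\mathrm{GL}_n}_{\tilde\bfP,\tilde\ga}=\tfrac12\bigl(d_G(\ga)+\mathrm{def}(\kappa_G(\ga))+\mathrm{Art}(G)-\mathrm{Art}_\ga\bigr)$, hence the theorem by the first reduction. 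The main obstacle is precisely this last step: reconciling dimensions and Artin conductors \emph{simultaneously} across a wildly ramified base change and the fixed-point operation, and isolating the $\mathrm{def}(\kappa_G(\ga))$ contribution; a secondary difficulty, requiring care with Bruhat--Tits theory over ramified extensions, is making the descent precise, since $\delta$ is not Galois-equivariant and the comparison must be routed through $\mathrm{PGL}_n$ (or $\mathrm{SL}_n$), where descent of affine Springer fibers is well behaved.
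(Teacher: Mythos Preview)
Your reduction to $G=\mathrm{GL}_n$ is fine and agrees with the paper.  After that, however, the approach diverges from the paper's and has a genuine gap.

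The idea of passing to a totally ramified $F'/F$ with $d\mid e(F'/F)$ forces $F'/F$ to be \emph{wildly} ramified (since $d$ is a nontrivial $p$-power).  Two of your key steps then do not go through as stated.  First, the identification of $X^{\mathrm{GL}_n}_{\tilde\bfP,\tilde\ga}$ with a Galois-fixed locus inside $X^{\mathrm{GL}_n}_{\bfP',\delta}$ is not established: Lemma~\ref{lem:affine-flag-decompose} only gives an identification of loop functors, not a descent of affine Springer fibers, and there is no analogue in the paper of Lemma~\ref{lem:fixed-point-locus-smooth} for wild extensions; even granting an embedding, nothing in the paper lets you compute the dimension of a wild-Galois fixed locus.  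Second, your ``descent of numerical data'' is a promissory note rather than a computation: Theorem~\ref{thm:Chai-Yu} controls the base-change conductor of the centralizer torus, but you never actually produce the identity linking $\mathrm{Art}_{F'}$, $\mathrm{Art}_F$, $e(F'/F)$, and $\mathrm{def}(\kappa_G(\ga))$; and Lemma~\ref{lem:orbit-dim} gives the dimension of a single orbit, not of the whole Springer fiber, so it cannot by itself isolate the $\mathrm{def}(\kappa_G(\ga))$ contribution.  You yourself flag both obstacles; they are real, and the sketch does not overcome them.

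The paper's proof stays entirely over $F$ and avoids wild base change.  It proceeds in two stages.  In the \emph{elliptic} case $\gcd(a,n)=1$ (with $a=\mathrm{val}\det\ga$, $\tau=\Pi_n^a$), one first shows $\bfP$ must be Iwahori, then replaces $\ga$ by a unit multiple of a power to arrange $a=1$, and observes the elementary inclusion $\bfI\tau\subset\mathrm{Lie}(\bfI)$ with image reducing to regular nilpotents.  This yields $Y_\ga^G=(\pi_\ga^\fg)^{-1}(X_\ga^{\fg,\mathrm{reg}})$ inside the \emph{Lie algebra} affine Springer fiber over the same $F$, and Theorem~\ref{thm:main-Lie-algebra-case} together with $d_\fg(\ga)=d_G(\ga)+(n-1)=d_G(\ga)+\mathrm{def}(\kappa_G(\ga))$ gives the formula.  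In the \emph{non-elliptic} case one finds a standard Levi $M\cong(\mathrm{GL}_{n'})^c$ containing $\ga$ as an elliptic element, cuts $X_{\bfP,\ga}^G$ by semi-infinite $LQ$-orbits, and uses Proposition~\ref{prop:Levi-reduction} to compute each piece as an $r_N(\ga)$-dimensional bundle over an affine Springer fiber for $M$; the elliptic case then finishes the computation.  The key point you are missing is this direct embedding into the Lie algebra picture over $F$ via $\bfI\tau\subset\mathrm{Lie}(\bfI)$, which makes the $\mathrm{def}(\kappa_G(\ga))$ term appear transparently as $n-1$ (or its Levi analogue) rather than as an unexplained fixed-point codimension.
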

We will finish the proof by the end of this subsection. Most of the following discussions are valid in more general situations and we will impose various assumptions in the statement only when necessary.\par 
By Corollary \ref{cor:group-reduction} we may assume that $G=\mathrm{GL}_n$. Let $Z_G$ be the center of $G$ and let $B$ (resp. $T$) be the standard Borel subgroup (resp. maximal torus) consisting of upper triangular (resp. diagonal) matrices. Let $\bfI\subset G(\cO)$ be the standard Iwahori subgroup consisting of matrices whose reduction mod $\varpi$ is upper triangular. Let $\Pi_n\in\mathrm{GL}_n(F)$ be the matrix whose $(n,1)$ entry is $\varpi$, whose $(i,i+1)$-entry is $1$ for $i=1,\dotsc,n-1$ and all the other
entries are zero. Then the normalizer $\widetilde{\bfI}=N_{G(F)}(\bfI)$ is generated by $\bfI$ and $\Pi_n$. \par
Let $\bfP\subset G(F)=\mathrm{GL}_n(F)$ be a parahoric subgroup. Then $\bfP$ is contained in a hyperspecial parahoric subgroup and after $G(F)$-conjugation we may assume that $\bfI\subset\bfP\subset G(\cO)$. Then the normalizer $\widetilde{\bfP}$ of $\bfP$ in $G(F)$ is generated by $\bfP$ and certain power of $\Pi_n$. For example if $\bfP=G(\cO)$ is hyperspecial, then $\widetilde{\bfP}=Z_G(F)G(\cO)$ is generated by $G(\cO)$ and $(\Pi_n)^n=\varpi\in Z_G(F)$.\par
Let $\ga\in G(F)$ be a \emph{bounded mod center} regular semisimple element and let $a=\mathrm{val}(\det(\ga))\in\bbZ$. Assume moreover that $\ga$ is not contained in any parahoric subgroup (topologically unipotent but not strongly topologically unipotent elements that we are currently interested in will satisfy this assumption). Then after multiplying by a scalar matrix in $Z_G(F)$ we may assume that $0<a<n$. Let $\tau:=(\Pi_n)^a$. Suppose moreover that $\tau\in\widetilde{\bfP}$. Then the Witt vector affine Springer fiber 
\[X_{\bfP,\ga}^G=\{g\in G(F)/\bfP\mid g^{-1}\ga g\in\widetilde{\bfP}\}=\{g\in G(F)/\bfP\mid g^{-1}\ga g\in\bfP\tau\}\]
is nonempty. In the case $\bfP=\bfI$ we get
\[Y_\ga^G=X_{\bfI,\ga}^G=\{g\in G(F)/\bfI\mid g^{-1}\ga g\in\bfI\tau\}.\]
In general the natural map $\mathrm{Fl}_\bfI=LG/\bfI\to\mathrm{Fl}_\bfP=LG/\bfP$ restricts to a morphism
\[\pi^G_{\bfP,\ga}:Y_\ga^G\to X_{\bfP,\ga}^G\]
whose fibers can be described as follows. The image of $\bfI$ under the natural homomorphism $\bfP\to\bfP/\bfP_+\cong\sG_\bfP$ is a Borel subgroup $\sB_\bfP\subset\sG_\bfP$. Since the  automorphism $\mathrm{Ad}(\tau)$ on $\bfP$ preserves the subgroup $\bfI$, after passing to the integral model and then the reductive quotient of the special fiber we get an automorphism on $\sG_\bfP$ preserving the subgroup $\sB_\bfP$. Therefore $\mathrm{Ad}(\tau)$ induces an automorphism of the flag variety $\sG_\bfP/\sB_\bfP$ that we simply denote by $\tau$. For any $g\bfP\in X_{\bfP,\ga}^G$, let $\ga_g:=g^{-1}\ga g\tau^{-1}\in\bfP$ and let $\Bar{\ga}_g\in\sG_\bfP$ be the image of $\ga_g$ under the natural map $\bfP\to\bfP/\bfP_+\cong\sG_\bfP$. Then the fiber $(\pi_{\bfP,\ga}^G)^{-1}(g\bfP)$ is the perfection of the fixed point locus of the composite automorphism $\Bar{\ga}_g\circ\tau$ on the flag variety $\sG_\bfP/\sB_\bfP$, in particular it is nonempty by \cite[Theorem 7.2 on page 49]{St-End}. Therefore $\pi^G_{\bfP,\ga}$ is surjective. 

\subsubsection{The case of elliptic conjugacy classes}
We first consider the case where $a$ is coprime to $n$. Since $\mathrm{val}\det\ga=a$, the $F$-subalgebra $E:=F[\ga]\subset\mathrm{Mat}_n(F)$ generated by $\ga$ is a degree $n$ totally ramified field extension of $F$. On the other hand we recall that $\widetilde{\bfI}$ is the stabilizer of a complete periodic chain of $\cO$-lattices $\{\cL_i;i\in\bbZ\}$ in $F^n$, where 
\begin{itemize}
    \item $\cL_0=\cO^n$ and $\cL_i=\cO^{n-i}\oplus(\varpi\cO)^i$ for $1\le i\le n-1$;
    \item  for all $i\in\bbZ$, $\cL_i\supset\cL_{i+1}$ and $\cL_{i+n}=\varpi\cL_i$.
\end{itemize}
 
Then there is a subset $J\subset\bbZ/n\bbZ$ such that $\widetilde{\bfP}$ is the stabilizer of the partial periodic lattice chain $\{\cL_i;i+n\bbZ\in J\}$. Since $\tau=(\Pi_n)^a\in\widetilde{\bfP}$ and $a$ is coprime to $n$, we must have $J=\bbZ/n\bbZ$ and hence $\bfP=\bfI$ is the standard Iwahori subgroup. Equivalently, one could also deduce this from the description of $\widetilde{\bfP}/Z_G(F)\bfP$ in terms of the local Dynkin diagram, see \cite[\S3.5]{Ti-Corvallis}.\par
Choose integers $j,r$ such that $aj-nr=1$. Let $\ga':=\varpi^{-r}\ga^j$. Then we have $\mathrm{val}(\det(\ga'))=1$ which means that $\ga'$ is a uniformizer in $\cO_E$ and $\cO_E=\cO[\ga']$. Consequently we have $\ga=(\ga')^ah$ where $h\in\cO_E^\times$ can be written as a polynomial in $\ga'$ with $\cO$-coefficients.\par
For any $g\in G(F)$, we claim that $g^{-1}\ga' g\in\widetilde{\bfI}$ if and only if $g^{-1}\ga g\in\widetilde{\bfI}$. First assume that $g^{-1}\ga' g\in\widetilde{\bfI}$. Then we have $g^{-1}hg\in\mathrm{Lie}(\bfI)$ and $\det(g^{-1}hg)\in\cO^\times$ since $h\in\cO_E^\times$. Therefore $g^{-1}hg\in\bfI$ and hence $g^{-1}\ga g=\widetilde{\bfI}$. Conversely if $g^{-1}\ga g\in\widetilde{\bfI}$, we get $g^{-1}\ga'g=\varpi^{-r}(g^{-1}\ga g)^j\in\widetilde{\bfI}$. This shows that
\[Y_\ga^G=Y_{\ga'}^G=\{g\in G(F)/\bfI\mid g^{-1}\ga'g\in\widetilde{\bfI}\}.\]
Since $a$ is coprime to $n$, the images of $\tau=(\Pi_n)^a$ and $\Pi_n$ in $W=S_n$ generate the same subgroup and therefore 
\[\mathrm{def}(\kappa_G(\ga))=\mathrm{def}(\kappa_G(\tau))=\mathrm{def}(\kappa_G(\Pi_n))=\mathrm{def}(\kappa_G(\ga'))=n-1.\]
Now assume that $p|n$ so that $a$ and $j$ are coprime to $p$. Then we have $d_G(\ga)=d_G(\ga^j)=d_G(\ga')$. Also it is clear that $G_\ga=G_{\ga'}=\mathrm{Res}_{E/F}\bbG_m$. Thus we may assume from now on that $a=1$ and $\tau=\Pi_n$.\par 
By assumption $\ga$ is topologically nilpotent as an element in $\fg(F)=\mathrm{Mat}_n(F)$. We consider the Lie algebra affine Springer fiber associated to $\ga$:
\[Y_\ga^{\fg}=\{g\in G(F)/\bfI\mid g^{-1}\ga g\in\mathrm{Lie}(\bfI)\}\]
together with its companion in the affine Grassmannian:
\[X_\ga^{\fg}:=\{g\in G(F)/G(\cO)\mid g^{-1}\ga g\in\fg(\cO)\}.\]
Let $\pi_\ga^\fg:Y_\ga^{\fg}\to X_\ga^{\fg}$ be the natural morphism. Since $\bfI\tau\subset\mathrm{Lie}(\bfI)$ we have a natural embedding $Y_\ga^G\subset Y_\ga^{\fg}$. Recall that the regular open subspace $X_\ga^{\fg,\mathrm{reg}}\subset X_\ga^{\fg}$ consists of points $g G(\cO)\in X_\ga^{\fg}$ such that the reduction mod $\varpi$ of $g^{-1}\ga g$ is regular and $\pi_\ga^\fg$ restricts to an isomorphism above $X_\ga^{\fg,\mathrm{reg}}$.\par 
We claim that $Y_\ga^G=(\pi^\fg_\ga)^{-1}(X_\ga^{\fg,\mathrm{reg}})$. It is clear that the reduction mod $\varpi$ of a matrix in the coset $\bfI\tau$ is regular nilpotent and hence we have the inclusion $Y_\ga^G\subset(\pi^\fg_\ga)^{-1}(X_\ga^{\fg,\mathrm{reg}})$. Conversely for any $g\in(\pi^\fg_\ga)^{-1}(X_\ga^{\fg,\mathrm{reg}})$ we have $g^{-1}\ga g\in\mathrm{Lie}(\bfI)$ and its reduction mod $\varpi$ is a regular nilpotent upper triangular matrix and then we deduce that $g^{-1}\ga g\tau^{-1}\in\bfI$ and hence $g\bfI\in Y_\ga^G$. 
Therefore by Theorem \ref{thm:main-Lie-algebra-case} we get that $Y_\ga^G$ is equidimensional of dimension
\[\dim Y_\ga^G=\dim X_\ga^{\fg,\mathrm{reg}}=\frac{1}{2}(d_\fg(\ga)-\mathrm{Art}_\ga).\]
By assumption $\mathrm{val}(\det\ga)=1$ and we get that 
\[d_\fg(\ga)=d_G(\ga)+n-1=d_G(\ga)+\mathrm{def}(\kappa_G(\ga)).\]
Combining the two equalities above we get
\[\dim Y_\ga^G=\frac{1}{2}(d_G(\ga)+\mathrm{def}(\kappa_G(\ga))-\mathrm{Art}_\ga)\]
and this proves Theorem \ref{thm:GLn} in the case where $\ga$ is elliptic. 

\subsubsection{Reduction to the elliptic case}
Now we treat the general case. Recall that $a=\mathrm{val}(\det\ga)$, $\tau=(\Pi_n)^a$ and $0<a<n$. Let $c:=\mathrm{gcd}(a,n)$, $n':=n/c$ and $a':=a/c$. Then $\tau^{n'}=(\Pi_n)^{an'}=(\Pi_n)^{na'}=\varpi^{a'}$ is central. We have an isomorphism of $F$-algebras $F[\ga]\cong\prod E_i$ where each $E_i$ is a finite totally ramified extension of $F$ of degree $n_i$ such that $\sum n_i=n$. Then after $G(F)$-conjugation we may assume that $\ga$ is a block diagonal matrix in $\prod\mathrm{GL}_{n_i}(F)$ whose blocks are $\ga_i\in\mathrm{GL}_{n_i}(F)$. Let $a_i:=\mathrm{val}(\det\ga_i)$. Then we have $\sum a_i=a$. Since $\ga^{n'}\in\bfI\tau^{n'}=\bfI\varpi^{a'}$, we get that $a_in'=a'n_i$ for all $i$. Then since $(a',n')=1$ we must have $a_i=a'$ and $n_i=n'$ for all $i$. 
Let $M\subset G$ be the standard Levi subgroup isomorphic to $(\mathrm{GL}_{n'})^c$ (which is a product of $c$ copies of $\mathrm{GL}_{n'}$). Let $Q\subset G$ be the standard parabolic subgroup (consisting of block upper triangular matrices) with Levi factor $M$ and let $N\subset Q$ be the unipotent radical of $Q$. Then we see that $\ga$ is an elliptic element in $M(F)$.\par
On the other hand, the standard parahoric subgroup $\bfP\subset G(\cO)$ corresponds to a standard parabolic subgroup $P$ with Levi factor $L$. More precisely, the special fiber of $L$ is the reductive quotient of the special fiber of the parahoric group scheme $\cG_\bfP$ associated to $\bfP$. Let $W=S_n$ be the Weyl group of $G$ and let $W_M\subset W$ (resp. $W_L\subset W$) be the Weyl group of $M$ (resp. $L$). We choose a representative $\dot{w}\in N_G(T)(F)\cap G(\cO)$ for each element $w\in W$. We have disjoint union decompositions
\[G(F)=\bigsqcup_{w\in W_M\backslash W}Q(F)\dot{w}\bfI=\bigsqcup_{w\in W_M\backslash W/W_L}Q(F)\dot{w}\bfP\]
where in each (double) coset of $W$ we have arbitrarily chosen a representative in $W$ and the resulting decomposition is independent of such choices. 
Accordingly there are decompositions of the (partial) affine flag varieties 
\[\mathrm{Fl}=LG/\bfI=\bigsqcup_{w\in W_M\backslash W}\mathrm{Fl}^{Q,w},\quad\mathrm{Fl}_\bfP=LG/\bfP=\bigsqcup_{w\in W_M\backslash W/W_L}\mathrm{Fl}_\bfP^{Q,w}\]
where $\mathrm{Fl}^{Q,w}:=LQ\dot{w}\bfI/\bfI$ and $\mathrm{Fl}_\bfP^{Q,w}=LQ\dot{w}\bfP/\bfP$ are the semi-infinite $LQ$-orbits. These induce decompositions of the Witt vector affine Springer fibers $Y_\ga^G$ and $X_{\bfP,\ga}^G$ into finitely many locally closed subschemes
\[Y_\ga^w:=Y_\ga^G\cap\mathrm{Fl}^{Q,w}=\{g\in Q(F)\dot{w}\bfI/\bfI\mid g^{-1}\ga g\in \bfI\tau\}=\{h\in Q(F)/Q(F)\cap{}^w\bfI\mid h^{-1}\ga h\in\dot{w}\bfI\tau\dot{w}^{-1}\}\]
\[X_{\bfP,\ga}^w:=X_{\bfP,\ga}^G\cap\mathrm{Fl}_\bfP^{Q,w}=\{g\in Q(F)\dot{w}\bfP/\bfP\mid g^{-1}\ga g\in\bfP\tau\}=\{h\in Q(F)/Q(F)\cap{}^w\bfP\mid h^{-1}\ga h\in\dot{w}\bfP\tau\dot{w}^{-1}\}\]
where ${}^w\bfI{}:=\dot{w}\bfI\dot{w}^{-1}$ and ${}^w\bfP{}:=\dot{w}\bfP\dot{w}^{-1}$. The structure of $Y_\ga^w$ and $X_{\bfP,\ga}^w$ can be understood by their closed subschemes
\[Y_{\ga}^{M,w}:=\{m\in M(F)/{}^w\bfI_M\mid m^{-1}\ga m\in\dot{w}\bfI\tau\dot{w}^{-1}\cap M(F)\},\]
\[X_{\bfP,\ga}^{M,w}:=\{m\in M(F)/{}^w\bfP_M\mid m^{-1}\ga m\in\dot{w}\bfP\tau\dot{w}^{-1}\cap M(F)\}\]
where the intersections ${}^w\bfI_M:={}^w\bfI\cap M(F)$, ${}^w\bfP_M:={}^w\bfP\cap M(F)$ are parahoric subgroups of $M(F)$.\par
For each double coset $w\in W_M\backslash W/W_L$, the inverse image $(\pi_{\bfP,\ga}^G)^{-1}(X_{\bfP,\ga}^{w})$ is a union of certain pieces $Y_\ga^{w'}$ for some $w'\in W_M\backslash W$ in the double coset of $w$. Suppose $X_{\bfP,\ga}^{w}$ is nonempty. Since $\pi_{\bfP,\ga}^G$ is surjective, there exists a representative in the double coset of $w$, which we still denote by $w$, such that $Y_\ga^{w}$ is nonempty. Let ${}^w\tau:=\dot{w}\tau\dot{w}^{-1}$ and let $\sigma$ be its image in $W=S_n$. \par
We claim that ${}^w\tau\in M(F)$. The non-emptiness of $Y_\ga^{w}$ implies the non-emptiness of $\dot{w}\bfI\tau\dot{w}^{-1}\cap M(F)$, so there exists an element $x\in{}^w\bfI$ such that $x\cdot{}^w\tau\in M(F)$. Choose an element $t=\mathrm{diag}(t_1,\dotsc,t_n)\in T(F)$ such that $M$ is the centralizer of $t$ in $G$. Then we have $t(x\cdot{}^w\tau)t^{-1}=x\cdot{}^w\tau$ and hence $tx\sigma(t)^{-1}=x$ where $\sigma(t)=\mathrm{diag}(t_{\sigma^{-1}(1)},\dotsc,t_{\sigma^{-1}(n)})$. Since $x\in{}^w\bfI$ so that $\dot{w}^{-1}x\dot{w}\in\bfI$, the diagonal entries of $x$ are nonzero and we deduce that $t_i=t_{\sigma^{-1}(i)}$ for all $i=1,\dotsc,n$. This shows that $\sigma\in W_M$ and hene ${}^w\tau\in M(F)$. Moreover, ${}^w\tau$ lies in the normalizer of ${}^w\bfI_M$ and also ${}^w\bfP_M$ in $M(F)$. Consequently
\[X_{\bfP,\ga}^{M,w}=\{m\in M(F)/{}^w\bfP_M\mid m^{-1}\ga m\in{}^w\bfP_M{}^w\tau\}\]
is the Witt vector affine Springer fiber for $M$ associated to $\ga$ and the parahoric subgroup ${}^w\bfP_M$. Similarly we have
\[Y_\ga^{M,w}=\{m\in M(F)/{}^w\bfI_M\mid m^{-1}\ga m\in {}^w\bfI_M{}^w\tau\}.\]
There is a natural morphism $X_{\bfP,\ga}^{w}\to X_{\bfP,\ga}^{M,w}$ whose fiber over a point in $X_{\bfP,\ga}^{M,w}$ represented by $m\in M(F)$ is
\begin{equation}
    \begin{split}
        Z_m&:=\{u\in N(F)/{}^w\bfP\cap N(F)\mid u^{-1}\ga_m u\in\dot{w}\bfP\tau\dot{w}^{-1}\}\\
        &=\{u\in N(F)/{}^w\bfP\cap N(F)\mid u^{-1}\ga_m u\ga_m^{-1}\in{}^w\bfP\cap N(F)\}
    \end{split}
\end{equation}
where $\ga_m:=m^{-1}\ga m\in{}^w\bfI_M{}^w\tau$. Then we have $\mathrm{ad}(\ga_m)(N(F)\cap{}^w\bfI)\subset N(F)\cap{}^w\bfI$ and thus we can apply Proposition \ref{prop:Levi-reduction} to conclude that $Z_m$ is a perfect scheme perfectly of finite type of dimension
\[r_N(\ga):=\mathrm{val}(\det(1-\mathrm{ad}(\ga)\mid\mathrm{Lie}N(F))).\]
Therefore we get the formula
\[\dim X_{\bfP,\ga}^{w}=X_{\bfP,\ga}^{M,w}+r_N(\ga).\]
When $\ga$ is topologically unipotent we check that 
\[d_G(\ga)=d_M(\ga)+2r_N(\ga).\] 
Since $\ga$ is elliptic in $M(F)$, by the case already proved we get that
\[\dim X_{\bfP,\ga}^{M,w}=\frac{1}{2}(\mathrm{def}(\kappa_M({}^w\tau))+d_M(\ga)-\mathrm{Art}_\ga)=\frac{1}{2}(\mathrm{def}(\kappa_G(\tau)+d_M(\ga)-\mathrm{Art}_\ga).\]
Combining the three identities above we see that whenever $X_{\bfP,\ga}^{w}$ is nonempty, its dimension does not depend on $w$ and equals to $\frac{1}{2}(\mathrm{def}(\kappa_G(\tau)+d_G(\ga)-\mathrm{Art}_\ga)$. Hence this is also the dimension of $X_{\bfP,\ga}$ and if $\bfP=\bfI$ we also see that $Y_\ga$ is equidimensional since each piece $Y_{\ga}^{M,w}$ is equidimensional by the elliptic case we already proved. This finishes the proof of Theorem \ref{thm:GLn} (keeping in mind that $\mathrm{Art}(G)=0$ for the split group $G$). 

\subsection{Finishing the proof of main theorem in the group case}\label{sec:finish-proof-group}
Now we can finish the proof of Theorem \ref{thm:main-group-case}. By Proposition \ref{prop:HC-descent} we may assume that $G$ is simply connected and $\ga\in G(F)$ topologically unipotent regular semisimple. By Lemma \ref{lem:affine-flag-decompose} we may assume that $G$ is absolutely simple and tamely ramified. Recall that $p$ is a good prime for $G$. If $\ga$ is not strongly topologically unipotent, then $G_\mathrm{ad}=PGL_n$ and $p$ divides $n$. In this case the result follows from Theorem \ref{thm:GLn}. Now assume that $\ga\in G(F)$ is \emph{strongly} topologically unipotent. By Proposition \ref{prop:pairing} and Corollary \ref{cor:group-reduction} we may assume that there exists a quasi-logarithm map $\Phi:G\to\fg$ that is bijective on topologically nilpotent elements (see Definition \ref{def:qlog-top-nilp}). Then we have $X_{\bfP,\ga}^G=X_{\bfP,\Phi(\ga)}^\fg$ by Lemma \ref{lem:compare-group-Lie}.\par 
On the other hand, the numerical invariants of $\ga$ and $\Phi(\ga)$ occurring in the dimension formulas coincide. Indeed, by Lemma \ref{lem:compare-group-Lie}, there is a canonical isomorphism between the centralizers $G_\ga\cong G_{\Phi(\ga)}$ and hence $\mathrm{Art}_\ga=\mathrm{Art}_{\Phi(\ga)}$. By Corollary \ref{cor:disc-group-Lie-alg} we have $d_G(\ga)=d_\fg(\Phi(\ga))$. Finally since $\ga$ is strongly topologically unipotent we have $\mathrm{def}(\kappa_G(\ga))=0$. Therefore Theorem \ref{thm:main-group-case} follows from Theorem \ref{thm:main-Lie-algebra-case} and Proposition \ref{prop:equi-dim-group}.

\end{document}